\newtheorem{prop}{Proposition}[subsection]
\newtheorem{lem}[prop]{Lemma}
\newtheorem{theo}{Theorem}
\newtheorem{theo2}{Theorem}
\newtheorem{defin}{Definition}
\newtheorem{rmq}{Remark}
\newtheorem{theoA}{Theorem}
\newcommand{\cg}{[\kern-0.15em [}
\newcommand{\cd}{]\kern-0.15em]}
\newcommand{\E}{\mathbb{E}}
\newcommand{\Prob}{\mathbb{P}}
\newcommand{\prob}{\text{P}}
\newcommand{\Qprob}{\mathbb{Q}}
\newcommand{\R}{\mathbf{R}}
\newcommand{\N}{\mathbf{N}}
\newcommand{\Z}{\mathbf{Z}}
\newcommand{\dd}{\mathrm{d}}
\title{Limit theorem for sub-ballistic Random Walks in Dirichlet Environment in dimension $d\geq 3$ }
\author{R\'emy Poudevigne--Auboiron}
\date{} 
\begin{document}
\maketitle
\section{Introduction and results}
\subsection{Introduction}

Random walks in random environments (RWRE) have been studied for several decades and are now rather well understood in the one dimensional case (see Solomon $\cite{Solomon}$,Kesten, Kozlov, Spitzer $\cite{Kesten}$ and Sina\"{\i} $\cite{Sinai}$). Important progress has been made in higher dimension, mainly in 3 directions: under a ballisticity condition, for small perturbation of the simple random walk ($\cite{BricKupia},\cite{SznitZeitCLTBrownian},\cite{ExitBall0},\cite{LowDis1},\cite{LowDis2}$) and in Dirichlet environment. \\
The most studied ballisticity conditions come from the conditions $(T)$ and $(T^{\prime})$ introduced by Sznitman in $\cite{SznitmanT},\cite{SznitmanEffective}$. They have been shown to be equivalent in $\cite{RamiEquiv}$ and also to be equivalent to an effective polynomial condition $\cite{BergEffectPol},\cite{ElliptCriteria}$. By assuming any of these, in the ballistic regime, directional transience, ballisticity, and a CLT have been proved. Quenched CLTs have also been proved in various cases, either by assuming an annealed CLT, uniform ellipticity and a condition introduced by Kalikow $\cite{SznitmanCLT}$, or by assuming the existence of high enough moments for the renewal times (see $\cite{SznitmanZerner}$ for a definition of the renewal times) and uniform ellipticity of the environment $\cite{SeppaTCL}$ and $\cite{BergerZeitouniTCL}$ in dimension $d\geq 4$. \\
All these results show limit theorems in the ballistic case, that is to say that the walk has a positive speed. In dimension 2 and higher no complete limit theorems are known for the RWRE in the sub-ballistic case. However in dimension 1 we know that a sub-ballistic regime exists, where the walk can behave like the inverse of a stable subordinator $\cite{Kesten}$ $\cite{LimLawsZ}$. This sub-ballistic regime is caused by the existence of traps where the walk spends most of its time. This trapping phenomenon appears in other models closely related to the RWRE for instance the Bouchaud Trap Model (see $\cite{BouchaudBenArous}$ for a precise definition and an overview of the results). The model of random walks in random conductances also exhibits a similar trapping phenomenon. Indeed an annealed limit theorem (the limit is the inverse of a stable subordinator) and an equivalent to the CLT $\cite{FriberghSubbal}$ have been proved for the biased random walk in random conductances. Similar results have been obtained for the biased walk in the percolation cluster and in Galton-Watson trees, but in both cases there is no convergence to a limit law $\cite{FriberghSubbalTree}, \cite{FriberghSubbalPerc}$. In the special case of iid RWRE a trapping phenomenon that leads to sub-ballistic behaviour has been identified in $\cite{BouchetSubbal},\cite{ballistic1}\text{ and }\cite{friberghtrap}$ but no limit theorem has been proved.\\
The random walk in Dirichlet environment (RWDE) is a model where the transition probabilities are iid Dirichlet random variables (see $\cite{DirOverview}$ for an overview). It was first introduced because of its link to the linearly directed-edge reinforced random walk ($\cite{Pemantle}$,$\cite{RWDE}$). It also has a property of invariance by time reversal that allows explicit calculations (see $\cite{transdim}$). In particular, it gives a simple criterion for existence of absolutely continuous invariant distribution from the point of view of the particle, directional transience and ballisticity in dimension $d\geq 3$ $(\cite{IntegExitTime},\cite{BouchetSubbal},\cite{transdir2},\cite{DirEnvirPOVPart})$. In the non-ballistic case the walk is directionally transient but the limit law was still unknown ($\cite{BouchetSubbal}$), it was only known that for some explicit $\kappa\in (0,1]$, $\frac{\log(|X_n|)}{\log(n)}\rightarrow \kappa$.   \\
In this paper we give the annealed limit law for the sub-ballistic regime ($\kappa\leq 1$) in dimension $d\geq 3$. In the case $\kappa=1$ we have the limit law of $\frac{1}{n\log(n)}Y_n$ (where $Y$ is the random walk) while for $\kappa<1$ we have the limit law of the process. To the best of our knowledge, this is the first stable limit theorem for non reversible RWRE in iid environment, in dimension $d\geq 2$.
.

\subsection{Definitions and statement of the results}

In all the paper we set $d\geq 3$. Let $(e_1,\dots,e_d)$ be the canonical base of $\Z^d$ and for any $j\in  [\![d+1,2d]\!]$, set $e_j=-e_{j-d}$. For any $z\in\Z^d $, let $||z||:=\sum\limits_{i=1}^d |z_i|$ be the $L_1$-norm of $z$. For any $x,y \in \Z^d$ we will write $x\sim y$ if $||y-x||=1$. Let $E=\{(x,y)\in(\Z^d)^2,x\sim y\}$ be the set of directed edges of $Z^d$ and let $\tilde{E}=\{\{x,y\},(x,y)\in(\Z^d)^2,x\sim y\}$ be the set of non-directed edges. Let $\Omega$ be the set of environments on $\Z^d$:
\[
\Omega=\{\omega = (\omega(x,y))_{x\sim y} \in (0,1]^E\text{ such that }\forall x\in\Z^d,\sum\limits_{i=1}^{2d}\omega(x,x+e_i)=1\}.
\]
For each $\omega\in\Omega$, let $(Y_n)_{n\in\N}$ be the Markov chain on $\Z^d$ defined by $Y_0=0$ almost surely and the following transition probabilities:
\[
\forall y\in\Z^d,\forall i \in [\![1,2d]\!],\ \prob_0^{\omega}(Y_{n+1}=y+e_i|Y_n=y)=\omega(y,y+e_1).
\]
Let $\text{E}_{\prob_0^{\omega}}$ be the expectation with respect to $\prob_0^{\omega}$.\\
Given a family of positive weights $(\alpha_1,\dots,\alpha_{2d})$, we consider the case where the transition probabilities at each site are iid Dirichlet random variables of parameter $\alpha:=(\alpha_1,\dots,\alpha_{2d})$,  that is with density:
\[
\frac{\Gamma\left(\sum\limits_{i=1}^{2d}\alpha_i\right)}{\prod\limits_{i=1}^{2d}\Gamma(\alpha_i)}\left(\prod\limits_{i=1}^{2d}x_i^{\alpha_i-1}\right)\dd x_1\dots \dd x_{2d-1}
\]
on the simplex
\[
\{(x_1,\dots,x_{2d})\in (0,1]^{2d},\sum\limits_{i=1}^{2d}x_i=1\}.
\]
Let $\Prob^{(\alpha)}$ be the law obtained on $\Omega$ this way. Let $\E_{\Prob^{(\alpha)}}$ be the expectation with respect to $\Prob^{(\alpha)}$ and let $\Prob^{(\alpha)}_0[.]:=\E_{\Prob^{(\alpha)}}[\prob^{\omega}_0(.)]$ be the annealed law of the process starting at $0$.
Let $\left(\tau_i\right)_{i\in\N^*}$ be the renewal times, in the direction $e_1$, introduced in $\cite{SznitmanZerner}$:
\begin{defin}
We define $(\tau_i)_{i\in\N^*}$, the renewal times in the direction $e_1$, by:
\[
\tau_1 = \inf \{ n \in \N, \forall i<n, Y_i.e_1<Y_n.e_1 \text{ and } \forall i>n, Y_i.e_1 > Y_n.e_1  \}
\]
and for all $i>1$:
\[
\tau_{i+1} = \inf \{ n > \tau_{i}, \forall i<n, Y_i.e_1<Y_n.e_1 \text{ and } \forall i>n, Y_i.e_1 > Y_n.e_1  \}.
\]
\end{defin}
The renewal times are used to create independence thanks to the following theorem (Theorem 1.4 of $\cite{SznitmanZerner}$).
\begin{prop}\label{prop:1}
For all $k\in\N^*$, let $\mathcal{G}_k$ be the $\sigma$-field defined by:
\[
\mathcal{G}_k:=\sigma(\tau_1,\dots,\tau_k,(Y_n)_{0\leq n \leq \tau_k},(\omega(x,\cdot))_{x.e_1 < Y_{\tau_k}.e_1}).
\] 
We have, for all $k\geq 1$:
\[
\Prob^{(\alpha)}_0\left((Y_{\tau_k+n})_{n\geq 0} \in \cdot, (\omega(Y_{\tau_k}+x,\cdot))_{x.e_1\geq 0}\in \cdot |\mathcal{G}_k\right)\\
=\Prob^{(\alpha)}_0\left((Y_n)_{n\geq 0} \in \cdot, (\omega(x,\cdot))_{x.e_1\geq 0}\in \cdot |\tau_1=0\right).
\]
\end{prop}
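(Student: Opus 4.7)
The plan is to reduce this renewal identity to two basic ingredients: the Markov property of the walk in a fixed environment, and the fact that under $\Prob^{(\alpha)}$ the weights $(\omega(x,\cdot))_{x\in\Z^d}$ are independent across sites.

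First I would show that $\mathcal{G}_k$ is measurable with respect to the walk up to $\tau_k$ and the environment restricted to sites $x$ with $x\cdot e_1 < Y_{\tau_k}\cdot e_1$. Indeed the definition of the $\tau_j$ forces $Y_i\cdot e_1 < Y_{\tau_k}\cdot e_1$ for every $i<\tau_k$, so the trajectory $(Y_i)_{i\le\tau_k}$ only queries weights $\omega(y,\cdot)$ at sites $y$ strictly to the left of the renewal level. By the site-independence of the Dirichlet environment, the family $(\omega(Y_{\tau_k}+x,\cdot))_{x\cdot e_1\ge 0}$ is then independent of $\mathcal{G}_k$ given $Y_{\tau_k}$, and by translation invariance of $\Prob^{(\alpha)}$ it has the same law as $(\omega(x,\cdot))_{x\cdot e_1\ge 0}$.

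Next, conditionally on $\mathcal{G}_k$ and on this fresh right-half-space environment, the process $(Y_{\tau_k+n})_{n\ge 0}$ is the canonical Markov chain starting from $Y_{\tau_k}$ in the given kernel. The constraint that $\tau_k$ actually is the $k$-th renewal time, once the past is fixed, reduces to the future event $\{Y_{\tau_k+n}\cdot e_1 > Y_{\tau_k}\cdot e_1,\ \forall n\ge 1\}$; translating by $-Y_{\tau_k}$ this becomes $\{Y_n\cdot e_1 > 0,\ \forall n\ge 1\}$, which since $Y_0=0$ is exactly $\{\tau_1=0\}$. Combining the two ingredients yields the claimed identity.

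The main obstacle is that $\tau_k$ is not a stopping time of the natural filtration of $(Y_n)$, so the strong Markov property cannot be invoked directly. To get around this I would decompose along the successive first-hitting times $H_L:=\inf\{n\ge 0:Y_n\cdot e_1=L\}$, which are honest stopping times: on $\{\tau_k=H_L\}$, the past information $\mathcal{G}_k$ is determined by $\mathcal{F}_{H_L}$ together with the environment strictly to the left of level $L$, while the event that $H_L$ is indeed the $k$-th renewal splits into an $\mathcal{F}_{H_L}$-measurable piece (the past has already produced exactly $k-1$ renewals and a new strict maximum at level $L$) and the future event that the walk never returns to level $L$. Applying the strong Markov property at each $H_L$, using site-independence, and summing over $L$ reassembles the statement. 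This level-by-level decomposition is the single delicate point; the rest is bookkeeping.
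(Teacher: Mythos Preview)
Your argument is correct and is essentially the standard Sznitman--Zerner proof; the paper does not give its own proof of this statement but quotes it as Theorem~1.4 of \cite{SznitmanZerner}. One small wording point: ``the past has already produced exactly $k-1$ renewals'' is not $\mathcal{F}_{H_L}$-measurable by itself, since whether a time $t<H_L$ is a renewal still depends on the walk after $H_L$; what is true is that on the never-return event $D_L=\{Y_{H_L+n}\cdot e_1>L,\ \forall n\ge 1\}$ this condition reduces to an $\mathcal{F}_{H_L}$-measurable one (for $i>H_L$ the inequality $Y_i\cdot e_1>Y_t\cdot e_1$ is forced by $D_L$), giving the decomposition $\{\tau_k=H_L\}=A_{k,L}\cap D_L$ with $A_{k,L}\in\mathcal{F}_{H_L}$ that you need.
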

This means that the trajectories and the transition probabilities inside slabs between two consecutive renewal times (after the first one) are i.i.d random variables.  
\begin{defin}
We define the drift $d_{\alpha}$ by:
\[
d_{\alpha}:= \sum \alpha_i e_i.
\]
If $d_{\alpha}\not = 0$, we will assume, without loss of generality, that $\alpha_1>\alpha_{1+d}$.
\end{defin}
\begin{defin}
We define the two parameters $\kappa$ and $\kappa^{\prime}$ by:
\[
\kappa = 2 \left(\sum\limits_{i=1}^{2d}\alpha_i\right)- \max\limits_{i=1,\dots,d}(\alpha_i+\alpha_{i+d})
\]
and
\[
\kappa^{\prime}= 3\left(\sum\limits_{i=1}^{2d}\alpha_i\right)-2\max\limits_{i=1,...,d}\left(\alpha_i+\alpha_{i+d}\right).
\]
For any direction $j\in[\![1,d]\!]$ we also define the parameter $\kappa_j$ by:
\[
\kappa_j:=2 \left(\sum\limits_{i=1}^{2d}\alpha_i\right)- (\alpha_j+\alpha_{j+d})
\]
\end{defin}
In $\cite{DirEnvirPOVPart}$, it was proved that, for $d\geq 3$, when $\kappa>1$, there exists an invariant probability measure $\Qprob^{(\alpha)}$ for the environment from the point of view of the particle, absolutely continuous with respect to $\Prob^{(\alpha)}$. From that it is possible to show that directional transience and ballisticity are equivalent when $\kappa>1$. Furthermore, we know for which parameter the walk is directionally transient.
\begin{theoA}[Corollary 1 of \cite{transdir2}] \label{theodule}
If $d\geq 3$ and $d_{\alpha}\not =0$, then for $\Prob^{(\alpha)}$ almost every environment, the walk is directionally transient with asymptotic direction $d_{\alpha}$, that is to say:
\[
\frac{Y_n}{||Y_n||}\rightarrow \frac{d_{\alpha}}{||d_{\alpha}||},\  \prob_0^{\omega} \text{ almost surely.}
\] 
\end{theoA}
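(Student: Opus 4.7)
The statement has two components: directional transience in some direction, and identification of the asymptotic direction as $d_{\alpha}/\|d_{\alpha}\|$. I would tackle them in order, leveraging both the general Kalikow / Zerner--Merkl 0-1 law for i.i.d.\ RWRE and the specific time-reversal symmetry of Dirichlet environments emphasized in the introduction.

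The first step is to show $\Prob_0^{(\alpha)}[Y_n\cdot e_1\to +\infty]=1$ under the normalization $\alpha_1>\alpha_{1+d}$. By the 0-1 law, for any $\ell\in\R^d\setminus\{0\}$ the events $\{Y_n\cdot\ell\to +\infty\}$ and $\{Y_n\cdot\ell\to -\infty\}$ each have probability in $\{0,1\}$, and at most one of them is $1$. To rule out $Y_n\cdot e_1\to -\infty$, I would use the invariance by time reversal of RWDE: the reversed walk from a point deep in the $e_1$-direction is again a RWDE, with the roles of $\alpha_i$ and $\alpha_{i+d}$ exchanged, hence biased oppositely. A max-flow / min-cut computation on a box $B_N=[-N,N]^d$, using the Dirichlet density and the explicit integral formulas for exit distributions derived via time reversal, then produces a uniform-in-$N$ lower bound on the probability of exiting $B_N$ through the face $\{x\cdot e_1=N\}$. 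By a standard renewal argument built on Proposition \ref{prop:1}, this lower bound propagates to full transience in direction $e_1$.

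Once directional transience in $e_1$ is in hand, the same argument applies to every direction $\ell$ such that $\ell\cdot d_{\alpha}>0$: after rotating the roles of coordinates, the relevant Dirichlet integrals are linear functionals of $(\alpha_i)$ whose sign is controlled exactly by $\ell\cdot d_{\alpha}$. Combining this with the existence-of-asymptotic-direction result of Simenhaus (which needs only directional transience on an open set of directions, plus the renewal structure already available to us), we obtain an almost sure asymptotic direction $\hat v\in S^{d-1}$ satisfying $\hat v\cdot \ell\geq 0$ for every $\ell$ with $\ell\cdot d_{\alpha}>0$. Intersecting these half-spaces forces $\hat v = d_{\alpha}/\|d_{\alpha}\|$.

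The main obstacle is the quantitative max-flow estimate, and this is precisely where the assumption $d\geq 3$ is indispensable: the flow capacity between the origin and a far face of $B_N$ grows polynomially in $N$ only in dimension at least $3$, and it is the Dirichlet-specific moment bounds on box exit times, obtained from the time-reversal identity together with Gamma-function estimates, that convert the combinatorial max-flow bound into a \emph{uniform-in-}$N$ lower bound on the exit probability. The delicate part is the bookkeeping: matching the cuts in $B_N$ to individual parameters $\alpha_i$, tracking how the normalization $\sum_i\alpha_i$ interacts with the min-cut value, and ensuring that the resulting bound does not degenerate as $\alpha$ approaches the boundary of the regime $d_{\alpha}\neq 0$. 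This is the genuine technical core of \cite{transdir2}.
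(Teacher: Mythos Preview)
The paper does not prove this statement: Theorem~A is explicitly labeled ``Corollary~1 of \cite{transdir2}'' and is simply quoted as a known background result; no proof or sketch appears anywhere in the text. So there is nothing in the paper to compare your proposal against.

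That said, your sketch is a plausible summary of what actually happens in \cite{transdir2} (Sabot--Tournier): the 0--1 law for directional transience, the time-reversal identity specific to Dirichlet environments combined with a max-flow/min-cut argument to rule out escape in the ``wrong'' half-space, and then Simenhaus' theorem to upgrade directional transience on an open set of directions to an almost-sure asymptotic direction, which is then pinned down to $d_\alpha/\|d_\alpha\|$. One caveat: the role of the assumption $d\geq 3$ in your sketch is slightly misplaced. In \cite{transdir2} directional transience is in fact proved for all $d\ge 1$; the dimension restriction $d\ge 3$ enters in other results of the present paper (existence of the invariant measure $\Qprob^{m,\alpha}$, finiteness of renewal moments, etc.), not in the directional-transience statement itself. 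So if you were to flesh this out, you should check carefully where the max-flow bound actually needs $d\ge 3$ versus where it does not.
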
 
However, when $\kappa\leq 1$, such an invariant probability does not exist because of traps. But, in $\cite{BouchetSubbal}$, it was proved that, by accelerating the walk, we can get an invariant probability for this accelerated walk, absolutely continuous with respect to $\Prob^{(\alpha)}$.\\
This lead to the following limit theorem in $\cite{BouchetSubbal}$:
\begin{prop}
If $\kappa\leq 1$, $d\geq 3$ and $d_{\alpha}\not = 0$. Let $l\in \{e_1,\dots,e_{2d}\}$ be such that $d_{\alpha}.l>0$. then we have the following convergence in probability (for the annealed law): 
\[
\frac{\log(Y_n.l)}{\log(n)}\rightarrow \kappa.
\] 
\end{prop}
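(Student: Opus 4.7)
The strategy is to transfer information from the time scale (renewal times) to the spatial scale via directional transience. By Proposition \ref{prop:1}, the increments $\tau_{i+1}-\tau_i$ are iid for $i\geq 1$, as are the spatial increments $Y_{\tau_{i+1}}-Y_{\tau_i}$. Since the walk is directionally transient in the direction $d_\alpha$ by Theorem \ref{theodule}, the quantity $Y_{\tau_{i+1}}\cdot l - Y_{\tau_i}\cdot l$ has positive expectation (in fact, is bounded below by $1$ after possibly switching to the $e_1$-component), so that $Y_{\tau_k}\cdot l /k$ converges to a positive constant, annealed almost surely. The problem then reduces to controlling the size of $\tau_k$ on the logarithmic scale.

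The crux is a tail estimate of the form
\[
\Prob_0^{(\alpha)}\!\left(\tau_2-\tau_1>t\,\big|\,\tau_1=0\right)\asymp t^{-\kappa}\quad\text{up to slowly varying factors.}
\]
The exponent $\kappa$ is dictated by the worst trap in dimension $d\geq 3$, which is a pair of neighbouring sites $\{x,x+e_j\}$ on which the walk bounces back and forth because both $1-\omega(x,x+e_j)$ and $1-\omega(x+e_j,x)$ are small. Using that the Dirichlet density near a face of the simplex of dimension $2d-2-k$ behaves like $\varepsilon^{(\sum_i\alpha_i)-\sum_{i\in F}\alpha_i}$, the probability that both complementary masses are below $\varepsilon$ is of order $\varepsilon^{\kappa_j}$ with $\kappa_j=2(\sum_i\alpha_i)-(\alpha_j+\alpha_{j+d})$. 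Maximising over $j$ yields precisely $\kappa$ as the correct trap exponent.

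From the tail estimate I would deduce $\log(\tau_n)/\log(n)\to 1/\kappa$ in probability: for $\kappa<1$ the iid sums are in the domain of attraction of a $\kappa$-stable subordinator and $\tau_n$ is of order $n^{1/\kappa}$; for $\kappa=1$ one obtains $\tau_n\asymp n\log n$, which still gives $\log(\tau_n)/\log(n)\to 1$. Combining with $Y_{\tau_k}\cdot l \asymp k$ via the sandwich $\tau_{k(n)}\leq n<\tau_{k(n)+1}$, where $k(n)$ is the number of renewals completed by time $n$, yields $k(n)\asymp n^{\kappa}$ in the logarithmic sense and therefore $\log(Y_n\cdot l)/\log(n)\to \kappa$ in probability.

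The main technical obstacle is the tail estimate itself. The lower bound is comparatively soft: one exhibits an explicit environment configuration creating an $(x,x+e_j)$-trap of the right depth, estimates its Dirichlet probability, and lower bounds the residence time by the geometric time it takes to escape. The upper bound is the delicate part, because one must rule out the possibility that larger trap geometries (finite pockets, longer corridors, etc.) produce a heavier tail. This is done by exploiting the time-reversal invariance of the Dirichlet law together with the finite-graph exit time moment bounds from \cite{IntegExitTime}; the assumption $d\geq 3$ enters crucially, ensuring that the walk escapes from pockets with high enough probability that only the two-site trap contributes at the dominant order.
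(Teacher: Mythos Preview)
The paper does not prove this proposition; it quotes it as a known result from \cite{BouchetSubbal}. So there is no proof in the present paper to compare against, and the relevant question is whether your sketch matches the argument in \cite{BouchetSubbal} or supplies a valid alternative.

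Your high-level strategy is the right one and is indeed the one carried out in \cite{BouchetSubbal}: reduce to a tail estimate on $\tau_2-\tau_1$, combine with the law of large numbers for $Y_{\tau_k}\cdot l$, and invert. The lower tail bound via an explicit two-site trap is also correct and standard.

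The gap is in the upper bound. You write that it follows from ``time-reversal invariance of the Dirichlet law together with the finite-graph exit time moment bounds from \cite{IntegExitTime}''. Those are ingredients, but they do not by themselves yield $\E_{\Prob_0}[(\tau_2-\tau_1)^s]<\infty$ for $s<\kappa$. The missing object is the \emph{accelerated walk} $X^m$ (jump rates multiplied by $\gamma^m_\omega$) and, crucially, its invariant distribution $\Qprob^{m,\alpha}$ from the point of view of the particle, absolutely continuous with respect to $\Prob^{(\alpha)}$ (Theorem~B in the present paper, originally Theorem~2.1 of \cite{BouchetSubbal}). When $\kappa\le 1$ the unaccelerated walk has no such invariant probability, so one cannot simply average the finite-box estimates of \cite{IntegExitTime} over the environment; the acceleration is what makes the ergodic/averaging step go through. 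The same machinery is also needed for the step you treat as routine, namely $\E_{\Prob_0}[\,|Y_{\tau_2}-Y_{\tau_1}|\,]<\infty$: in the present paper this is Lemmas~\ref{lem:8}--\ref{lem:9}, and both proofs rely on $\Qprob^m$. Without the accelerated-walk construction neither the moment bound on $\tau_2-\tau_1$ nor the finiteness of the spatial increments is justified, so your sketch, while pointing in the right direction, omits the central technical device.
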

We will now give a precise definition of the accelerated walk. We call directed path a sequence of vertices $\sigma = (x_0,\dots,x_n)$ such that $(x_i,x_{i+1})\in E$ for all $i$. To simplify notations, we will write $\omega_{\sigma}:=\prod\limits_{i=0}^{n-1} \omega(x_i,x_{i+1})$. For any positive integer $m$, we define the accelerating function $\gamma^m_{\omega}(x)$ by:
\[
 \gamma^m_{\omega}(x):=\frac{1}{\sum \omega_{\sigma}},
\] 
where the sum is on all finite simple (each vertex is visited at most once) paths $\sigma$ in $x+[\![-m,m ]\!]^d$, starting from $x$, going to the border of $x+[\![-m,m ]\!]^d$ and stopped the first time they reach this border. We will call $X^m_t$ the continuous-time Markov chain whose jump rate from $x$ to $y$ is $\gamma^m_{\omega}(x)\omega(x,y)$, with $X^m_0=0$. This means that $Y_n=X^m_{t^m_n}$ and $X^m_t=\sum\limits_{k}Y_k 1_{t^m_k\leq t < t^m_{k+1}}$, for $t^m_n=\sum\limits_{k=1}^{n}\frac{1}{\gamma^m_{\omega}(Y_k)}\mathcal{E}_k$, where the $\mathcal{E}_i$ are iid exponential random variables of parameter 1. The walk $X^m_t$ can be viewed as an accelerated version of the walk $Y_n$.\\
Now, we need to introduce an other object: the walk seen from the point of view of the particle. First, let $(\theta_x)_{x\in\Z^d}$ be the shift on the environment defined by: $\theta_x \omega(y,z):=\omega(x+y,x+z)$. We call process seen from the point of view of the particle the process defined by $\overline{\omega^m_t}=\theta_{X^m_t}\omega$. Unlike the walk $Y$, under $\Prob^{(\alpha)}_0$, $\overline{\omega^m_t}$ is a Markov process on $\Omega$. Its generator $R$ is given by:
\[
Rf(\omega)=\sum\limits_{i=1}^{2d}\gamma^m_{\omega}(0)\omega(0,e_i)f(\theta_{e_i}\omega),
\]
for all bounded measurable functions $f$ on $\Omega$.
\begin{theoA}(Theorem 2.1 of \cite{BouchetSubbal})\\
In dimension $d\geq 3$, if $m$ is large enough then the process $\left(\overline{\omega^m_t}\right)_{t\in\R^+}$ has a stationary distribution $\Qprob^{m,\alpha}$. For any $\beta>1$ there exists an $m$ such that $\frac{\dd \Qprob^{m,\alpha}}{\dd \Prob^{\alpha}}$ is in $L^{\beta}$.
\end{theoA}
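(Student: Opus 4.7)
The plan is to construct $\Qprob^{m,\alpha}$ by a standard Krylov--Bogolyubov averaging procedure, then identify a workable formula for its density with respect to $\Prob^{(\alpha)}$, and finally reduce the $L^\beta$ bound to a moment estimate on the acceleration function $\gamma^m_\omega(0)$ that is controllable thanks to the Dirichlet structure.

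\textbf{Step 1: Existence of the stationary measure.} I would start the process $(\overline{\omega^m_t})$ from an arbitrary initial distribution $\mu_0$, for instance $\Prob^{(\alpha)}$ itself, and form the Cesaro averages $\nu_T := \frac{1}{T} \int_0^T \mu_0 P^m_t \, dt$, where $P^m_t$ is the semigroup generated by $R$. Since $\Omega$ is compact under the product topology on $(0,1]^E$, $\{\nu_T\}_{T\geq 1}$ is automatically tight, and any subsequential weak limit $\Qprob^{m,\alpha}$ is invariant for $(P^m_t)$ by the usual Krylov--Bogolyubov argument.

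\textbf{Step 2: Density formula and absolute continuity.} The key is to produce a candidate density of the form
\[
\frac{d\Qprob^{m,\alpha}}{d\Prob^{(\alpha)}}(\omega) \;=\; \frac{1}{Z_m}\, \Psi_m(\omega),
\]
where $\Psi_m(\omega)$ is built from the accelerated Green function in the box $0+[\![-m,m]\!]^d$, so that the factors $\gamma^m_\omega(\cdot)$ in $R$ get compensated by ratios of local sojourn contributions. Checking $R^{*}(\Psi_m \cdot \Prob^{(\alpha)}) = 0$ reduces, using the shifts $\theta_{e_i}$ and the fact that the Dirichlet law is a product, to a local identity on configurations $\omega|_{0+[\![-m,m]\!]^d}$. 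This gives simultaneously the invariance and the absolute continuity.

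\textbf{Step 3: $L^\beta$ integrability and the role of $m$.} Once one has such a formula, $\|d\Qprob^{m,\alpha}/d\Prob^{(\alpha)}\|_{L^\beta}$ is governed by moments of $\Psi_m$, which in turn reduce to moments of $\gamma^m_\omega(0)^{-1}$ (resp.\ $\gamma^m_\omega(0)$, depending on where the factor sits). Here one uses the two features specific to Dirichlet environments: the explicit Beta/Gamma moment identities, which give $\E_{\Prob^{(\alpha)}}[\omega_\sigma^{-s}]$ in closed form, and the time-reversal invariance of the Dirichlet law on $\Z^d$ (as in \cite{transdim,DirEnvirPOVPart}), which lets one rewrite the sum over simple paths in $0+[\![-m,m]\!]^d$ in a symmetric way. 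Combining the two yields a bound of the shape
\[
\E_{\Prob^{(\alpha)}}\!\bigl[\Psi_m^{\,\beta}\bigr] \;\leq\; C(\beta,m),
\]
with $C(\beta,m)$ finite as soon as $m$ exceeds some threshold $m_0(\beta)$: the larger the box, the more paths contribute to $\sum_\sigma \omega_\sigma$ and the better the negative moments of the sum behave.

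\textbf{Main obstacle.} The construction and the Krylov--Bogolyubov step are standard; the real difficulty is the quantitative moment estimate in Step~3, i.e., showing that $\gamma^m_\omega(0)$ has $L^\beta$ moments for every $\beta$ once $m$ is large enough. The naive bound $\gamma^m_\omega(0)\geq 1$ is essentially useless, and one has to exploit the fact that in a box of radius $m$ the number of distinct simple exit paths explodes with $m$, so that the reciprocal $\gamma^m_\omega(0)$ admits increasingly many moments. Extracting this improvement from the Dirichlet moment identities, in a way uniform enough to beat any prescribed $\beta$, is the heart of the proof and the step I would expect to spend most of the effort on.
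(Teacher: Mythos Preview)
The paper does not prove this statement: it is quoted verbatim as Theorem~2.1 of \cite{BouchetSubbal} and used as a black box throughout. So there is no ``paper's own proof'' to compare against; your task here was to reproduce an external result, not something argued in this manuscript.

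That said, a brief remark on your sketch versus the actual argument in \cite{BouchetSubbal} (building on \cite{DirEnvirPOVPart}): your Step~1 is standard and fine. Your Step~2, however, does not match what is done there. Bouchet does not produce an explicit closed-form density $\Psi_m$ and then verify $R^*(\Psi_m\,\Prob^{(\alpha)})=0$ by a local identity; rather, the construction goes through the environment viewed from the particle in a large finite box, exploiting the time-reversal invariance specific to Dirichlet environments to identify the invariant measure on the finite system, and then passing to the limit. The $L^\beta$ bound comes from moment estimates on the Green function of the accelerated walk (this is exactly the content of Proposition~A.2 of \cite{BouchetSubbal}, quoted in the present paper as Lemma~\ref{lem:3}), not from plugging an explicit $\Psi_m$ into Dirichlet moment formulas. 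Your identification of the main obstacle in Step~3 is correct in spirit---the whole game is indeed controlling moments of $\gamma^m_\omega(0)$, and the exponent one can reach improves with $m$---but the mechanism is the combinatorial/max-flow-type estimate behind Lemma~\ref{lem:3} rather than a direct Beta/Gamma computation on a closed-form density.
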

We will write $\Qprob^{m,\alpha}_0(\cdot)$ for $\Qprob^{m,\alpha}\left(\prob^{\omega}_0(\cdot)\right)$
To simplify the notations, we will drop the $(\alpha)$ from $\Prob^{(\alpha)},\Prob^{(\alpha)}_0,\Qprob^{m,\alpha}$ and $\Qprob^{m,\alpha}_0$ when there is no ambiguity. We will also write $X_t$, $\Qprob$ and $\Qprob_0$ instead of $X^m_t$, $\Qprob^m$ and $\Qprob^m_0$ when there is no ambiguity on $m$. \\
We need a last definition to be able to state the limit theorems.
\begin{defin}
For any $\kappa\in (0,1)$ let $\mathcal{S}^{\kappa}$ be the L\'evy process where the increments are completely asymmetric $\kappa$-stable random variables. The increment have the following characterizations:
\[
\forall \lambda\in\R,\forall s\in \R^+, \E\left(\exp\left(i\lambda \mathcal{S}^{\kappa}_s\right)\right)=\exp\left(-s|\lambda |^{\kappa}\left(1-i\text{sgn}(\lambda)\tan\left(\frac{\pi\kappa}{2}\right)\right)\right)
\]
and for any $s\in \R^+$, $\mathcal{S}^{\kappa}_s$ and $s^{\frac{1}{\kappa}}\mathcal{S}^{\kappa}_1$ have the same law. \\
Since this process is non-decreasing and c\`adl\`ag we can define the c\`adl\`ag inverse $\tilde{\mathcal{S}}^{\kappa}$ by:
\[
\tilde{\mathcal{S}}^{\kappa}_t := \inf\{s, \mathcal{S}^{\kappa}_s\geq t\}.
\]
\end{defin}
The following two theorems, which are the main results of this paper, give a full annealed limit theorem: \\
\begin{theo2}
Set $d\geq 3$ and $\alpha\in(0,\infty)^{2d}$. Let $Y^n(t)$ be defined by:
\[
Y^n(t)=n^{-\kappa}Y_{\lfloor nt \rfloor}.
\]
If $\kappa<1$ and $d_{\alpha}\not=0$, there exists positive constants $c_1,c_2,c_3$ such that for the $J_1$ topology and for $\Prob^{(\alpha)}_0$:
\[
\left(t\rightarrow n^{-\frac{1}{\kappa}}\tau_{\lfloor nt \rfloor}\right)\rightarrow c_1 \mathcal{S}^{\kappa},
\]
for the $M_1$ topology and for $\Prob^{(\alpha)}_0$:
\[
\left(t\rightarrow n^{-\frac{1}{\kappa}}\inf\{t\geq 0, Y(t).e_1\geq nx\}\right) \rightarrow c_2 \mathcal{S}^{\kappa}
\]
and for the $J_1$ topology and for $\Prob^{(\alpha)}_0$:
\[
Y^n\rightarrow c_3 \tilde{\mathcal{S}}^{\kappa}d_{\alpha}.
\]
\end{theo2}
\begin{rmq}
We will give a quick explanation on what the $M_1$ and $J_1$ topologies are, for a precise definition see \cite{Skorohod},\cite{Whitt}. They were both introduced as a generalization of the infinite norm for c\`adl\`ag functions. In the $J_1$ topology, a sequence of c\`adl\`ag functions $f_n$ converges to $f$ if there exists a sequence of increasing homomorphisms $\lambda_n:[0,1]\mapsto [0,1]$ such that 
\[
\sup\limits_{t\in[0,1]} |\lambda_n(t)-t|\rightarrow 0,
\]
and
\[
\sup\limits_{t\in[0,1]} |f_n(\lambda_n(t))-f(t)|\rightarrow 0.
\]
It is essentially the same as the infinite norm except that you can "wiggle" the function time-wise. The $M_1$ topology is a topology on the graphs of the functions where we add vertical segments every time there is a jump. The main difference between the $M_1$ and $J_1$ topology is that there is almost no difference between one jump and small consecutive jumps in the $M_1$ topology while the difference is significant in the $J_1$ topology. The reason why we only have a convergence in $M_1$ for the hitting times $n^{-\frac{1}{\kappa}}\inf\{t\geq 0, Y(t).e_1\geq nx\}$ is because there are consecutive jumps. Indeed, if there is a large jump between $\inf\{t\geq 0, Y(t).e_1\geq n\}$ and $\inf\{t\geq 0, Y(t).e_1\geq n+1\}$ it is likely that there is a trap with high strength close-by which means that it is likely that there also is a large jump between $\inf\{t\geq 0, Y(t).e_1\geq n+1\}$ and $\inf\{t\geq 0, Y(t).e_1\geq n+2\}$.
\end{rmq}
\begin{theo2}
If $d\geq 3$ and $\kappa=1$, there exists positive constants $c_1,c_2,c_3$ such that we have the following convergences in probability (for the annealed law):
\[
\frac{1}{n\log(n)}\tau_n \rightarrow c_1,
\]
\[
\frac{1}{n\log(n)}\inf\{i,Y_i.e_1\geq n\}\rightarrow c_2,
\]
\[
\frac{\log(n)}{n}(Y_n)\rightarrow c_3 d_{\alpha}.
\]
\end{theo2}
\begin{rmq}
We cannot replace the convergence in probability by an almost sure convergence. This is because if we look at a sum of iid random variables $Z_i$ with a heavy tail $\Prob(Z_i\geq t)\sim ct^{-1}$ then we do not have an almost sure convergence. In fact, there are infinitely many $i$ such that:
\[
Z_i \geq i\log(i)\log(\log(i)).
\]
\end{rmq}

A tool that will be central in the proof is the study of traps. We now give a precise definition of traps.
\begin{defin}\label{defTrap}
A trap is any undirected edge $\{x,y\}$ such that $\omega(x,y)+\omega(y,x)> \frac{3}{2}$. \\
The strength of a trap is the quantity $\frac{1}{(1-\omega(x,y))+(1-\omega(y,x))}$.
\end{defin}
\begin{rmq}
$\frac{3}{2}$ has been chosen because it ensures that $\omega(x,y),\omega(y,x)>\frac{1}{2}$ which in turn means that for every point $x$, there is at most one point $y$ such that $(x,y)$ is a trap. 
\end{rmq}

\subsection{Sketch of the proof}

The proofs for $\kappa<1$ and $\kappa=1$ are mostly the same and therefore we will explain both at the same time.
\subsubsection{Only the renewal times matter}
We first show that the number of points visited between two renewal times has a finite expectation (lemma \ref{lem:9} ). This means that the walk does not "wander far" between two renewal times. So we only have to know the renewal times and the position of the walk at the renewal times to prove both theorems (lemma \ref{lem:10} ). By proposition \ref{prop:1}, the random variables $(\tau_{i+1}-\tau_i)$ are iid which simplifies the study of the process $i\rightarrow \tau_i $
\subsubsection{The time between renewal times only depends on the strength of the traps}
Then we use the stationary law of the accelerated walk to get two results: firstly, the time spent outside of traps is negligible (lemma \ref{lem:19} ); secondly, the number of time $N$ the walk enters a trap has a finite moment of order $\kappa +\varepsilon$ for some $\varepsilon>0$ if $\kappa<1$. If $\kappa=1$, then $N$ has a finite expectation (lemma \ref{lem:21} ). This means the time spent in a trap mostly depends on its strength.\\
Now we want to show that the number of times the walk enters a trap and the time it stays in the trap each time are approximately independent.\\
We get two different results in this direction:
\subsubsection{The strength of the traps are essentially independent}
The first result (lemma \ref{lem:11}) is that in a way the time spent in traps are independent random variables. These random variables have a tail in $Ct^{-\kappa}$ where the constant $C$ depends on where the walk enters and exits the trap and how many times it does. More precisely, we first set an environment and a path in this environment. Then we forget all the transition probabilities in the traps, this means that if $\{x,y\}$ is a trap, then we only remember the "renormalized" transition probabilities:
\[
\left(\frac{\omega(x,z)}{1-\omega(x,y)}\right)_{z\sim x,z\not = y} \text{ and } \left(\frac{\omega(y,z)}{1-\omega(y,x)}\right)_{z\sim y,z\not = x}.
\]
Then every time the path visits a trap we only remember where it enters the trap and where it exits the trap, we forget the number of back and forths inside the trap. Then, only knowing these information, the strength of the traps are independent.
\subsubsection{The number of times a trap is visited and its strength are essentially independent}
The second result (lemma \ref{lem:13} ) allows us to bound the probability that both the number of times the walk enters a trap and the strength of the trap are high. We use the fact that for an edge $(x,y)$ if we know all the transition probabilities outside of $x,y$ and we know the $\left(\frac{\omega(x,z)}{1-\omega(x,y)}\right)_{z \sim x}$ and the $\left(\frac{\omega(y,z)}{1-\omega(y,x)}\right)_{z\sim y}$ then the number of times the walk enters the trap is essentially independent of the strength of the trap (it depends mostly on $\frac{1-\omega(x,y)}{1-\omega(y,x)}$ and hardly on the strength of the trap). This means that it is unlikely that the traps with a high strength are visited many times.
\subsubsection{Conclusion}
Thanks to these results we get that if we set an integer $A$ and we only look at traps that are entered less than $A$ times then we have a good approximation of the total time spent in traps (lemma \ref{lem:16} ). The higher $A$ is, the better the approximation gets. Now if we only look at the traps the walk enters less than $A$ times, we get a finite sum of sums of iid random variables by lemma \ref{lem:11}. This means that, after renormalization, the time spent in traps entered less than $A$ times converges to a stable distribution if $\kappa<1$. It converges to a constant if $\kappa = 1$ (lemma \ref{lem:18} ). Then the only thing left is to make $A$ go to infinity and we get the first two results of both theorems.\\
Finally to prove the last part of both theorems we just use basic inversion arguments.

\section{The proof}
\subsection{Number of points visited between renewal times}

In this section we show that the expectation of the number of point visited between two renewal times is finite. This means that only knowing the values of the renewal times will be enough to prove theorem 1 and 2.
\begin{lem}\label{lem:8}
For $m$ such that $\Qprob^m$ exists, let $(T_i^m)_{i\in\N^*}$ be the renewal times for the walk $X^m$ i.e $T_i^m := t^m_{\tau_i}$ or to put it another way $X^m_{T_i^m}=Y_{\tau_i}$. There exists a constant $C_m$ such that for all $i\in\N^*$, $\E_{\Prob^{(\alpha)}_0}(T^m_{i+1}-T^m_i)=C_m$ and $\Prob^{(\alpha)}_0$ almost surely:
\[
\frac{1}{n}T^m_n\rightarrow C_m.
\]
\end{lem}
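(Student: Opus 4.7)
The plan is to combine three ingredients: the renewal i.i.d.\ structure from Proposition~\ref{prop:1}, positive speed of $X^m$ under the invariant measure $\Qprob^m$, and finiteness of the expected $e_1$-displacement between successive renewals.

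First, I would observe that Proposition~\ref{prop:1} extends directly to the continuous-time walk $X^m$: the waiting times $\mathcal{E}_k/\gamma^m_\omega(Y_k)$ depend measurably on the environment together with independent exponentials, so $(T^m_{i+1}-T^m_i)_{i\geq 1}$ is an i.i.d.\ sequence of nonnegative random variables. By the strong law of large numbers this reduces the task to exhibiting an almost sure finite limit $C_m$ for $T^m_n/n$; the identity $\E_{\Prob^{(\alpha)}_0}[T^m_{i+1}-T^m_i]=C_m$ will then follow automatically.

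Next, I would invoke the invariant measure $\Qprob^m$ from Theorem~2.1 of \cite{BouchetSubbal}, which is equivalent to $\Prob^{(\alpha)}$ and ergodic for $(\bar\omega^m_t)$. The standard martingale decomposition of $X^m$ combined with the ergodic theorem would yield
\[
\frac{X^m_t}{t}\longrightarrow v^m:=\E_{\Qprob^m}\Bigl[\gamma^m_\omega(0)\sum_{i=1}^{2d}\omega(0,e_i)\,e_i\Bigr]
\]
almost surely, both under $\Qprob^m_0$ and (by equivalence) under $\Prob^{(\alpha)}_0$. The positivity $v^m\cdot e_1>0$ is part of the content of \cite{BouchetSubbal}: the acceleration parameter $m$ is chosen large enough that the accelerated walk is ballistic in direction $e_1$, even though the original walk $Y$ is sub-ballistic.

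Finally, I would combine this with directional transience. Theorem~\ref{theodule} gives $\tau_n<\infty$ and $Y_{\tau_n}\cdot e_1\to\infty$, and with $X^m_{T^m_n}=Y_{\tau_n}$ the previous display specializes to $Y_{\tau_n}\cdot e_1/T^m_n\to v^m\cdot e_1$. The increments $(Y_{\tau_{k+1}}-Y_{\tau_k})\cdot e_1$ are i.i.d.\ positive integers (Proposition~\ref{prop:1}), each bounded by the number of distinct sites visited in its renewal slab; I would establish finiteness of the expectation of the latter via a separate Dirichlet-specific integrability estimate, which is the other main result of this subsection. The SLLN then gives $Y_{\tau_n}\cdot e_1/n\to a\in(0,\infty)$ almost surely, and combining the two limits yields $T^m_n/n\to a/(v^m\cdot e_1)=:C_m$, finite and positive.

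The main obstacle will be the positivity $v^m\cdot e_1>0$: it cannot be read off from ellipticity alone since $Y$ is sub-ballistic, and rests on the delicate construction of $\Qprob^m$ in \cite{BouchetSubbal}, where $m$ must be large enough to neutralize traps yet preserve integrability of the density.
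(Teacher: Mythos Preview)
Your overall strategy is the same as the paper's: exploit ballisticity of the accelerated walk $X^m$ (i.e.\ $X^m_t\cdot e_1/t\to c>0$) together with a law of large numbers for $Y_{\tau_n}\cdot e_1$, and combine them via $X^m_{T^m_n}=Y_{\tau_n}$. However, there are two issues worth flagging.

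\medskip

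\textbf{Circularity.} To get the finite limit $a=\lim_n Y_{\tau_n}\cdot e_1/n$, you bound $(Y_{\tau_{k+1}}-Y_{\tau_k})\cdot e_1$ by the number of distinct sites visited in the $k$-th renewal slab and then appeal to ``the other main result of this subsection'' (Lemma~\ref{lem:9}). But Lemma~\ref{lem:9} is proved \emph{using} Lemma~\ref{lem:8}: its proof explicitly invokes the constant $c_\infty$ with $T^m_n/n\to c_\infty$, which is exactly what you are trying to establish here. So this route is circular. The paper avoids this by giving a direct, self-contained argument for $\E_{\Prob_0}[(Y_{\tau_2}-Y_{\tau_1})\cdot e_1]<\infty$: if the expectation were infinite, then $d_n/n\to\infty$ and hence the density of renewal levels among $\{1,\dots,i\}$ would tend to zero; but each fresh maximum along $e_1$ is a renewal time with a fixed positive probability (independent of the past), forcing a positive lower bound on that density --- contradiction. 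This short argument is the missing ingredient in your sketch.

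\medskip

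\textbf{Independence of the $T^m$-increments.} Your claim that Proposition~\ref{prop:1} transfers verbatim to give $(T^m_{i+1}-T^m_i)_{i\ge1}$ i.i.d.\ is not quite right: $\gamma^m_\omega(Y_k)$ depends on the environment in a box of radius $m$ around $Y_k$, so consecutive renewal slabs share environment information through these boxes. The paper handles this by observing that the subsequence $(T^m_{(2m+3)i+j+1}-T^m_{(2m+3)i+j})_{i\ge1}$ is i.i.d.\ for each fixed $j$, which suffices for the SLLN argument. This is a minor patch, but it should be made explicit.
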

\begin{proof}
Let $D$ be the random distance defined by $D=Y_{\tau_2}-Y_{\tau_1}$. First we will show that $\E_{\Prob_0}(D)<\infty$. \\
Let $(\tau_i)_{i\in \N^*}$ be the different renewal times along the direction $e_1$. Now let $(d_i)_{i\in\N^*}$ be the sequence defined by:
\[
\forall i\in\N^*, d_i=Y_{\tau_i}.e_1.
\]
Let $\tilde{L}^{\tau}(i)$ be the number of renewal times before the walks travels a distance $i$ in the direction $e_1$ ie:
\[
\forall i\in\N^*, \tilde{L}^{\tau}(i)=\inf\{n,d_n\geq i\}.
\]
The sequence of random variables $(d_{i+1}-d_{i})_{i\in\N^*}$ is iid by lemma \ref{prop:1}. Therefore, if the expectation of $D=d_2-d_1$ is infinite then $\frac{d_n}{n}\rightarrow\infty$, $\Prob_0$ almost surely. Now, for every $i\in\N^*$, we have $d_{\tilde{L}^{\tau}(i)} \geq i$ and therefore $\frac{\tilde{L}^{\tau}(i)}{i} \leq \frac{\tilde{L}^{\tau}(i)}{d_{\tilde{L}^{\tau}(i)}}$. If $\Prob_0$ almost surely $\frac{n}{d_n}\rightarrow 0$ we would have $\frac{\tilde{L}^{\tau}(i)}{i}\rightarrow 0$ $\Prob_0$ almost surely. Since $\frac{\tilde{L}^{\tau}(i)}{i+1}\leq 1$ we would get that $\E_{\Prob_0}\left(\frac{\tilde{L}^{\tau}(i)}{i}\right)\rightarrow 0$. However, there is a constant $C>0$ such that every time the walk reaches a new height along $e_1$, it is a renewal time with probability $C$ (independent of the walk up to that time) so $\E_{\Prob_0}\left(\frac{\tilde{L}^{\tau}(i)}{i}\right)\geq C$. Therefore we get that the expectation of the distance the walk travels in the direction $e_1$ between two renewal times is finite.\\
\\
Now we can look at the accelerated walk $X^m$. We would like the sequence $(T^m_{i+1}-T^m_i)_{i\in\N^{*}}$ to be a sequence of iid random variables. Unfortunately, the definition of the accelerated random walk uses vertices in a box of size $m$ around the vertex on which the walk currently is, so we need to wait at least $2m+3$ renewal times to be sure to be at a distance at least $2m+1$ of all the vertices visited before time $T^m_{i+1}-1$. So we only have that for any $j\in\N$, the sequence $\left(T^m_{(2m+3)i+j+1}-T^m_{(2m+3)i+j}\right)_{i\in\N^*}$ is a sequence of iid random variables. Furthermore the sequence $(T^m_{i+1}-T^m_i)_{i\geq m+2}$ is identically distributed.\\
\\
We know that there exists a constant $c>0$ such that $\Prob_0$ almost surely $\frac{X^m_t.e_1}{t}\rightarrow c>0$. If the expectation of the time the accelerated walk spends between two renewal times is infinite then $\frac{T^m_i}{i}\rightarrow \infty$, $\Prob_0$ almost surely since the random variables $\left(T^m_{(2m+3)i+1}-T^m_{(2m+3)i}\right)_{i\in\N^*}$ are iid. Therefore we would have $\frac{X^m_{T^m_i}.e_1}{T^m_i}\frac{T^m_i}{i}\rightarrow \infty$ so $\frac{Y_{\tau_i}.e_1}{i}\rightarrow\infty$ which is absurd because: $\frac{Y_{\tau_i}.e_1}{i}=\frac{d_i}{i}$ and $\frac{d_i}{i}$ satisfies a law of large number. Therefore the expectation of time the accelerated walk spends between two renewal times is finite and there exists a constant $C>0$ such that:
\[
\forall i\geq m+2,\ \E_{\Prob_0}(T^m_{i+1}-T^m_i)=C.
\]
And by the law of large number, $\Prob_0$ almost surely:
\[
\frac{1}{i}T^m_i\rightarrow C.
\]
\end{proof}

\begin{lem}\label{lem:9}
The number of different points the walk visits between two renewal times has a finite expectation (Note that the number of different points visited between two renewal times is the same for the walk $Y$ and the accelerated walks $X^m$). 
\end{lem}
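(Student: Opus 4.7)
The plan is to deduce this result from Lemma~\ref{lem:8} by relating $|V_1|$ to the time the accelerated walk spends between two renewal times. Concretely, I would fix $m$ large enough that the stationary measure $\Qprob^m$ exists, so that Lemma~\ref{lem:8} yields $\E_{\Prob_0}[T^m_2 - T^m_1] = C_m < \infty$. Decomposing $T^m_2 - T^m_1$ as the sum over jumps $k \in [\tau_1,\tau_2)$ of the holding times $\mathcal{E}_k/\gamma^m_\omega(Y_k)$ and taking expectation over the iid exponential clocks gives
\[
C_m \;=\; \E_{\Prob_0}\!\left[\sum_{x \in V_1} \frac{N_x}{\gamma^m_\omega(x)}\right],
\]
where $N_x \geq 1$ is the number of visits to $x$ in $[\tau_1,\tau_2)$ and $V_1 := \{Y_k : \tau_1 \leq k < \tau_2\}$.

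The main step would then be to show that the summand $N_x/\gamma^m_\omega(x)$ is bounded below by a universal constant $c > 0$ in a suitably averaged sense, which would immediately yield $\E_{\Prob_0}[|V_1|] \leq C_m/c$. The heuristic is that $N_x$ and $\gamma^m_\omega(x)$ are genuinely comparable: on the one hand $\gamma^m_\omega(x) \geq 1$, because $\sum_\sigma \omega_\sigma$ is the probability, under $\Prob^\omega_x$, that the walk exits $x+[\![-m,m]\!]^d$ via a simple (non-self-intersecting) trajectory, and is hence at most $1$; on the other hand, upon first visiting $x$ the expected number of returns to $x$ before escaping the box equals $1/P_{\mathrm{exit}}(x)$, where $P_{\mathrm{exit}}(x)$ is the probability of escape without returning to~$x$. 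When the neighborhood of $x$ contains a trap edge $\{x,y\}$ in the sense of Definition~\ref{defTrap}, both $1/\gamma^m_\omega(x)$ and $P_{\mathrm{exit}}(x)$ are of order $1-\omega(x,y)\omega(y,x)$, and accordingly $N_x$ and $\gamma^m_\omega(x)$ are of the same order; outside traps both are of order~$1$.

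The hardest step will be making the comparison $P_{\mathrm{exit}}(x) \leq C/\gamma^m_\omega(x)$ rigorous uniformly over environments, in particular dealing with exits via non-self-intersecting paths that avoid $x$ but involve loops elsewhere in the box $x+[\![-m,m]\!]^d$. A plausible route is to partition $V_1$ according to whether $x$ is adjacent to a trap edge, handling the trap case by the explicit local computation above and the non-trap case using that $\sum_\sigma \omega_\sigma$ is then bounded below by a deterministic constant (so that $\gamma^m_\omega(x)$ is bounded from above). The Dirichlet integrability of the weights, together with the $L^\beta$ bound on $\mathrm{d}\Qprob^m/\mathrm{d}\Prob$ from Theorem~2.1 of \cite{BouchetSubbal}, should provide enough moment control to absorb the remaining error terms.
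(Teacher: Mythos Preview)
Your starting identity $C_m = \E_{\Prob_0}\bigl[\sum_{x\in V_1} N_x/\gamma^m_\omega(x)\bigr]$ is correct, but the step from there to $\E_{\Prob_0}[|V_1|]<\infty$ has a genuine gap. The assertion that in the non-trap case $\sum_\sigma\omega_\sigma$ is bounded below by a deterministic constant is false: $\gamma^m_\omega(x)$ depends on the environment in the whole box $x+[\![-m,m]\!]^d$, not only on the edges at $x$. Concretely, let $\{x,y\}$ be a trap of large strength $s$ and take $z\sim x$, $z\neq y$, with $\omega(z,x)$ close to $1$ but $\omega(x,z)\ll 1-\omega(x,y)$. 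Then $z\notin\tilde{\mathcal T}$, yet every simple path from $z$ either takes a first step of weight $1-\omega(z,x)\ll 1$ or is forced through $x$, then $y$, and then off the trap edge with weight at most $1-\omega(y,x)$; hence $\gamma^m_\omega(z)$ is of order $s$. Meanwhile the quenched expected number of visits to $z$ stays $O(1)$, so $N_z/\gamma^m_\omega(z)$ is not bounded below even in expectation. Put differently, what you would ultimately need is a uniform bound $\prob^\omega_x(\text{exit the box by a simple path})\ge c\,\prob^\omega_x(\text{never return to }x)$, and there is no reason for this to hold: the right-hand side forbids only loops through $x$, the left-hand side forbids all loops in the box. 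The moment bounds you invoke (on $\dd\Qprob^m/\dd\Prob$ or on $\gamma^m_\omega/\gamma^\omega$) do not repair this, since they do not turn a pointwise failure of the comparison into a summable error over $V_1$.

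The paper sidesteps the per-vertex comparison. It argues by contradiction: if the expectation were infinite then $R_n/n\to\infty$ almost surely under $\Prob_0$, hence also under $\Qprob^m_0$. On $\{T^m_n\le 2c_\infty n\}$ one has $R_n\le\sum_{0\le i\le 2c_\infty n}\#\{x:\exists t\in[i,i+1),\,X^m_t=x\}$, whose $\Qprob^m_0$-expectation is, by stationarity, at most a constant times $n\cdot\E_{\Qprob^m_0}\bigl[\#\{x:\exists t\in[0,1),\,X^m_t=x\}\bigr]$; this last expectation is finite by H\"older against $\frac{\dd\Qprob^m_0}{\dd\Prob_0}\in L^\gamma$ together with the moment bound on the range of $X^m$ over $[0,1)$ from Lemma~4 of \cite{BouchetSubbal}. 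One obtains $\Qprob^m_0(R_n\ge Cn)\le o(1)+\mathrm{const}/C$, contradicting $R_n/n\to\infty$. The conceptual difference is that the paper bounds the number of \emph{distinct sites per unit accelerated time} globally --- exactly what stationarity of $\Qprob^m$ delivers --- rather than the accelerated time spent per visited site, which is where your pointwise heuristic breaks down.
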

\begin{proof}
We choose $m$ large enough such that $\frac{\dd\Qprob^m}{\dd\Prob}$ is in $L^{\gamma}$ for some $\gamma >1$. In the following we will write $T_i$ instead of $T_i^m$ to simplify the notations. Let $\beta$ be such that $\frac{1}{\gamma}+\frac{1}{\beta}=1$. Let $c_{\infty}$ be the constant such that $\Prob_0$ almost surely: $\frac{1}{i}T_i\rightarrow c_{\infty}$, it exists by lemma \ref{lem:8}. Let $(R_i)_{i\in\N^*}$ be the sequence defined by: $\forall i\in\N^*,R_i= \#\{x,\exists j\leq \tau_i, Y_j=x\}$. The random variables $(R_{i+1}-R_i)_{i\geq 1}$ are iid by proposition \ref{prop:1}. Thus if the number of different points the walk visits between two renewal times has an infinite expectation (for $\Prob_0$) then $\frac{R_i}{i}\rightarrow \infty $, $\Prob_0$ almost surely and therefore $\Qprob^m_0$ almost surely. However we have for any $C>0$: 
\[
\begin{aligned}
\Qprob^m_0(R_n\geq Cn)&\leq\Qprob^m_0(T_n\geq 2c_{\infty}n)+\Qprob(R_n\geq Cn \text{ and } T_n<2c_{\infty}n)\\
&=o(1)+\Qprob^m_0(R_n\geq Cn \text{ and } T_n<2c_{\infty}n)\\
&\leq o(1) + \Qprob^m_0\left(\sum\limits_{0\leq i\leq 2c_{\infty}n}\#\{x,\exists t\in[i,i+1),X_t=x\}\geq Cn\right)\\
&\leq o(1) + \frac{1}{Cn}\E_{\Qprob^m_0}\left(\sum\limits_{0\leq i\leq 2c_{\infty}n}\#\{x,\exists t\in[i,i+1),X_t=x\}\right)\\
&\leq o(1) + \frac{4c_{\infty}}{C}\E_{\Qprob^m_0}\left(\#\{x,\exists t\in[0,1),X_t=x\}\right) \text{ for } n \text{ large enough.}
\end{aligned}
\]
Now we just have to prove that $\E_{\Qprob^m_0}\left(\#\{x,\exists t\in[0,1),X_t=x\}\right)$ is finite. We use the fact that $\frac{\dd\Qprob^m}{\dd\Prob}$ is in $L^{\gamma}$ and therefore $\frac{\dd\Qprob^m_0}{\dd\Prob_0}$ is also in $L^{\gamma}$.
\[
\begin{aligned}
\E_{\Qprob^m_0}\left(\#\{x,\exists t\in[0,1),X_t=x\}\right)&=\E_{\Prob_0}\left(\#\{x,\exists t\in[0,1),X_t=x\}\frac{\dd\Qprob^m_0}{\dd\Prob_0}\right)\\
&\leq \E_{\Prob_0}\left(\#\{x,\exists t\in[0,1),X_t=x\}^{\beta}\right)^{\frac{1}{\beta}}\left(\E_{\Prob_0}\left(\frac{\dd\Qprob^m_0}{\dd\Prob_0}\right)^{\gamma}\right)^{\frac{1}{\gamma}}.
\end{aligned}
\]
So we just need to prove that $\E_{\Prob}\left(\#\{x,\exists t\in[0,1),X_t=x\}^{\beta}\right)$ is finite. This is an immediate consequence of lemma $4$ of $\cite{BouchetSubbal}$. Therefore, for $C$ large enough, we get: 
\[
\Qprob_0^m(R_n\geq Cn)\leq o(1)+\frac{1}{2}.
\]
Therefore, the number of different points the walk visits between two renewal times has a finite expectation.
\end{proof}

Now, we show that the trajectory of the walk cannot deviate too much from a straight line.
\begin{lem}\label{lem:10}
Let $L^{\tau}(n)=\min\{i,\tau_i\geq n\}$. There exists $D\in\R^{d}$ such that $\Prob_0$ almost surely:
\[
\frac{Y_n}{L^{\tau}(n)}\rightarrow D.
\] 
\end{lem}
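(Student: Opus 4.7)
The plan is to reduce the convergence to the classical strong law of large numbers applied to the iid increments $(Y_{\tau_{i+1}}-Y_{\tau_i})_{i\geq 1}$, and then to control the small remaining piece of trajectory between $\tau_{L^\tau(n)-1}$ and $n$.

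First I would verify that the candidate limit $D := \E_{\Prob_0}(Y_{\tau_2}-Y_{\tau_1})$ is well defined. By Proposition \ref{prop:1}, the variables $(Y_{\tau_{i+1}}-Y_{\tau_i})_{i\geq 1}$ are iid, so it suffices to check integrability. If $N_i$ denotes the number of distinct points visited during $[\tau_i,\tau_{i+1}]$, then for every coordinate $j$ the range of $Y\cdot e_j$ on $[\tau_i,\tau_{i+1}]$ contains $|Y_{\tau_{i+1}}\cdot e_j-Y_{\tau_i}\cdot e_j|+1$ distinct integers, hence is at most $N_i$. This gives $\|Y_{\tau_{i+1}}-Y_{\tau_i}\|\leq d\, N_i$, and Lemma \ref{lem:9} yields $\E_{\Prob_0}(N_i)<\infty$. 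The usual strong law then gives $Y_{\tau_k}/k\to D$, $\Prob_0$-almost surely.

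Next I would control the displacement accumulated between the last completed renewal and time $n$. By definition, $\tau_{L^\tau(n)-1}<n\leq \tau_{L^\tau(n)}$, so $Y_n$ lies inside the renewal slab indexed by $L^\tau(n)$, and the same range argument gives
\[
\|Y_n-Y_{\tau_{L^\tau(n)-1}}\|\leq d\, N_{L^\tau(n)-1}.
\]
Since the $N_i$ are iid with finite mean, a standard Borel--Cantelli argument (using $\sum_k \Prob_0(N_k>\varepsilon k)\leq \varepsilon^{-1}\E_{\Prob_0}(N_1)<\infty$ for each $\varepsilon>0$) yields $N_k/k\to 0$ $\Prob_0$-a.s.; together with $L^\tau(n)\to\infty$ almost surely (which follows, say, from $\tau_k<\infty$ for all $k$), this gives
\[
\frac{\|Y_n-Y_{\tau_{L^\tau(n)-1}}\|}{L^\tau(n)}\longrightarrow 0\quad \Prob_0\text{-a.s.}
\]

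Finally I would combine the two ingredients. Writing
\[
\frac{Y_n}{L^\tau(n)}=\frac{Y_{\tau_{L^\tau(n)-1}}}{L^\tau(n)-1}\cdot\frac{L^\tau(n)-1}{L^\tau(n)}+\frac{Y_n-Y_{\tau_{L^\tau(n)-1}}}{L^\tau(n)},
\]
the first factor converges to $D$ (random subsequential composition of an a.s.\ convergent sequence with $L^\tau(n)\to\infty$) and the last term converges to $0$ by the previous step, establishing the claim with $D=\E_{\Prob_0}(Y_{\tau_2}-Y_{\tau_1})$. The only non-routine point is the passage from Lemma \ref{lem:9} (finiteness of $\E N_i$) to the finiteness of $\E\|Y_{\tau_{i+1}}-Y_{\tau_i}\|$, which the coordinate-range observation handles cleanly; everything else is the standard renewal/SLLN bookkeeping.
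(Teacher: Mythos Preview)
Your proof is correct and follows essentially the same route as the paper: both use Proposition~\ref{prop:1} to get iid renewal increments, bound $\|Y_{\tau_{i+1}}-Y_{\tau_i}\|$ by the number of distinct points visited in the slab (invoking Lemma~\ref{lem:9}), apply the strong law to obtain $Y_{\tau_k}/k\to D$, and then kill the residual slab contribution via $N_k/k\to 0$. The only cosmetic differences are that the paper anchors at $Y_{\tau_{L^\tau(n)}}$ rather than $Y_{\tau_{L^\tau(n)-1}}$ and bounds the increment directly by $R_{i+1}-R_i$ in the $\ell_1$ norm rather than coordinate-by-coordinate with a factor~$d$.
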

\begin{proof}
By proposition \ref{prop:1}, $(Y_{\tau_{i+1}}-Y_{\tau_i})_{i\geq 1}$ is a sequence of iid random variables (for $\Prob_0$). Let $R_i:= \#\left\{x\in\Z^d,\exists j< \tau_i,\ Y_j=x \right\}$ be the number of different points visited before time $\tau_i$. By lemma \ref{lem:9} , $R_i-R_{i-1}$ has a finite expectation and since $|Y_{\tau_{i+1}}-Y_{\tau_i}|_{1}\leq R_{i+1}-R_i$, we get that $|Y_{\tau_{i+1}}-Y_{\tau_i}|_{1}$ also has a finite expectation. So there exists $D\in\Z^d$ such that $\Prob_0$ almost surely:
\[
\frac{Y_{\tau_n}}{n}\rightarrow D.
\]
Now we want to show that $\frac{\left|Y_n-Y_{\tau(L^{\tau}_n)}\right|_1}{L^{\tau}(n)}\rightarrow 0$, $\Prob_0$ almost surely.
We clearly have:
\[
\frac{\left|Y_n-Y_{\tau(L^{\tau}(n))}\right|_1}{L^{\tau}(n)}\leq \frac{R_{L^{\tau}(n)}-R_{L^{\tau}(n)-1}}{L^{\tau}(n)}
\]
but since $\E_{\Prob_0}(R_i-R_{i-1})$ is finite, $\frac{R_i-R_{i-1}}{i}\rightarrow 0$, $\Prob_0$ almost surely, so:
\[
\frac{\left|Y_n-Y_{\tau(L^{\tau}(n))}\right|_1}{L^{\tau}(n)}\rightarrow 0, \ \Prob_0 \text{ almost surely }.
\]
So we get that $\Prob_0$ almost surely:
$\frac{Y_n}{L^{\tau}(n)}\rightarrow D.$
\end{proof}

\subsection{Number of visits of traps}

This section is devoted to refining some results of $\cite{BouchetSubbal}$ to get an upper bound on the number of visits of traps. First we must get some results on finite graphs and then we will extend these results on $\Z^d$.\\
\begin{defin}
Let $G=(V,E)$ be a finite, directed graph. A vertex $\delta\in V$ is a cemetery vertex if
\begin{itemize}
\item no edge exits $\delta$, ie $\forall x\in V, (\delta,x)\not in E$,
\item for every vertex $x\in V$ there exists a directed path from $x$ to $\delta$.
\end{itemize}
\end{defin}
In this section we will only consider graphs with no multiple edges, no elementary loops (one edge starting and ending at the same point), and such that for every $x,y\in V\backslash \{\delta\}$, $(x,y)\in E$ if and only if $(y,x \in E)$.\\
We will first extend the definition of $\gamma^m_{\omega}(x)$ for those graphs. Let $G=(V\cup \{\delta\},E)$ be a finite directed graph, $(\alpha(e))_{e\in E}$ be a family of real numbers, and $\Prob^{\alpha}$ be the corresponding Dirichlet distribution (independent at each site).
\begin{defin}
For $x\in G$ and $\Lambda\subset V\cup\{\delta\}$, we define the following generalization of $\gamma^m_{\omega}$:
\[
\gamma_{G,\omega}^{\Lambda}(x):=\frac{1}{\sum\limits_{\sigma}\omega_{\sigma}},
\]
where we sum on simple paths from $x$ to the border of $\Lambda$ (i.e $\{y\in\Lambda,\exists z \not\in\Lambda, \{x,y\}\in V\}$) that stay in $\Lambda$.
\end{defin}
\begin{rmq}
We notice that, in $\Z^d$, for any $m\in\N^{*}$:
\[
\forall x\in\Z^d,\ \gamma_{\omega}^m(x)=\gamma_{\Z^d,\omega}^{x+[\![-m,m]\!]^d}(x).
\] 
\end{rmq}
We will also use the following acceleration function.
\begin{defin}
For any graph $G$ and any environment $\omega$ on $G$ we define the partial acceleration function $\gamma_G^{\omega}$ by: 
\[
\gamma_G^{\omega}(x)=\max\limits_{y\sim x}\left(\frac{1}{1-\omega(x,y)+1-\omega(y,x)}\right).
\] 
When there is no ambiguity we will write $\gamma^{\omega}(x)$ instead of $\gamma_G^{\omega}(x)$
\end{defin}
\begin{rmq}
Let $x$ be a vertex in $\Z^d$. If it is in a trap then $\gamma^{\omega}(x)$ is equal to the strength of the trap. Otherwise $\gamma^{\omega}(x) \leq 2 $.
\end{rmq}
We have the following result, in the case of finite graphs:
\begin{lem}\label{lem:3}
(Proposition A.2 of $\cite{BouchetSubbal}$) \\
Let $n\in\N^*$. Let $G=(V\cup\{\delta\},E)$ be a finite directed graph possessing at most $n$ edges and such that every vertex is connected to $\delta$ by a directed path. We furthermore suppose that $G$ has no multiple edges, no elementary loop, and that if $(x,y)\in E$ and $y\not = \delta$, then $(y,x)\in E$. Let $(a(e))_{e\in E}$ be positive real numbers. Then, for every vertex $x\in V$, there exist real numbers $C,r>0$ such that, for small $\varepsilon>0$,
\[
\Prob^{(a)}\left( \gamma_{G,\omega}^{\{\delta\}}(x)  \geq\frac{1}{\varepsilon}\right)
\leq C \varepsilon^{\beta}(-\ln \varepsilon)^r
\] 
where the value of $\beta$ is explicit and given in $\cite{BouchetSubbal}$ but to simplify the notations we will only use the fact that it is bigger than or equal to $\kappa^{\prime}$ in the case we will look at.
\end{lem}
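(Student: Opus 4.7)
The plan is to combine a max-flow/min-cut style argument with tail estimates for the Dirichlet marginals at each vertex. First I would rephrase the event: $\gamma^{\{\delta\}}_{G,\omega}(x) \geq 1/\varepsilon$ is equivalent to $\sum_\sigma \omega_\sigma \leq \varepsilon$, so the task is to control the probability that every simple directed path from $x$ to the border has very small product weight.

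Next I would perform a dyadic decomposition: assign each edge $e$ a scale $k(e)\in\N$ with $\omega(e) \in [2^{-k(e)-1}, 2^{-k(e)})$. Since $G$ has at most $n$ edges and every simple path has length at most $n$, the possible edge-scale profiles along a single path take only $O((\log(1/\varepsilon))^{n})$ values. A pigeonhole argument on these profiles shows that the event $\sum_\sigma \omega_\sigma \leq \varepsilon$ forces the existence of a cut $C \subset E$ (an edge set separating $x$ from $\delta$) along which all edges have weights bounded by $\varepsilon^{1/n}$, up to logarithmic factors. This is the combinatorial heart of the proof and is where the max-flow/min-cut duality really enters.

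I would then quantify each cut via Dirichlet marginals. At each vertex $v$, the outgoing weights $(\omega(v,y))_{y\sim v}$ are Dirichlet-distributed with parameters $(a(v,y))_y$, so direct integration of the joint density near the boundary of the simplex yields, for any strict subset $S_v$ of outgoing edges at $v$,
\[
\Prob^{(a)}\!\left(\omega(v,y) \leq \eta \text{ for all } y \in S_v\right) \leq C\,\eta^{\sum_{y \in S_v} a(v,y)}(-\ln \eta)^{r_v}.
\]
Multiplying across the endpoints appearing in a given cut $C$ yields a bound of order $\eta^{s(C)}(-\ln \eta)^{r_C}$ with $s(C) = \sum_{e \in C} a(e)$. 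A union bound over the finitely many cuts together with the $O((\log 1/\varepsilon)^n)$ dyadic profiles then produces the claimed $C\varepsilon^\beta(-\ln\varepsilon)^r$, with exponent $\beta = \min_C s(C)$ the minimum Dirichlet-weighted cut and $r$ absorbing the polylogarithmic overhead.

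The main obstacle is the combinatorial reduction in the previous step: turning the joint condition ``every simple path is small'' into the existence of a single cheap cut without losing in the exponent. This is precisely the content of Proposition A.2 of \cite{BouchetSubbal}, and the explicit value of $\beta$ is obtained by solving a linear programming problem over cuts in the graph. In our $\Z^d$ setting the worst cut corresponds to isolating a vertex by an antiparallel pair of edges together with the surrounding outgoing edges, which is exactly the trap mechanism of Definition \ref{defTrap}, and the minimum weight works out to be at least $\kappa^{\prime}$, as asserted.
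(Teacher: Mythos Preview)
The paper does not prove this lemma at all: it is quoted verbatim as Proposition~A.2 of \cite{BouchetSubbal} and used as a black box, so there is no ``paper's own proof'' to compare against. Your proposal is therefore an attempt to reconstruct a proof that the present paper deliberately outsources.

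As a sketch of the argument in \cite{BouchetSubbal} (and the closely related work of Tournier \cite{IntegExitTime}), your outline has the right ingredients---dyadic decomposition of edge weights, Dirichlet tail bounds vertex by vertex, and a combinatorial optimisation identifying $\beta$---but there is a genuine gap and a circularity. The gap is the claim that $\sum_\sigma \omega_\sigma \le \varepsilon$ forces a single edge-cut with all edges below $\varepsilon^{1/n}$: this is false in general (two parallel paths can each have one small edge in different positions, giving no uniformly small cut), and the correct object is not an edge-cut but a \emph{subset} $S\ni x$ with $\delta\notin S$, the relevant exponent being $\sum_{e\in\partial^+ S} a(e)$ over edges exiting $S$. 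The optimisation is over such subsets (equivalently, a linear program on unit flows), not over min-cuts in the standard sense. The circularity is that you yourself identify ``turning the joint condition into the existence of a single cheap cut without losing in the exponent'' as the main obstacle and then say this is ``precisely the content of Proposition~A.2 of \cite{BouchetSubbal}''---but Proposition~A.2 \emph{is} the lemma you are proving, so you have deferred the crux back to the statement itself.

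If you want a self-contained argument, the route in \cite{IntegExitTime,BouchetSubbal} is: (i) bound $\sum_\sigma \omega_\sigma$ below by a max-flow quantity via path decomposition; (ii) use max-flow/min-cut to pass to the minimum exit weight $\min_S \sum_{e\in\partial^+ S}\omega(e)$; (iii) apply the Dirichlet tail estimate of Lemma~\ref{lem:4} (or its analogue) to each candidate $S$ and take a union bound. The logarithmic factor comes from the dyadic decomposition needed to handle the product structure across vertices of $S$.
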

\begin{lem}\label{lem:4}
(Lemma 8 of $\cite{IntegExitTime}$) \\
Let $(p_i^{(1)})_{1\leq i \leq n_1},\dots,(p_i^{(r)})_{1\leq i \leq n_r}$ be independent Dirichlet random variables with respective parameters $(\alpha_i^{(1)})_{1\leq i \leq n_1},\dots,(\alpha_i^{(r)})_{1\leq i \leq n_r}$. Let $m_1,\dots,m_r$ be integers such that $\forall i\leq r, 1\leq m_i < n_i$, and let $\Sigma=\sum\limits_{j=1}^r\sum\limits_{i=1}^{m_j}p_i^{(j)}$ and $\beta=\sum\limits_{j=1}^r\sum\limits_{i=1}^{m_j}\alpha_i^{(j)}$. There exists positive constants $C,C^{\prime}$ such that, for any positive measurable function $f:\R\times \R^{\sum_j m_j}\mapsto \R$,
\[
\E\left[f\left(\frac{p_{1}^{(1)}}{\Sigma},\dots,\frac{p_{m_1}^{(1)}}{\Sigma},\dots,\frac{p_{1}^{(r)}}{\Sigma},\dots,\frac{p_{m_r}^{(r)}}{\Sigma}\right)\right]
\leq C \tilde{\E}\left[f\left(\tilde{p}_{1}^{(1)},\dots,\tilde{p}_{m_1}^{(1)},\dots,\tilde{p}_{1}^{(r)},\dots,\tilde{p}_{m_r}^{(r)}\right) \right],
\]
where, under the probability $\tilde{\Prob}$, $(\tilde{p}_{1}^{(1)},\dots,\tilde{p}_{m_1}^{(1)},\dots,\tilde{p}_{1}^{(r)},\dots,\tilde{p}_{m_r}^{(r)})$ is sampled from a Dirichlet distribution of parameter $(\tilde{\alpha}_{1}^{(1)},\dots,\tilde{\alpha}_{m_1}^{(1)},\dots,\tilde{\alpha}_{1}^{(r)},\dots,\tilde{\alpha}_{m_r}^{(r)})$.
\end{lem}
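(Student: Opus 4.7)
The plan is to reduce the inequality to a density comparison on the $(r-1)$-dimensional simplex of block proportions, by exploiting the classical beta--Dirichlet decomposition of each independent Dirichlet block. For each $j$, I would set $S^{(j)} := \sum_{i \leq m_j} p_i^{(j)}$ and use the aggregation property to write $p_i^{(j)} = S^{(j)}\,u_i^{(j)}$, where $u^{(j)} := (p_i^{(j)}/S^{(j)})_{i \leq m_j}$ is Dirichlet with parameters $(\alpha_i^{(j)})_{i \leq m_j}$ and independent of $S^{(j)} \sim \text{Beta}(\beta_j, \beta_j')$, with $\beta_j := \sum_{i \leq m_j}\alpha_i^{(j)}$ and $\beta_j' := \sum_{i > m_j}\alpha_i^{(j)}$. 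Independence across $j$ makes all the pairs $(S^{(j)}, u^{(j)})$ mutually independent, and defining $\pi_j := S^{(j)}/\Sigma$ yields the factorization $p_i^{(j)}/\Sigma = u_i^{(j)}\,\pi_j$.

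Applying the same decomposition to the reference side gives $\tilde p_i^{(j)} = \tilde u_i^{(j)}\,\hat\pi_j$, where the $\tilde u^{(j)}$ have the joint law of the $u^{(j)}$ and $\hat\pi$ is Dirichlet with parameters $(\beta_j)_j$, all mutually independent. Since the $u$'s agree in law on both sides, the lemma reduces to comparing the law of $\pi = (S^{(j)}/\Sigma)_j$, obtained by normalizing $r$ independent Beta variables, against the Dirichlet law of $\hat\pi$, together with a weight on the total mass $\Sigma$ (which, along with the constant $C'$, accounts for the extra $\R$-factor in the domain of $f$). I would make this comparison explicit through the change of variables $(S^{(1)},\dots,S^{(r)}) \leftrightarrow (\pi_1,\dots,\pi_{r-1},\Sigma)$, whose Jacobian is $\Sigma^{r-1}$ and which exposes a joint density proportional to
\[
\Sigma^{\beta - 1}\prod_{j=1}^r \pi_j^{\beta_j - 1}(1 - \pi_j\Sigma)^{\beta_j' - 1}
\]
on the region $\{0<\Sigma<1/\max_j \pi_j\}$.

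The main obstacle will be the uniform bound in $\pi$ of the remaining weight $\Sigma^{\beta - 1}\prod_j(1-\pi_j\Sigma)^{\beta_j'-1}$: when several $\pi_j$ approach $\max_k \pi_k$ simultaneously with small associated $\beta_j'$, the marginal density of $\pi$ alone develops a non-integrable singularity at $\Sigma = 1/\max_j\pi_j$, so $\Sigma$ cannot be integrated out first. Tournier's argument circumvents this by keeping $\Sigma$ as a free parameter and bounding the joint density of $(\pi, \Sigma)$ pointwise against that of $(\hat\pi, \tilde\Sigma)$ for an auxiliary $\tilde\Sigma$ whose density contains an $e^{-C'\Sigma}$-type factor; this gamma-type weight absorbs the boundary singularity and leaves a uniformly bounded density ratio. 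The stated inequality then follows by Fubini, with the first argument of $f$ encoding $\Sigma$ and the constant $C$ collecting all the prefactors from the beta, gamma and Jacobian factors.
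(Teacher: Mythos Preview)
The paper does not prove this lemma at all: it is stated as a direct quotation of Lemma~8 of \cite{IntegExitTime} (Tournier), with no proof given. So there is nothing in the paper to compare your proposal against; your sketch is an attempt to reconstruct Tournier's original argument, not to reproduce anything in this paper. If your goal is to match the paper, you should simply cite the result as the author does.
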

The following lemma shows that the value of the acceleration function $\gamma^m_{\omega}(x)$ depends mostly on the strength of the trap that contains $x$ (if there is one). This means that the number of visits to a vertex depends mostly on the strength of the trap containing this vertex.
\begin{lem}\label{lem:5}
Set $\alpha\in (0,\infty)^{2d}$. In $\Z^d$, for any $\beta \in \left[\kappa,\frac{\kappa+\kappa^{\prime}}{2}\right)$, for any $m\geq 2$:
\[
\E_{\Prob^{(\alpha)}_0}\left( \left(\frac{\gamma_{\omega}^m(0)}{\gamma^{\omega}_{\Z^d}(0)}\right)^{\beta}\right) < \infty.
\]
\end{lem}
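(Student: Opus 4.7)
The plan is to reduce the moment of $\gamma^m_\omega(0)/\gamma^\omega_{\Z^d}(0)$ to moments of acceleration functions on ``de-trapped'' modifications of $[-m,m]^d$, and then apply lemma \ref{lem:3}. Let $y^*=y^*(\omega)\in\{e_1,\dots,e_{2d}\}$ be the neighbor of $0$ realizing the maximum in the definition of $\gamma^\omega_{\Z^d}(0)$, and set $a=1-\omega(0,y^*)$, $b=1-\omega(y^*,0)$, so that $\gamma^\omega_{\Z^d}(0)=1/(a+b)$. Writing $\gamma^m_\omega(0)=1/S$ and splitting the simple paths from $0$ to $\partial[-m,m]^d$ according to whether their first edge is $(0,y^*)$ gives the identity
\[
S \;=\; a\,\tilde S \;+\; (1-a)\,b\,\tilde U,
\]
where $\tilde S = \sum_{z\sim 0,\,z\neq y^*}\tilde\omega(0,z)\,T_z$, $\tilde U = \sum_{z\sim y^*,\,z\neq 0}\tilde\omega(y^*,z)\,U_z$, with renormalized probabilities $\tilde\omega(0,z)=\omega(0,z)/a$, $\tilde\omega(y^*,z)=\omega(y^*,z)/b$, and $T_z,U_z$ summing $\omega_\sigma$ over simple paths from $z$ to $\partial[-m,m]^d$ avoiding $0$, respectively $\{0,y^*\}$.

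The elementary inequality $ab/(a+b)\leq 1/2$ yields $S\geq (a+b)\min(\tilde S,\tilde U)/2$, hence
\[
\frac{\gamma^m_\omega(0)}{\gamma^\omega_{\Z^d}(0)} \;=\; \frac{a+b}{S} \;\leq\; \frac{2}{\min(\tilde S,\tilde U)} \;\leq\; 2\bigl(\tilde S^{-1}+\tilde U^{-1}\bigr).
\]
A union bound over the $2d$ possible values of $y^*$ reduces the problem to proving, for each fixed $y^*=e_j$, that $\E[\tilde S^{-\beta}\mathbf{1}_{y^*=e_j}]$ and $\E[\tilde U^{-\beta}\mathbf{1}_{y^*=e_j}]$ are finite. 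The Dirichlet aggregation property gives that $(\tilde\omega(0,z))_{z\neq y^*}$ is independent of $a$ and Dirichlet-distributed with parameters $(\alpha_i)_{i\neq j}$, and similarly at $y^*$; lemma \ref{lem:4} then replaces the renormalized vectors by genuine independent Dirichlet vectors at a multiplicative constant cost. This identifies $\tilde S^{-1}$ and $\tilde U^{-1}$ with acceleration functions $\gamma^{\{\delta\}}_{G',\omega'}(\cdot)$ on finite graphs $G'$ obtained from $[-m,m]^d$ by deleting the outgoing edge from $0$ (resp. from $y^*$) in the direction towards $y^*$ (resp. towards $0$), with a cemetery $\delta$ adjoined at the boundary.

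Applying lemma \ref{lem:3} to these modified graphs then produces a tail bound $\Prob(\tilde S^{-1}\geq 1/\varepsilon)\leq C\varepsilon^{\beta'}(-\log\varepsilon)^r$ with exponent $\beta'\geq\kappa'$, and similarly for $\tilde U^{-1}$; integrating against any $\beta<(\kappa+\kappa')/2<\kappa'$ yields the claimed finite moment. The main technical obstacle is exactly this last step: the modified graph $G'$ still admits a potential trap between $0$ and the opposite neighbor $-y^*$ carrying the identical parameter-sum $\alpha_j+\alpha_{j+d}$, so a naive unconditional tail estimate at vertex $0$ in $G'$ would in general give an exponent strictly smaller than $\kappa'$. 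The indicator $\mathbf{1}_{y^*=e_j}$ is essential here: it constrains this second potential trap to be no stronger than the first, and combining this constraint with the explicit removal of the $y^*$-direction at $0$ delivers the joint tail exponent $\kappa'$ that lemma \ref{lem:3} refers to as ``the case we will look at''. The slack between $(\kappa+\kappa')/2$ and $\kappa'$ in the statement of the lemma then absorbs the polylogarithmic correction and the finite combinatorial factors arising from the union bound over $y^*$ and from the substitution provided by lemma \ref{lem:4}.
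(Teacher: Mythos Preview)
Your decomposition $S=a\tilde S+(1-a)b\tilde U$ and the bound $\gamma^m_\omega(0)/\gamma^\omega_{\Z^d}(0)\le 2(\tilde S^{-1}+\tilde U^{-1})$ are correct and close in spirit to the paper. The genuine gap is exactly where you flag it: after deleting only the edge $(0,y^*)$ and renormalizing at $0$, the graph you feed into Lemma~\ref{lem:3} does \emph{not} have exponent $\ge\kappa'$. Concretely, take $y^*=e_j$ with $j$ achieving the maximum in the definition of $\kappa$, and look at the two-vertex set $\{0,-e_j\}$ in your graph for $\tilde S^{-1}$: its total exit weight is $(\overline\alpha-\alpha_j-\alpha_{j+d})+(\overline\alpha-\alpha_j)=2\overline\alpha-2\alpha_j-\alpha_{j+d}$, and one checks that this is strictly smaller than $(\kappa+\kappa')/2$. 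So a direct, unconditional application of Lemma~\ref{lem:3} gives a tail exponent that is not enough for any $\beta$ in the stated range. Your proposed fix via the indicator $\mathbf 1_{y^*=e_j}$ does not close this: Lemma~\ref{lem:3} is a statement about a product Dirichlet law, and conditioning on $\{y^*=e_j\}$ destroys that structure. The sentence ``combining this constraint with the explicit removal \dots\ delivers the joint tail exponent $\kappa'$'' is an assertion, not an argument.

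What the paper does instead is to \emph{contract} $0$ and $y$ into a single vertex rather than delete an edge. After renormalizing (and applying Lemma~\ref{lem:4}), the merged vertex carries a Dirichlet law with total weight $2\overline\alpha-\alpha_j-\alpha_{j+d}=\kappa_j$ and has $4d-2$ outgoing edges. In this contracted graph every connected set containing the merged vertex has exit weight at least $\kappa'$: for instance the pair $\{0_{\mathrm{merged}},-e_j\}$ now has exit weight $(\kappa_j-\alpha_{j+d})+(\overline\alpha-\alpha_j)=3\overline\alpha-2(\alpha_j+\alpha_{j+d})=\kappa'$, and the symmetric pair $\{0_{\mathrm{merged}},2e_j\}$ likewise. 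This is precisely ``the case we will look at'' in Lemma~\ref{lem:3}. The contraction simultaneously neutralizes the $(0,y)$ trap and the mirrored trap on the other side, which your one-sided edge deletion cannot do. The rest of the paper's argument then proceeds by a union bound over $y$ with no conditioning needed.
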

\begin{proof}
Let $m\geq 2$ be an integer. We will use the results we have on finite graphs for this lemma. First we notice that the value of $\left(\frac{\gamma^m_{\omega}(0)}{\gamma^{\omega}(0)}\right)^{\beta}$ only depends on a finite amount of edges and vertices around $0$. This means that we can look at this quantity on a finite graph and have the same law. The finite graph $G^m=(V^m,E^m)$ we want is obtained by contracting all the points $x\in \Z^d$ such that $||x||_{1}\geq m$ in a single point $\delta$ (the cemetery vertex) and deleting all the edges going from this vertex to the rest of the environment. For any environment $\omega$ on $\Z^d$ we have an equivalent environment $\omega^m$ on $G^m$: if $(x,y)\in E$ and $(x,y)\in E^m$ then $\omega(x,y)=\tilde{\omega}(x,y)$ and for any $x\in V^m \backslash \{\delta\}$, $\tilde{\omega}(x,\delta)=\sum\limits_{y\in\Z^d,||y||_1=m} \omega(x,y)$. Now we have:
\[
\gamma^m_{\omega}(0)= \gamma^{\{\delta\}}_{G^m,\omega^m}(0)
\]  
and
\[
\gamma^{\omega}_{\Z^d}(0)=\gamma^{\omega^m}_{G^m}(0).
\]
So we just have to show that
\[
\E_{\Prob^{(\alpha)}}\left( \left(\frac{\gamma^{\{\delta\}}_{G^m,\omega^m}(0)}{\gamma^{\omega^m}_{G^m}(0)}\right)^{\beta}\right) < \infty.
\]
For any point $y\sim 0$ and any environment $\omega$ we define $\Sigma^{\omega}_y$ by:
\[
\Sigma^{\omega}_y=2-\omega(0,y)-\omega(y,0).
\]
For any point $x\in G^m$ such that $x\sim 0$, we define $G^m_x=(V^m_x,E^m_x)$ by contracting the vertices $0$ and $x$ into a single vertex $0$ and deleting the edges $(0,x)$ and $(x,0)$. The edges $(0,y)$ and $(y,0)$ stay the same for any $y\sim 0$ such that $x\not=y$. However, the edges $(x,y)$ and $(y,x)$ become $(0,y)$ and $(y,0)$ respectively, for any $y\sim x$ such that $0\not=y$. We can also define $\omega^m_x$ by:
\[
\begin{aligned}
&\forall (y,z)\in E^m, y\not\in \{0,x\},\ \omega^m_x(y,z):=\omega^m(y,z)\\
&\forall (y,z)\in E^m, y\in \{0,x\}, (y,z)\in E^m_x,\ \omega^m_x(y,z):=\frac{\omega^m(y,z)}{\Sigma^{\omega}_y}
\end{aligned}
\] 
Let $x\sim 0$ be a vertex of $G^m$. If we think of $\frac{1}{\gamma^{\{\delta\}}_{G^m,\omega^m}}$ as a sum on simple paths, we have:
\[
\frac{1}{\gamma^{\{\delta\}}_{G^m,\omega^m}}\geq \Sigma^{\omega^m}_x \omega^m(0,x)\frac{1}{\gamma^{\{\delta\}}_{G^m,\omega^m}}
\]
Indeed, if we look at $\frac{1}{\gamma^{\{\delta\}}_{G^m_x,\omega^m_x}}$ as a sum on simple paths $\sigma$ from $0$ to $\delta$ ($\sigma_0=0$), either the first vertex $\sigma_1$ visited by the path is such that $(0,\sigma_1)\in E^m$ or $(x,\sigma_1)\in E^m$. We define $\tilde{\sigma}$ by: if $(0,\sigma_1)\in E^m$ then $\tilde{\sigma}:=\sigma$ and we have:
\[
\omega^m(\tilde{\sigma})=\Sigma^{\omega^m}_x \omega^m_x(\sigma)\geq \Sigma^{\omega^m}_x \omega^m(0,x) \omega^m_x(\sigma),
\]
and if $(x,\sigma_1)\in E^m$ then $\tilde{\sigma}_i:=\sigma_{i-1}$ for $i\geq 2$ and $\tilde{\sigma}_0:=0$ and $\tilde{\sigma}_1:=x$ and we get:
\[
\omega^m(\tilde{\sigma})=\Sigma^{\omega^m}_x \omega^m(0,x) \omega^m_x(\sigma).
\]
For any environment $\omega$, let $x(\omega^m)$ be the point that maximises $(y\rightarrow\omega^m(0,y))$. We have $\tilde{\omega}(0,y)\geq \frac{1}{2d}$ and therefore:
\[
\frac{1}{\gamma^{\{\delta\}}_{G^m,\omega^m}}
\geq \frac{1}{2d}\Sigma^{\omega^m}_{x(\omega^m)} \frac{1}{\gamma^{\{\delta\}}_{G^m_{x(\omega^m)},\omega^m_{x(\omega^m)}}}.
\]
So we get, for any $\varepsilon>0$:
\[
\begin{aligned}
\Prob^{(\alpha)}\left( \left(\frac{\gamma^{\{\delta\}}_{G^m,\omega^m}(0)}{\gamma^{\omega^m}_{G^m}(0)}\right)\geq \frac{1}{\varepsilon}\right)
&\leq \Prob^{(\alpha)}\left(\frac{2d\varepsilon}{\gamma^{\omega^m}_{G^m}(0)}\geq \Sigma^{\omega^m}_{x(\omega^m)} \frac{1}{\gamma^{\{\delta\}}_{G^m_{x(\omega^m)},\omega^m_{x(\omega^m)}}}\right)\\
&=\sum\limits_{y\sim 0}\Prob^{(\alpha)}\left(y=x(\omega^m) \text{ and } \frac{2d\varepsilon}{\gamma^{\omega^m}_{G^m}(0)}\geq \Sigma^{\omega^m}_{y} \frac{1}{\gamma^{\{\delta\}}_{G^m_{y},\omega^m_{y}}}\right)\\
&\leq \sum\limits_{y\sim 0}\Prob^{(\alpha)}\left(\frac{2d\varepsilon}{\gamma^{\omega^m}_{G^m}(0)}\geq \Sigma^{\omega^m}_{y} \frac{1}{\gamma^{\{\delta\}}_{G^m_{y},\omega^m_{y}}}\right).
\end{aligned}
\]
by definition of $\gamma^{\omega^m}_{G^m}(0)$:
\[
\forall y\sim 0,\ \gamma^{\omega^m}(0)_{G^m}\Sigma^{\omega^m}_y\geq 1.
\]
Therefore:
\[
\Prob^{(\alpha)}\left( \left(\frac{\gamma^{\{\delta\}}_{G^m,\omega^m}(0)}{\gamma^{\omega^m}_{G^m}(0)}\right)\geq \frac{1}{\varepsilon}\right)
\leq \sum\limits_{y\sim 0}\Prob^{(\alpha)}\left(2d\varepsilon \geq \frac{1}{\gamma^{\{\delta\}}_{G^m_{y},\omega^m_{y}}}\right).
\]
Now we can apply lemma \ref{lem:4} which gives, for any $y\sim 0$:
\[
\Prob^{(\alpha)}\left(2d\varepsilon \geq \frac{1}{\gamma^{\{\delta\}}_{G^m_{y},\omega^m_{y}}}\right)
\leq C \tilde{\Prob}\left(2d\varepsilon \geq \frac{1}{\gamma^{\{\delta\}}_{G^m_{y},\omega^m_{y}}}\right),
\]
where under $\tilde{\Prob}$, $\omega^m_{y}$ are independent Dirichlet random variables (on the graph $G^m_y$ and the parameters of the Dirichlet are the same as in $\Z^d$). Now, according to lemma \ref{lem:3} there exists two constants $C^{\prime}$, $r$ such that:
\[
\forall \varepsilon \text{ \ \ small enough \ \ }, \tilde{\Prob}\left(2d\varepsilon \geq \frac{1}{\gamma^{\{\delta\}}_{G^m_{y},\omega^m_{y}}}\right) \leq C^{\prime}\varepsilon^{\kappa^{\prime}}\left(-\log(\varepsilon)\right)^r.
\]
This means that by changing the constant $C^{\prime}$, we get:
\[
\forall \varepsilon \geq 0, \tilde{\Prob}\left(2d\varepsilon \geq \frac{1}{\gamma^{\{\delta\}}_{G^m_{y},\omega^m_{y}}}\right) \leq C^{\prime}\varepsilon^{\frac{\kappa+\kappa^{\prime}}{2}}.
\]
So there exists a constant $D$ that does not depend on $\varepsilon$ such that:
\[
\Prob^{(\alpha)}\left( \left(\frac{\gamma^{\{\delta\}}_{G^m,\omega^m}(0)}{\gamma^{\omega^m}_{G^m}(0)}\right)\geq \frac{1}{\varepsilon}\right) \leq
D\varepsilon^{\frac{\kappa+\kappa^{\prime}}{2}}.
\]
We have the result we want.
\end{proof}
Unfortunately this statement cannot be efficiently used with the invariant distribution $\Qprob^m$ because we can visit multiple points between times 0 and 1 since the time is continuous. So we need a version of the previous lemma that takes this continuity into account.
\begin{lem}\label{lem:15}
Set $\alpha\in (0,\infty)^{2d}$. For every $\beta < \frac{\kappa+\kappa^{\prime}}{2}$, there exists an integer $m$ such that:
\[
\E_{\Qprob_0^m}\left(\sum\limits_{x\in\Z^d}\left(\int\limits_{t=0}^1 \frac{\gamma^m_{\omega}(x)}{\gamma^{\omega}_{\Z^d}(x)}1_{X^m_t=x}\dd t\right)^{\beta}\right)<\infty.
\]
\end{lem}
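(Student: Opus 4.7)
The bound is established by a pointwise reduction of $\sum_x A_x^\beta$ to a time integral along the trajectory (with $A_x:=\int_0^1\frac{\gamma^m_\omega(x)}{\gamma^\omega_{\Z^d}(x)}1_{X^m_t=x}\,\dd t$), followed by the stationarity of the environment seen from the particle under $\Qprob^m_0$ to turn that integral into a single-environment moment under $\Qprob^m$, and finally Hölder with the density $\dd\Qprob^m/\dd\Prob$ to invoke Lemma~\ref{lem:5}.

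\textbf{Main case $\beta\geq 1$.} Setting $T_x:=\int_0^1 1_{X^m_t=x}\,\dd t\in[0,1]$, the elementary inequality $T_x^\beta\leq T_x$ yields
\[
\sum_{x\in\Z^d}\left(\frac{\gamma^m_\omega(x)}{\gamma^\omega_{\Z^d}(x)}\right)^{\beta}T_x^\beta\;\leq\;\sum_x\left(\frac{\gamma^m_\omega(x)}{\gamma^\omega_{\Z^d}(x)}\right)^{\beta}T_x\;=\;\int_0^1\left(\frac{\gamma^m_\omega(X^m_t)}{\gamma^\omega_{\Z^d}(X^m_t)}\right)^{\beta}\dd t.
\]
Writing the integrand as $g(\theta_{X^m_t}\omega)$ with $g(\omega):=(\gamma^m_\omega(0)/\gamma^\omega_{\Z^d}(0))^\beta$, the stationarity of $(\theta_{X^m_t}\omega)_{t\geq 0}$ under $\Qprob^m_0$ (second Theorem~A of the excerpt) combined with Fubini gives
\[
\E_{\Qprob^m_0}\left[\int_0^1 g(\theta_{X^m_t}\omega)\,\dd t\right]\;=\;\E_{\Qprob^m}[g].
\]
Choose $p>1$ close enough to $1$ that $\beta p<(\kappa+\kappa')/2$; this is possible by the strict hypothesis. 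With $q$ the conjugate exponent,
\[
\E_{\Qprob^m}[g]\;\leq\;\Big\|\tfrac{\dd\Qprob^m}{\dd\Prob}\Big\|_{L^q(\Prob)}\left(\E_{\Prob}\left[\left(\frac{\gamma^m_\omega(0)}{\gamma^\omega_{\Z^d}(0)}\right)^{\beta p}\right]\right)^{1/p}.
\]
Lemma~\ref{lem:5} makes the second factor finite, while the second Theorem~A allows $m$ to be picked large enough that $\dd\Qprob^m/\dd\Prob\in L^q(\Prob)$, concluding this case.

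\textbf{Case $\beta<1$.} Here $T_x^\beta\leq T_x$ fails, so one must open up the jump structure of the accelerated walk. Using $T_x=\sum_{k\geq 1:\,Y_k=x,\,t^m_k\leq 1}\mathcal{E}_k/\gamma^m_\omega(x)$ (up to the last, partially clipped visit, which only strengthens the bound) and the subadditivity $(\sum a_i)^\beta\leq\sum a_i^\beta$ valid for $\beta\leq 1$, one gets $A_x^\beta\leq \gamma^\omega_{\Z^d}(x)^{-\beta}\sum_{k:\,Y_k=x,\,t^m_k\leq 1}\mathcal{E}_k^\beta$. Summing over $x$ and conditioning on $\omega$ and $\mathcal{E}_1,\dots,\mathcal{E}_{k-1}$ (so that $\mathcal{E}_k$ is independent of $1_{t^m_{k-1}\leq 1}$) produces a factor $\Gamma(1+\beta)$ and reduces matters to estimating $\E_{\Qprob^m_0}[\sum_k\gamma^\omega_{\Z^d}(Y_k)^{-\beta}1_{t^m_{k-1}\leq 1}]$. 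A standard compensator identity for the continuous-time chain rewrites this as $\E_{\Qprob^m_0}[\int_0^1\gamma^m_\omega(X^m_t)\gamma^\omega_{\Z^d}(X^m_t)^{-\beta}\,\dd t]$, after which the stationarity/Hölder argument of the main case applies to the shift-covariant function $g(\omega):=\gamma^m_\omega(0)\gamma^\omega_{\Z^d}(0)^{-\beta}=(\gamma^m_\omega(0)/\gamma^\omega_{\Z^d}(0))\cdot\gamma^\omega_{\Z^d}(0)^{1-\beta}$.

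\textbf{Main obstacle.} The case $\beta<1$ is the delicate one: after opening the jump structure, the integrability required is that of $(\gamma^m/\gamma)\cdot\gamma^{1-\beta}$ under $\Qprob^m$. A further Hölder step splits this into an $L^{\beta p}$-moment of $\gamma^m/\gamma$ controlled by Lemma~\ref{lem:5} and a moment $\E_{\Prob}[\gamma^{(1-\beta)s}]$ governed by the Dirichlet tail exponent $\kappa$. Tuning the pair of Hölder exponents so that both moments lie below their respective thresholds is where the proof is tight, and it is precisely the strict inequality $\beta<(\kappa+\kappa')/2$ that furnishes the slack needed to close the estimate.
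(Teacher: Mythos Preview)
Your $\beta\geq 1$ argument is correct and in fact cleaner than the paper's: $T_x^\beta\leq T_x$ collapses the sum into a single time integral, and stationarity under $\Qprob^m_0$ plus one H\"older against the density finishes without any displacement bound. But the $\beta<1$ case---the range actually needed in Lemmas~\ref{lem:14} and~\ref{lem:19}---does not close.

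The subadditivity $(\sum_j a_j)^\beta\leq\sum_j a_j^\beta$ is fatally lossy here. In a trap of strength $s$ one has $\gamma^m_\omega\approx\gamma^\omega\approx s$, so the accelerated walk makes order $s$ jumps inside the trap per unit time; the true $A_x^\beta$ is $O(1)$, but your bound inflates it to roughly $s\cdot s^{-\beta}=s^{1-\beta}$. Concretely, since $\gamma^m_\omega(0)\geq\gamma^\omega(0)$ for $m\geq 2$ (bound the simple-path sum by splitting on whether the first step uses the strongest neighbouring edge), the function $\gamma^m_\omega(0)\gamma^\omega(0)^{-\beta}$ you reduce to is at least $\gamma^\omega(0)^{1-\beta}$. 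Every H\"older route back to $\Prob$ then meets $\E_\Prob[(\gamma^\omega)^{(1-\beta)s}]$ for some $s\geq 1$, and the $t^{-\kappa}$ tail of $\gamma^\omega$ forces $(1-\beta)s<\kappa$, i.e.\ a \emph{lower} bound $\beta>1-\kappa$ that is completely unrelated to the hypothesis $\beta<(\kappa+\kappa')/2$ and fails throughout the needed range whenever $\kappa\leq\tfrac12$. The slack you invoke in the last paragraph only loosens the $\gamma^m/\gamma^\omega$ moment via Lemma~\ref{lem:5}; it does nothing for the $\gamma^\omega$ moment. The paper avoids this by using $T_x\leq 1$ in the form $T_x^{p\beta}\leq 1_{\{T_x>0\}}$ (valid for every exponent), obtaining $A_x^{p\beta}\leq(\gamma^m/\gamma^\omega)(x)^{p\beta}1_{\{\exists t\in[0,1]:X^m_t=x\}}$; a second H\"older then separates the ratio (handled by Lemma~\ref{lem:5}) from the visit indicator, and the latter is summed over $\Z^d$ via the tail of the unit-time maximal displacement $D_1^m$.
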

\begin{proof}
Let $p\in(1,\infty)$ be a constant such that $\beta p^2<\frac{\kappa+\kappa^{\prime}}{2}$ and let $\gamma$ be such that $\frac{1}{p}+\frac{1}{\gamma}=1$. Now let $m$ be an integer such that $\frac{\dd\Qprob^m}{\dd\Prob}$ is in $L^{\gamma}$. This means that $\frac{\dd\Qprob_0^m}{\dd\Prob_0}$ is also in $L^{\gamma}$. We will only work in $\Z^d$ so we will write $\gamma^{\omega}$ instead of $\gamma^{\omega}_{\Z^d}$.
\[
\begin{aligned}
&\E_{\Qprob_0^m}\left(\sum\limits_{x\in\Z^d}\left(\int\limits_{t=0}^1 \frac{\gamma^m_{\omega}(x)}{\gamma^{\omega}(x)}1_{X^m_t=x}\dd t\right)^{\beta}\right)\\
=&\sum\limits_{x\in\Z^d}\E_{\Prob_0}\left(\left(\int\limits_{t=0}^1 \frac{\gamma^m_{\omega}(x)}{\gamma^{\omega}(x)}1_{X^m_t=x}\dd t\right)^{\beta}\frac{\dd\Qprob_0^m}{\dd\Prob_0}\right)\\
\leq &\sum\limits_{x\in\Z^d} \E_{\Prob_0}\left(\left(\int\limits_{t=0}^1 \frac{\gamma^m_{\omega}(x)}{\gamma^{\omega}(x)}1_{X^m_t=x}\dd t\right)^{p\beta}\right)^{\frac{1}{p}}\E_{\Prob_0}\left(\left(\frac{\dd\Qprob_0^m}{\dd\Prob_0}\right)^{\gamma}\right)^{\frac{1}{\gamma}}.
\end{aligned}
\]
This means we just need to show that $\sum\limits_{x\in\Z^d} \E_{\Prob_0}\left(\left(\int\limits_{t=0}^1 \frac{\gamma^m_{\omega}(x)}{\gamma^{\omega}(x)}1_{X^m_t=x}\dd t\right)^{p\beta}\right)^{\frac{1}{p}}$ is finite. Let $D^m_1$ be the random variable defined by:
\[
D^m_1:= \sum\limits_{i=1}^d \max\limits_{t\in [0,1]}|X^m_t.e_i|.
\]
We have:
\[
\begin{aligned}
\sum\limits_{x\in\Z^d} \E_{\Prob_0}\left(\left(\int\limits_{t=0}^1 \frac{\gamma^m_{\omega}(x)}{\gamma^{\omega}(x)}1_{X^m_t=x}\dd t\right)^{p\beta}\right)^{\frac{1}{p}}
&\leq \sum\limits_{x\in\Z^d} \E_{\Prob_0}\left(\left(\frac{\gamma^m_{\omega}(x)}{\gamma^{\omega}(x)}\right)^{p\beta}1_{\exists t\in[0,1],X^m_t=x}\right)^{\frac{1}{p}}\\
&\leq \sum\limits_{x\in\Z^d} \left(\E_{\Prob_0}\left(\left(\frac{\gamma^m_{\omega}(x)}{\gamma^{\omega}(x)}\right)^{p\beta}1_{D^m_1\geq ||x||_{\infty}}\right)\right)^{\frac{1}{p}}\\
&\leq \sum\limits_{x\in\Z^d} \left(\E_{\Prob_0}\left(\left(\frac{\gamma^m_{\omega}(x)}{\gamma^{\omega}(x)}\right)^{p^2\beta }\right)\right)^{\frac{1}{p^2}}\left(\E_{\Prob_0}\left(1_{D^m_1\geq ||x||_{\infty}}\right)\right)^{\frac{1}{\alpha}}\\
&= \sum\limits_{x\in\Z^d} \left(\E_{\Prob}\left(\left(\frac{\gamma^m_{\omega}(x)}{\gamma^{\omega}(x)}\right)^{p^2\beta }\right)\right)^{\frac{1}{p^2}}\left(\E_{\Prob_0}\left(1_{D^m_1\geq ||x||_{\infty}}\right)\right)^{\frac{1}{\alpha}}
\end{aligned}
\]
Now since the environment for $\Prob$ is iid, $\E_{\Prob}\left(\left(\frac{\gamma^m_{\omega}(x)}{\gamma^{\omega}(x)}\right)^{p^2\beta}\right)$ does not depend on $x$ and we get:
\[
\sum\limits_{x\in\Z^d} \E_{\Prob_0}\left(\left(\int\limits_{t=0}^1 \frac{\gamma^m_{\omega}(x)}{\gamma^{\omega}(x)}1_{X^m_t=x}\dd t\right)^{p\beta}\right)^{\frac{1}{p}}
\leq \left(\E_{\Prob}\left(\left(\frac{\gamma^m_{\omega}(0)}{\gamma^{\omega}(0)}\right)^{p^2\beta}\right)\right)^{\frac{1}{p^2}} \sum\limits_{x\in\Z^d} \left(\E_{\Prob_0}\left(1_{D^m_1\geq ||x||_{\infty}}\right)\right)^{\frac{1}{\gamma}}.
\]
And since there exists a constant $C$ such that for every $i \geq 1$ there are at most $Ci^{d-1}$ points $x$ such that $||x||_{\infty}=i$, we get:
\[
\sum\limits_{x\in\Z^d} \left(\E_{\Prob_0}\left(1_{D_1\geq ||x||_{\infty}}\right)\right)^{\frac{1}{\alpha}}
\leq 1+C\sum\limits_{i\geq 1} i^{d-1}\left(\E_{\Prob_0}\left(1_{D_1\geq i}\right)\right)^{\frac{1}{\alpha}}
\]
which is finite by lemma 4 of $\cite{BouchetSubbal}$. And by lemma \ref{lem:5} we get:
\[
\E_{\Prob}\left(\left(\frac{\gamma^m_{\omega}(0)}{\gamma^{\omega}(0)}\right)^{p^2\beta}\right)< \infty.
\]
So we get the result we want.
\end{proof}

\subsection{Independence of the traps}

This section will be devoted to the precise study of traps. The notion of trap was defined in the introduction in definition\ref{defTrap}. In the previous section we have essentially shown that the total amount of time spent on a trap mostly depends on its strength. Now, we need a way to create independence between the times spent in the different traps. We will do it in two steps. First we will show that the strength of the traps are essentially independent and then we will show that the strength of a trap and the number of times it is visited are essentially independent. However, we first need to introduce a few objects to characterize this independence precisely.  
\begin{defin}
Let $\mathcal{T}^{\omega}$ be the set of traps $\{x,y\}\in \tilde{E}$ for the environment $\omega$.\\
$\tilde{\mathcal{T}}^{\omega}$ is the set of vertices $x\in \Z^d$ such that there exist $y$ such that $\{x,y\}\in\mathcal{T}^{\omega}$. \\
For any subset $J$ of $[\![1,d]\!]$ we define $\mathcal{T}^{\omega}_J$, the traps with direction in $J$ by:
\[
\mathcal{T}^{\omega}_J = \{\{x,y\}\in\mathcal{T},\ \exists j\in J, y=x+e_j \text{ or } y=x-e_{j} \}.
\]
For any subset $J$ of $[\![1,d]\!]$, $\tilde{\mathcal{T}}^{\omega}_J$ is the set of vertices $x\in \Z^d$ such that there exist $y$ such that $\{x,y\}\in\mathcal{T}^{\omega}_J$. \\
In the following we will omit the $\omega$ when there is no ambiguity.
\end{defin}
\begin{defin}
We say that two environments $\omega_1$ and $\omega_2$ are trap-equivalent if:\\
- they have the same traps:
\[
\mathcal{T}^{\omega_1}=\mathcal{T}^{\omega_2},
\]
- at each vertex not in a trap, the transition probabilities are the same for both environment:
\[
\forall x \not\in\tilde{\mathcal{T}}^{\omega_1},\ \forall y\sim x,\ \omega_1(x,y)=\omega_2(x,y),
\]
- at each vertex $x$ in a trap $\{x,y\}$, the transition probabilities conditioned on not crossing the trap are the same:
\[
\forall (x,y)\in E,\ \{x,y\}\in \mathcal{T}^{\omega_1},\ \forall z\sim x, z\not = y,
 \frac{\omega_1(x,z)}{1-\omega_1(x,y)}=\frac{\omega_2(x,y)}{1-\omega_2(x,y)}.
\]
We will denote by $\tilde{\Omega}$ the set of all equivalence classes for the trap-equivalence relation.
\end{defin}
\begin{defin}
Set $\tilde{\omega}\in\tilde{\Omega}$. Let $\mathcal{T}$ be its set of trap and $\sigma$ a path starting at $0$ that only stays a finite amount of time every time it enters a trap. We want to define a path, with the same trajectory as $\sigma$ outside the traps, which does not keep information regarding the time spent in the traps. We essentially want to erase all the back and forths inside traps. To that extent we define the sequences of integer times $(t_i),(s_i)$ by:
\[
\begin{aligned}
&t_0=0,\\
&s_i=\inf\{n\geq t_i, (\sigma_{n}=\sigma_{t_i}\text{ or }\{\sigma_{n},\sigma_{t_i}\}\in\mathcal{T})\text{ and } (\sigma_{n+1}\not =\sigma_{t_i}\text{ and }\{\sigma_{n+1},\sigma_{t_i}\}\not\in\mathcal{T})\},\\
&t_{i+1}=
\begin{cases}
s_i+1 &\text{ if }\sigma_{s_i}=\sigma_{t_i}\\
s_i &\text{ otherwise. }
\end{cases}
\end{aligned}
\] 
If $\sigma_{t_i}$ is in a trap then $[t_i,s_i]$ is the interval of time spent in this trap before leaving it.\\
The partially forgotten path $\tilde{\sigma}$ associated with $\sigma$ in the environment $\tilde{\omega}$ is defined by:
\[
\tilde{\sigma}_i:=\sigma_{t_i}.
\]
Similarly we can define the partially-forgotten walk $(\tilde{Y}_n)_{n\in\N}$ associated with $(Y_n)_{n\in\N}$ 
\end{defin}
\begin{defin}
For all $i\in\N^*$, let $I_i$ be the set defined by:
\[
I_i= [\![1,d]\!] \times \{a,b,c,d\in \N , a\geq1 , a+b+c+d=i\}.
\]
And $I^n$ be defined by:
\[
I^n=\bigcup\limits_{1\leq i\leq n} I_i.
\]
Let $\sigma$ be a path starting at $0$ and $\tilde{e}\in\tilde{E}$ be an undirected edge. We define the sequences $(t^{\text{in}}_i)$ (the times when the path enters $\tilde{e}$) and $(t^{\text{out}}_i)$ (the times when the path exits $\tilde{e}$) by:
\[
\begin{aligned}
t^{\text{in}}_1 =& \inf \{ n, \tilde{Y}_n \in \tilde{e} \}, \\
t^{\text{in}}_{i+1} =& \inf \{ n > t^{\text{in}}_{i} , \tilde{Y}_n \in \tilde{e} \text{ and } \tilde{Y}_{n-1}\not\in\tilde{e}\}, \\
t^{\text{out}}_i =& \inf \{ n \geq t^{\text{in}}_i, \tilde{Y}_n \in \tilde{e} \text{ and } \tilde{Y}_{n+1}\not\in\tilde{e}\} \}. \\
\end{aligned}
\]
Since the walk is almost surely transient by theorem $\ref{theodule}$, we have that for $i$ large enough $t^{\text{in}}_i=t^{\text{out}}_i=\infty$ almost surely.\\
Now let $x:=\sigma_{t^{\text{in}}_1}$ and $y$ be such that $\{x,y\}=\tilde{e}$. Let $j\in [\![1,d]\!]$ be such that either $x = y + e_j$ or $x = y - e_j$ ($j$ is the direction of the edge) and $n$ be such that $t^{\text{in}}_n <\infty$ and $t^{\text{in}}_{n+1}= \infty$. Now we can define $N_{x\rightarrow x},N_{x\rightarrow y},N_{y\rightarrow x},N_{y\rightarrow y}$ by:
\[
\begin{aligned}
N_{x\rightarrow x} =& \#\{i \leq n ,t^{\text{in}}_i = x \text{ and } t^{\text{out}}_i = x \}, \\
N_{x\rightarrow y} =& \#\{i \leq n ,t^{\text{in}}_i = x \text{ and } t^{\text{out}}_i = y \}, \\
N_{y\rightarrow x} =& \#\{i \leq n ,t^{\text{in}}_i = y \text{ and } t^{\text{out}}_i = x \}, \\
N_{y\rightarrow y} =& \#\{i \leq n ,t^{\text{in}}_i = y \text{ and } t^{\text{out}}_i = y \}.
\end{aligned}
\] 
The configuration $p$ of the edge $\tilde{e}$, for the path $\sigma$, is the element of $I_n$ defined by:
\[
p^{\sigma}_{\{x,y\}}:= (j, N_{x\rightarrow x},N_{x\rightarrow y},N_{y\rightarrow x},N_{y\rightarrow y}).
\]
\end{defin}
\begin{rmq}
Set $\tilde{\omega}\in\tilde{\Omega}$. Let $\sigma_1,\sigma_2$ be two paths starting at $0$ with the same partially forgotten path in $\tilde{\omega}$. For any undirected edge $\tilde{e}$, the configuration of $\tilde{e}$ is the same for $\sigma_1$ and $\sigma_2$. Therefore we only need to know the partially forgotten path to know the configuration of an edge.
\end{rmq}
Now we can say in what way the strength of the traps are independent.
\begin{lem}\label{lem:11}
For any environment $\omega\in\Omega$, let $\tilde{\omega}\in\tilde{\Omega}$ be its equivalence class for the trap-equivalent relation. Now let $(\tilde{Y}_i)$ be the partially forgotten walk. We will write $\overline{\alpha}:= \sum\limits_{1\leq i \leq 2d} \alpha_i$ and for any vertex $z$ and integer $i$ we will use the notation $\alpha(z,z+e_i):=\alpha_i$. Knowing $\tilde{\omega}$ and $(\tilde{Y}_i)$, the strength of the various traps are independent. Furthermore, let $\{x,y\}$ be a trap and $p=(j, N_{x\rightarrow x},N_{x\rightarrow y},N_{y\rightarrow x},N_{y\rightarrow y})$ its configuration. To simplify notations we will write $N_x:=N_{x\rightarrow x}+N_{y\rightarrow x}$, $N_y:=N_{x\rightarrow y}+N_{y\rightarrow y}$ and $N:= N_x+ N_y$. Let $(r,k)$ be defined by $(1-\omega(x,y),1-\omega(y,x))=((1+k)r,(1-k)r)$. The density of law of $(r,k)$ (with respect to the Lebesgue measure) knowing $\tilde{\omega}$ and $\tilde{Y}$ is:
\[
C_p r^{\kappa_j-1} (1+k)^{N_x+\overline{\alpha}-\alpha(x,y)-1}(1-k)^{N_y+\overline{\alpha}-\alpha(y,x)-1}h_p(r(1+k),r(1-k))1_{0\leq r\leq\frac{1}{4}}1_{-1\leq k \leq 1},
\]
where $C_p$ is a constant that only depends on $p$ and $\alpha$, and $h_p$ is a function that only depends on $p$ and $\alpha$ and that satisfies the following bound:
\[
\forall r\leq \frac{1}{4},\ \ 
|\log(h_p(r(1+k),r(1-k)))|
\leq 5(N+2\overline{\alpha})r.
\]
And for the law of the strength $s$ of the trap, there exists a constant $D$ that only depends on the configuration of the trap such that for any $A\geq 2$: 
\[
D A^{-\kappa_j} \exp\left( -\frac{5(N + 2\overline{\alpha})}{A}\right)
\leq \Prob_0\left(s \geq A|\tilde{\omega},\tilde{Y}\right) 
\leq D A^{-\kappa_j} \exp\left( \frac{5(N + 2\overline{\alpha})}{A}\right).
\]
\end{lem}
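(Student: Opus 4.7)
Given $\tilde{\omega}$ and $\tilde{Y}$, the only remaining random features of the environment are, for each trap $\{x,y\}$, the pair $(v,w) := (1-\omega(x,y), 1-\omega(y,x))$; the renormalized probabilities at the trap vertices and all transition probabilities outside traps are already encoded in $\tilde{\omega}$. My plan is to compute the posterior density of $(v,w)$ by combining the Dirichlet prior with the quenched likelihood $\prob_0^{\omega}(\tilde{Y}=\tilde{y})$, and then to change variables to $(r,k)$. Independence across traps drops out because, as noted just after the definition of traps, no vertex belongs to two traps, so distinct traps live on disjoint vertex sets and their Dirichlet draws are a priori independent; and the quenched likelihood factors trap-by-trap, since each trap's contribution depends only on its own $(v,w)$.

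\textbf{Prior and likelihood for a single trap.} At vertex $x$ with trap partner $y$, reparametrize the Dirichlet draw at $x$ by $u=\omega(x,y)$ together with the renormalized probabilities $\eta_z=\omega(x,z)/(1-u)$, $z\sim x$, $z\neq y$. A direct Jacobian calculation (factor $(1-u)^{2d-2}$) shows that the density decouples into a Beta$(\alpha(x,y),\overline{\alpha}-\alpha(x,y))$ density in $u$ times a Dirichlet density in $(\eta_z)_{z\neq y}$; hence $v=1-u$ has prior density proportional to $v^{\overline{\alpha}-\alpha(x,y)-1}(1-v)^{\alpha(x,y)-1}$, independent of the $\eta$'s. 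The same at $y$ gives the analogous density of $w$. For the quenched likelihood, once $\tilde y$ is fixed the only remaining freedom is the number of $x\leftrightarrow y$ back-and-forths during each trap visit; summing the corresponding geometric series yields the explicit visit factors $v/(v+w-vw)$, $(1-v)w/(v+w-vw)$, $v(1-w)/(v+w-vw)$, $w/(v+w-vw)$ for visit types $xx$, $xy$, $yx$, $yy$ respectively (each multiplied by an $\eta$-factor that is fixed by $\tilde{\omega}$ and $\tilde y$). Taking the product over all $N$ visits of the trap yields the single-trap likelihood contribution
\[
\frac{v^{N_x}w^{N_y}(1-v)^{N_{xy}}(1-w)^{N_{yx}}}{(v+w-vw)^N},
\]
and all remaining factors in the quenched probability are independent of $(v,w)$.

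\textbf{Change of variables and tail bound.} Combining prior and likelihood, the conditional density of $(v,w)$ is proportional to $v^{\overline{\alpha}-\alpha(x,y)-1+N_x}w^{\overline{\alpha}-\alpha(y,x)-1+N_y}(1-v)^{\alpha(x,y)+N_{xy}-1}(1-w)^{\alpha(y,x)+N_{yx}-1}(v+w-vw)^{-N}$ on $\{v+w<1/2\}$. The change $v=(1+k)r$, $w=(1-k)r$ has Jacobian $2r$ and combines the powers of $v$ and $w$ into $r^{\kappa_j-1}(1+k)^{N_x+\overline{\alpha}-\alpha(x,y)-1}(1-k)^{N_y+\overline{\alpha}-\alpha(y,x)-1}$, leaving the residual factor
\[
h_p(v,w) := (1-v)^{N_{xy}+\alpha(x,y)-1}(1-w)^{N_{yx}+\alpha(y,x)-1}\left(\frac{v+w}{v+w-vw}\right)^N.
\]
The inequality $|\log(1-t)|\le 2t$, valid for $t\le 1/2$, applied to each factor of $h_p$ (using $v,w\le 2r$ and $vw/(v+w)=(1-k^2)r/2\le r/2$), produces $|\log h_p|\le 5(N+2\overline{\alpha})r$ after routine bookkeeping; $C_p$ is the $(v,w)$-independent normalization, absorbing the $k$-integral and overall constants. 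The tail on $s=1/(2r)$ then follows: for $A\ge 2$, on $\{r\le 1/(2A)\}$ one has $h_p\in[e^{-5(N+2\overline{\alpha})/A},e^{5(N+2\overline{\alpha})/A}]$, and $\int_0^{1/(2A)}r^{\kappa_j-1}\,dr=(2A)^{-\kappa_j}/\kappa_j$ supplies the leading $A^{-\kappa_j}$ order, yielding the claimed two-sided bound with $D$ collecting the $A$-independent factors. The main technical obstacle is the explicit summation of back-and-forths, i.e., correctly assembling the four visit-type factors so that only the $(v,w)$-dependence survives; after that the argument reduces to change of variables and elementary estimates.
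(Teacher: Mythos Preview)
Your proposal is correct and follows essentially the same approach as the paper: you compute the Dirichlet prior on $(v,w)$ via the Beta--Dirichlet decoupling, derive the single-trap likelihood by summing the geometric series of back-and-forths to get the four visit-type factors, multiply, change variables to $(r,k)$, and bound the residual factor $h_p$ using $|\log(1-t)|\le 2t$ to obtain the tail estimate. The paper's proof carries out exactly these steps in the same order with the same intermediate expressions (their $\tilde p(\cdot,\cdot)$ are your visit factors and their $h_{\{x,y\}}$ is your $h_p$).
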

\begin{proof}
In the following, we will write $\overline{\alpha}:=\sum\limits_{i=1}^{2d}\alpha_i$ and if $y=x+e_i$ we will write $\alpha(x,y):=\alpha_i$. First we need to show that the strength of the traps is approximately independent of the trajectory of the walk. We will take an environment $\omega$ and let $\tilde{\omega}$ be the set of all environments that are trap-equivalent to $\omega$. Now for any path $\sigma$ starting at $0$, let $\tilde{\sigma}^{\tilde{\omega}}$ be the set of all path that starts at $0$ and that have the same partially-forgotten path as $\sigma$. We want to see how the law of the environment is changed knowing the partially-forgotten path and the equivalence class of the environment. We get that the density of the environment (we look at an environment of finite size, large enough to contain the path we look at) (for $\Prob^{(\alpha)}$) knowing the equivalence class of the environment is equal to:
\begin{equation}
C\prod\limits_{\{x,y\}\in \mathcal{T}}(\varepsilon_x)^{\overline{\alpha}-\alpha(x,y)-1}(1-\varepsilon_x)^{\alpha(x,y)-1}
(\varepsilon_y)^{\overline{\alpha}-\alpha(y,x)-1}(1-\varepsilon_y)^{\alpha(y,x)-1}
1_{\varepsilon_x+\varepsilon_y<\frac{1}{2}}\dd \varepsilon_x \dd \varepsilon_y,
\label{eqn:11.1}
\end{equation}
where $\varepsilon_x=1-\omega(x,y)$ and $\varepsilon_y=1-\omega(y,x)$. Now, knowing the environment, the probability of having the given partially-forgotten walk is the same in parts of the environment where there is no trap. The only thing that depends on the specific environment is the times when the walk crosses the traps. Let $\{x,y\}$ be a trap, and for any $z_1,z_2\in \{x,y\}$ let $\tilde{p}(z_1,z_2)$ be the probability to exit the path by $z_2$, starting at $z_1$, we get:
\[
\begin{aligned}
&\tilde{p}(x,x)=\frac{\varepsilon_x}{\varepsilon_x+\varepsilon_y-\varepsilon_x\varepsilon_y}, \ \ \ \ 
&\tilde{p}(y,y)=\frac{\varepsilon_y}{\varepsilon_x+\varepsilon_y-\varepsilon_x\varepsilon_y}, \\
&\tilde{p}(x,y)=\frac{\varepsilon_y(1-\varepsilon_x)}{\varepsilon_x+\varepsilon_y-\varepsilon_x\varepsilon_y},
&\tilde{p}(y,x)=\frac{\varepsilon_x(1-\varepsilon_y)}{\varepsilon_x+\varepsilon_y-\varepsilon_x\varepsilon_y}.
\end{aligned}
\]
So for any environment $\omega$, we get that the probability of a partially-forgotten path (for $\prob^{(\alpha)}_0$), is equal to:  
\begin{align}
&C\prod\limits_{\{x,y\}\in \mathcal{T}}\tilde{p}(x,x)^{N_{x\rightarrow x}}\tilde{p}(x,y)^{N_{x\rightarrow y}}\tilde{p}(y,x)^{N_{y\rightarrow x}}\tilde{p}(y,y)^{N_{y\rightarrow y}} \notag\\
=&C\prod\limits_{\{x,y\}\in \mathcal{T}}
\frac{\varepsilon_x^{N_{x\rightarrow x}} (\varepsilon_y(1-\varepsilon_x))^{N_{x\rightarrow y}} (\varepsilon_x(1-\varepsilon_y))^{N_{y\rightarrow x}} \varepsilon_y^{N_{y\rightarrow y}}}{\left(\varepsilon_x+\varepsilon_y-\varepsilon_x\varepsilon_y\right)^{N_{x\rightarrow x}+N_{x\rightarrow y}+N_{y\rightarrow x}+N_{y\rightarrow y}}} \notag\\
=&C\prod\limits_{\{x,y\}\in \mathcal{T}}\frac{\varepsilon_x^{N_{x\rightarrow x}+N_{y\rightarrow x}} \varepsilon_y^{N_{x\rightarrow y}+N_{y\rightarrow y}}}{(\varepsilon_x+\varepsilon_y)^{N_{x\rightarrow x}+N_{x\rightarrow y}+N_{y\rightarrow x}+N_{y\rightarrow y}}}
\frac{(1-\varepsilon_x)^{N_{x\rightarrow y}}(1-\varepsilon_y)^{N_{y\rightarrow x}}}{\left(1-\frac{\varepsilon_x\varepsilon_y}{\varepsilon_x+\varepsilon_y}\right)^{N_{x\rightarrow x}+N_{x\rightarrow y}+N_{y\rightarrow x}+N_{y\rightarrow y}}}. \label{eqn:11.2}
\end{align}
We define $h_{\{x,y\}}$ by:
\[
h_{\{x,y\}}(\varepsilon_x,\varepsilon_y)=\frac{(1-\varepsilon_x)^{N_{x\rightarrow y}}(1-\varepsilon_y)^{N_{y\rightarrow x}}}{\left(1-\frac{\varepsilon_x\varepsilon_y}{\varepsilon_x+\varepsilon_y}\right)^{N_{x\rightarrow x}+N_{x\rightarrow y}+N_{y\rightarrow x}+N_{y\rightarrow y}}}(1-\varepsilon_x)^{\alpha(x,y)-1}(1-\varepsilon_y)^{\alpha(y,x)-1}.
\]
Now we get that the density probability of having a given environment knowing the equivalence class of the environment and the partially forgotten path is equal to the product of \ref{eqn:11.1} and \ref{eqn:11.2} up to a multiplicative constant $C$ that depends on the partially-forgotten path:
\[
C\prod\limits_{\{x,y\}\in \mathcal{T}}\frac{\varepsilon_x^{N_{x\rightarrow x}+N_{y\rightarrow x}+\overline{\alpha}-\alpha(x,y)-1} \varepsilon_y^{N_{x\rightarrow y}+N_{y \rightarrow y}+\overline{\alpha}-\alpha(y,x)-1}}{(\varepsilon_x+\varepsilon_y)^{N_{x\rightarrow x}+N_{x\rightarrow y}+N_{y\rightarrow x}+N_{y\rightarrow y}}}h_{\{x,y\}}(\varepsilon_x,\varepsilon_y)1_{\varepsilon_x+\varepsilon_y< \frac{1}{2}}\dd \varepsilon_x \dd \varepsilon_y.
\]
This means that for $\Prob^{(\alpha)}_0$, knowing the equivalence class of the environment and the partially forgotten path, the transition probabilities for each trap are independent, so we will look at each trap independently. Let's fix a trap $\{x,y\}$ and to simplify notations, we will write $N_x=N_{x\rightarrow x}+N_{y\rightarrow x}$, $N_y=N_{x\rightarrow y}+N_{y\rightarrow y}$ and $N=N_x+N_y$. We define $r$ and $k$ by $r=\frac{\varepsilon_x+\varepsilon_y}{2}$ and $k=\frac{\varepsilon_x-\varepsilon_y}{\varepsilon_x+\varepsilon_y}$ which gives $\varepsilon_x=r(1+k)$ and $\varepsilon_y=r(1-k)$ the law of the transition probabilities becomes:
\[
\begin{aligned}
&C\frac{r}{2}\frac{(r(1+k))^{N_x+\overline{\alpha}-\alpha(x,y)-1}(r(1-k))^{N_y+\overline{\alpha}-\alpha(y,x)-1}}{(2r)^{N_x+N_y+2}}h_{\{x,y\}}(r(1+k),r(1-k))1_{r<\frac{1}{2}} \dd r \dd k\\
=& C^{\prime} r^{\kappa_j-1} (1+k)^{N_x+\overline{\alpha}-\alpha(x,y)-1}(1-k)^{N_y+\overline{\alpha}-\alpha(y,x)-1}h_{\{x,y\}}(r(1+k),r(1-k))1_{r<\frac{1}{2}} \dd r \dd k.
\end{aligned}
\]
Now we want to give bounds on $h_{\{x,y\}}$. Since for all $r\leq \frac{1}{2}$, $|\log(1-r)|\leq 2r$, we get:
\[
\begin{aligned}
&|\log(h_{\{x,y\}}(r(1+k),r(1-k)))| \\
\leq &|(N(x,y)+\alpha(x,y)-1)\log(1-r(1+k))|+|(N(y,x)+\alpha(y,x)-1)\log(1-r(1-k))|\\
&\qquad +|N\log(1-\frac{r(1-k^2)}{2})|\\
\leq &(N(x,y)+\alpha(x,y))4r+(N(y,x)+\alpha(y,x))4r+Nr\\
\leq &5(N_x+N_y+\alpha_x+\alpha_y)r.
\end{aligned}
\]
Let $D=\int\limits_{k= -1}^1 C^{\prime} (1+k)^{N_x+\overline{\alpha}-\alpha(x,y)-1}(1-k)^{N_y+\overline{\alpha}-\alpha(y,x)-1}$, for any $A\geq 2$, we have:
\[
D A^{-\kappa_j} \exp\left( -\frac{5(N + 2\alpha)}{A}\right)
\leq \Prob_0\left(s \geq A|\tilde{\omega},\tilde{Y}\right) 
\leq D A^{-\kappa_j} \exp\left( \frac{5(N + 2\alpha)}{A}\right).
\]
\end{proof}

Now we want to show that there cannot be too many traps that are visited many times.
\begin{lem}\label{lem:14}
Set $\alpha\in (0,\infty)^{2d}$. For any $\beta\in \left[\kappa,\frac{\kappa+\kappa^{\prime}}{2}\right)$ with $\beta\leq 1$ there exists a finite constant $C>0$ such that for every $i\in\N\setminus \{0,1\}$:
\[
\E_{\Prob_0}\left(\sum\limits_{\{x,y\}\in\mathcal{T}}\#\{j\in [\![\tau_i,\tau_{i+1}-1 ]\!],Y_j\in\{x,y\}\text{ and } Y_{j+1}\not\in\{x,y\} \}^{\beta} \right) = C.
\]
\end{lem}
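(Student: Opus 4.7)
The strategy is to combine Proposition \ref{prop:1} with Lemma \ref{lem:15}.

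By Proposition \ref{prop:1}, the random variables $Z_i := \sum_{\{x,y\}\in\mathcal T}\bigl(E^{(i)}_{\{x,y\}}\bigr)^\beta$, where $E^{(i)}_{\{x,y\}} := \#\{j\in[\![\tau_i,\tau_{i+1}-1]\!]: Y_j\in\{x,y\}, Y_{j+1}\notin\{x,y\}\}$, are iid under $\Prob_0$ for $i\ge 1$. In particular $\E_{\Prob_0}[Z_i]$ is constant in $i\ge 1$; call this constant $C$. The only task is to show $C<\infty$.

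I would then pass to the accelerated walk $X^m$: the exit counts of $Y$ during $[\tau_i,\tau_{i+1})$ equal those of $X^m$ during $[T^m_i,T^m_{i+1})$, and by Lemma \ref{lem:8} this block has finite expected length. Conditionally on $\omega$, the counting process of exits from the trap $\{x,y\}$ has intensity at most $(\gamma^m_\omega/\gamma^\omega)(X^m_t)\,1_{X^m_t\in\{x,y\}}$, since at a trap vertex $z$ the exit rate $\gamma^m_\omega(z)(1-\omega(z,z'))$ is bounded by $\gamma^m_\omega(z)/\gamma^\omega(z)$ (because $1-\omega(z,z')\le 1/\gamma^\omega(z)$ on a trap).

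The key step is to control $(E^{(i)}_{\{x,y\}})^\beta$ by the corresponding compensator $A_{\{x,y\}} := \int_{T^m_i}^{T^m_{i+1}}(\gamma^m_\omega/\gamma^\omega)\,1_{X^m_t\in\{x,y\}}\,dt$, up to a martingale fluctuation $M_{\{x,y\}} := E_{\{x,y\}} - A_{\{x,y\}}$. Using $(a+b)^\beta\le a^\beta+b^\beta$ for $a,b\ge 0,\ \beta\le 1$, together with the counting-process identity $\E^\omega[M^2_{\{x,y\}}] = \E^\omega[A_{\{x,y\}}]$ (quadratic variation) and Jensen, yields $\E^\omega[|M|^\beta]\le \E^\omega[A]^{\beta/2}$. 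Summing over traps, and using both subadditivity of $x\mapsto x^\beta$ and the elementary splitting $1_{\{x,y\}}=1_x+1_y$, one gets
\[
\sum_{\{x,y\}\in\mathcal T}A_{\{x,y\}}^\beta \;\le\; \sum_{z\in\Z^d}\Bigl(\int_{T^m_i}^{T^m_{i+1}}\frac{\gamma^m_\omega}{\gamma^\omega}\,1_{X^m_t=z}\,dt\Bigr)^\beta.
\]

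Finally, I would partition $[T^m_i,T^m_{i+1})$ into unit intervals and apply subadditivity once more to split the sum across the partition. On each unit interval, under the stationary measure $\Qprob^m_0$, Lemma \ref{lem:15} guarantees that the expectation of $\sum_z(\int_k^{k+1}(\gamma^m_\omega/\gamma^\omega)1_{X^m_t=z}\,dt)^\beta$ is a finite constant (since $\beta<(\kappa+\kappa')/2$); summing over the finite-in-expectation number of intervals covering the block (by Lemma \ref{lem:8}) keeps the bound finite. The passage back from $\Qprob^m_0$ to $\Prob_0$ is handled by H\"older's inequality together with $\mathrm d\Qprob^m_0/\mathrm d\Prob_0 \in L^\gamma(\Prob_0)$. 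The martingale term is controlled by a parallel argument with exponent $\beta/2$ in place of $\beta$.

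The main technical obstacle is to keep the sums over traps \emph{inside} the expectations throughout: a naive application of Jensen would produce $\beta$-powers of expectations which do not sum over the infinitely many traps of $\Z^d$. The remedy is the pathwise compensator bound combined with the unit-interval decomposition, which aligns the sum exactly with the quantity controlled by Lemma \ref{lem:15}.
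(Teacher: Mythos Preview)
Your high-level strategy---relate exit counts to the accelerated integrals $\int(\gamma^m_\omega/\gamma^\omega)\,1_{X^m_t\in\{x,y\}}\,dt$ and invoke Lemma~\ref{lem:15}---is correct, and your bound on the exit intensity is fine. But two steps do not close.

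\textbf{The martingale term.} After $E_{\{x,y\}}=\tilde A_{\{x,y\}}+M_{\{x,y\}}$ you obtain $\E^{\omega}[|M_{\{x,y\}}|^{\beta}]\le(\E^{\omega}[\tilde A_{\{x,y\}}])^{\beta/2}$ and then appeal to a ``parallel argument with exponent $\beta/2$''. But Lemma~\ref{lem:15} controls $\E\bigl[\sum_z(\int\dots)^{\beta/2}\bigr]$, whereas what you have is $\sum_{\{x,y\}}\bigl(\E^{\omega}[\tilde A_{\{x,y\}}]\bigr)^{\beta/2}$, a sum of $\beta/2$-powers of quenched expectations. These are not comparable in the needed direction (Jensen goes the wrong way), so the obstacle you correctly flag in your last paragraph resurfaces unchanged in the martingale piece.

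\textbf{The direction of H\"older.} Lemma~\ref{lem:15} is a $\Qprob^m_0$-bound; you need a $\Prob_0$-bound. With $\mathrm d\Qprob^m_0/\mathrm d\Prob_0\in L^{\gamma}(\Prob_0)$, H\"older lets you bound $\Qprob^m_0$-expectations by $\Prob_0$-expectations, not the reverse; going from $\Qprob^m_0$ to $\Prob_0$ would require integrability of $\mathrm d\Prob_0/\mathrm d\Qprob^m_0$, which is not available. For the same reason, ``summing over the finite-in-expectation number of unit intervals'' is not justified: the random length $T^m_{i+1}-T^m_i$ is correlated with the per-interval integrand, so you cannot simply multiply expectations.

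The paper sidesteps both issues at once. It avoids the compensator/martingale split: by the moment comparison of Lemma~\ref{lem:23}, conditionally on the partially forgotten walk one has $\E\bigl[(\int(\gamma^m_\omega/\gamma^\omega)1_{X^m_t\in\{x,y\}}dt)^{\beta}\bigr]\ge c\,N_{\{x,y\}}^{\beta}$ directly. And it bridges $\Qprob^m_0$ and $\Prob_0$ not by H\"older but by a law of large numbers: the per-block average $S^m_n/n$ converges $\Prob_0$- (hence $\Qprob^m_0$-) almost surely to a constant $C_0$, Fatou plus stationarity under $\Qprob^m_0$ and Lemma~\ref{lem:15} force $C_0<\infty$, and finiteness of the a.s.\ limit is then a $\Prob_0$-statement yielding the finiteness of $\E_{\Prob_0}[Z_i]$.
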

\begin{proof}
We want to show that 
\[
\E_{\Prob_0}\left(\sum\limits_{\{x,y\}\in\mathcal{T}}\#\{j\in [\![\tau_i,\tau_{i+1}-1 ]\!],Y_j\in\{x,y\}\text{ and } Y_{j+1}\not\in\{x,y\} \}^{\beta} \right)
\]
can be bounded away from infinity by using the inequality from lemma \ref{lem:15}:
\[
\E_{\Qprob^m_0}\left(\sum\limits_{x\in\Z^d}\left(\int\limits_{t=0}^1 \frac{\gamma^m_{\omega}(x)}{\gamma^{\omega}(x)}1_{X_t=x}\dd t\right)^{\beta}\right)<\infty,
\]
which is true for any $\beta\in \left[\kappa,\frac{\kappa+\kappa^{\prime}}{2}\right)$, and for any integer $m$ such that $\Qprob^m_0$ exists.\\
To that end we need to introduce the intermediate quantity $S^m_n$:
\[
S^m_n:=\sum\limits_{i=0}^n\sum\limits_{\{x,y\}\in \mathcal{T}}\left(\int\limits_{T^m_{i}}^{T^m_{i+1}}\frac{\gamma^m_{\omega}(x)}{\gamma^{\omega}(x)}1_{X^m_t=x}\dd t\right)^{\beta}+\left(\int\limits_{T^m_{i}}^{T^m_{i+1}}\frac{\gamma^m_{\omega}(y)}{\gamma^{\omega}(y)}1_{X^m_t=y}\dd t\right)^{\beta},
\]
where $(T^m_i)$ are the renewal times for the walk $(X^m_t)$, with the convention that $T^m_0:=0$.
By definition of $X^m$, the time the walk $X^m$ spends in a vertex $x$ is a sum of $\ell_x$ iid exponential random variables of expectation $\frac{1}{\gamma^m_{\omega}(x)}$, where $\ell_x$ is the number of times the walk $Y$ visits the point $x$. Therefore the quantity
\[
\int_{0}^{\infty}\gamma^m_{\omega}(x)1_{X_t=x}\dd t
\]
should be close to $\ell_x$. Then, every time the walk $Y$ enters the trap $\{x,y\}$ is stays a time of order $\gamma^{\omega}(x)$. This means that $\frac{\ell_x}{\gamma^{\omega}(x)}$ should be almost equal to the number of times the trap is entered. Finally, we get that for every trap the quantities
\[
\sum\limits_{\{x,y\}\in\mathcal{T}}\#\{j\in [\![\tau_i,\tau_{i+1}-1 ]\!],Y_j\in\{x,y\}\text{ and } Y_{j+1}\not\in\{x,y\} \}^{\beta} 
\]
and
\[
\sum\limits_{\{x,y\}\in \mathcal{T}}\left(\int\limits_{T^m_{i}}^{T^m_{i+1}}\frac{\gamma^m_{\omega}(x)}{\gamma^{\omega}(x)}1_{X^m_t=x}\dd t\right)^{\beta}+\left(\int\limits_{T^m_{i}}^{T^m_{i+1}}\frac{\gamma^m_{\omega}(y)}{\gamma^{\omega}(y)}1_{X^m_t=y}\dd t\right)^{\beta}
\]
should be of the same order. Then we just need to bound the second quantity with lemma \ref{lem:15} and a law of large number.\\
For any $k\in [\![0,2m+3]\!]$ the random variables $(S^m_{(2m+3)i+k+1}-S^m_{(2m+3)i+k})_{i\geq 1}$ are iid (the definition of $\gamma^m_{\omega}(x)$ depends on a box f size $m$ around $x$ and traps span over 2 vertices that's why we cannot consider the sequence $(S^m_{i+1}-S^m_i)_{i\geq 1}$). This means that there is a positive constant $C_0$ that can be infinite such that $\E_{\Prob_0}\left(S^m_{2m+3}-S^m_{2m+2}\right)=C_0$ and
\[
\frac{1}{n}S^m_{n}\rightarrow C_0 \ \Prob_0 \text{ a.s and therefore } \Qprob_0 \text{ a.s.}
\]
For any $x\in \Z^d$ there is at most one integer $i$ such that $\left(\int\limits_{T^m_{i}}^{T^m_{i+1}}\frac{\gamma^m_{\omega}(x)}{\gamma^{\omega}(x)}1_{X^m_t=x}\dd t\right)$ is non-zero and therefore:
\[
S^m_n = \sum\limits_{\{x,y\}\in \mathcal{T}}\left(\int\limits_{0}^{T^m_{n+1}}\frac{\gamma^m_{\omega}(x)}{\gamma^{\omega}(x)}1_{X^m_t=x}\dd t\right)^{\beta}+\left(\int\limits_{0}^{T^m_{n+1}}\frac{\gamma^m_{\omega}(y)}{\gamma^{\omega}(y)}1_{X^m_t=y}\dd t\right)^{\beta}.
\]
By lemma \ref{lem:8} there is a finite constant $D^m$ such that $\frac{1}{n}T_n^m\rightarrow D^m$ $\Prob_0$ and $\Qprob_0$ almost surely. We get:
\[
\frac{1}{n}\sum\limits_{\{x,y\}\in \mathcal{T}}\left(\int\limits_{0}^{D^m n}\frac{\gamma^m_{\omega}(x)}{\gamma^{\omega}(x)}1_{X^m_t=x}\dd t\right)^{\beta}+\left(\int\limits_{0}^{D^m n}\frac{\gamma^m_{\omega}(y)}{\gamma^{\omega}(y)}1_{X^m_t=y}\dd t\right)^{\beta} \rightarrow C_0 \ \Qprob_0 \text{ a.s.}
\]
Therefore,
\[
\liminf \frac{1}{n} \E_{\Qprob_0}\left(\sum\limits_{\{x,y\}\in \mathcal{T}}\left(\int\limits_{0}^{D^m n}\frac{\gamma^m_{\omega}(x)}{\gamma^{\omega}(x)}1_{X^m_t=x}\dd t\right)^{\beta}+\left(\int\limits_{0}^{D^m n}\frac{\gamma^m_{\omega}(y)}{\gamma^{\omega}(y)}1_{X^m_t=y}\dd t\right)^{\beta}\right) \geq C_0.
\]
Since $\beta\leq 1$ we have:
\[
\begin{aligned}
&\frac{1}{n} \E_{\Qprob_0}\left(\sum\limits_{\{x,y\}\in \mathcal{T}}\left(\int\limits_{0}^{D^m n}\frac{\gamma^m_{\omega}(x)}{\gamma^{\omega}(x)}1_{X^m_t=x}\dd t\right)^{\beta}+\left(\int\limits_{0}^{D^m n}\frac{\gamma^m_{\omega}(y)}{\gamma^{\omega}(y)}1_{X^m_t=y}\dd t\right)^{\beta}\right)\\
\leq& \frac{1}{n} \sum\limits_{i=0}^{\lfloor D^m n \rfloor}\E_{\Qprob_0}\left(\sum\limits_{\{x,y\}\in \mathcal{T}}\left(\int\limits_{i}^{i+1}\frac{\gamma^m_{\omega}(x)}{\gamma^{\omega}(x)}1_{X^m_t=x}\dd t\right)^{\beta}+\left(\int\limits_{i}^{i+1}\frac{\gamma^m_{\omega}(y)}{\gamma^{\omega}(y)}1_{X^m_t=y}\dd t\right)^{\beta}\right)\\
=& \frac{\lfloor D^m n \rfloor +1}{n} \E_{\Qprob_0}\left(\sum\limits_{\{x,y\}\in \mathcal{T}}\left(\int\limits_{0}^{1}\frac{\gamma^m_{\omega}(x)}{\gamma^{\omega}(x)}1_{X^m_t=x}\dd t\right)^{\beta}+\left(\int\limits_{0}^{1}\frac{\gamma^m_{\omega}(y)}{\gamma^{\omega}(y)}1_{X^m_t=y}\dd t\right)^{\beta}\right)\\
&<\infty \text{ by lemma \ref{lem:15}}.
\end{aligned}
\]
So $C_0$ is finite. \\
Now we want to get a bound on $Y$ from a bound on $X^m$. For any trap $\{x,y\}\in\mathcal{T}$ let $N_{\{x,y\}}$ be the number of times the trap $\{x,y\}$ is entered. Let $\mathcal{T}^{\omega,n}$ be the subset of $\mathcal{T}^{\omega}$ defined by:
\[
\mathcal{T}^{\omega,n}:=\left\{ \{x,y\}\in\mathcal{T}^{\omega},\ Y_{\tau_1}.e_1\leq x.e_1\leq Y_{\tau_1}.e_1+n \text{ and } Y_{\tau_1}.e_1\leq y.e_1\leq Y_{\tau_1}.e_1+n \right\}.
\] 
We chose a partially-forgotten path $\sigma$ and we look at the law of the total time the walk $X$ spends in a trap $\{x,y\}\in\mathcal{T}^{\omega}$ knowing $Y_{\tau_1}$ and $\tilde{Y}=\sigma$, where $\tilde{Y}$ is the partially forgotten walk. We now have two sources of randomness: the number of back and forth the walk does every time it visits a trap and the time the continuous speed-walk $X^m$ spends for every step. \\
Knowing the partially-forgotten walk, $N_{\{x,y\}}$ is deterministic. Let $t^j_{\{x,y\}}$ be the $j^{\text{th}}$ time the walk $Y$ enters the trap $\{x,y\}$ and $\tilde{t}^j_{\{x,y\}}$ be the $j^{\text{th}}$ time the walk $Y$ exits the trap $\{x,y\}$. We define $H^j_{\{x,y\}}$ by $H^j_{\{x,y\}}:=\left\lfloor\frac{\tilde{t}^j_{\{x,y\}}-t^j_{\{x,y\}}}{2}\right\rfloor$, the number of back and forths in the trap $\{x,y\}$ during the $j^{\text{th}}$ visit to the trap. For any integer $n$ and for any trap $\{x,y\}\in\mathcal{T}^{\omega,n}$ we have that knowing the environment, $Y_{\tau_1}$ and the partially forgotten walk, $\left(H^j_{\{x,y\}}\right)_{j\in\N,\{x,y\}\in\mathcal{T}}$ is a sequence of independent geometric random variables of parameter $(1-\omega(x,y))(1-\omega(y,x))$. Finally, for every $x\in\tilde{\mathcal{T}}$, let $\ell^j_x$ be the number of time $x$ is visited between times $t^j_{\{x,y\}}$ and $\tilde{t}^j_{\{x,y\}}$. We define $\varepsilon^j_{x}$ by $\varepsilon^j_{x}:=\ell^j_x-H^j_{\{x,y\}} $. Knowing the partially forgotten walk, $\varepsilon^j_{x}$ is deterministic (it is equal to $0$ iff the walk enters and leaves the trap by $y$ during the $j^{\text{th}}$ visit) and $\varepsilon^j_{x}\in\{0,1\}$. We have:
\[
\int\limits_{0}^{\infty}\frac{\gamma^m_{\omega}(x)}{\gamma^{\omega}(x)}1_{X^m_t=x}\dd t=\sum\limits_{j=1}^{N_{\{x,y\}}}\sum\limits_{k=1}^{\varepsilon^j_{x}+H^j_{\{x,y\}}}\mathcal{E}^{k,j}_{m,x}\frac{\gamma^m_{\omega}(x)}{\gamma^{\omega}(x)},
\]
where the $(\mathcal{E}_{m,x}^{k,j})_{x\in\Z^d,k,j\in\N}$ are independent exponential random variables of parameter $\gamma^m_{\omega}(x)$, they correspond to the time the accelerated walk spends on each vertex. By technical lemma \ref{lem:23} (the proof of which is in the annex) we get that there exists a constant $C_1>0$ such that for any integer $n$ and any trap $\{x,y\}\in\mathcal{T}^{\omega,n}$:
\begin{equation} \label{eqn:14.2}
C_1 (N_{\{x,y\}})^{\beta} 
\leq \E_{\prob^{\omega}_0}\left(\left(\int\limits_{0}^{\infty}\frac{\gamma^m_{\omega}(x)}{\gamma^{\omega}(x)}1_{X^m_t=x}\dd t\right)^{\beta} + \left(\int\limits_{0}^{\infty}\frac{\gamma^m_{\omega}(y)}{\gamma^{\omega}(y)}1_{X^m_t=y}\dd t\right)^{\beta}|\tilde{Y},Y_{\tau_1}\right).
\end{equation}
Unfortunately, we cannot directly use this inequality to conclude because it does not behave nicely with the renewal times. Indeed if you know that a trap spans over two renewal blocks, it means that you cannot do any back and forth inside the trap and the previous inequality becomes false. Instead we will have to first consider traps in  $\mathcal{T}^{\omega,n}$. First, by definition of the renewal times, no trap in $\mathcal{T}^{\omega,n}$ can be visited before time $\tau_1$ or after time $\tau_{n+2}$ since $Y_{\tau_{n+2}}.e_1\geq Y_{\tau_1}.{e_1}+n+1$. Therefore:
\[
\begin{aligned}
&\sum\limits_{\{x,y\}\in\mathcal{T}^{\omega,n}}\left(\int\limits_{0}^{\infty}\frac{\gamma^m_{\omega}(x)}{\gamma^{\omega}(x)}1_{X^m_t=x}\dd t\right)^{\beta}+\left(\int\limits_{0}^{\infty}\frac{\gamma^m_{\omega}(y)}{\gamma^{\omega}(y)}1_{X^m_t=y}\dd t\right)^{\beta}\\
\leq &\sum\limits_{\{x,y\}\in\mathcal{T}^{\omega}}\left(\int\limits_{T^m_1}^{T^m_{n+2}}\frac{\gamma^m_{\omega}(x)}{\gamma^{\omega}(x)}1_{X^m_t=x}\dd t\right)^{\beta}+\left(\int\limits_{T^m_1}^{T^m_{n+2}}\frac{\gamma^m_{\omega}(y)}{\gamma^{\omega}(y)}1_{X^m_t=y}\dd t\right)^{\beta}
\end{aligned}
\]
Therefore we get:
\[
\frac{1}{n+1}\E_{\Prob_0}\left(\sum\limits_{\{x,y\}\in\mathcal{T}^{\omega,n}}\left(\int\limits_{0}^{\infty}\frac{\gamma^m_{\omega}(x)}{\gamma^{\omega}(x)}1_{X^m_t=x}\dd t\right)^{\beta}+\left(\int\limits_{0}^{\infty}\frac{\gamma^m_{\omega}(y)}{\gamma^{\omega}(y)}1_{X^m_t=y}\dd t\right)^{\beta}\right)\leq C_0<\infty
\]
This in turns gives:
\[
\frac{1}{n+1}\E_{\Prob_0}\left(\sum\limits_{\{x,y\}\in\mathcal{T}^{\omega,n}}\left(N_{\{x,y\}}\right)^{\beta}\right)\leq \frac{C_0}{C_1}<\infty
\]
Now let $C_{2}:=\E_{\Prob_0}\left(\sum\limits_{\{x,y\}\in\mathcal{T}}\#\{j\in [\![\tau_i,\tau_{i+1}-1 ]\!],Y_j\in\{x,y\}\text{ and } Y_{j+1}\not\in\{x,y\} \}^{\beta} \right) $ be the quantity we want to bound. By the law of large number, we have that $\Prob_0$ a.s and therefore $\Qprob_0$ a.s:
\[
\frac{1}{n}\sum\limits_{i=1}^n\sum\limits_{\{x,y\}\in\mathcal{T}}\#\{j\in [\![\tau_i,\tau_{i+1}-1 ]\!],Y_j\in\{x,y\}\text{ and } Y_{j+1}\not\in\{x,y\} \}^{\beta}\rightarrow C_2
\]
Now, as a consequence of lemma \ref{lem:9} and the law of large number, there exists a finite constant $D>0$ such that $\Prob_0$ a.s and therefore $\Qprob_0$ a.s, $\frac{1}{n}Y_{\tau_n}.e_1\rightarrow D$. Furthermore, a trap spans over at most two renewal blocks so for any trap $\{x,y\}$:
\[
\sum\limits_{i\geq 1} \#\{j\in [\![\tau_i,\tau_{i+1}-1 ]\!],Y_j\in\{x,y\}\text{ and } Y_{j+1}\not\in\{x,y\} \}^{\beta} \leq 2 (N_{\{x,y\}})^{\beta}.
\]
As a consequence, $\Prob_0$ a.s:
\[
\liminf \frac{1}{n}\frac{1}{n+1}\E_{\Prob_0}\left(\sum\limits_{\{x,y\}\in\mathcal{T}^{\omega,Dn}}\left(N_{\{x,y\}}\right)^{\beta}\right) \geq \frac{C_2}{2}.
\]
Finally we get:
\[
\frac{C_2}{2} \leq D\frac{C_0}{C_1}
\]
so $C_2$ is finite.
\end{proof}

The next lemma is just a variation of the previous one, with the difference that the sum has a deterministic number of terms instead of a random one which makes it simpler to use.
\begin{lem}\label{lem:21}
For any $j\in [\![ 1,d ] \!]$ let $(x_i^j,y_i^j)$ be the $i^{\text{th}}$ trap in the direction $j$ the walk encounters after $\tau_2$. Let $N_i^j$ be the number of times the walk enters this trap. \\
If $\kappa \leq 1$, for any $\beta\in [\kappa,\frac{\kappa+\kappa^{\prime}}{2})$ with $\beta\leq 1$ there is a constant $C$ such that for any $j\in [\![ 1,d  ]\!]$:
\[
\E_{\Prob_0}\left(\sum\limits_{i=1}^{n}(N_i^j)^{\beta}\right) \leq C n.
\] 
If $\kappa=1$ there exists a positive concave function $\phi$ defined on $[0,\infty)$ such that $\phi(t)$ goes to infinity when $t$ goes to infinity. And such that if $\Phi(t)=\int\limits_{x=0}^t \phi(x) \dd x$ then there exists a constant $C$ such that for any $n\in\N$:
\[
\E_{\Prob_0}\left(\sum\limits_{i=1}^{n}\Phi(N_i^j)\right) \leq C n.
\] 
Those results are also true if $(x_i^j,y_i^j)$ is the $i^{\text{th}}$ trap in the direction $j$ the walk encounters after $\tau_2$ such that $x_i.e_1,y_i.e_1\geq Y_{\tau_2}.e_1$ .
\end{lem}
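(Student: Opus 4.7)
The key input is Lemma \ref{lem:14}, which provides, for each renewal block $i \geq 2$, the per-block bound $\E_{\Prob_0}\left(\sum_{\{x,y\} \in \mathcal{T}_j} M_{\{x,y\}, i}^{\beta}\right) \leq C$, where $M_{\{x,y\}, i}$ denotes the number of exits of the trap $\{x,y\}$ during $[\![\tau_i, \tau_{i+1}-1]\!]$. My plan is to convert this per-block moment bound into the per-trap bound asserted here.

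For the case $\beta \leq 1$: by subadditivity $(\sum_m a_m)^{\beta} \leq \sum_m a_m^{\beta}$, and since $N_{\{x,y\}} = \sum_m M_{\{x,y\}, m}$, we have $N_{\{x,y\}}^{\beta} \leq \sum_m M_{\{x,y\}, m}^{\beta}$. Define $\tilde{Z}_k := \sum_{\{x,y\} \text{ first visited in block } k} N_{\{x,y\}}^{\beta}$, which by Proposition \ref{prop:1} is identically distributed for $k \geq 2$. To show $\E_{\Prob_0}(\tilde{Z}_k) < \infty$, I control how many renewal blocks a single trap can span: if a trap has $e_1$-level $v$ and is first visited in block $k_0$, it can only be visited in blocks with $Y_{\tau_k} \cdot e_1 \leq v$, and since $Y_{\tau_k} \cdot e_1$ increases by at least one per block, this gives at most $v - Y_{\tau_{k_0}} \cdot e_1 + 1$ blocks, a quantity bounded by the $L_1$-diameter of block $k_0$ whose expectation is finite by Lemma \ref{lem:9}. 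Combining this with Lemma \ref{lem:14} bounds $\E_{\Prob_0}(\tilde{Z}_k)$ by some constant $C'$. The first $n$ direction-$j$ traps after $\tau_2$ fall into blocks $[\![2, K_n]\!]$, where $K_n$ is the first-visit block of the $n$-th such trap; by Lemma \ref{lem:9} the number of new direction-$j$ traps per block has positive finite mean $\mu$, so Wald's identity yields $\E_{\Prob_0}(K_n) \leq n/\mu + O(1)$ and hence
\[
\E_{\Prob_0}\left(\sum_{i=1}^n (N_i^j)^{\beta}\right) \leq \E_{\Prob_0}\left(\sum_{k=2}^{K_n} \tilde{Z}_k\right) \leq C'' n.
\]

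For $\kappa = 1$: Lemma \ref{lem:14} at $\beta = 1$ shows $Y_k := \sum_{\{x,y\} \in \mathcal{T}_j} M_{\{x,y\}, k}$ is integrable, and by de la Vall\'ee Poussin there exists a convex super-linear $\Phi$ with $\Phi(0) = 0$, satisfying a doubling condition $\Phi(2t) \leq K \Phi(t)$ and $\E_{\Prob_0}(\Phi(Y_k)) < \infty$; moreover $\Phi$ can be written as $\Phi(t) = \int_0^t \phi(s)\,ds$ for some concave $\phi$ tending to infinity. Convexity of $\Phi$ with $\Phi(0) = 0$ gives super-additivity, whence $\sum_{\{x,y\}} \Phi(M_{\{x,y\}, k}) \leq \Phi(Y_k)$, and the doubling condition plays the role of subadditivity in the earlier argument at the cost of a multiplicative constant. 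The Wald-type conclusion then yields $\E_{\Prob_0}(\sum_{i=1}^n \Phi(N_i^j)) \leq C''' n$. The main obstacle throughout is controlling how many renewal blocks a single trap spans, since a priori this is not bounded by a constant; it is handled via the linear growth of the $e_1$-component of the renewal levels combined with Lemma \ref{lem:9}. The variant restricting to traps with $x_i \cdot e_1, y_i \cdot e_1 \geq Y_{\tau_2} \cdot e_1$ follows by the same argument, as it merely restricts the class of traps considered.
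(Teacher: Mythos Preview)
Your strategy is the same as the paper's---reduce to the per-block bound of Lemma~\ref{lem:14} via subadditivity of $x\mapsto x^\beta$, exploit the renewal structure, and for $\kappa=1$ upgrade integrability via a de~la~Vall\'ee~Poussin/Lemma~\ref{lem:22}-type construction---but you miss the one observation that makes the argument clean, and without it your proof has real gaps.

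The observation is simply that \emph{a trap spans at most two consecutive renewal blocks}. Since $Y_{\tau_k}\!\cdot e_1$ is strictly increasing in $k$ and a trap has $e_1$-extent at most $1$, any trap visited in block $k_0$ can be revisited only in block $k_0+1$. With this, the paper obtains the deterministic inequality $\sum_{i=1}^m (N_i^j)^\beta \le \sum_{i=1}^{2m} Z_i^j$, where $Z_i^j$ is the per-block quantity from Lemma~\ref{lem:14} restricted to blocks actually containing a direction-$j$ trap; linearity of expectation then finishes immediately. Your diameter bound $v-Y_{\tau_{k_0}}\!\cdot e_1+1$ on the number of blocks is correct but needlessly loose (it actually collapses to $\le 2$), and keeping it as a random quantity creates two problems you do not address. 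First, ``combining this with Lemma~\ref{lem:14}'' to bound $\E[\tilde Z_k]$ requires controlling $\E\bigl[\sum_{m=k}^{k+D_k} W_m\bigr]$ where $D_k$ is the (random) diameter and $W_m$ the block-$m$ contribution; these are correlated, and you give no argument. Second, your Wald step is not justified: $K_n$ is a stopping time for the filtration of the per-block new-trap counts, but $\tilde Z_k$ involves the \emph{total} count $N_{\{x,y\}}$, which looks into future blocks, so the independence needed for Wald's identity is not immediate. Both issues evaporate once you use the constant bound $2$ in place of the diameter.

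For $\kappa=1$ your plan (doubling condition on $\Phi$ to recover a subadditivity-type inequality) is correct and equivalent to the paper's factor-of-$2$ trick $\Phi(a+b)\le \Phi(2\max(a,b))$, but it inherits the same gaps from the block-span argument above.
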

\begin{proof}
Let $p>0$ be the probability, for ${\Prob_0}$, that there is at least one trap in the direction $j$ between times $\tau_2$ and $\tau_{3}-1$. Let $\mathcal{T}_j$ be the set of traps in the direction $j$. Now let the sequence $(n_i)$ be defined by:
\[
\begin{aligned}
n_0=&1,\\
n_{i+1}=& \min \{k> n_i, \exists \{x,y\}\in \mathcal{T}_j, \exists n\in[\![\tau_k,\tau_{k+1}-1]\!], Y_n\in\{x,y\}    \}.
\end{aligned}
\]
Now, if $\kappa \leq 1$, let $Z^j_i=\sum\limits_{\{x,y\}\in\mathcal{T}_j}\#\{m\in [\![\tau_{n_i},\tau_{n_i+1}-1 ]\!],Y_m\in\{x,y\}\text{ and } Y_{m+1}\not\in\{x,y\} \}^{\beta}$. The $(Z^j_i)_{i\geq 1}$ are clearly identically distributed and we have:
\[
\E_{\Prob_0}(Z^j_i) = \frac{1}{p}\E_{\Prob_0}\left(\sum\limits_{\{x,y\}\in\mathcal{T}_j}\#\{m\in [\![\tau_{i},\tau_{i+1}-1 ]\!],Y_m\in\{x,y\}\text{ and } Y_{m+1}\not\in\{x,y\} \}^{\beta}\right).
\]
So let $C_j=\E_{\Prob_0}(Z^j_i)$ which is finite by lemma \ref{lem:14}. We clearly have:
\[
\sum\limits_{i=1}^m (N_i^j)^{\beta}\leq \sum\limits_{i=1}^{2m} Z^j_i.
\]
The sum has to go up to $2m$ because in the second sum some traps can appear twice if they are in between two renewal slabs. Indeed, in this case they can be visited before and after the renewal time (if they are in the direction $e_1$). We now have:
\[
\E_{\Prob_0}\left(\sum\limits_{i=1}^m (N_i^j)^{\beta}\right) \leq 2C_j m.
\]
Similarly, if $\{\overline{x}_i,\overline{y}_i\}$ is the $i^{\text{th}}$ trap in the direction $j$ the walk encounters after $\tau_2$ such that $x_i.e_1,y_i.e_1\geq Y_{\tau_2}.e_1$ and $\overline{N}_i^j$ the number of times the walk enters this trap then we have:
\[
\sum\limits_{i=1}^m (\overline{N}_i^j)^{\beta} \leq \sum\limits_{i=1}^{2m+1} Z_i.
\]
If $\kappa=1$, by lemma \ref{lem:14},
\[
\E_{\Prob_0}\left(\sum\limits_{\{x,y\}\in\mathcal{T}_j}\#\{m\in [\![\tau_{2},\tau_{3}-1 ]\!],Y_m\in\{x,y\}\text{ and } Y_{m+1}\not\in\{x,y\} \}^{\beta}\right)<\infty.
\]
Therefore, by forthcoming technical lemma \ref{lem:22} there exists a positive, concave function $\phi$ defined on $[0,\infty)$ such that $\phi(t)$ goes to infinity when $t$ goes to infinity and such that, if $\Phi(t):=\int\limits_{x=0}^t \phi(x) \dd x$ then:
\[
\E_{\Prob_0}\left(\Phi\left(2\sum\limits_{\{x,y\}\in\mathcal{T}_j}\#\{m\in [\![\tau_{2},\tau_{3}-1 ]\!],Y_m\in\{x,y\}\text{ and } Y_{m+1}\not\in\{x,y\} \}^{\beta}\right)\right)<\infty,
\]
where $\Phi(t):=\int\limits_{x=0}^t \phi(x) \dd x$. We have that $x\rightarrow \frac{\Phi(x)}{x}$ is increasing and therefore, by writing $g(x)=\frac{\Phi(x)}{x}$, for any non-negative sequence $(a_i)_{1\leq i \leq n}$:
\[
\begin{aligned}
\sum\limits_{1\leq i\leq n} \Phi(a_i)
&= \sum\limits_{1\leq i\leq n} a_i g(a_i)\\
&\leq \sum\limits_{1\leq i\leq n} a_i g\left(\sum\limits_{1\leq j\leq n} a_j\right)\\
&= \left(\sum\limits_{1\leq i\leq n} a_i\right) g\left(\sum\limits_{1\leq i\leq n} a_i\right) \\
&= \Phi\left(\sum\limits_{1\leq i\leq n} a_i\right).
\end{aligned}
\]
So we get:
\[
\begin{aligned}
&\E_{\Prob_0}\left(\sum\limits_{\{x,y\}\in\mathcal{T}_j}\Phi\left(2\#\{m\in [\![\tau_{2},\tau_{3}-1 ]\!],Y_m\in\{x,y\}\text{ and } Y_{m+1}\not\in\{x,y\} \}^{\beta}\right)\right)\\
\leq & \E_{\Prob_0}\left(\Phi\left(2\sum\limits_{\{x,y\}\in\mathcal{T}_j}\#\{m\in [\![\tau_{2},\tau_{3}-1 ]\!],Y_m\in\{x,y\}\text{ and } Y_{m+1}\not\in\{x,y\} \}^{\beta}\right)\right)<\infty.
\end{aligned}
\]
Let $Z^j_i:=\sum\limits_{\{x,y\}\in\mathcal{T}_j}\Phi\left(\#\{m\in [\![\tau_{n_i},\tau_{n_i+1}-1 ]\!],Y_m\in\{x,y\}\text{ and } Y_{m+1}\not\in\{x,y\} \}^{\beta}\right)$. The $(Z^j_i)_{i\geq 1}$ are clearly identically distributed and we have:
\[
\E_{\Prob_0}(Z_i) = \frac{1}{p}\E_{\Prob_0}\left(2\sum\limits_{\{x,y\}\in\mathcal{T}_j}\Phi\left(\#\{m\in [\![\tau_{2},\tau_{3}-1 ]\!],Y_m\in\{x,y\}\text{ and } Y_{m+1}\not\in\{x,y\} \}^{\beta}\right)\right)<\infty.
\]
So let $C_j=\E_{\Prob_0}(Z^j_i)$, which is finite. We clearly have:
\[
\sum\limits_{i=1}^m \Phi(N_i^j) \leq \sum\limits_{i=1}^{2m}  Z_i.
\]
Once again, the sum has to go up to $2m$ because in the second sum some traps can appear twice if they are in between two renewal slabs. Indeed, in this case they can be visited before and after the renewal time (if they are in the direction $e_1$).
so:
\[
\E_{\Prob_0}\left(\sum\limits_{i=1}^m \Phi(N_i^j)\right) \leq 2C_j m.
\]
Similarly, if $\{\overline{x}_i,\overline{y}_i\}$ is the $i^{\text{th}}$ trap in the direction $j$ the walk encounters after $\tau_2$ such that $x_i.e_1,y_i.e_1\geq Y_{\tau_2}.e_1$ and $\overline{N}_i^j$ the number of times the walk enters this trap then we have:
\[
\sum\limits_{i=1}^m \Phi(\overline{N}_i^j) \leq \sum\limits_{i=1}^{2m+1} Z^j_i.
\]
and we get the result we want.
\end{proof}

The following lemma gives us some independence between the strength of a trap and the number of times the walk enters this trap.
\begin{lem}\label{lem:13}
Let $j\in [\![ 1,d ]\!]$ be an integer that represents the direction of the traps we will consider. Let $\{x^j_i,y^j_i\}$ be the $i^{\text{th}}$ trap in the direction $j$ (ie $x^j_i-y^j_i\in \{e_j,-e_{j}\}$) to be visited after time $\tau_2$ and such that $x^j_i.e_1 \geq Y_{\tau_2}.e_1$ and $y^j_i.e_1 \geq Y_{\tau_2}.e_1$. Now let $s^j_i$ be the strength of the trap. Let $N_i^j$ be the number of times the trap $\{x_i^j,y_i^j\}$ is exited. Let $\kappa_j=2\sum\limits_{i=1}^{2d} \alpha_i-\alpha_j-\alpha_{j+d}$. For any $\gamma\in[0,1]$, there exists a constant $C$ that does not depend on $i$ such that:
\[
\forall A\geq 2,\ \E_{\Prob_0}\left((N^j_i)^{\gamma}1_{s^j_i\geq A}\right)\leq\frac{C}{A^{\kappa_j}}\E_{\Prob_0}((N^j_i)^{\gamma}).
\]
We also have that for any positive concave function $\phi$ such that $\phi (0)=1$ with $\Phi(t)=\int\limits_{x=0}^t \phi(x) \dd x$ we get:
\[
\forall A\geq 2,\ \E_{\Prob_0}\left(\Phi(N^j_i)1_{s^j_i\geq A}\right)\leq\frac{C}{A^{\kappa_j}}\E_{\Prob_0}(\Phi(N^j_i)).
\]  
\end{lem}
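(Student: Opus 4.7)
The plan is to condition on the equivalence class $\tilde{\omega}$ of the environment and on the partially-forgotten walk $\tilde{Y}$, noting that $N_i^j$ is a deterministic function of $\tilde{Y}$. This lets me write
\[
\E_{\Prob_0}\bigl[(N_i^j)^\gamma 1_{s_i^j \geq A}\bigr] = \E_{\Prob_0}\bigl[(N_i^j)^\gamma\, \Prob_0(s_i^j \geq A \mid \tilde{\omega}, \tilde{Y})\bigr],
\]
and analogously for the $\Phi$-moment version. Lemma \ref{lem:11} furnishes the explicit conditional density of the trap parameters $(r,k)$ (with $r = 1/(2 s_i^j)$), and in particular the upper tail bound
\[
\Prob_0(s_i^j \geq A \mid \tilde{\omega}, \tilde{Y}) \leq D_p\, A^{-\kappa_j} \exp\bigl(5(N_i^j + 2\overline{\alpha})/A\bigr),
\]
where $D_p$ is the configuration-dependent normalization constant.

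From here I would split the expectation according to the dichotomy $N_i^j \leq A$ versus $N_i^j > A$. In the favourable regime $N_i^j \leq A$ the exponential correction is bounded by $e^{5+5\overline{\alpha}}$, and comparing $D_p$ with the reference value $\kappa_j\, 4^{\kappa_j}$ (which is exactly the normalization when $h_p \equiv 1$) together with the fact that $h_p(r(1+k), r(1-k)) \to 1$ as $r \to 0$ shows that the mass of $r^{\kappa_j-1}\, h_p$ concentrates near $r = 0$, so $D_p$ is controlled by a universal constant in this regime. This produces a contribution of at most $(C/A^{\kappa_j})\,\E_{\Prob_0}[(N_i^j)^\gamma]$.

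In the regime $N_i^j > A$, I would apply the crude bounds $1_{s_i^j \geq A} \leq 1$ and $1_{N_i^j > A} \leq (N_i^j/A)^{\kappa_j}$ (valid since $\kappa_j > 0$), yielding a contribution bounded by $A^{-\kappa_j}\, \E_{\Prob_0}[(N_i^j)^{\gamma+\kappa_j}\, 1_{N_i^j > A}]$; this is controlled by $(C/A^{\kappa_j})\, \E_{\Prob_0}[(N_i^j)^\gamma]$ thanks to the finite-moment estimates provided by Lemma \ref{lem:21}. The concave version follows the same scheme: polynomial moments are replaced by $\Phi$-moments, with the second part of Lemma \ref{lem:21} supplying the required integrability and the concavity of $\Phi$ preserving the splitting argument.

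The main obstacle will be the careful bookkeeping of $D_p$. Although $D_p$ depends on the whole configuration $p$ (hence on $N_i^j$), the analysis above shows its dependence is mild enough in the favourable regime $N_i^j \leq A$ that the overall bound closes. Handling the simultaneous effect of $D_p$ and the exponential correction across the two regimes, while remaining within the moment ranges made available by Lemma \ref{lem:21}, is the delicate technical step.
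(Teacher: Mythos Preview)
Your approach has a genuine gap at precisely the point you flag as delicate: the configuration-dependent constant $D_p$ from Lemma \ref{lem:11} is \emph{not} bounded uniformly in $p$. A direct computation from the density in that lemma shows that for large $N$ one has $D_p \asymp N^{\kappa_j}$: the $r$-marginal is essentially $r^{\kappa_j-1}e^{-cNr}$, whose mass on $[0,1/4]$ is $\sim c' N^{-\kappa_j}$, and $D_p$ is the reciprocal of that mass. Hence even in your ``favourable'' regime $N_i^j\le A$ the bound you actually obtain is $C A^{-\kappa_j}\,\E\bigl[(N_i^j)^{\gamma+\kappa_j}\bigr]$, not $C A^{-\kappa_j}\,\E\bigl[(N_i^j)^{\gamma}\bigr]$. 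Closing the argument would then require $\E[(N_i^j)^{\gamma+\kappa_j}]\le C\,\E[(N_i^j)^{\gamma}]$ uniformly in $i$, which Lemma \ref{lem:21} does not give (it only bounds Ces\`aro averages $\frac1n\sum_{i\le n}\E[(N_i^j)^\beta]$, for $\beta$ in a range that $\gamma+\kappa_j$ can exceed). There is also a measurability issue you pass over: the label ``$i$-th trap after $\tau_2$'' depends on $\tau_2$, and $\tau_2$ is \emph{not} determined by $(\tilde\omega,\tilde Y)$ alone, since back-and-forths inside an $e_1$-direction trap can change whether a given time is a renewal time.

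The paper avoids both difficulties by a different conditioning. Instead of fixing $\tilde Y$ (hence $N$) and estimating $\Prob(s\ge A\mid N)$, it fixes the environment outside $\{x,y\}$, the renormalized transition probabilities at $x$ and $y$, and the walk up to first entry into the trap. Under this conditioning the pair $(r,k)$ has the explicit density $\propto r^{\kappa_j-1}(1+k)^{\cdots}(1-k)^{\cdots}h(r,k)$ with $h$ bounded above and below by absolute constants (no $N$ involved yet). The key observation is that, given $k$, the number of exits $N$ is stochastically sandwiched between geometric variables of parameter $\frac{(1+k)\beta_x+(1-k)\beta_y}{2}$, a quantity depending on $k$ and the outside environment but \emph{not} on $r$; similarly $\Prob(\tau_2=t\mid N)$ is controlled by a function of $k$ alone. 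The $r$-integration therefore factors out cleanly as $\int_0^{1/(2A)}r^{\kappa_j-1}\,dr\big/\int_0^{1/4}r^{\kappa_j-1}\,dr$, producing exactly the $A^{-\kappa_j}$ with a universal constant. In short, the paper decouples $N$ from $s$ by showing $N$ depends essentially only on the \emph{ratio} $\varepsilon_x/\varepsilon_y$, not on the strength $\varepsilon_x+\varepsilon_y$; your route, conditioning on $N$ and then bounding the tail of $s$, cannot avoid the $N$-dependent normalization and does not close.
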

\begin{proof}
First if $H$ is a geometric random variable of parameter $p$ then for any $\gamma \in [0,1]$ we have the following three inequalities:
\begin{equation}
\E((1+H)^{\gamma})\geq 1=p^{\gamma}\frac{1}{p^{\gamma}},
\label{eqn: 13.1}
\end{equation}
\begin{equation}
\E((1+H)^{\gamma})\geq\Prob\left(Z\geq \frac{1}{p}\right)\frac{1}{p^{\gamma}}\geq (1-p)^{\frac{1}{p}-1}\frac{1}{p^{\gamma}},
\label{eqn: 13.2}
\end{equation}
\begin{equation}
\E((1+H)^{\gamma})\leq\E((1+H))^{\gamma}=\frac{1}{p^{\gamma}}.
\label{eqn: 13.3}
\end{equation}
Inequalities \ref{eqn: 13.1} and \ref{eqn: 13.2} give us that there is a constant $C_{\gamma}$ such that $\E((1+H)^{\gamma})\geq C_{\gamma}\frac{1}{p^{\gamma}}$, inequality \ref{eqn: 13.1} gives us the result for $p\geq\frac{1}{2}$ and since $(1-p)^{\frac{1}{p}-1}$ converges to $\exp(-1)$ when $p$ goes to $0$, inequality \ref{eqn: 13.2} gives us the result for $p\leq\frac{1}{2}$.\\
By lemma \ref{lem:24} we get that there is a constant $C_{\phi}$ such that:
\begin{equation}
\frac{1}{2} \frac{1}{p}\phi\left(\frac{1}{p}\right) 
\leq \E(\Phi(1+H)) 
\leq C_{\phi} \frac{1}{p}\phi\left(\frac{1}{p}\right).
\label{eqn: 13.ineq}
\end{equation}
Let $t\in \N$ be an integer. In the following we will call renewal hyperplan the set of vertices $\{x,x.e_1=Y_t.e_1\}$. We look at the $n^{\text{th}}$ time, after time $t$, that the walk encounters a vertex that touches a trap $\{x,y\}$ in the direction $j$ that has never been visited before and such that $x.e_1,y.e_1\geq Y_t.e_1$. We want to show that the strength of the trap is basically independent from the number of times the walk leaves the trap and from the random variable $1_{\tau_2=t}$. Let ${x,y}$ be the corresponding trap with $x$ being the first vertex visited.\\
Now we look at the trap $\{x,y\}$. Let $i$ be such that $y=x+e_i$, we will write $\alpha_x:=\alpha_i$, $\alpha_y:=\alpha_{i+d}$ and $\overline{\alpha}:=\sum\limits_{k=1}^{2d}\alpha_k$. The density probability (for $\Prob^{(\alpha)}$) for the transition probabilities $\omega(x,y)$ and $\omega(y,x)$, knowing all the transition probabilities $(\omega(z_1,z_2))_{z_1\in\Z^d \backslash \{x,y\}}$, the renormalized transition probabilities $(\frac{\omega(x,z)}{1-\omega(x,y)})_{z\not = y},(\frac{\omega(y,z)}{1-\omega(y,x)})_{z\not = x}$ and that $\{x,y\}$ is a trap is:
\[
C\omega(x,y)^{\alpha_x-1}(1-\omega(x,y))^{\overline{\alpha}-\alpha_x-1}\omega(y,x)^{\alpha_y-1}(1-\omega(y,x))^{\overline{\alpha}-\alpha_y-1}1_{\omega(x,y)+\omega(y,x)\geq \frac{3}{2}}.
\]
Now we make the change of variables:
\[
1-\omega(y,x)=r(1-k),\ 1-\omega(x,y)=r(1+k),
\]
which gives a density probability of:
\[
2rCr^{\kappa_j-2}(1-k)^{\overline{\alpha}-\alpha_y-1}(1+k)^{\overline{\alpha}-\alpha_x-1}(1-r(1+k))^{\alpha_x-1}(1-r(1-k))^{\alpha_y-1}1_{r\leq\frac{1}{4}}\dd r \dd k.
\]
Let $h(r,k)$ be defined by:
\[
h(r,k)=(1-r(1+k))^{\alpha_x-1}(1-r(1-k))^{\alpha_y-1}.
\]
For $0\leq r \leq \frac{1}{4}$ and $-1\leq k \leq 1$ we have:
\[
\log(h(r,k))\leq |\alpha_x-1|\left|\log\left(\frac{1}{2}\right)\right|+|\alpha_y-1|\left|\log\left(\frac{1}{2}\right)\right|\leq (\alpha_x+\alpha_y+2)\log(2).
\]
So for $0\leq r \leq \frac{1}{4}$ and $-1\leq k \leq 1$ we have:
\[
2^{-(\alpha_x+\alpha_y+2)}\leq h(r,k) \leq 2^{\alpha_x+\alpha_y+2}.
\]
Now the density probability is:
\[
2Ch(r,k)r^{\kappa_j-1}(1-k)^{\overline{\alpha}-\alpha_y-1}(1+k)^{\overline{\alpha}-\alpha_x-1}1_{r\leq\frac{1}{4}}\dd r\dd k.
\]
Now we look at a specific environment $\omega$ and an edge $\{x^{\prime},y^{\prime}\}$ in that environment. To simplify the notation we will write $\varepsilon_{x^{\prime}}=1-\omega(x^{\prime},y^{\prime})$ and $\varepsilon_{y^{\prime}}=1-\omega(y^{\prime},x^{\prime})$. When the walk leaves the trap there are three possibilities: \\
-the walk goes to infinity before going back to the trap or the renewal hyperplan\\
-the walk goes to the renewal hyperplan before it goes back to the trap (this does not necessarily mean that the walk will go back to the trap after going to the renewal hyperplan)\\
-the walk goes back to the trap before it goes to the renewal hyperplan (this does not necessarily mean that the walk will eventually go to the renewal hyperplan).\\
\\
If the walk is in $x^{\prime}$ let $\beta^{\infty}_{x^{\prime}}$ be the probability, knowing that the next step isn't crossing the trap, that the walk goes to infinity without going to the renewal hyperplan or the trap. Similarly, let $\beta^{0}_{x^{\prime}}$ be the probability, knowing that the next step isn't crossing the trap, that the walk goes to the renewal hyperplan before it goes back to the trap (this does not mean that the walk necessarily goes back to the trap). We will also define $\beta_{x^{\prime}}$ by $\beta_{x^{\prime}}:=\beta^{\infty}_{x^{\prime}}+\beta^{0}_{x^{\prime}}$. Similarly we will define $\beta_{y^{\prime}},\beta^{\infty}_{y^{\prime}},\beta^{0}_{y^{\prime}}$. \\
\\
Now, if the walk is in $x^{\prime}$, the probability that when the walk leaves the trap it either never comes back to the trap or goes to the renewal hyperplan before it goes back to the trap is:
\[
\frac{\varepsilon_{x^{\prime}}}{\varepsilon_{x^{\prime}}+\varepsilon_{y^{\prime}}-\varepsilon_{x^{\prime}}\varepsilon_{y^{\prime}}}\beta_{x^{\prime}}
+\frac{\varepsilon_{y^{\prime}}(1-\varepsilon_{x^{\prime}})}{\varepsilon_{x^{\prime}}+\varepsilon_{y^{\prime}}-\varepsilon_{x^{\prime}}\varepsilon_{y^{\prime}}}\beta_{y^{\prime}}
=\frac{\varepsilon_{x^{\prime}}\beta_{x^{\prime}}+\varepsilon_{y^{\prime}}(1-\varepsilon_{x^{\prime}})\beta_{y^{\prime}}}{\varepsilon_{x^{\prime}}+\varepsilon_{y^{\prime}}-\varepsilon_{x^{\prime}}\varepsilon_{y^{\prime}}}.
\]
Similarly, if the walk is in $y^{\prime}$, this probability is:
\[
\frac{\varepsilon_{x^{\prime}}(1-\varepsilon_{y^{\prime}})\beta_{x^{\prime}}+\varepsilon_{y^{\prime}}\beta_{y^{\prime}}}{\varepsilon_{x^{\prime}}+\varepsilon_{y^{\prime}}-\varepsilon_{x^{\prime}}\varepsilon_{y^{\prime}}}.
\]
Now we want to show that that both these quantities are almost equal to:
\[
\frac{\varepsilon_{x^{\prime}}\beta_{x^{\prime}}+\varepsilon_{y^{\prime}}\beta_{y^{\prime}}}{\varepsilon_{x^{\prime}}+\varepsilon_{y^{\prime}}}.
\]
We will only show it for the first quantity, the proof is the same for the second one. We recall that $\varepsilon_{x^{\prime}},\varepsilon_{y^{\prime}}\leq\frac{1}{2}$, therefore:
\[
0\leq\varepsilon_{x^{\prime}}\varepsilon_{y^{\prime}}\leq\frac{1}{2}(\varepsilon_{x^{\prime}}+\varepsilon_{y^{\prime}})
\]
and
\[
0\leq \varepsilon_{x^{\prime}}\varepsilon_{y^{\prime}}\beta_{y^{\prime}}\leq\frac{1}{2}(\varepsilon_{x^{\prime}}\beta_{x^{\prime}}+\varepsilon_{y^{\prime}}\beta_{y^{\prime}}).
\]
So we get:
\[
\frac{1}{2}\frac{\varepsilon_{x^{\prime}}\beta_{x^{\prime}}+\varepsilon_{y^{\prime}}\beta_{y^{\prime}}}{\varepsilon_{x^{\prime}}+\varepsilon_{y^{\prime}}}
\leq \frac{\varepsilon_{x^{\prime}}\beta_{x^{\prime}}+\varepsilon_{y^{\prime}}(1-\varepsilon_{x^{\prime}})\beta_{y^{\prime}}}{\varepsilon_{x^{\prime}}+\varepsilon_{y^{\prime}}-\varepsilon_{x^{\prime}}\varepsilon_{y^{\prime}}}
\leq 2\frac{\varepsilon_{x^{\prime}}\beta_{x^{\prime}}+\varepsilon_{y^{\prime}}\beta_{y^{\prime}}}{\varepsilon_{x^{\prime}}+\varepsilon_{y^{\prime}}}.
\]
Similarly, if the walk is in ${x^{\prime}}$, the probability that the walk goes to infinity knowing that the walk either goes to infinity or the renewal hyperplan before coming to the trap is:
\[
\frac{\varepsilon_{x^{\prime}}\beta^{\infty}_{x^{\prime}}+\varepsilon_{y^{\prime}}(1-\varepsilon_{x^{\prime}})\beta^{\infty}_{y^{\prime}}}{\varepsilon_{x^{\prime}}+\varepsilon_{y^{\prime}}-\varepsilon_{x^{\prime}}\varepsilon_{y^{\prime}}}\frac{\varepsilon_{x^{\prime}}+\varepsilon_{yx^{\prime}}-\varepsilon_{x^{\prime}}\varepsilon_{y^{\prime}}}{\varepsilon_{x^{\prime}}\beta_{x^{\prime}}+\varepsilon_{y^{\prime}}(1-\varepsilon_{x^{\prime}})\beta_{y^{\prime}}}
=\frac{\varepsilon_{x^{\prime}}\beta^{\infty}_{x^{\prime}}+\varepsilon_{y^{\prime}}(1-\varepsilon_{x^{\prime}})\beta^{\infty}_{y^{\prime}}}{\varepsilon_{x^{\prime}}\beta_{x^{\prime}}+\varepsilon_{y^{\prime}}(1-\varepsilon_{x^{\prime}})\beta_{y^{\prime}}}.
\]
And if it is in $y^{\prime}$ this probability is:
\[
\frac{\varepsilon_{x^{\prime}}(1-\varepsilon_{y^{\prime}})\beta^{\infty}_{x^{\prime}}+\varepsilon_{y^{\prime}}\beta^{\infty}_{y^{\prime}}}{\varepsilon_{x^{\prime}}(1-\varepsilon_{y^{\prime}})\beta_{x^{\prime}}+\varepsilon_{y^{\prime}}\beta_{y^{\prime}}}.
\]
We want to show that both these probabilities are almost equal to $\frac{\varepsilon_{x^{\prime}}\beta^{\infty}_{x^{\prime}}+\varepsilon_{y^{\prime}}\beta^{\infty}_{y^{\prime}}}{\varepsilon_{x^{\prime}}\beta_{x^{\prime}}+\varepsilon_{y^{\prime}}\beta_{y^{\prime}}}$. We will only show it for the first one:
\[
\begin{aligned}
\frac{\varepsilon_{x^{\prime}}\beta^{\infty}_{x^{\prime}}+\varepsilon_{y^{\prime}}(1-\varepsilon_{x^{\prime}})\beta^{\infty}_{y^{\prime}}}{\varepsilon_{x^{\prime}}\beta_{x^{\prime}}+\varepsilon_{y^{\prime}}(1-\varepsilon_{x^{\prime}})\beta_{y^{\prime}}}
\leq &\frac{\varepsilon_{x^{\prime}}\beta^{\infty}_{x^{\prime}}+\varepsilon_{y^{\prime}}\beta^{\infty}_{y^{\prime}}}{\varepsilon_{x^{\prime}}\beta_{x^{\prime}}+\varepsilon_{y^{\prime}}(1-\varepsilon_{x^{\prime}})\beta_{y^{\prime}}}\\
\leq & \frac{1}{(1-\varepsilon_x)}\frac{\varepsilon_{x^{\prime}}\beta^{\infty}_{x^{\prime}}+\varepsilon_{y^{\prime}}\beta^{\infty}_{y^{\prime}}}{\varepsilon_{x^{\prime}}\beta_{x^{\prime}}+\varepsilon_{y^{\prime}}\beta_{y^{\prime}}}\\
\leq & 2 \frac{\varepsilon_{x^{\prime}}\beta^{\infty}_{x^{\prime}}+\varepsilon_{y^{\prime}}\beta^{\infty}_{y^{\prime}}}{\varepsilon_{x^{\prime}}\beta_{x^{\prime}}+\varepsilon_{y^{\prime}}\beta_{y^{\prime}}}.
\end{aligned}
\] 
And we also get, the same way:
\[
\frac{\varepsilon_{x^{\prime}}\beta^{\infty}_{x^{\prime}}+\varepsilon_{y^{\prime}}(1-\varepsilon_{x^{\prime}})\beta^{\infty}_{y^{\prime}}}{\varepsilon_{x^{\prime}}\beta_{x^{\prime}}+\varepsilon_{y^{\prime}}(1-\varepsilon_{x^{\prime}})\beta_{y^{\prime}}}
\geq \frac{1}{2}\frac{\varepsilon_{x^{\prime}}\beta^{\infty}_{x^{\prime}}+\varepsilon_{y^{\prime}}\beta^{\infty}_{y^{\prime}}}{\varepsilon_{x^{\prime}}\beta_{x^{\prime}}+\varepsilon_{y^{\prime}}\beta_{y^{\prime}}}.
\]
Now we get back to the trap $\{x,y\}$. Let $N$ be the number of times the walks leaves the trap $\{x,y\}$ before going to the renewal hyperplan (so if the walk never goes to the renewal hyperplan, N is just the number of times the walk leaves the trap $\{x,y\}$). We get that knowing $\varepsilon_x,\varepsilon_y$ and $N$, the probability (for $\prob_0^{\omega}$) that the walk never goes to the renewal hyperplan is between $\frac{1}{2}\frac{\varepsilon_x\beta^{\infty}_x+\varepsilon_y\beta^{\infty}_y}{\varepsilon_x\beta_x+\varepsilon_y\beta_y}$ and $2\frac{\varepsilon_x\beta^{\infty}_x+\varepsilon_y\beta^{\infty}_y}{\varepsilon_x\beta_x+\varepsilon_y\beta_y}$. \\
We also have that there exists two geometric random variables $N^{-}$ and $N^{+}$ respectively of parameter $\frac{1}{2}\frac{\varepsilon_x\beta_x+\varepsilon_y\beta_y}{\varepsilon_x+\varepsilon_y}$ and $2\frac{\varepsilon_x\beta_x+\varepsilon_y\beta_y}{\varepsilon_x+\varepsilon_y}$ such that $\prob_0^{\omega}$ almost surely:
\[
1+N^{-}\leq N \leq 1+N^{+}.
\]
Therefore, by equations \ref{eqn: 13.1}, \ref{eqn: 13.2}, \ref{eqn: 13.3} and \ref{eqn: 13.ineq} there exists two positive constants $C_1$ and $C_2$ (that depend on $\gamma$ and $\Phi$) such that for $f$ equal to either $x\rightarrow x^{\gamma}$ or $\Phi$:
\begin{equation}
C_1f\left(\frac{\varepsilon_x+\varepsilon_y}{\varepsilon_x\beta_x+\varepsilon_y\beta_y}\right)
\leq \E_{\prob_0^{\omega}}(f(N))
\leq C_2f\left(\frac{\varepsilon_x+\varepsilon_y}{\varepsilon_x\beta_x+\varepsilon_y\beta_y}\right). \label{eqn:13.4}
\end{equation}
Now let $f$ be either $x\rightarrow x^{\gamma}$ or $\Phi$. We need to show that $N$ is almost independent from $1_{\tau_2=t}$. Let $t_{xy}$ be the first time the walk is in $x$ or $y$ and let $B$ be the event that ``$\tau_2$ can be equal to $t$" ie there exists $t^{\prime}<t$ ($t^{\prime}$ plays the role of $\tau_1$) such that: \\
- $\forall i<t^{\prime},\ X_i.e_1<X_{t^{\prime}}.e_1$, \\
- $\forall i \in [\![t^{\prime},t-1]\!],\ X_{t^{\prime}}.e_1\leq X_i.e_1 < X_{t}.e_1$, \\
- $\forall i \in [\![t,t_{xy}]\!],\ X_i.e_1 \geq X_{t}.e_1$, \\
- $\forall i \in [\![0,t^{\prime}-1]\!]\cup [\![t^{\prime}+1,t-1]\!], \left(\exists j <i,\ X_j.e_1\geq X_i.e_1\right) \text{ or } \left( \exists j\in [\![i+1,t-1]\!] X_j.e_1<X_i.e_1\right)$. \\
\\
We have that if $B$ isn't true then $\tau_2$ cannot be equal to $t$. If $B$ is true the $\tau_2=t$ iff the walk never crosses the renewal hyperplan after time $t_{xy}$. So, for any environment $\omega$:
\begin{equation}
\frac{1}{2}\frac{\varepsilon_x\beta^{\infty}_x+\varepsilon_y\beta^{\infty}_y}{\varepsilon_x\beta_x+\varepsilon_y\beta_y} \prob_0^{\omega}(B)
\leq \prob_0^{\omega}\left(\tau_2=t|N\right)
\leq 2\frac{\varepsilon_x\beta^{\infty}_x+\varepsilon_y\beta^{\infty}_y}{\varepsilon_x\beta_x+\varepsilon_y\beta_y} \prob_0^{\omega}(B) \label{eqn:13.5}
\end{equation}
To simplify notations we will write 
\[
\overline{h}(k):=\frac{(1+k)\beta^{\infty}_x+(1-k)\beta^{\infty}_y}{(1+k)\beta_x+(1-k)\beta_y}.
\]
We have (in the following, the constant $C$ will depend on the line):
\[
\begin{aligned}
&\E_{\Prob_0}\left(f(N)1_{\varepsilon_x+\varepsilon_y\leq\frac{1}{A}}1_{\tau_2=t}\right)\\
\leq & 2\E_{\Prob_0}\left(f(N)\frac{\varepsilon_x\beta^{\infty}_x+\varepsilon_y\beta^{\infty}_y}{\varepsilon_x\beta_x+\varepsilon_y\beta_y}1_{\varepsilon_x+\varepsilon_y\leq\frac{1}{A}}\right) \text{ by } \ref{eqn:13.5}\\
\leq& C \E_{\Prob_0}\left(f\left(\frac{\varepsilon_x+\varepsilon_y}{\varepsilon_x\beta_x+\varepsilon_y\beta_y}\right)1_{\varepsilon_x+\varepsilon_y\leq\frac{1}{A}}\frac{\varepsilon_x\beta^{\infty}_x+\varepsilon_y\beta^{\infty}_y}{\varepsilon_x\beta_x+\varepsilon_y\beta_y}\right) \text{ by } \ref{eqn:13.4}.
\end{aligned}
\]
Now we use the fact that the various $\beta$ only depend on the trajectory of the walk up to the time it encounters the $n^{\text{th}}$ trap in the direction $j$ after time $t$, the transition probabilities $(\omega(z_1,z_2))_{z_1\in\Z^d \backslash \{x,y\}}$, the renormalized transition probabilities $(\frac{\omega(x,z)}{1-\omega(x,y)})_{z\not = y},(\frac{\omega(y,z)}{1-\omega(y,x)})_{z\not = x}$ and that $\{x,y\}$ is a trap. But the law of $(\omega(x,y),\omega(x,y))$ is independent of this so we get:
\[
\begin{aligned} 
&\E_{\Prob_0}\left(f(N)1_{\varepsilon_x+\varepsilon_y\leq\frac{1}{A}}1_{\tau_2=t}\right)\\
\leq & C \E_{\Prob_0}\left(\int\limits_{r=0}^{\frac{1}{2A}}\int\limits_{k=-1}^{1}f\left(\frac{r}{r(1+k)\beta_x+r(1-k)\beta_y}\right)2Ch(r,k)r^{\kappa_j-1}(1-k)^{\alpha_y}(1+k)^{\alpha_x}\overline{h}(k)\dd k\dd r\right) \\
\leq& C \E_{\Prob_0}\left(\int\limits_{r=0}^{\frac{1}{2A}}r^{\kappa_j-1}\dd r\int\limits_{k=-1}^{1}f\left(\frac{1}{(1+k)\beta_x+(1-k)\beta_y}\right)(1-k)^{\alpha_y}(1+k)^{\alpha_x}\overline{h}(k)\dd k\right)\\
=& C \left(\frac{2}{A}\right)^{\kappa_j} \E_{\Prob_0}\left(\int\limits_{r=0}^{\frac{1}{4}}r^{\kappa_j-1}\dd r\int\limits_{k=-1}^{1}f\left(\frac{1}{(1+k)\beta_x+(1-k)\beta_y}\right)(1-k)^{\alpha_y}(1+k)^{\alpha_x}\overline{h}(k)\dd k\right)\\
\leq& \frac{C}{A^{\kappa_j}}\E_{\Prob_0}\left(\int\limits_{r=0}^{\frac{1}{4}}\int\limits_{k=-1}^{1}f\left(\frac{r}{r(1+k)\beta_x+r(1-k)\beta_y}\right)2Ch(r,k)r^{\kappa_j-1}(1-k)^{\alpha_y}(1+k)^{\alpha_x}\overline{h}(k)\dd k\dd r\right) \\
\leq& \frac{C}{A^{\kappa_j}} \E_{\Prob_0}\left(f(N)1_{\varepsilon_x+\varepsilon_y\leq\frac{1}{2}}\frac{\varepsilon_x\beta^{\infty}_x+\varepsilon_y\beta^{\infty}_y}{\varepsilon_x\beta_x+\varepsilon_y\beta_y}\right)\\
\leq& \frac{C}{A^{\kappa_j}}\E_{\Prob_0}\left(f(N)1_{\varepsilon_x+\varepsilon_y<\frac{1}{2}}1_{\tau_2=t}\right).
\end{aligned}
\]
Then, by summing on all $t$ we get the result. 
\end{proof}

\subsection{The time the walk spends in trap}
Now that we have some independence, we can start to look at the precise behaviour of the time spent in the traps. First we want to show that the number of times the walk enters a trap times the strength of said trap is a good approximation of the total time spent in this trap.
\begin{lem}\label{lem:17}
Let $j\in [\![ 1,d ]\!]$ be a direction. Now let $\{x^j_i,y^j_i\}$ be the $i^{\text{th}}$ trap in the direction $j$ entered after time $\tau_2$ and such that $x_i^j.e_1,y_i^j.e_1\geq Y_{\tau_2}.e_1$. Let $s^j_i$ be the strength of this trap, $N^j_i$ the number of times the walk enters this trap and $\ell^j_i=\#\{n,Y_n\in \{x^j_i,y^j_i\} \}$ the time spent in the trap. We have for any environment $\omega$, for any $A, B\geq 0$, for any integer $m$ and for any $C\in \R^+ \cup \{\infty\}$: 
\[
\prob_0^{\omega}\left(\sum\limits_{i=1}^n \ell^j_i1_{N^j_i\geq m}1_{s^j_i\leq C}\geq A \text{ and } \sum\limits N^j_i1_{N^j_i\geq m} s^j_i1_{s^j_i\leq C}\leq B\right)\leq \frac{5B}{A}.
\]
\end{lem}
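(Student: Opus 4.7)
The plan is to exploit the simple geometric structure of a single trap visit: given the environment and the partially-forgotten walk $\tilde{Y}$, the time $\ell_i^j$ spent in each trap is a sum, over the visits, of conditionally independent random variables whose expectations are controlled by the strength $s_i^j$. Combined with Markov's inequality, this will immediately yield the stated bound.

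Step 1 (setup). Condition on the partially-forgotten walk $\tilde{Y}$. Once $\omega$ and $\tilde{Y}$ are fixed, the trap sequence $\{x_i^j,y_i^j\}$, the counts $N_i^j$, and the entry vertex of each visit are all determined; the strength $s_i^j$ is a function of $\omega$ alone. Thus
\[
V := \sum_{i} N_i^j\, 1_{N_i^j \geq m}\, s_i^j\, 1_{s_i^j \leq C}
\]
is $\sigma(\omega,\tilde{Y})$-measurable. The only remaining randomness inside a given visit is the number of back-and-forths between the two trap vertices before exiting, and these are conditionally independent across visits.

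Step 2 (sojourn per visit). Fix a trap $\{x,y\}$ with $\varepsilon_x=1-\omega(x,y)$, $\varepsilon_y=1-\omega(y,x)$, so $s = 1/(\varepsilon_x+\varepsilon_y)$. Let $T$ be the number of vertices in $\{x,y\}$ visited during a single entry. A two-state analysis (setting $a=\E_{\prob_0^\omega}[T\mid\text{enter at }x]$, $b=\E_{\prob_0^\omega}[T\mid\text{enter at }y]$, and solving $a=1+(1-\varepsilon_x)b$, $b=1+(1-\varepsilon_y)a$) gives
\[
a=\frac{2-\varepsilon_x}{\varepsilon_x+\varepsilon_y-\varepsilon_x\varepsilon_y},
\]
and symmetrically for $b$. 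Since the trap condition forces $\varepsilon_x+\varepsilon_y < 1/2$, one has $\varepsilon_x\varepsilon_y \leq \tfrac{1}{8}(\varepsilon_x+\varepsilon_y)$, hence $\varepsilon_x+\varepsilon_y-\varepsilon_x\varepsilon_y \geq \tfrac{7}{8}(\varepsilon_x+\varepsilon_y)$, so $a,b \leq \tfrac{16}{7}s < 3s$, uniformly in the entry vertex.

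Step 3 (assembly). Writing $U := \sum_i \ell_i^j\, 1_{N_i^j \geq m}\, 1_{s_i^j \leq C}$, the previous step combined with conditional independence of visits yields $\E_{\prob_0^\omega}[\ell_i^j \mid \tilde{Y}] \leq 3\, N_i^j\, s_i^j$, and therefore
\[
\E_{\prob_0^\omega}[U \mid \tilde{Y}] \leq 3 V.
\]
Multiplying by $1_{V \leq B}$ (which is $\tilde{Y}$-measurable) and taking unconditional expectations gives $\E_{\prob_0^\omega}[U\cdot 1_{V\leq B}] \leq 3B$. Finally, Markov's inequality produces
\[
\prob_0^\omega(U \geq A,\ V \leq B)\;\leq\;\frac{\E_{\prob_0^\omega}[U\cdot 1_{V\leq B}]}{A}\;\leq\;\frac{3B}{A}\;\leq\;\frac{5B}{A},
\]
as required. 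The only non-routine ingredient is the elementary two-state computation of Step 2; the constant $5$ in the statement is comfortably loose, accommodating the slightly sharper bound $3B/A$ that the argument actually produces.
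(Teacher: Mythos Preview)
Your proof is correct and follows essentially the same approach as the paper: condition on the partially-forgotten walk $\tilde{Y}$, bound the conditional expectation of the sojourn time by a multiple of $N_i^j s_i^j$, then apply Markov's inequality. The paper decomposes each visit into a geometric number of back-and-forths plus a small correction, while you compute the expected per-visit sojourn directly via the two-state recursion; these are equivalent computations. One minor point: conditioning on $\tilde{Y}$ fixes not only the entry vertex but also the exit vertex of each visit, so the relevant per-visit bound is $\E[T\mid \text{enter},\text{exit}] \leq 2/(\varepsilon_x+\varepsilon_y-\varepsilon_x\varepsilon_y)$ rather than your $a=(2-\varepsilon_x)/(\varepsilon_x+\varepsilon_y-\varepsilon_x\varepsilon_y)$; this is still $\leq \tfrac{16}{7}s < 3s$ by your denominator estimate, so the conclusion is unaffected.
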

\begin{proof}
Let $\omega$ be an environment, $(\tilde{Y}_i)_{i\in\N}$ be the partially forgotten walk on this environment. Let $p^j_i=\omega(x^j_i,y^j_i)\omega(y^j_i,x^j_i)$. Now the number of back and forths inside the trap $(x^j_i,y^j_i)$ during its $k^{\text{th}}$ visit is equal to $H^j_{i,k}$ where $H^j_{i,k}$ is a geometric random variable of parameter $p^j_i$. Knowing the partially-forgotten walk and $p^j_i$, the $H^j_{i,k}$ are independent and we get for any $j$:
\[
\begin{aligned}
\E_{\prob_0^{\omega}}\left(\sum\limits_{i=1}^{n}1_{N^j_i\geq m}1_{s^j_i\leq C}\sum\limits_{k=1}^{N^j_i}2H^j_{i,k}|\tilde{Y}\right)
=&\sum\limits_{i=1}^{n}1_{N^j_i\geq m}1_{s^j_i\leq C}\sum\limits_{k=1}^{N^j_i}2\frac{p^j_i}{1-p^j_i}\\
\leq & 2\sum\limits_{i=1}^{n} 1_{N^j_i\geq m}1_{s^j_i\leq C}N^j_i\frac{1}{1-p^j_i}.
\end{aligned}
\]
Now we use the fact that $\omega(x^j_i,y^j_i)\geq \frac{1}{2}$ to show that $1-p^j_i\geq \frac{1}{2s^j_i}$:
\[
\begin{aligned}
1-p^j_i
=& 1-(1-(1-\omega(x^j_i,y^j_i)))(1-(1-\omega(y^j_i,x^j_i))\\
=& (1-\omega(x^j_i,y^j_i))+(1-\omega(y^j_i,x^j_i))-(1-\omega(x^j_i,y^j_i))(1-\omega(y^j_i,x^j_i))\\
\geq & (1-\omega(x^j_i,y^j_i))+(1-\omega(y^j_i,x^j_i)) -\frac{1}{2}(1-\omega(y^j_i,x^j_i))\\
\geq &\frac{(1-\omega(x^j_i,y^j_i))+(1-\omega(y^j_i,x^j_i))}{2}\\
=&\frac{1}{2s^j_i}.
\end{aligned}
\]
So we get:
\[
\E_{\prob_0^{\omega}}\left(\sum\limits_{i=1}^{n}1_{N^j_i\geq m}1_{s^j_i\leq C}\sum\limits_{k=1}^{N^j_i}2H^j_{i,k}|\tilde{Y}\right)\leq 4\sum\limits_{i=1}^{n} N^j_i1_{N^j_i\geq m}s^j_i1_{s^j_i\leq C}.
\]
The actual value of $\ell_i^j$ can be slightly larger than $\sum\limits_{k=1}^{N^j_i}2H^j_{i,k}$ because this only counts the back-and-forths, so we miss the correct amount by $1$ every time the walks crosses the trap an even number of times and by $2$ every time the walks crosses the trap an odd number of times. So we get that the time $\ell_i^j$ the walk spends in the $i^{\text{th}}$ trap is smaller than $2N^j_i+\sum\limits_{j=1}^{N^j_i}2H^j_{i,k}$. For any positive constants $A,B>0$, let $E^n(B)$ be the event $\sum\limits_{i=1}^{n} N^j_i1_{N^j_i\geq m}1_{s^j_i\leq C}s^j_i\leq B$, we have:
\[
\begin{aligned}
&\prob_0^{\omega}\left(\sum\limits_{i=1}^n \ell^j_i1_{N^j_i\geq m}1_{s^j_i\leq C}\geq A \text{ and } \sum\limits N^j_i1_{s^j_i\leq C}1_{N^j_i\geq m}s^j_i\leq B\right)\\
=&\E_{\prob_0^{\omega}}\left(\prob_0^{\omega}\left(\sum\limits_{i=1}^n \ell^j_i1_{s^j_i\leq C}1_{N^j_i\geq m}\geq A |\tilde{Y}\right)1_{E^n(B)}\right)\\
\leq & \E_{\prob_0^{\omega}}\left(\frac{\sum\limits_{i=1}^n N^j_i1_{N^j_i\geq m}1_{s^j_i\leq C}(4s^j_i+2)}{A}1_{E^n(B)}\right)\\
\leq & \E_{\prob_0^{\omega}}\left(\frac{\sum\limits_{i=1}^n 5N^j_i1_{N^j_i\geq m}s^j_i1_{s^j_i\leq C}}{A}1_{E^n(B)}\right) \text{ since } s_i^j>2 \\
\leq & \E_{\prob_0^{\omega}}\left(\frac{5B}{A}1_{E^n(B)}\right)
\leq \frac{5B}{A}.
\end{aligned}
\]
\end{proof}

Now we want to show that we can neglect the time spent in traps in directions such that $\kappa_j\not=\kappa$ and in traps that are visited a lot of times. This will allow us to have traps that are rather similar so that the time spent in those traps are almost identically distributed.
\begin{lem}\label{lem:16}
Let $j\in [\!|1,d|\!]$ be an integer that represents the direction of the trap we will consider.
Let $\{x_i,y_i\}$ be the $i^{\text{th}}$ trap in the direction $j$ visited by the walk after time $\tau_2$ and such that $x_i.e_1 \geq Y_{\tau_2}.e_1$ and $y_i.e_1 \geq Y_{\tau_2}.e_1$. Let $\kappa_j=2\alpha-\alpha_j-\alpha_{j+d}\geq \kappa$.\\
If $\kappa <1$ there are two cases:
If $\kappa_j= \kappa$, for any $\varepsilon>0$ there exists an integer $m_{\varepsilon}$ such that for $n$ large enough:
\[
\Prob_0\left(\sum\limits_{i=1}^n \ell_i^j 1_{N^j_i\geq m_{\varepsilon}} \geq \varepsilon n^{\frac{1}{\kappa}}\right)
\leq \varepsilon.
\]
If $\kappa_j > \kappa$, for any $\varepsilon>0$ there exists an integer $n_{\varepsilon}$ such that for $n\geq n_{\varepsilon}$:
\[
\Prob_0\left(\sum\limits_{i=1}^n \ell_i^j \geq \varepsilon n^{\frac{1}{\kappa}} \right)
\leq \varepsilon.
\]
If $\kappa = 1$ there are two cases:
If $\kappa_j= \kappa$, for any $\varepsilon>0$ there exists an integer $m_{\varepsilon}$ such that for $n$ large enough:
\[
\Prob_0\left(\sum\limits_{i=1}^n \ell_i^j1_{N^j_i\geq m_{\varepsilon}} \geq \varepsilon n\log(n)\right)
\leq \varepsilon.
\]
If $\kappa_j > 1$, for any $\varepsilon>0$ there exists an integer $n_{\varepsilon}$ such that for $n\geq n_{\varepsilon}$:
\[
\Prob_0\left(\sum\limits_{i=1}^n \ell_i^j \geq \varepsilon n\log(n) \right)
\leq \varepsilon.
\]
\end{lem}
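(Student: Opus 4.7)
The strategy is uniform across the four cases: bound $\sum_i \ell_i^j$ in terms of $\sum_i N_i^j s_i^j$ via Lemma \ref{lem:17}, then control the latter using the near-independence of $N_i^j$ and $s_i^j$ (Lemma \ref{lem:13}) combined with the moment bounds of Lemma \ref{lem:21}. The four cases split naturally according to whether $\kappa_j > \kappa$ (no cutoff needed) or $\kappa_j = \kappa$ (cutoff on $N_i^j$ needed).

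For the cases $\kappa_j > \kappa$, pick $\beta \in (\kappa, \min(\kappa_j, (\kappa+\kappa')/2, 1))$, non-empty by hypothesis. Writing $\E[(N_i^j)^\beta (s_i^j)^\beta]$ as a tail integral in $s_i^j$ and applying Lemma \ref{lem:13}, the condition $\beta < \kappa_j$ makes the integral converge, so by Lemma \ref{lem:21} one gets $\E[\sum_i (N_i^j s_i^j)^\beta] \leq Cn$. Sub-additivity $(\sum x_i)^\beta \leq \sum x_i^\beta$ (valid for $\beta \leq 1$) and Markov then yield $\Prob_0(\sum_i N_i^j s_i^j > B) \leq Cn/B^\beta$, which for $B$ of order $n^{1/\kappa}$ tends to zero because $\beta > \kappa$; Lemma \ref{lem:17} with $m=0$ closes the $\kappa<1,\kappa_j>\kappa$ case. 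In the $\kappa=1,\kappa_j>1$ subcase, use instead the $\Phi$ from Lemma \ref{lem:21}: since $\int_0^\infty \min(1, t^{-\kappa_j})\,dt < \infty$, Lemma \ref{lem:13} applied to $\Phi$ gives $\E[\sum \Phi(N_i^j) s_i^j] \leq Cn$, hence $\E[\sum N_i^j s_i^j] \leq Cn$ (since $\Phi(t) \geq t$), and Markov at level $\varepsilon n \log n$ finishes.

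For the cases $\kappa_j = \kappa$, first truncate strengths: by Lemma \ref{lem:13} with $\gamma=0$, $\Prob_0(\exists i \leq n : s_i^j > M_n) \leq Cn/M_n^{\kappa_j}$, which is $\leq \varepsilon/3$ for $M_n$ a suitable constant times $n^{1/\kappa}$ (resp.\ $n$ for $\kappa=1$). On this event, apply Lemma \ref{lem:17} to $\sum_i \ell_i^j 1_{N_i^j \geq m} 1_{s_i^j \leq M_n}$. For $\kappa<1$, pick $\beta \in (\kappa, \min((\kappa+\kappa')/2, 1))$ so that $\beta > \kappa_j$; then Lemma \ref{lem:13} gives $\E[(N_i^j)^\beta (s_i^j)^\beta 1_{s_i^j \leq M_n}] \leq C M_n^{\beta-\kappa} \E[(N_i^j)^\beta]$, and a dominated-convergence consequence of Lemma \ref{lem:21} yields $\sum_i \E[(N_i^j)^\beta 1_{N_i^j \geq m}] \leq n\, g_\beta(m)$ with $g_\beta(m) \to 0$. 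Combining, $\E[\sum (N_i^j s_i^j)^\beta 1_{N_i^j \geq m,\, s_i^j \leq M_n}] \leq C n g_\beta(m) M_n^{\beta-\kappa}$; with $M_n \asymp n^{1/\kappa}$ this equals a constant times $n^{\beta/\kappa} g_\beta(m)$, matching $B^\beta$ for $B \asymp n^{1/\kappa}$ under Markov, and the surviving factor is $g_\beta(m_\varepsilon)$, made $\leq \varepsilon$ by choosing $m_\varepsilon$ large. For $\kappa=1$, use the $\Phi$ from Lemma \ref{lem:21} (rescaled so $\phi(0)=1$) and the decomposition $N 1_{N \geq m} \leq m 1_{N \geq m} + \Phi(N)/\phi(m)$, valid for $\phi$ increasing; Lemma \ref{lem:13} applied to $\Phi$ then bounds $\E[\sum N_i^j 1_{N_i^j \geq m} s_i^j 1_{s_i^j \leq M_n}]$ by $Cn g(m) \log M_n$ with $g(m) := m/\Phi(m) + 1/\phi(m) \to 0$, and Markov at level $\varepsilon n \log n$ produces a bound proportional to $g(m_\varepsilon)$.

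The main obstacle is the bookkeeping in the $\kappa_j = \kappa$ cases: the truncation level $M_n$, the cutoff $m$, and the intermediate Markov level $B$ must be coordinated so that the polynomial (or logarithmic) blow-up of the truncation is exactly absorbed by the target scale $n^{1/\kappa}$ (resp.\ $n \log n$), leaving only the vanishing $g_\beta(m)$ or $g(m)$ factor to drive the probability below $\varepsilon$. A secondary subtlety is that the traps $\{x_i^j, y_i^j\}$ are only block-iid between renewal times, so the claim $g_\beta(m) \to 0$ requires Proposition \ref{prop:1} to upgrade per-trap dominated convergence into an averaged bound on $\sum_i$, exactly in the spirit of the proof of Lemma \ref{lem:21}.
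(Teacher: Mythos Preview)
Your proposal is correct and follows essentially the same route as the paper: reduce via Lemma~\ref{lem:17} to controlling $\sum N_i^j s_i^j$, decouple $N$ and $s$ through Lemma~\ref{lem:13}, and feed in the moment bounds of Lemma~\ref{lem:21}, with a $\beta$-th-power subadditivity step and a strength truncation in the $\kappa_j=\kappa$ cases. The four-case split, the choice $\beta\in(\kappa,\min(\kappa_j,(\kappa+\kappa')/2,1))$, and the order of operations (truncate~$s$, apply Lemma~\ref{lem:17}, Markov on the truncated moment) all mirror the paper's proof.

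Two minor differences in execution are worth flagging. First, to extract the decay in $m$ in the $\kappa_j=\kappa<1$ case, the paper does not invoke any dominated-convergence/renewal argument: it simply writes $(N_i^j)^\beta 1_{N_i^j\ge m}\le m^{-\eta}(N_i^j)^{\beta+\eta}$ and applies Lemma~\ref{lem:21} at exponent $\beta+\eta$, giving $g_\beta(m)=Cm^{-\eta}$ directly. Your ``dominated convergence via Proposition~\ref{prop:1}'' remark is unnecessary here---the higher moment already does the job. Second, in the $\kappa=1,\kappa_j=1$ case the paper uses the single inequality $N\,1_{N\ge m}\le \Phi(N)\cdot m/\Phi(m)$ (since $x\mapsto\Phi(x)/x$ is increasing), which is slightly cleaner than your two-term decomposition $N\,1_{N\ge m}\le m\,1_{N\ge m}+\Phi(N)/\phi(m)$, though both lead to a vanishing factor. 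The paper also bounds the large-strength event jointly with $\{N_i^j\ge m\}$ via \eqref{eqn:16.11}, gaining an extra factor $m^{-\kappa}$, whereas you bound $\{\exists\,i:s_i^j>M_n\}$ unconditionally and absorb the slack into the constant in $M_n$; both are fine.
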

\begin{proof}
For all $i\geq 0$ let $t_i$ be the time at which the walk $Y$ enters its $i^{\text{th}}$ trap ($\{x_i,y_i\}$) in the direction $j$ after $\tau_2$ and such that $x_i.e_1 \geq Y_{\tau_2}.e_1$ and $y_i.e_1 \geq Y_{\tau_2}.e_1$. We will write $x_i$ the vertex such that $x_i=Y_{t_i}$. Let $s_i^j$ be the strength of the trap $\{x^j_i,y^j_i\}$. For any $A,B>0$:
\begin{equation}
\begin{aligned}[b]
\Prob_0(\exists i\leq n, s_i^j\geq A\text{ and } N^j_i \geq B)
&\leq \Prob_0\left(\left(\sum\limits_{i=1}^n N^j_i1_{s_i^j\geq A}\right)^{\kappa}\geq B^{\kappa}\right)\\
&\leq \Prob_0\left(\sum\limits_{i=1}^n (N^j_i)^{\kappa}1_{s_i^j\geq A}\geq B^{\kappa}\right)\\
&\leq \frac{1}{B^{\kappa}}\E_{\Prob_0}\left(\sum\limits_{i=1}^n (N^j_i)^{\kappa}1_{s_i^j\geq A}\right)\\
&\leq \frac{c}{B^{\kappa}}\frac{1}{A^{\kappa_j}}\E_{\Prob_0}\left(\sum\limits_{i=1}^n (N^j_i)^{\kappa}\right) \text{ \ \ \ by lemma \ref{lem:13}} \\
&\leq \frac{c}{B^{\kappa}}\frac{1}{A^{\kappa_j}}Cn \text{ \ \ \ by lemma \ref{lem:21}}. \label{eqn:16.11}
\end{aligned}
\end{equation}
We will first look at the case $\kappa<1$. \\
Now, we want to show that we can neglect traps with a high $N^j_i$ or a low $s^j_i$. We get that for any positive integer $M$, any real $A\geq 2$ and any $\beta\in [\kappa,1]$ and $\eta>0$ such that $\beta+\eta\leq \min\left(\frac{\kappa+\kappa^{\prime}}{2} ,1\right)$:
\[
\begin{aligned}
&\Prob_0\left(\sum\limits_{i=1}^n N^j_i s_i^j1_{s_i^j<A}1_{N^j_i\geq M}\geq (an)^{\frac{1}{\kappa}}\right)\\
\leq & \Prob_0\left(\sum\limits_{i=1}^n(N^j_i)^{\beta} (s_i^j)^{\beta} 1_{s_i^j<A}1_{N^j_i\geq M}\geq (an)^{\frac{\beta}{\kappa}}\right)\\
\leq & (an)^{-\frac{\beta}{\kappa}}\E_{\Prob_0}\left(\sum\limits_{i=1}^n(N^j_i)^{\beta} (s_i^j)^{\beta} 1_{s^j_i<A}1_{N^j_i\geq M}\right)\\
\leq & (an)^{-\frac{\beta}{\kappa}}M^{-\eta}\E_{\Prob_0}\left(\sum\limits_{i=1}^n(N^j_i)^{\beta+\eta} (s_i^j)^{\beta} 1_{s_i^j<A}\right)\\
\leq & (an)^{-\frac{\beta}{\kappa}}M^{-\eta}\E_{\Prob_0}\left(\int\limits_{t=0}^{A^{\beta}}\sum\limits_{i=1}^n(N^j_i)^{\beta+\eta}1_{(s_i^j)^{\beta}\geq t}\dd t\right)\\
\leq & (an)^{-\frac{\beta}{\kappa}}M^{-\eta}\sum\limits_{i=1}^n\int\limits_{t=0}^{A^{\beta}}\E_{\Prob_0}\left((N^j_i)^{\beta+\eta}1_{(s_i^j)^{\beta}\geq t}\right)\dd t\\
\leq & (an)^{-\frac{\beta}{\kappa}}M^{-\eta}\sum\limits_{i=1}^n \left(2\E_{\Prob_0}\left((N^j_i)^{\beta+\eta}\right)+\int\limits_{t=2^{\beta}}^{A^{\beta}}\E_{\Prob_0}\left((N^j_i)^{\beta+\eta}1_{s_i\geq t^{\frac{1}{\beta}}}\right)\dd t \right).
\end{aligned}
\]
By lemma \ref{lem:13}, there exists a constant $c$ such that $\E_{\Prob_0}\left((N^j_i)^{\beta+\eta}1_{s^j_i\geq t^{\frac{1}{\beta}}}\right)\leq\E_{\Prob_0}\left((N^j_i)^{\beta+\eta}\right) ct^{-\frac{\kappa}{\beta}}$, for $t\geq 2^{\beta}$ so:
\begin{equation}
\begin{aligned}  
&\Prob_0\left(\sum\limits_{i=1}^n N^j_i s_i^j1_{s_i^j<A}1_{N^j_i\geq M}\geq (an)^{\frac{1}{\kappa}}\right)\\
\leq & (an)^{-\frac{\beta}{\kappa}}M^{-\eta}\sum\limits_{i=1}^n \left(2\E_{\Prob_0}((N^j_i)^{\beta+\eta})+\E_{\Prob_0}\left((N^j_i)^{\beta+\eta}\right)\int\limits_{t=2^{\beta}}^{A^{\beta}} ct^{-\frac{\kappa}{\beta}}\dd t \right)\\
\leq & (an)^{-\frac{\beta}{\kappa}}M^{-\eta} \sum\limits_{i=1}^n\left(2+c\int\limits_{t=2^{\beta}}^{A^{\beta}} t^{-\frac{\kappa}{\beta}}\dd t \right)E_{\Prob_0}((N^j_i)^{\beta+\eta})\\
\leq & dn(an)^{-\frac{\beta}{\kappa}}M^{-\eta} \left(2+c\int\limits_{t=2^{\beta}}^{A^{\beta}} t^{-\frac{\kappa}{\beta}}\dd t \right) \text{ by lemma \ref{lem:21}} \label{eqn:16.1}.
\end{aligned}
\end{equation}
Now for $\kappa_j=\kappa$ if we take $\beta\in (\kappa,1]$  such that $\beta<\frac{\kappa+\kappa^{\prime}}{2}$, $\eta=0$ and $A=bn^{\frac{1}{\kappa}}$ we get:
\begin{equation}
\begin{aligned}
\Prob_0\left(\sum\limits_{i=1}^n N^j_i s_i^j1_{s_i^j<bn^{\frac{1}{\kappa}}}1_{N^j_i\geq M}\geq (an)^{\frac{1}{\kappa}}\right)
\leq& \frac{d}{a}n^{1-\frac{\beta}{\kappa}}\left(2+\frac{\beta c}{\beta-\kappa}\left(bn^{\frac{1}{\kappa}}\right)^{\beta-\kappa}\right)\\
\leq &\frac{d}{a}n^{1-\frac{\beta}{\kappa}}\left(2+\frac{\beta c}{\beta-\kappa}b^{\beta-\kappa}n^{\frac{\beta-\kappa}{\kappa}}\right)\\
= &2\frac{d}{a}n^{1-\frac{\beta}{\kappa}}+\frac{d}{a}\frac{\beta c}{\beta-\kappa}b^{\beta-\kappa}. \label{eqn:16.2}
\end{aligned}
\end{equation}
Now, we get by lemma \ref{lem:17} that for any positive constants $A,B$ and any positive integer $m$:
\begin{equation}
\begin{aligned}
&\Prob_0\left(\sum\limits_{i=1}^n \ell_i^j 1_{N^j_i\geq m}\geq A\right)\\
\leq &\Prob_0\left(\sum\limits_{i=1}^n \ell_i^j 1_{N^j_i\geq m}\geq A \text{ and } \sum\limits_{i=1}^n N^j_i1_{N^j_i\geq m}s_i^j \leq B\right) + \Prob_0\left(\sum\limits_{i=1}^n N^j_i1_{N^j_i\geq m}s_i^j\geq B\right)\\
\leq &\frac{5B}{A}+ \Prob_0\left(\sum\limits_{i=1}^n N^j_i1_{N^j_i\geq m} s_i^j\geq B\right). \label{eqn:16.3}
\end{aligned}
\end{equation}
So for any $\varepsilon>0$, for any $a>0$, by taking $B=\varepsilon^2 n^{\frac{1}{\kappa}}$ and $A=\varepsilon n^{\frac{1}{\kappa}}$ in \ref{eqn:16.3}, we have for any positive integer $m$:
\[
\Prob_0\left(\sum\limits_{i=1}^n \ell_i^j 1_{N^j_i\geq m}\geq \varepsilon n^{\frac{1}{\kappa}}\right) 
\leq 5\varepsilon + \Prob_0\left(\sum\limits_{i=1}^n N^j_i1_{N^j_i\geq m} s_i^j\geq \varepsilon^2 n^{\frac{1}{\kappa}}\right).
\] 
And we have for any $b>0$:
\[
\begin{aligned}
&\Prob_0\left(\sum\limits_{i=1}^n N^j_i1_{N^j_i\geq m} s_i^j\geq \varepsilon^2 n^{\frac{1}{\kappa}}\right)\\
\leq & \Prob_0\left(\sum\limits_{i=1}^n N^j_i1_{N^j_i\geq m} s_i^j1_{s_i^j\leq b n^{\frac{1}{\kappa}}}\geq \varepsilon^2 n^{\frac{1}{\kappa}}\right)
+ \Prob_0\left(\exists i\leq n, N^j_i\geq m \text{ and } s_i^j\geq b n^{\frac{1}{\kappa}}\right).
\end{aligned}
\]
We have by \ref{eqn:16.11}:
\[
\Prob_0\left(\exists i\leq n, N^j_i\geq m \text{ and } s_i^j\geq b n^{\frac{1}{\kappa}}\right)
\leq \frac{cdn}{(mb)^{\kappa}n}=\frac{cd}{(mb)^{\kappa}}.
\] 
And by \ref{eqn:16.2}, taking $b=\varepsilon^{\frac{2\kappa+1}{\beta-\kappa}}$:
\[
\begin{aligned}
\Prob_0\left(\sum\limits_{i=1}^n N^j_i1_{N^j_i\geq m} s_i^j1_{s_i^j\leq \varepsilon^{\frac{2\kappa+1}{\beta-\kappa}}n^{\frac{1}{\kappa}}}\geq \varepsilon^2 n^{\frac{1}{\kappa}}\right)
\leq &\frac{d}{\varepsilon^{2\kappa}}\left(2n^{1-\frac{\beta}{\kappa}}+\frac{\beta c}{\beta-\kappa}\varepsilon^{\frac{2\kappa+1}{\beta-\kappa}(\beta-\kappa)}\right)\\
=&\frac{d}{\varepsilon^{2\kappa}}\left(2n^{1-\frac{\beta}{\kappa}}+\frac{\beta c}{\beta-\kappa}\varepsilon^{2\kappa+1}\right).
\end{aligned}
\]
So for $n$ large enough:
\[
\Prob_0\left(\sum\limits_{i=1}^n N^j_i1_{N^j_i\geq m} s_i^j1_{s_i^j\leq \varepsilon^{\frac{2\kappa+1}{\beta-\kappa}}n^{\frac{1}{\kappa}}}\geq \varepsilon^2 n^{\frac{1}{\kappa}}\right)
\leq \frac{2d\beta c}{\beta-\kappa}\varepsilon
\]
which means that for $n$ large enough and $m_{\varepsilon}$ such that $m_{\varepsilon}\varepsilon^{\frac{2\kappa+1}{\beta-\kappa}} \geq \varepsilon^{-\frac{1}{\kappa}}$ we have:
\[
\Prob_0\left(\sum\limits_{i=1}^n \ell^j_i 1_{N^j_i\geq m_{\varepsilon}} \geq \varepsilon n^{\frac{1}{\kappa}}\right)
\leq 5 \varepsilon + cd\varepsilon + \frac{2d\beta c}{\beta-\kappa}\varepsilon.
\]
And we have the result we want.\\
If $\kappa_j>\kappa$ there exists $\beta \in (\kappa,\kappa_j) $ such that $\beta \leq 1$ and $\beta\leq \frac{\kappa+\kappa^{\prime}}{2}$ we get by taking $M=1$ and $A=\infty$ in \ref{eqn:16.1}:
\[
\begin{aligned}
\Prob_0\left(\sum\limits_{i=1}^nN^j_i s_i^j \geq (an)^{\frac{1}{\kappa}}\right)
\leq & da^{-\frac{\beta}{\kappa}}n^{1-\frac{\beta}{\kappa}}\left(2+\int\limits_{t=2}^{\infty}t^{-\frac{\kappa_j}{\beta}}\dd t\right)\\
= &  da^{-\frac{\beta}{\kappa}}n^{1-\frac{\beta}{\kappa}}\left(2+\frac{\beta}{\kappa_j-\beta}2^{1-\frac{\kappa_j}{\beta}}\right) \\
=& Ca^{-\frac{\beta}{\kappa}}n^{1-\frac{\beta}{\kappa}} \text{ for some constant } C.
\end{aligned}
\] 
And then lemma \ref{lem:17} gives us the result we want.\\
Now we can look at the case $\kappa=1$. \\
Let $\phi$ be a positive concave function such that $\phi(t)$ goes to infinity when $t$ goes to infinity. We define $\Phi$ by $\Phi(x):=\int\limits_{t=0}^x \phi\left(t\right) \dd t$. Let $f$ be defined by $f(0):=\phi(0)>0$ and $\forall x>0,\ f(x):=\frac{\Phi(x)}{x}$, we clearly have that $f(x)\geq f(0)$ and we have for any $y>x>0$:
\[
\begin{aligned}
f(y)
= & \frac{1}{y} \int\limits_{t=0}^y \phi(t) \dd t \\
= & \frac{1}{y} \frac{y}{x}\int\limits_{t=0}^x \phi\left(\frac{y}{x} t\right) \dd t \\
\geq & \frac{1}{x} \int\limits_{t=0}^x \phi\left(t\right) \dd t \\
= & f(x).
\end{aligned}
\]
We get that for any positive integer $M$ and any real $A\geq 2$:
\[
\begin{aligned}
&\Prob_0\left(\sum\limits_{i=1}^n N^j_i s_i^j1_{s_i^j<A}1_{N^j_i\geq M}\geq an\log(n)\right)\\
\leq & \frac{1}{an\log(n)}\E_{\Prob_0}\left(\sum\limits_{i=1}^n N^j_i s_i^j 1_{s_i^j<A}1_{N^j_i\geq M}\right)\\
\leq & \frac{1}{an\log(n)f(M)}\E_{\Prob_0}\left(\sum\limits_{i=1}^n N^j_if(N^j_i) s_i^j 1_{s_i^j<A}\right)\\
\leq & \frac{1}{an\log(n)f(M)}\sum\limits_{i=1}^n\int\limits_{t=0}^{A}\E_{\Prob_0}\left(\Phi(N^j_i)1_{s_i^j\geq t}\right)\dd t\\
\leq & \frac{1}{an\log(n)f(M)}\sum\limits_{i=1}^n \left(2\E_{\Prob_0}(\Phi(N^j_i))+\int\limits_{t=2}^{A}\E_{\Prob_0}\left(\Phi(N^j_i)1_{s_i^j\geq t}\right)\dd t \right).
\end{aligned}
\]
Now, by lemma \ref{lem:13} we get:
\begin{equation}
\begin{aligned}[b]
&\Prob_0\left(\sum\limits_{i=1}^n N^j_i s_i^j1_{s_i^j<A}1_{N^j_i\geq M}\geq an\log(n)\right) \\
\leq & \frac{1}{an\log(n)f(M)}\sum\limits_{i=1}^n \left(2\E_{\Prob_0}(\Phi(N^j_i))+\E_{\Prob_0}\left(\Phi(N^j_i)\right)\int\limits_{t=2}^{A} ct^{-\kappa_j}\dd t \right)\\
\leq & \frac{1}{an\log(n)f(M)} \sum\limits_{i=1}^n\left(2+c\int\limits_{t=2}^{A} t^{-\kappa_j}\dd t \right)E_{\Prob_0}(\Phi(N^j_i))\\
 \leq & \frac{dn}{an\log(n)f(M)}\left(2+c\int\limits_{t=2}^{A} t^{-\kappa_j}\dd t \right) \text{    by lemma \ref{lem:21}}. \label{eqn:16.10}
\end{aligned}
\end{equation}
If $\kappa_j=1$, we get, by taking $A= n^2$ (for $n\geq 2$) in \ref{eqn:16.10}:
\[
\Prob_0\left(\sum\limits_{i=1}^n N^j_i s_i^j1_{s_i^j\leq A}1_{N^j_i\geq M}\geq an\log(n)\right) 
\leq \frac{d}{a\log(n)f(M)}(2+2c\log(n)) \leq \frac{C}{af(M)}.
\]
And by taking $A=n^2$ and $B=1$ in equation \ref{eqn:16.11} we have for some constant $c$:
\[
\Prob_0\left(\exists i \leq n, s_i^j \geq n^2 \right)\leq \frac{c}{n}.
\]
So for any $\varepsilon>0$ we get, by taking $m_{\varepsilon}$ such that $f(m_{\varepsilon})\geq \frac{1}{\varepsilon^3}$ and using lemma \ref{lem:17}:
\[
\begin{aligned}
\Prob_0\left(\sum\limits_{i=1}^n \ell^j_i 1_{N^j_i\geq m_{\varepsilon}}\geq \varepsilon n\log(n)\right) 
\leq & 5\varepsilon + \Prob_0\left(\sum\limits_{i=1}^n N^j_i s_i^j 1_{N^j_i\geq M}\geq \varepsilon^2 n\log(n)\right) \\
\leq & 5\varepsilon + \frac{c}{n} + C\varepsilon.
\end{aligned}
\]
So there exists a constant $C$ such that for any $\varepsilon>0$ there exists $m_{\varepsilon}$ such that:
\[
\Prob_0\left(\sum\limits_{i=1}^n \ell_i^j 1_{\ell_i^j \geq m_{\varepsilon}}\geq \varepsilon n\log(n)\right)
\leq C\varepsilon.
\]
If $\kappa_j >1$, we take $M=0$ and $A=\infty$ in \ref{eqn:16.10} we get for some constant $C$:
\[
\Prob_0\left(\sum\limits_{i=1}^n N^j_i s_i^j\geq an\log(n)\right) 
\leq  \frac{d}{a\log(n)f(0)}\left(2+c\int\limits_{t=2}^{\infty} t^{-\kappa_j}\dd t \right) 
= \frac{C}{a\log(n)}.
\]
And therefore by lemma \ref{lem:17}, for any $\varepsilon >0$
\[
\begin{aligned}
\Prob_0\left(\sum\limits_{i=1}^n \ell_i^j \geq \varepsilon n\log(n)\right)
\leq & 5\varepsilon + \Prob_0\left(\sum\limits_{i=1}^n N^j_i s_i^j\geq \varepsilon^2 n\log(n)\right) \\
\leq & 5\varepsilon + \frac{C}{\varepsilon^2 \log(n)}.
\end{aligned}
\]
So we have the result we want
\end{proof}

Now we have all the tools to get a first limit theorem on the time spent in traps.
\begin{lem}\label{lem:18}
Set $\alpha\in(0,\infty)^{2d}$ and let $\overline{\alpha}:=\sum\limits_{i=1}^{2d}\alpha_i$. Let $J=\{ j\in [\!| 1,d |\!], 2\overline{\alpha}-\alpha_j-\alpha_{j+d}=\kappa \}$ and $\tilde{\mathcal{T}}_{j}$ be the set of vertices $x$ such that there exists $j\in J$ such that either $(x,x+e_j)\in\mathcal{T}$ or $(x,x-e_j)\in\mathcal{T}$. Let $\{x_i^j,y_i^j\}$ be the $i^{\text{th}}$ trap in the direction $j$ encountered after time $\tau_2$. \\ 
For $\kappa<1$, for any $m$ there exists a constant $C_m$ such that: 
\[
n^{-\frac{1}{\kappa}}\sum\limits_{j\in J}\sum\limits_{i\geq 0} \ell_i^j1_{N_i^j\leq m}1_{\exists k \leq \tau_{n+1}-1, Y_k\in \{x_i^j,y_i^j\} }\rightarrow C_m \mathcal{S}^{\kappa}_1 \text{ in law for } \Prob_0.
\]
For $\kappa=1$, for any $m$ there exists a constant $C_m>0$ such that:
\[
\frac{1}{n\log(n)}\sum\limits_{j\in J}\sum\limits_{i\geq 0} \ell_i^j1_{N_i^j\leq m}1_{\exists k \leq \tau_{n+1}-1, Y_k\in \{x_i^j,y_i^j\} }\rightarrow C_m \text{ in probability for }\Prob_0.
\]
\end{lem}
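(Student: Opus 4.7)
I would prove the lemma by decomposing the sum over renewal blocks into i.i.d.\ contributions, computing their tail, and then applying a classical heavy-tailed limit theorem. By Proposition~\ref{prop:1}, the blocks $[\tau_k,\tau_{k+1})$ together with their environments are i.i.d.\ for $k\geq 2$. Writing
\[
\mathcal{S}_n^{(m)} := \sum_{j\in J}\sum_{i\geq 0} \ell_i^j\, \mathbf{1}_{N_i^j\leq m}\,\mathbf{1}_{\exists k\leq \tau_{n+1}-1,\ Y_k\in\{x_i^j,y_i^j\}} = \sum_{k=1}^n Z_k + R_n,
\]
where $Z_k$ is the time contributed by traps in directions $J$ that are first entered during $[\tau_k,\tau_{k+1})$, visited at most $m$ times, and whose two endpoints lie strictly between the renewal hyperplanes $\{x.e_1=Y_{\tau_k}.e_1\}$ and $\{x.e_1=Y_{\tau_{k+1}}.e_1\}$, and $R_n$ is the remainder from traps whose two endpoints straddle a hyperplane (necessarily of direction $e_1$). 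The sequence $(Z_k)_{k\geq 2}$ is i.i.d., and $R_n$ is negligible at the scale $n^{1/\kappa}$ (resp.\ $n\log n$) using moment control from Lemma~\ref{lem:14} on the number of straddling traps.

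Second, I would estimate $\Prob_0(Z_k \geq t)$ as $t\to\infty$. By Lemma~\ref{lem:9}, a typical block visits $O(1)$ vertices, so up to constants the tail of $Z_k$ is governed by the contribution of a single trap. For a trap $\{x,y\}$ in direction $j\in J$ with visit count $N\leq m$, Lemma~\ref{lem:17} represents $\ell_i^j$ as $2N$ plus twice a sum of $N$ independent geometric variables of parameter $p\in[\omega(x,y)\omega(y,x)]$ satisfying $1-p\asymp 1/s$; in particular $\ell_i^j/s$ converges in law to $2\sum_{k=1}^N \mathcal{E}_k$ with i.i.d.\ exponentials $\mathcal{E}_k$. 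Lemma~\ref{lem:11} gives the strength density proportional to $r^{\kappa_j-1}=r^{\kappa-1}$, whence $\Prob_0(s\geq A\mid \tilde\omega,\tilde Y)\asymp A^{-\kappa}$, and Lemma~\ref{lem:13} decouples $s$ from $N$ up to a bounded multiplicative factor. Integrating these ingredients yields
\[
\Prob_0(Z_k \geq t) \sim c_m\, t^{-\kappa}\qquad (t\to\infty),
\]
with $c_m>0$ for every $m\geq 1$.

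Third, I would invoke classical heavy-tailed limit theory on the i.i.d.\ sum $\sum_{k=1}^n Z_k$. For $\kappa<1$, sums of i.i.d.\ positive variables with tail $c_m t^{-\kappa}$, renormalized by $n^{-1/\kappa}$, converge in law to the completely asymmetric $\kappa$-stable variable $C_m\mathcal{S}_1^\kappa$ from Section~1.2. For $\kappa=1$, the truncated mean satisfies $\E_{\Prob_0}[Z_k\wedge n]\sim c_m \log n$, so the weak law for truncated sums gives $(n\log n)^{-1}\sum_{k=1}^n Z_k \to C_m$ in probability; both cases then transfer to $\mathcal{S}_n^{(m)}$ via the negligibility of $R_n$.

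The principal obstacle is the sharp tail asymptotic $\Prob_0(Z_k\geq t)\sim c_m t^{-\kappa}$, since the strengths of distinct traps are only independent after conditioning on the partially-forgotten walk (Lemma~\ref{lem:11}), and the coupling between $s$ and $N$ introduces a Radon--Nikodym factor (Lemma~\ref{lem:13}) that must be shown to converge to a non-trivial limit, not merely be bounded. A secondary difficulty is to verify that the random index $\{\exists k\leq \tau_{n+1}-1\}$ in the definition of $\mathcal{S}_n^{(m)}$ produces only an $o(n^{1/\kappa})$ (resp.\ $o(n\log n)$) discrepancy from the clean block sum, which relies on the moment bounds of Lemma~\ref{lem:21} applied to straddling traps.
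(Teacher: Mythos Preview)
Your plan is sound and would succeed, but it organizes the argument differently from the paper. You decompose by \emph{renewal block}, writing $\mathcal S_n^{(m)}=\sum_k Z_k+R_n$ with $(Z_k)_{k\ge2}$ i.i.d.\ by Proposition~\ref{prop:1}, and then aim for the sharp tail $\Prob_0(Z_k\ge t)\sim c_m t^{-\kappa}$. The paper instead decomposes by \emph{configuration}: for each $p\in I^m(J)$ (finitely many, since $N\le m$), Lemma~\ref{lem:11} shows that conditionally on $(\tilde\omega,\tilde Y)$ the time $\ell^p_i$ spent in the $i$-th trap of configuration $p$ depends only on $p$ and on the trap's strength, and that the strengths are independent with a law depending only on $p$; hence the $(\ell^p_i)_i$ are \emph{unconditionally} i.i.d.\ across $i$ and across $p$. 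The paper then computes directly $\Prob_0(\ell^p\ge a)\sim C'_p\,N\,a^{-\kappa}$ by integrating the explicit density of $(r,k)$ from Lemma~\ref{lem:11} against the law of a sum of $N$ geometrics, applies the stable limit theorem per configuration, and finally replaces the deterministic index $n$ by the random count $M_{n,p}\sim C_p n$ via the law of large numbers.

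What each approach buys: the paper's per-configuration route reduces the hard step to the tail of a \emph{single} trap, avoiding the random-sum tail asymptotic you would need for $Z_k$ (which, while standard for regularly varying summands with a finite-mean random count, still requires a dominated-convergence argument over the random number of configurations present in a block). Your per-block route has the advantage that the i.i.d.\ structure is immediate from Proposition~\ref{prop:1} and no random-index replacement is needed at the end. Two small corrections to your sketch: Lemma~\ref{lem:13} is not used here (it gives only two-sided bounds, not the asymptotic constant); the exact tail comes entirely from the density in Lemma~\ref{lem:11}. And once you fix the configuration, the ``Radon--Nikodym factor'' you worry about is exactly the function $h_p$ of Lemma~\ref{lem:11}, whose bound $|\log h_p|\le 5(N+2\overline\alpha)r$ with $N\le m$ makes the constant computation routine.
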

\begin{proof}
For every configuration $p\in\bigcup I_n$ let $C_p$ be the expectation of the number of traps of configuration $p$ encountered between times $\tau_2$ and $\tau_3-1$ (it is also the expectation of the number of traps of configuration $p$ encountered between times $\tau_i$ and $\tau_{i+1}-1$ for any $i \geq 2)$. We clearly have:
\[
C_p\leq \E_{\Prob_0}\left(\sum\limits_{x\in\Z^d}1_{\exists i\in[\tau_2,\tau_3-1],Y_i=x}\right)<\infty.
\] 
Once we know that a trap is in a direction $j\in J$ and has a configuration $p$ for some partially forgotten random walk, the exact number of back and forth the walk does in this trap is still random, because the exact number of back and forths knowing the transition probabilities of the trap is random and because the transition probabilities of the trap are still random, following the law (cf lemma \ref{lem:11}):
\[
C\frac{\varepsilon_x^{p_x}\varepsilon_y^{p_y}}{(\varepsilon_x+\varepsilon_y)^{p^s}}h(\varepsilon_x,\varepsilon_y)1_{\varepsilon_x+\varepsilon_y\leq \frac{1}{2}},
\] 
where $\varepsilon_x:=1-\omega(x,y)$, $\varepsilon_y:=1-\omega(y,x)$ and the value of $p_x,p_y,p^s$ are explicit but irrelevant, except for the fact that $p_x+p_y-p^s=\kappa-2$. Let $N$ be such that $p\in I_N$ (ie the walks exits the trap $N$ times) we also have that there exists a constant $C_{\alpha}$ that only depends on $\alpha$ such that: 
\[
|\log(h(\varepsilon_x,\varepsilon_y))|\leq C_{\alpha} N(\varepsilon_x+\varepsilon_y). \label{eqn:18.1}
\] 
Now if we make the change of variable $2r=\varepsilon_x+\varepsilon_y$, $k=\frac{\varepsilon_x-\varepsilon_y}{\varepsilon_x+\varepsilon_y}$, we get that the law of the transition probabilities becomes:
\[
\begin{aligned}
&2rC r^{p_x+p_y-p^s}\frac{(1+k)^{p_x}(1-k)^{p_y}}{(2r)^{p^s}}h(r(1+k),r(1-k))1_{r\leq \frac{1}{4}}\dd r \dd k\\
=&2^{1-p^s}Cr^{\kappa-1}(1+k)^{p_x}(1-k)^{p_y}h(r(1+k),r(1-k))1_{r\leq \frac{1}{4}}\dd r \dd k.
\end{aligned}
\]
The number of back and forths is the sum of $N$ iid geometric random variable $(H_1,\dots,H_N)$ of parameter $q=1-\varepsilon_x-\varepsilon_y+\varepsilon_x\varepsilon_y=1-2r+r^2(1-k^2)$ . This gives us the following bound:
\[
\begin{aligned}
\Prob\left(\sum\limits_{i=1}^N H_i\geq a|q\right)
&\leq N\Prob\left(H_1\geq\frac{a}{N}|q\right)\\
&\leq N(1-q)q^{\frac{a}{N}}\\
&\leq N\exp\left(\log(1-2r+r^2(1-k^2))\frac{a}{N}\right)\\
&\leq N\exp\left((-2r+r^2)\frac{a}{N}\right).
\end{aligned}
\] 
For $r\in\left[\frac{2\kappa n\log(a)}{a},\frac{1}{2}\right]$ we have $-2r+r^2\leq -r$ and
\[
\begin{aligned}
N\exp\left((-2r+r^2)\frac{a}{N}\right)
&\leq N\exp\left(-r\frac{a}{N}\right)\\
&\leq N\exp\left(-\frac{2\kappa N\log(a)}{a}\frac{a}{N}\right)\\
&= N a^{-2\kappa}.
\end{aligned}
\]
Now let $\ell^-$ be equal to twice the number of back-and-forths: $\ell^-:=2\sum\limits_{i=1}^N H_i$. Now we look at $\Prob\left(\ell^-\geq a \text{ and } r \leq \frac{2\kappa N\log(a)}{a}\right)$, we want to show that it is equivalent to $Ca^{-\kappa}$ for some constant $C$. First we want to have a good approximation of $\Prob\left(2\sum\limits_{i=1}^N H_i\geq a|q\right)$ for large $q$. Now let $\tilde{H}_1,\dots,\tilde{H}_n$ be iid exponential random variables of parameter $-\log(q)$ such that for every $i$, $H_i=\lfloor \tilde{H}_i \rfloor$. And we define $\tilde{\ell}^-=2\sum\limits_{i=1}^n \tilde{H}_i$. Now it is easy to show by induction on $n$ that:
\[
\Prob_0\left(\tilde{\ell}^-\geq 2a|q\right)=\sum\limits_{j=0}^{N-1}\frac{(-a\log(q))^j}{j!}\exp(\log(q)a).
\]
Now we clearly have:
\[
\ell^- \leq \tilde{\ell}^- \leq \ell^- +2N 
\]
so 
\[
\Prob_0\left(\ell^- \geq a|q\right) \leq\Prob_0\left(\tilde{\ell}^-\geq a|q\right)
\]
and 
\[
\Prob_0\left(\ell^-\geq a|q\right) \geq \Prob_0\left(\tilde{\ell}^-\geq a-2N|q\right).
\]
We want to show that $\Prob\left(\tilde{\ell}^-\geq a|q\right)$ and $\Prob\left(\tilde{\ell}^-\geq a-2N|q\right)$ are more or less equal. We clearly have:
\[
\Prob_0\left(\tilde{\ell}^-\geq a-2N|q\right) \leq \Prob_0\left(\tilde{\ell}^- \geq a|q\right)
\] 
and we also have:
\[
\begin{aligned}
\Prob_0\left(\tilde{\ell}^-\geq 2a-2N|q\right) 
&=\sum\limits_{j=0}^{N-1}\frac{(-a\log(p))^j}{j!}\left(1-\frac{N}{a}\right)^j\exp(\log(q)a)\exp(-\log(p)N)\\
&\geq \exp(-\log(q)N)\left(1-\frac{N}{a}\right)^N \sum\limits_{j=0}^{N-1}\frac{(-a\log(q))^j}{j!}\exp(\log(q)a).\\
\end{aligned}
\] 
First we want to show that we can replace $\log(q)$ by $-2r$. We clearly have $\log(q)\leq -2r +r^2$. We also have $\log(q)\geq \log(1-2r)$ and for $r\in[0,\frac{1}{4}]$, there exists a constant C that does not depend on $r$ such that $\log(1-2r)\geq -2r -C r^2$. So we get:
\[
2r-r^2\leq -\log(q) \leq 2r +Cr^2.
\]  
So 
\[
\exp(-2ar)\exp(-Car^2) \leq\exp(a\log(q))\leq \exp(-2ar)\exp(ar^2).
\]
So we get:
\[
\forall j, \ \frac{(-a\log(q))^j}{j!}\exp(\log(q)a)\leq \frac{(2ar)^j}{j!}\exp(-2ar)\left(1+\frac{Cr}{2}\right)^{j}\exp(ar^2)
\]
and
\[
\frac{(-a\log(q))^j}{j!}\exp(\log(q)a)\geq \frac{(2ar)^j}{j!}\exp(-2ar)\left(1-\frac{r}{2}\right)^{j}\exp(-Car^2).
\]
Now we will define $g^+(a,r)$ and $g^-(a,r)$ by:
\[
\begin{aligned}
g^+(a,r)&=\left(1+\frac{Cr}{2}\right)^{j}\exp(ar^2)\exp\left(2rC_{\alpha}N\right)\\
g^-(a,r)&=\left(1-\frac{r}{2}\right)^{j}\exp(-Car^2)\exp\left(-2rC_{\alpha}N\right)\exp(\left(2r-r^2\right)N)\left(1-\frac{N}{a}\right)^N,
\end{aligned}
\]
where $C$ is the same constant as in the previous inequality and $C_{\alpha}$ is the same as in $\ref{eqn:18.1}$. And for every $r\leq\frac{1}{4},k\in[-1,1]$ we have:
\[
\frac{(-a\log(q))^j}{j!}\exp(\log(q)a)h(r(1-k),r(1+k))\leq \frac{(2ar)^j}{j!}\exp(-2ar)g^+(a,r)
\]
and
\[
\frac{(-a\log(q))^j}{j!}h(r(1-k),r(1+k))\exp(\log(q)a)\geq \frac{(2ar)^j}{j!}\exp(-2ar)g^-(a,r).
\]
We clearly have that $g^+(a,r)$ is increasing in $r$ while $g^-(a,r)$ is decreasing in $r$ and $g^+(a,0)=1$ and $g^-(a,0)=\left(1-\frac{N}{a}\right)^N$.\\
So, for any $c>0$, we have the following $2$ inequalities:
\[
\begin{aligned}
&\Prob_0(\ell^-\geq 2a \text{ and } 1-q\leq c)\\
\leq &\Prob_0(\tilde{\ell}^-\geq 2a \text{ and } 1-q\leq c)\\
\leq &\Prob_0(\tilde{\ell}^-\geq 2a \text{ and } r\leq c)\text{\ \ \ \ \ \ since }1-q\geq 2r-r^2\geq r\\
= &\int\limits_{r=0}^{c}\int\limits_{k=-1}^1 2^{1-p^s}Cr^{\kappa-1}(1+k)^{p_x}(1-k)^{p_y}h(r(1+k),r(1-k))\Prob_0(\tilde{\ell}^-\geq 2a|q)\dd k\dd r\\
\leq & \int\limits_{r=0}^{c}\int\limits_{k=-1}^1 2^{1-p^s}Cr^{\kappa-1}(1+k)^{p_x}(1-k)^{p_y}\sum\limits_{j=0}^{N-1}\frac{(2ar)^j}{j!}\exp(-2ar) g^+(a,r)\dd k\dd r\\
\leq & g^+(a,c) \int\limits_{k=-1}^1 (1+k)^{p_x}(1-k)^{p_y}\dd k \int\limits_{r=0}^{c}2^{1-p^s}Cr^{\kappa-1}\sum\limits_{j=0}^{N-1}\frac{(2ar)^j}{j!}\exp(-2ar)\dd r,
\end{aligned}
\]
and
\[
\begin{aligned}
&\Prob_0(\ell^-\geq 2a \text{ and } 1-q\leq c)\\
\geq &\Prob_0(\tilde{\ell}^-\geq 2a-2N \text{ and } 1-q\leq c)\\
\geq &\Prob_0(\tilde{\ell}^-\geq 2a-2N \text{ and } 2r\leq c)\text{\ \ \ \ \ \ since }1-q\leq 2r\\
= &\int\limits_{r=0}^{\frac{c}{2}}\int\limits_{k=-1}^1 2^{1-p^s}Cr^{\kappa-1}(1+k)^{p_x}(1-k)^{p_y}h(r(1+k),r(1-k))\Prob_0(\tilde{\ell}^-\geq 2a-2N|q)\dd k\dd r\\
\geq & \int\limits_{r=0}^{\frac{c}{2}}\int\limits_{k=-1}^1 2^{1-p^s}Cr^{\kappa-1}(1+k)^{p_x}(1-k)^{p_y}\sum\limits_{j=0}^{N-1}\frac{(a2r)^j}{j!}\exp(-2ar) g^-(a,r)\dd k\dd r\\
\geq & g^-\left(a,\frac{c}{2}\right) \int\limits_{k=-1}^1 (1+k)^{p_x}(1-k)^{p_y}\dd k \int\limits_{r=0}^{\frac{c}{2}}2^{1-p^s}Cr^{\kappa-1}\sum\limits_{j=0}^{N-1}\frac{(a2r)^j}{j!}\exp(-2ar)\dd r.
\end{aligned}
\]
If we take $c=a^{-\frac{3}{4}}$ we clearly get when $a\rightarrow\infty$, $g^-(a,a^{-\frac{3}{4}})\rightarrow 1$ and $g^+(a,a^{-\frac{3}{4}})\rightarrow 1$. Furthermore, for any constant $c^{\prime}$:
\[
\begin{aligned}
&\int\limits_{r=0}^{c^{\prime}a^{-\frac{3}{4}}}2^{1-p^s}Cr^{\kappa-1}\sum\limits_{j=0}^{N-1}\frac{(a2r)^j}{j!}\exp(-2ar)\dd r\\
=& (2a)^{-\kappa} \int\limits_{r=0}^{2c^{\prime}a^{\frac{1}{4}}}2^{1-p^s}Cr^{\kappa-1}\sum\limits_{j=0}^{N-1}\frac{r^j}{j!}\exp(-r)\dd r\\
\sim & (2a)^{-\kappa} \sum\limits_{j=0}^{N-1}\frac{\Gamma(j+1)}{j!}\\
=& (2a)^{-\kappa}N.
\end{aligned}
\]
Therefore we get:
\[
\Prob_0(\ell^-\geq 2a \text{ and } 1-q\leq a^{\frac{3}{4}})\sim N\left(\int\limits_{k=-1}^1 (1+k)^{p_x}(1-k)^{p_y}\dd k \right)2^{1-p^s}C(2a)^{-\kappa}.
\]
So there exist a constant $C$ that only depends on $\alpha$ such that:
\[
\Prob_0(\ell^-\geq 2a \text{ and } 1-q\leq a^{-\frac{3}{4}})\sim CNa^{-\kappa}.
\]
So we get for some constant $C^{\prime}$:
\[
\Prob_0(\ell^-\geq a)\sim C^{\prime}Na^{-\kappa}.
\]
Now let $\ell$ be the total time spent in the trap. It is equal to $\ell^-$ plus the number of time the walk enters and exits the trap by the same vertex plus twice the number of times the walk enters and exits the trap by different vertices. This means there exists a constant $\delta_p$ that only depends on the configuration such that $\ell=\ell^-+\delta_p$. This, in turn, means that we have also the asymptotic equality:
\[
\Prob_0(\ell \geq a)\sim C^{\prime}Na^{-\kappa}.
\]
Now, let $\ell^p_i$ be the time spent in the $i^{\text{th}}$ trap with configuration $p$.\\
First, if $\kappa<1$, by Theorem $3.7.2$ of $\cite{Durrett}$ we get that for some constant $c_p$: 
\[
n^{-\frac{1}{\kappa}}\sum\limits_{i=1}^n \ell_i^p \rightarrow c_p \mathcal{S}_1^{\kappa} \text{ in law for } \Prob_0.
\]
Now we use the fact that the number of trap of configuration $p$ between two renewal times has a finite expectation $C_p$ to show that we have the convergence we want. Let $M_{n,p}$ be the number of traps of configuration $p$ the walk has entered before the $n^{\text{th}}$ renewal time. For any $\varepsilon>0$ and any $p$ we have:
\[
\Prob_0(M_{n,p}\in[(C_p-\varepsilon)n,(C_p+\varepsilon)n])\rightarrow 1.
\]
Therefore for any configuration $p$:
\[
n^{-\frac{1}{\kappa}}\sum\limits_{i=(C_p-\varepsilon)n}^{(C_p+\varepsilon)n}\ell^p_i\rightarrow (2\varepsilon)^{\frac{1}{\kappa}}c_pS_{\kappa} \text{ in law for } \Prob_0.
\] 
And for any $m\in\N$:
\[
n^{-\frac{1}{\kappa}}\sum\limits_{p\in I^m}\sum\limits_{i=(C_p-\varepsilon)n}^{(C_p+\varepsilon)n}\ell^p_i\rightarrow (2\varepsilon)^{\frac{1}{\kappa}}\left(\sum\limits_{p\in I^m} (c_p)^{\kappa}\right)^{\frac{1}{\kappa}}S_{\kappa} \text{ in law for } \Prob_0.
\]
We write $I^m(J)$ all the configuration of $I^m$ that are in a direction $j\in J$. Now, using the fact that the $\ell_p^i$ are non negative, for any $n\in\N$ and any $\varepsilon>0$ small enough, we have:
\[
\begin{aligned}
&\Prob_0\left(n^{-\frac{1}{\kappa}}\left|\sum\limits_{p\in I^m(J)}\sum\limits_{i=1}^{M_{n,p}}\ell^p_i-\sum\limits_p\sum\limits_{i=1}^{C_pn}\ell_p^i\right|\geq \eta \right)\\
\leq &\Prob_0(\exists p\in I^m(J),N_{n,p}\not\in[(C_p-\varepsilon)n,(C_p+\varepsilon)n])+\Prob\left(n^{-\frac{1}{\kappa}}\sum\limits_{p\in I^m(J)}\sum\limits_{i=(C_p-\varepsilon)n}^{(C_p+\varepsilon)n}\ell^p_i\geq \eta\right)\\
= & o(1) + \Prob_0\left((2\varepsilon)^{\frac{1}{\kappa}}\left(\sum\limits_{p\in I^m(J)} (c_p)^{\kappa}\right)^{\frac{1}{\kappa}}\mathcal{S}^{\kappa}_1\geq \eta\right).
\end{aligned}
\] 
Since it is true for all $\varepsilon$, we get that 
\[
n^{-\frac{1}{\kappa}}\left|\sum\limits_{p\in I^m(J)}\sum\limits_{i=1}^{M_{n,p}}\ell^p_i-\sum\limits_{p\in I^m(J)}\sum\limits_{i=1}^{C_pn}\ell_p^i\right|\rightarrow 0 \text{ in probability for } \Prob_0.
\]
And since 
\[
n^{-\frac{1}{\kappa}}\sum\limits_{p\in I^m(J)}\sum\limits_{i=1}^{C_pn}\ell^p_i\rightarrow\left(\sum\limits_{p\in I^m(J)} (c_p)^{\kappa}\right)^{\frac{1}{\kappa}}\mathcal{S}^{\kappa}_1 \text{ in probability for } \Prob_0,
\]
we get:
\[
n^{-\frac{1}{\kappa}}\sum\limits_{p\in I^m(J)}\sum\limits_{i=1}^{M_{n,p}}\ell^p_i\rightarrow\left(\sum\limits_{p\in I^m(J)} (c_p)^{\kappa}\right)^{\frac{1}{\kappa}}\mathcal{S}^{\kappa}_1\text{ in law for } \Prob_0
\]
Now if $\kappa=1$, we first want to show that we can neglect the values larger than $n\log(n)$.
Let $p$ be a configuration, $\ell^p_i$ the total time spent in the $i^{\text{th}}$ trap in the configuration $p$ encountered, $C_p$ the constant such that the number of trap encountered before time $\tau_{n+1}-1$ is equivalent to $C_p n$, $M_{n,p}$ the number of traps in the configuration $p$ encountered before the time $\tau_{n+1}-1$ and $c_p$ the constant such that $\Prob_0(\ell^p_i \geq t) \sim c_p n^{-1}$. We get:
\[
\begin{aligned}
\Prob_0(\exists i \leq M_{n,p} , \ell^p_i \geq n\log(n) )
\leq& \Prob_0(\exists i \leq 2C_p n , \ell^p_i \geq n\log(n) ) + \Prob_0(M_{n,p} \geq 2C_p n)\\
\leq&  2C_p n \frac{c_p}{n\log(n)} +o(1).\\
=& o(1)
\end{aligned}
\]
Now we can compute the expectation and variance of $\ell^p_i \wedge n\log(n)$:
\[
\begin{aligned}
\E_{\Prob_0}(\ell^p_i \wedge n\log(n)) 
\sim &\int\limits_{t=1}^{n\log(n)} \frac{c_p}{t} \dd t \\
\sim & c_p \log(n). \label{eqn:18.2}
\end{aligned}
\]
Now for the variance we get:
\[
\begin{aligned}
\text{Var}_{\Prob_0}(\ell^p_i \wedge n\log(n))
\leq & \E_{\Prob_0} ((\ell^p_i \wedge n\log(n))^2)\\
\sim & \int\limits_{t=1}^{n\log(n)} 2 t \frac{c_p}{t} \dd t \\
\sim & 2 c_p n \log(n).
\end{aligned} 
\]
So  for $n$ large enough:
\[
\text{Var}_{\Prob_0}(\ell^p_i \wedge n\log(n)) \leq 4 c_p n\log(n).
\]
First, for any constant $c$, for $n$ big enough:
\[
\begin{aligned}
&\Prob_0\left(\left|\sum\limits_{i=1}^{cn}\ell_i^p\wedge n\log(n)- cnc_p\log(n)\right| \geq \varepsilon n\log(n)\right) \\
\leq & \Prob_0\left(\left|\sum\limits_{i=1}^{cn}\ell_i^p\wedge n\log(n)-  cn\E\left(\ell_1^p\wedge n\log(n)\right)\right| \geq \frac{1}{2}\varepsilon n\log(n)\right) \text{ for } n \text{ big enough, by }\ref{eqn:18.2}\\
\leq & cn\frac{4\text{Var}_{\Prob_0}\left(\ell_1^p\wedge n\log(n)\right)}{(\varepsilon n\log(n))^2}\\
\leq & cn \frac{16 c_p n\log(n)}{(\varepsilon n\log(n))^2} \\
= & \frac{16c c_p}{\log(n)} =o(1)
\end{aligned}
\]
This means that we have the following results:
\[
\Prob_0\left(\sum\limits_{i=1}^{(C_p+\varepsilon)n}\ell_i^p\wedge n\log(n)- (C_p+\varepsilon)nc_p\log(n) \geq \varepsilon n\log(n)\right) \rightarrow 0
\] 
and
\[
\Prob_0\left(\sum\limits_{i=1}^{(C_p-\varepsilon)n}\ell_i^p\wedge n\log(n)- (C_p+\varepsilon)nc_p\log(n) \leq -\varepsilon n\log(n)\right) \rightarrow 0
\] 
Then, by definition of $C_p$ we get, for any $\varepsilon \geq 0$: 
\[
\Prob_0\left(|M(n,p)-C_p n|\geq \varepsilon n\right)\rightarrow 0.
\]
Then, using the fact that $\sum\limits_{i=1}^n \ell_i^p \wedge a$ is increasing in $n$ for any $a$, we get:
\[
\begin{aligned}
&\Prob_0\left(\sum\limits_{i=1}^{M(n,p)}\ell_i^p\geq (C_p+\varepsilon)(c_p +\varepsilon)n\log(n)\right)\\
\leq & \Prob_0(M(n,p)\geq (C_p + \varepsilon) n) + \Prob_0\left(\sum\limits_{i=1}^{(C_p + \varepsilon) n}\ell_i^p\geq (C_p+\varepsilon)(c_p+\varepsilon)n\log(n)\right)\\
=& o(1).
\end{aligned}
\]
Similarly, we have:
\[
\begin{aligned}
&\Prob_0\left(\sum\limits_{i=1}^{M(n,p)}\ell_i^p\leq (C_p-\varepsilon)(c_p -\varepsilon)n\log(n)\right)\\
\leq & \Prob_0(M(n,p)\geq (C_p - \varepsilon) n) + \Prob_0\left(\sum\limits_{i=1}^{(C_p - \varepsilon) n}\ell_i^p\geq (C_p-\varepsilon)(c_p-\varepsilon)n\log(n)\right)\\
=& o(1).
\end{aligned}
\]
Therefore,
\[
\frac{1}{n\log(n)}\sum\limits_{i=1}^{M(n,p)}\ell_i^p \rightarrow C_p c_p \text{ in probability for } \Prob_0.
\]
Now we just have to sum on all configurations $p\in I^m$ that are in a direction $j\in J$ to get the result we want.

\end{proof}

\subsection{Only the time spent in traps matter}

Now to properly show the result we want, we have to show that some quantities and some events are negligible, this is what this section is devoted to.
\begin{lem}\label{lem:31}
Let $j$ be in $[\!|1,d|\!]$. Let $\{x_i^j,y_i^j\}$ be the $i^{\text{th}}$ trap visited by the walk in the direction $j$ after time $\tau_2$, $s_i^j$ its strength, $\ell^j_i$ the time spent in this trap and $N^j_i$ the number of times the walk enters the trap:
\[
\begin{aligned}
\ell^j_i&=\sum\limits_{k\geq 0} 1_{Y_k\in \{x^j_i,y^j_i\}},\\
N^j_i&=\sum\limits_{k\geq 0} 1_{Y_k\in \{x_i,y_i\}\text{ and }Y_{k+1}\not\in \{x^j_i,y^j_i\}}.
\end{aligned}
\]
Let $\kappa_j=2\sum\limits_{i=1}^{2d}\alpha_i-\alpha_j-\alpha_{j+d}\geq \kappa$. Let $M(n,j)$ be the number of traps in the direction $j$ encountered between times $\tau_2$ and $\tau_n - 1$.\\
If $\kappa <1$ and $\kappa_j=\kappa$, for any $\varepsilon>0$ there exists $\varepsilon^{\prime}>0$ such that for $n$ large enough:
\[
\Prob_0\left(\sum\limits_{i=1}^{M(n,j)} \ell_i^j1_{s^j_i \leq \varepsilon^{\prime} n^{\frac{1}{\kappa}}} \geq \varepsilon n^{\frac{1}{\kappa}} \right) \leq \varepsilon.
\]
\end{lem}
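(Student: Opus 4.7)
The plan is to adapt the $\kappa_j=\kappa<1$ argument from Lemma \ref{lem:16}, trading the large-strength cutoff $1_{s^j_i\geq b n^{1/\kappa}}$ for the small-strength cutoff $1_{s^j_i\leq \varepsilon' n^{1/\kappa}}$. First I would apply Lemma \ref{lem:17} (integrating the quenched bound $5B/A$ against the environment) with $m=1$, $C=\varepsilon' n^{1/\kappa}$, $A=\varepsilon n^{1/\kappa}$ and $B=\varepsilon^2 n^{1/\kappa}$. This gives
\[
\Prob_0\!\left(\sum_{i=1}^{M(n,j)} \ell^j_i 1_{s^j_i\leq \varepsilon' n^{1/\kappa}}\geq \varepsilon n^{1/\kappa}\right)
\leq 5\varepsilon + \Prob_0\!\left(\sum_{i=1}^{M(n,j)} N^j_i s^j_i 1_{s^j_i\leq \varepsilon' n^{1/\kappa}}\geq \varepsilon^2 n^{1/\kappa}\right),
\]
reducing the statement to bounding this last probability by $O(\varepsilon)$.

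Next, Lemma \ref{lem:9} together with the law of large numbers for $(\tau_{i+1}-\tau_i)$ provides a constant $D$ such that $\Prob_0(M(n,j)> Dn)\leq \varepsilon$ for $n$ large. Since all summands are non-negative, I can replace $M(n,j)$ by the deterministic cap $\lfloor Dn\rfloor$ at the cost of an extra $\varepsilon$. Fix $\beta\in(\kappa,1]$ with $\beta<(\kappa+\kappa')/2$. Subadditivity $(\sum x_i)^\beta\leq \sum x_i^\beta$ (valid for $\beta\leq 1$) together with Markov's inequality yields
\[
\Prob_0\!\left(\sum_{i=1}^{\lfloor Dn\rfloor} N^j_i s^j_i 1_{s^j_i\leq \varepsilon' n^{1/\kappa}}\geq \varepsilon^2 n^{1/\kappa}\right)
\leq \frac{1}{(\varepsilon^2 n^{1/\kappa})^\beta}\sum_{i=1}^{\lfloor Dn\rfloor}\E_{\Prob_0}\!\left[(N^j_i)^\beta (s^j_i)^\beta 1_{s^j_i\leq \varepsilon' n^{1/\kappa}}\right].
\]

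For each term I would use the layer-cake identity
\[
\E[(N^j_i)^\beta(s^j_i)^\beta 1_{s^j_i\leq \varepsilon' n^{1/\kappa}}]
=\int_0^{\varepsilon' n^{1/\kappa}}\beta t^{\beta-1}\E[(N^j_i)^\beta 1_{s^j_i\geq t}]\,\dd t,
\]
and split the integral at $t=2$. On $[0,2]$ the contribution is at most $2^\beta\E[(N^j_i)^\beta]$. On $[2,\varepsilon' n^{1/\kappa}]$ Lemma \ref{lem:13} (with $\kappa_j=\kappa$) gives $\E[(N^j_i)^\beta 1_{s^j_i\geq t}]\leq C t^{-\kappa}\E[(N^j_i)^\beta]$, so the contribution is at most $C'\E[(N^j_i)^\beta](\varepsilon' n^{1/\kappa})^{\beta-\kappa}/(\beta-\kappa)$. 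Summing over $i\leq \lfloor Dn\rfloor$ and invoking the bound $\sum_{i=1}^{\lfloor Dn\rfloor}\E_{\Prob_0}[(N^j_i)^\beta]\leq C_{21} Dn$ from Lemma \ref{lem:21}, the probability is bounded by a constant multiple of
\[
\frac{Dn\cdot (\varepsilon')^{\beta-\kappa} n^{(\beta-\kappa)/\kappa}}{\varepsilon^{2\beta} n^{\beta/\kappa}}
=\frac{D\,(\varepsilon')^{\beta-\kappa}}{\varepsilon^{2\beta}(\beta-\kappa)},
\]
where the $n$-powers cancel exactly. Choosing $\varepsilon'$ sufficiently small (depending on $\varepsilon$, $\beta$, $D$) this is $\leq \varepsilon$, and combining all errors yields the claim.

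The only delicate point is the simultaneous use of Lemmas \ref{lem:13} and \ref{lem:21}: the $t^{-\kappa}$ tail estimate on strengths conditional on the visit count gives an integrable contribution only because $\beta>\kappa=\kappa_j$, which is exactly what allows the exponent $(\varepsilon')^{\beta-\kappa}$ to appear and be driven to zero. The random summation limit $M(n,j)$ is a minor bookkeeping nuisance handled by the a priori upper bound from Lemma \ref{lem:9}.
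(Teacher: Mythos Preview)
Your approach is essentially the same as the paper's: reduce to $\sum N^j_i s^j_i 1_{s^j_i\leq \varepsilon' n^{1/\kappa}}$ via Lemma~\ref{lem:17}, cap the number of traps at $O(n)$, apply Markov with exponent $\beta\in(\kappa,1]$, and use Lemma~\ref{lem:13} plus Lemma~\ref{lem:21} to extract the factor $(\varepsilon')^{\beta-\kappa}$.

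There is one genuine bookkeeping point you have skipped. Lemmas~\ref{lem:17} and~\ref{lem:13} are stated only for the \emph{restricted} traps $\{\overline{x}^j_i,\overline{y}^j_i\}$ satisfying $\overline{x}^j_i\cdot e_1,\overline{y}^j_i\cdot e_1\geq Y_{\tau_2}\cdot e_1$, whereas the sum in Lemma~\ref{lem:31} runs over the unrestricted traps (in particular, when $j=1$ a trap may straddle the renewal hyperplane). The paper handles this by splitting
\[
\sum_{i=1}^{M(n,j)} \ell^j_i 1_{s^j_i\leq \varepsilon' n^{1/\kappa}}
\;\leq\; \sum_{i=1}^{M(n,j)} \overline{\ell}^j_i 1_{\overline{s}^j_i\leq \varepsilon' n^{1/\kappa}} \;+\; \sum_{i=1}^{M(3,j)} \ell^j_i,
\]
and bounding the second piece crudely by $\tau_3$, which is $o(n^{1/\kappa})$ in probability. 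Only then does it invoke Lemmas~\ref{lem:17} and~\ref{lem:13} on the overlined quantities. You should insert this split before applying those lemmas; otherwise the citations are formally out of scope. Apart from this, your layer-cake computation (which is really an inequality, not an equality, once the cutoff $1_{s^j_i\leq \varepsilon' n^{1/\kappa}}$ is present) and the power-counting are correct and match the paper.
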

\begin{proof}
Let $\gamma \in \left(\kappa,\frac{\kappa+\kappa^{\prime}}{2}\right)$ be such that $\gamma \leq 1$. Let $\beta$ be a positive real. Let $\{\overline{x}^j_i,\overline{y}^j_i\}$ be the $i^{\text{th}}$ trap visited by the walk in the direction $j$ after time $\tau_2$ such that $\{\overline{x}^j_i.e_1,\overline{y}^j_i.e_1\geq Y_{\tau_2}.e_1$. Let $\overline{s}^j_i$ be its strength $\overline{\ell}^j_i$ the time spent in this trap and $\overline{N}^j_i$ the number of times the trap is visited. \\
By lemma \ref{lem:9} the number of traps encountered between $2$ renewal times has a finite expectation and since the $(M(2i+1,j)-M(2i,j))_{i\in \N^*}$ are iid and so are the $(M(2i+2,j)-M(2i+1,j))_{i\in \N^*}$, there exists a constant $C_j$ such that $\Prob_0$ almost surely:
\[
\frac{1}{n}M(n,j) \rightarrow C_j.
\]
So for any $\varepsilon>0$, for $n$ large enough: 
\[
\Prob_0(M(n,j) \geq 2C_j n)\leq \frac{\varepsilon}{4}.
\]
We have for $n$ large enough: 
\[
\begin{aligned}
&\Prob_0\left(\sum\limits_{i=1}^{M(n,j)} \ell_i^j1_{s^j_i\leq\varepsilon^{\prime} n^{\frac{1}{\kappa}}}\geq \varepsilon n^{\frac{1}{\kappa}}\right)\\
\leq & \Prob_0\left(\sum\limits_{i=1}^{M(n,j)} \overline{\ell}^j_i1_{ \overline{s}^j_i\leq\varepsilon^{\prime} n^{\frac{1}{\kappa}}}\geq \frac{1}{2}\varepsilon n^{\frac{1}{\kappa}}\right) + \Prob_0\left(\sum\limits_{i=1}^{M(3,j)} \ell^j_i1_{s_i\leq\varepsilon^{\prime} n^{\frac{1}{\kappa}}}\geq \frac{1}{2}\varepsilon n^{\frac{1}{\kappa}}\right)\\
\leq & \Prob_0\left(\sum\limits_{i=1}^{M(n,j)} \overline{\ell}^j_i1_{ \overline{s}^j_i\leq\varepsilon^{\prime} n^{\frac{1}{\kappa}}}\geq \frac{1}{2}\varepsilon n^{\frac{1}{\kappa}}\right) + \Prob_0\left(\tau_3 \geq \frac{1}{2}\varepsilon n^{\frac{1}{\kappa}}\right)\\
\leq & \Prob_0\left(\sum\limits_{i=1}^{M(n,j)} \overline{\ell}^j_i1_{ \overline{s}^j_i\leq\varepsilon^{\prime} n^{\frac{1}{\kappa}}}\geq \frac{1}{2}\varepsilon n^{\frac{1}{\kappa}}\right) + \frac{\varepsilon}{4} \text{ for } n \text{ large enough } \\
\leq & \Prob_0\left(\sum\limits_{i=1}^{2C_j n} \overline{\ell}^j_i1_{ \overline{s}^j_i\leq\varepsilon^{\prime} n^{\frac{1}{\kappa}}}\geq \frac{1}{2}\varepsilon n^{\frac{1}{\kappa}}\right) + 2\frac{\varepsilon}{4} \text{ for } n \text{ large enough }.
\end{aligned}
\]
Then by lemma \ref{lem:17} we have:
\[
\Prob_0\left(\sum\limits_{i=1}^{2C_j n} \overline{\ell}^j_i1_{ \overline{s}^j_i\leq\varepsilon^{\prime} n^{\frac{1}{\kappa}}} \geq \frac{1}{2}\varepsilon n^{\frac{1}{\kappa}}\right)\leq \frac{\varepsilon}{4}+ \Prob_0\left(\sum\limits_{i=1}^{2C_j n} \overline{N}^j_i\overline{s}^j_i1_{\overline{s}^j_i\leq\varepsilon^{\prime} n^{\frac{1}{\kappa}}} \geq \frac{\varepsilon^2}{40} n^{\frac{1}{\kappa}}\right).
\]
And finally we have:
\[
\begin{aligned}
\Prob_0\left(\sum\limits_{i=1}^{2C_j n} \overline{N}^j_i\overline{s}^j_i1_{\overline{s}^j_i\leq\varepsilon^{\prime} n^{\frac{1}{\kappa}}} \geq \frac{\varepsilon^2}{40} n^{\frac{1}{\kappa}}\right)
\leq & \Prob_0\left(\sum\limits_{i=1}^{2C_j n} (\overline{N}^j_i)^{\gamma}(\overline{s}_i^j)^{\gamma}1_{\overline{s}_i^j\leq\beta n^{\frac{1}{\kappa}}} \geq \left(\frac{\varepsilon^2}{40} n^{\frac{1}{\kappa}}\right)^{\gamma}\right)\\
\leq & \left(\frac{\varepsilon^2}{40} n^{\frac{1}{\kappa}}\right)^{-\gamma}\E_{\Prob_0}\left(\sum\limits_{i=1}^{2C_j n} (\overline{N}^j_i)^{\gamma}(\overline{s}^j_i)^{\gamma}1_{\overline{s}^j_i\leq\beta n^{\frac{1}{\kappa}}}\right)\\
= & \left(\frac{\varepsilon^2}{40} n^{\frac{1}{\kappa}}\right)^{-\gamma}\sum\limits_{i=1}^{2C_j n} \E_{\Prob_0}\left( (\overline{N}^j_i)^{\gamma}(\overline{s}^j_i)^{\gamma}1_{\overline{s}^j_i\leq\beta n^{\frac{1}{\kappa}}}\right).
\end{aligned}
\]
Then by lemma \ref{lem:13} we get, for some constant $c$ that does not depend on $\beta$:
\[
\begin{aligned}
\left(\frac{\varepsilon^2}{40} n^{\frac{1}{\kappa}}\right)^{-\gamma}\sum\limits_{i=1}^{2C_j n} \E_{\Prob_0}\left( (\overline{N}^j_i)^{\gamma}(\overline{s}^j_i)^{\gamma}1_{\overline{s}_i\leq\beta n^{\frac{1}{\kappa}}}\right)
\leq & c\left(\frac{\varepsilon^2}{40} n^{\frac{1}{\kappa}}\right)^{-\gamma}\sum\limits_{i=1}^{2C_j n} \E_{\Prob_0}\left( (\overline{N}_i^j)^{\gamma}\right) \left(\varepsilon^{\prime} n^{\frac{1}{\kappa}}\right)^{\gamma-\kappa}\\
= & c\left(\frac{\varepsilon^2}{40}\right)^{-\gamma}\left(\varepsilon^{\prime}\right)^{\gamma-\kappa}  n^{-1}\sum\limits_{i=1}^{2C_j n} \E_{\Prob_0}\left( (\overline{N}^j_i)^{\gamma}\right).
\end{aligned}
\]
And by lemma \ref{lem:21} there exists a constant $c$ that does not depend on $\beta$ such that:
\[
\left(\frac{\varepsilon^2}{40}\right)^{-\gamma}\left(\varepsilon^{\prime}\right)^{\gamma-\kappa}  n^{-1}\sum\limits_{i=1}^{2C_j n} \E_{\Prob_0}\left( (\overline{N}^j_i)^{\gamma}\right)
\leq c\left(\frac{\varepsilon^2}{40}\right)^{-\gamma}\left(\varepsilon^{\prime}\right)^{\gamma-\kappa}.
\]
So by taking $\beta$ small enough we get the result we wanted.
\end{proof}

\begin{lem}\label{lem:20}
Let $J=\{ j\in[\![1,d]\!],\kappa_j>\kappa\}$. \\
If $\kappa=1$ there exists a constant C such that $\Prob_0$ almost surely:
\[
\frac{1}{n}\sum\limits_{i=0}^{\tau_n-1}1_{Y_i\in\tilde{\mathcal{T}}_J} \rightarrow C.
\]
If $\kappa<1$ there exists a constant $C>0$ and a constant $\gamma \in (\kappa,1]$ such that $\Prob_0$ almost surely:
\[
\limsup n^{-\frac{1}{\gamma}}\sum\limits_{k=0}^{\tau_{n}-1}1_{Y_k\in \tilde{\mathcal{T}}_J} \leq C.
\]
\end{lem}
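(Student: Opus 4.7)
The plan is to combine the renewal structure with a moment bound. Set $\tau_0 := 0$ and, for $k \geq 0$,
\[
S_k := \sum_{i = \tau_k}^{\tau_{k+1} - 1} 1_{Y_i \in \tilde{\mathcal{T}}_J}.
\]
By Proposition \ref{prop:1} the variables $(S_k)_{k \geq 1}$ are i.i.d., and $S_0 \leq \tau_1 < \infty$ almost surely. Since $\sum_{i=0}^{\tau_n - 1} 1_{Y_i \in \tilde{\mathcal{T}}_J} = \sum_{k=0}^{n-1} S_k$, it suffices to control $\sum_{k=1}^{n-1} S_k$; the term $S_0$ is an a.s.-finite lower-order correction.

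The heart of the argument is the moment bound $\E_{\Prob_0}(S_1^\gamma) < \infty$ for a suitable $\gamma \in (0, 1]$. Within one renewal block, $S_1 \leq \sum_{j \in J} \sum_i \ell_i^j$ where $\ell_i^j$ is the time spent in the $i^{\text{th}}$ trap in direction $j$ encountered during the block. The subadditivity $(a+b)^\gamma \leq a^\gamma + b^\gamma$ for $\gamma \in (0,1]$ gives $S_1^\gamma \leq \sum_{j \in J} \sum_i (\ell_i^j)^\gamma$. From the computation in the proof of lemma \ref{lem:17} one has the quenched bound $\E_{\prob_0^\omega}(\ell_i^j \mid \tilde Y) \leq 5 N_i^j s_i^j$; Jensen's inequality thus yields
\[
\E_{\Prob_0}\bigl((\ell_i^j)^\gamma\bigr) \leq 5^\gamma\, \E_{\Prob_0}\bigl((N_i^j s_i^j)^\gamma\bigr).
\]
Writing $(s_i^j)^\gamma = \gamma \int_0^\infty t^{\gamma - 1} 1_{s_i^j \geq t}\, dt$, splitting the $t$-integral at $t = 2$, and using lemma \ref{lem:13} for the tail gives, whenever $\gamma < \kappa_j$,
\[
\E_{\Prob_0}\bigl((N_i^j s_i^j)^\gamma\bigr) \leq C(\gamma, \kappa_j)\, \E_{\Prob_0}\bigl((N_i^j)^\gamma\bigr).
\]
Summing over $i$ and $j \in J$ and invoking lemma \ref{lem:14} yields $\E_{\Prob_0}(S_1^\gamma) < \infty$ as soon as $\gamma \in [\kappa, \tfrac{\kappa + \kappa'}{2}) \cap (0, 1]$ and $\gamma < \min_{j \in J} \kappa_j$.

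When $\kappa = 1$, choose $\gamma = 1$; this is admissible since $\kappa' > \kappa$ strictly (because $d \geq 3$ forces $\max_i(\alpha_i + \alpha_{i+d}) < \overline{\alpha}$), and $\kappa_j > 1$ for every $j \in J$ by definition. The strong law of large numbers then gives $\frac{1}{n} \sum_{k=1}^n S_k \to \E_{\Prob_0}(S_1)$ a.s., proving the claim with $C = \E_{\Prob_0}(S_1)$. When $\kappa < 1$ and $J \neq \emptyset$ (the case $J = \emptyset$ being immediate), pick $\gamma$ in the non-empty open interval $\bigl(\kappa,\, \min(1, \tfrac{\kappa + \kappa'}{2}, \min_{j \in J} \kappa_j)\bigr)$; then the subadditivity $(\sum_{k=1}^n S_k)^\gamma \leq \sum_{k=1}^n S_k^\gamma$ combined with the strong law applied to $(S_k^\gamma)$ yields
\[
\limsup_{n \to \infty} n^{-1/\gamma} \sum_{k=1}^n S_k \leq \E_{\Prob_0}(S_1^\gamma)^{1/\gamma} \quad \text{a.s.}
\]
The main technical difficulty is the decoupling of the trap strength $s_i^j$ from the visit count $N_i^j$ (both functionals of the same environment), which hinges on the quasi-independence supplied by lemma \ref{lem:13}; once that is in hand the renewal reduction and the SLLN are routine.
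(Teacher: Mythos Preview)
Your strategy is the paper's strategy: decouple $s_i^j$ from $N_i^j$ via Lemma~\ref{lem:13}, control the visit moments via Lemma~\ref{lem:14}/\ref{lem:21}, and conclude by the renewal SLLN. Two points deserve attention.

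The minor one: the blanket claim that $(S_k)_{k\ge 1}$ are i.i.d.\ from Proposition~\ref{prop:1} glosses over boundary traps. The indicator $1_{Y_i\in\tilde{\mathcal T}_J}$ depends on $\omega$ at neighbours of $Y_i$, and if $1\in J$ these neighbours can sit in the adjacent renewal slab. The paper works around this by splitting into even and odd blocks; easy to patch.

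The real gap is in the line ``summing over $i$ and $j\in J$ and invoking Lemma~\ref{lem:14}''. Lemma~\ref{lem:13} is proved for each \emph{fixed} index $i$ in the global enumeration after~$\tau_2$: it yields $\E_{\Prob_0}[(N_i^j)^\gamma 1_{s_i^j\ge A}]\le CA^{-\kappa_j}\E_{\Prob_0}[(N_i^j)^\gamma]$, and hence $\E_{\Prob_0}[(N_i^j s_i^j)^\gamma]\le C'\E_{\Prob_0}[(N_i^j)^\gamma]$. What you then need is
\[
\E_{\Prob_0}\Bigl[\sum_{\text{traps in block }2}(N^j s^j)^\gamma\Bigr]\ \le\ C'\,\E_{\Prob_0}\Bigl[\sum_{\text{traps in block }2}(N^j)^\gamma\Bigr],
\]
and this does \emph{not} follow by summing the per-$i$ bound, because the summation set $\{i\le M(3,j)\}$ is random and correlated with $(N_i^j,s_i^j)$: the end of block~2 is $\tau_3$, which depends on the full trajectory of $Y$ (in particular on back-and-forths inside direction-$e_1$ traps, hence on their strengths). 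Nor can you simply drop the indicator and sum over all $i$, since $\sum_i \E[(N_i^j)^\gamma]=\infty$.

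The paper closes exactly this gap by arguing by contradiction: suppose the per-block $\gamma$-moment is infinite, so by SLLN $n^{-1}\sum_k S_k^\gamma\to\infty$ a.s.; since $M(n,j)/n\to C_j$ a.s., this forces $n^{-1}\sum_{i=1}^{C_j n}(\ell_i^j)^\gamma\to\infty$ a.s., and the expectation of this last sum --- now over a \emph{deterministic} index range --- is bounded term by term via Lemmas~\ref{lem:13} and~\ref{lem:21}, contradiction. The detour through a deterministic upper index is precisely what licenses the termwise application of Lemma~\ref{lem:13}; your direct route skips it.
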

\begin{proof}
For any $j\in J$ we define $\kappa_j=2\sum\limits_{i=1}^{2d}\alpha_i -\alpha_j-\alpha_{j+d}>\kappa$. Let $\{x^j_i,y^j_i\}$ be the $i^{\text{th}}$ trap in the direction $j$ the walk enters after time $\tau_2$ and such that $x^j_i.e_1,y^j_i.e_1\geq Y_{\tau_2}.e_1$. Let $N^j_i$ be the number of times the walk exits $\{x^j_i,y^j_i\}$ and $\ell^j_i$ the time the walk spends in this trap. Let $M(i,j)$ be the number of traps in the direction $j$ entered before time $\tau_{i}$. The $(M(2i+2,j)-M(2i+1,j))_{i\in\N^*}$ are iid and so are the $(M(2i+1,j)-M(2i,j))_{i\in\N^*}$, they also all have the same law (the only issue is that since a trap span over two vertices, there might be a slight overlap between traps of two different 'renewal slabs'). Now, since the number of different vertices the walk encounters between two renewal times has a finite expectation, the $(M(i+1,j)-M(i,j))$ have a finite expectation and therefore there exists a constant $C_j$ such that $\Prob_0$ almost surely:
\[
M(n,j) -C_j n\rightarrow - \infty.
\]
Now let $\tilde{Y}$ be the partially forgotten walk associated with $Y$. We get that knowing the environment, the partially forgotten walk and the renewal position $Y_{\tau_2}$ the time spend in the $\{x^j_i,y^j_i\}$, the $k^{\text{th}}$ time the walk enters this trap is equal to $\varepsilon_{i,k}^j +2H_{i,k}^j$ where $\varepsilon_{i,k}^j$ is $1$ if the walk enters the trap by the same vertex it leaves it and $2$ otherwise and $H_{i,k}^j$ is a geometric random variable that counts the number of back and forths. The parameter of $H_{i,k}^j$ is $p_i^j:=\omega(x_i^j,y_i^j)\omega(y^j_i,x^j_i)$.\\
First, lets look at the case $\kappa=1$. Since the $\left(\sum\limits_{j=\tau_{2i}}^{\tau_{2i+1}-1}1_{Y_{i}\in \tilde{\mathcal{T}}_J}\right)_{i\in\N^*}$ are iid and so are the $\left(\sum\limits_{j=\tau_{2i+1}}^{\tau_{2i+2}-1}1_{Y_{i}\in \tilde{\mathcal{T}}_J}\right)_{i\in\N^*}$, we just have to prove that their expectation is not infinite to have the result we want. If their expectation were infinite, then we would have that $\Prob_0$ almost surely:
\[
\frac{1}{n}\sum\limits_{j\in J}\sum\limits_{i=1}^{M(n,j)}\ell^j_i \rightarrow \infty.
\]
Therefore we would have $\Prob_0$ almost surely:
\[
\frac{1}{n}\sum\limits_{j\in J}\sum\limits_{i=1}^{C_j n}\ell^j_i \rightarrow \infty.
\]
But
\[
\begin{aligned}
\E_{\prob_0^{\omega}}\left(\frac{1}{n}\sum\limits_{j\in J}\sum\limits_{i=1}^{C_j n}\ell^j_i |\tilde{Y} \right)
= & \frac{1}{n}\sum\limits_{j\in J}\sum\limits_{i=1}^{C_j n}\sum\limits_{k=1}^{N_i^j}\E_{\prob_0^{\omega}}\left(\varepsilon_{i,k}^j+2H_{i,k}^j|\tilde{Y}\right)  \\
= & \frac{1}{n}\sum\limits_{j\in J}\sum\limits_{i=1}^{C_j n}\sum\limits_{k=1}^{N_i^j}\left(\varepsilon_{i,k}^j+2\frac{p_i^j}{1-p_i^j}\right) \\
\leq & 2\frac{1}{n}\sum\limits_{j\in J}\sum\limits_{i=1}^{C_j n}\sum\limits_{k=1}^{N_i^j}\frac{1}{1-p_i^j}\\
\leq & C\frac{1}{n} \sum\limits_{j\in J}\sum\limits_{i= 1 }^{nC_j} N_i^j s_i^j,
\end{aligned}
\]
where $s_i^j$ is the strength of the trap $\{x_i^j,y_i^j\}$. Now we get:
\[
\begin{aligned}
\E_{\Prob_0}\left(\frac{1}{n}\sum\limits_{j\in J}\sum\limits_{i=1}^{C_j n}\ell_i^j  \right) 
\leq & \frac{1}{n}\E_{\Prob_0}\left(C \sum\limits_{j\in J}\sum\limits_{i= 1 }^{nC_j} N_i^j  s_i^j \right) \\
= & C\frac{1}{n} \E_{\Prob_0}\left(\sum\limits_{j\in J}\sum\limits_{i= 1}^{nC_j} N_i^j  \int\limits_{t=0}^{\infty}1_{(s_i^j)\geq t}\dd t \right)\\
\leq  & C\frac{1}{n}\sum\limits_{j\in J}\sum\limits_{i= 1}^{nC_j}\E_{\Prob_0}\left( N_i^j  \left(2+\int\limits_{t=2}^{\infty}1_{s_i^j\geq t}\dd t\right) \right)\\
\leq & C\frac{1}{n}\sum\limits_{j\in J}\sum\limits_{i= 1}^{nC_j} \left( 2\E_{\Prob_0}(N_i^j ) + C \int_{t=2}^{\infty} \E_{\Prob_0}(N_i^j1_{s_i^j\geq t})\dd t\right).
\end{aligned}
\]
Now by lemma \ref{lem:13} we know that there exists a constant $C$ such that for any $t\geq 2$: 
\[
\E_{\Prob_0}(N_i^j 1_{s_i^j\geq t})\leq C t^{-\kappa_j}\E_{\Prob_0}(N_i^j).
\]
So there exists a constant $C^{\prime}$ (the value of this constant will change depending on the line) such that:
\[ 
\begin{aligned}
\E_{\Prob_0}\left(\frac{1}{n}\sum\limits_{j\in J}\sum\limits_{i=1}^{C_j n}\ell_i^j  \right) 
\leq & C^{\prime}\frac{1}{n} \sum\limits_{j\in J}\sum\limits_{i= 1}^{nC_j} \E_{\Prob_0}(N_i^j)\\
\leq & C^{\prime}\sum\limits_{j\in J} C_j \text{\ \ \ by lemma \ref{lem:21}}\\
\leq & C^{\prime}.
\end{aligned}
\]
This means that we cannot have $\frac{1}{n}\sum\limits_{j\in J}\sum\limits_{i=1}^{C_j n}N_i^j\rightarrow \infty$ $\Prob_0$ almost surely. Therefore the random variables $\left(\sum\limits_{j=\tau_{2i}}^{\tau_{2i+1}-1}1_{Y_{i}\in \tilde{\mathcal{T}}_J}\right)_{i\in\N^*}$ have finite expectation and so have the random variables $\left(\sum\limits_{j=\tau_{2i+1}}^{\tau_{2i+2}-1}1_{Y_{i}\in \tilde{\mathcal{T}}_J}\right)_{i\in\N^*}$. So we have the result we want. \\
If $\kappa<1$, we will basically use the same method. First there exists $\gamma\in(\kappa,1]$ such that $\gamma<\frac{\kappa+\kappa^{\prime}}{2}$ and for every $j\in J,\ \gamma<\kappa_j$.\\
We have that:
\[
\limsup n^{-\frac{1}{\gamma}}\sum\limits_{k=0}^{\tau_{n}-1}1_{Y_k\in \tilde{\mathcal{T}}_J}
=\limsup n^{-\frac{1}{\gamma}}\sum\limits_{i=2}^{n-1}\sum\limits_{k=\tau_i}^{\tau_{i+1}-1}1_{Y_k\in \tilde{\mathcal{T}}_J}.
\]
And since:
\[
\left(n^{-\frac{1}{\gamma}}\sum\limits_{i=2}^{n}\sum\limits_{k=\tau_i}^{\tau_{i+1}-1}1_{Y_k\in \tilde{\mathcal{T}}_J}\right)^{\gamma}
\leq \frac{1}{n}\sum\limits_{j\in J}\sum\limits_{i=1}^{n}\left(\sum\limits_{k=\tau_i}^{\tau_{i+1}-1}1_{Y_k\in \tilde{\mathcal{T}}_J} \right)^{\gamma}
\]
we also have:
\[
\limsup n^{-\frac{1}{\gamma}}\sum\limits_{i=2}^{n}\sum\limits_{k=\tau_i}^{\tau_{i+1}-1}1_{Y_k\in \tilde{\mathcal{T}}_J}
\leq \left(\limsup \frac{1}{n}\sum\limits_{i=2}^{n}\left(\sum\limits_{k=\tau_i}^{\tau_{i+1}-1}1_{Y_k\in \tilde{\mathcal{T}}_J} \right)^{\gamma}\right)^{\frac{1}{\gamma}}.
\]
Now, since the random variables $\left(\left(\sum\limits_{k=\tau_{2i}}^{\tau_{2i}-1}1_{Y_k\in \tilde{\mathcal{T}}_J}\right)^{\gamma}\right)_{i\in\N^*}$ are iid and so are the random variables $\left(\left(\sum\limits_{k=\tau_{2i+1}}^{\tau_{2i+1}-1}1_{Y_k\in \tilde{\mathcal{T}}_J}\right)^{\gamma}\right)_{i\in\N^*}$ we have that there exists a constant $C_{\infty}\in [0,\infty]$ such that $\Prob_0$ almost surely:
\[
\frac{1}{n}\sum\limits_{i=2}^n\left(\sum\limits_{k=\tau_{i}}^{\tau_{i}-1}1_{Y_k\in \tilde{\mathcal{T}}_J}\right)^{\gamma}\rightarrow C_{\infty}.
\]
Now, by definition of the $C_j$ and since $(a+b)^{\gamma}\leq a^{\gamma}+b^{\gamma}$ we have that if $C_{\infty}=\infty$ then $\Prob_0$ almost surely:
\[
\frac{1}{n}\sum\limits_{j\in J}\sum\limits_{i=1}^{C_j n}\left(N_i^j\right)^{\gamma}\rightarrow \infty.
\]
However we have (using the same techniques and notations as in the case $\kappa=1$):
\[
\begin{aligned}
\E_{\prob_0^{\omega}}\left(\frac{1}{n}\sum\limits_{j\in J}\sum\limits_{i=1}^{C_j n}(\ell_i^j)^{\gamma}|\tilde{Y}\right)
=& \frac{1}{n}\sum\limits_{j\in J}\sum\limits_{i=1}^{C_j n}\E_{\prob_0^{\omega}}\left(\left(\sum\limits_{k=1}^{N_i^j}\varepsilon_i^j+2H_{i,k}^j\right)^{\gamma}|\tilde{Y}\right)\\
\leq & \frac{1}{n}\sum\limits_{j\in J}\sum\limits_{i=1}^{C_j n}\E_{\prob_0^{\omega}}\left(\sum\limits_{k=1}^{N_i^j}\varepsilon_i^j+2H_{i,k}^j|\tilde{Y}\right)^{\gamma}\\
\leq & \frac{1}{n}\sum\limits_{j\in J}\sum\limits_{i=1}^{C_j n}\left(N_i^j\frac{2}{p_i^j}\right)^{\gamma}\\
\leq & C \frac{1}{n}\sum\limits_{j\in J}\sum\limits_{i=1}^{C_j n}\left(N_i^js_i^j\right)^{\gamma}.
\end{aligned}
\]
Now by the same method as the one for $\kappa =1$, by using lemma \ref{lem:13} and lemma \ref{lem:21} we get:
\[
\E_{\Prob_0}\left(\frac{1}{n}\sum\limits_{j\in J}\sum\limits_{i=1}^{C_j n}(\ell_i^j)^{\gamma}\right)
\leq C. 
\]
This means that $C_{\infty}<\infty$ and therefore:
\[
\limsup n^{-\frac{1}{\gamma}}\sum\limits_{k=0}^{\tau_{n}-1}1_{Y_k\in \tilde{\mathcal{T}}_J} \leq (C_{\infty})^{\frac{1}{\gamma}}<\infty.
\]
\end{proof}

\begin{lem}\label{lem:12}
Let $A^{i_1,i_2}_{\varepsilon,n}$(i) be the event that the walk visits at least two trap of strength at least $\varepsilon n^{\frac{1}{\kappa}}$ between times $\tau_{i}$ and $\tau_{i+i_1}-1$ and that it enters these traps at most $i_2$ times. We have that for any $i_1\geq 1$:
\[
\Prob_0 \left( \bigcup\limits_{2\leq i \leq n} A^{i_1,i_2}_{\varepsilon,n}(i) \right) \rightarrow 0.
\]
\end{lem}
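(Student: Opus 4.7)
The plan is to combine a union bound in $i$, the stationarity of renewal blocks from Proposition \ref{prop:1}, and the conditional independence of trap strengths from Lemma \ref{lem:11} to show that two simultaneous large-strength traps in a single slab are vanishingly unlikely. By Proposition \ref{prop:1}, for every $i\ge 2$ the law of the portion of the walk between $\tau_i$ and $\tau_{i+i_1}$, together with the environment to the right of the renewal hyperplane through $Y_{\tau_i}$, does not depend on $i$; hence $\Prob_0(A^{i_1,i_2}_{\varepsilon,n}(i))$ is the same for all $i\ge 2$ (up to boundary effects on the trap straddling the hyperplane, which we absorb by the obvious monotone bound). A union bound then reduces the claim to
\[
\Prob_0\bigl(A^{i_1,i_2}_{\varepsilon,n}(2)\bigr)=O(n^{-2}).
\]

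To bound the probability on a single slab I would condition on the trap-equivalence class $\tilde\omega$ and the partially-forgotten walk $\tilde Y$. This determines the direction, configuration and visit count of every trap visited in $[\![\tau_2,\tau_{2+i_1}-1]\!]$; write $\mathcal{M}^\ast$ for the set of those whose visit count is at most $i_2$. By Lemma \ref{lem:11}, conditionally on $(\tilde\omega,\tilde Y)$ the strengths of distinct traps are independent, and each trap of direction $j$ with configuration having $N\le i_2$ satisfies
\[
\Prob_0\bigl(s\ge A\,\bigm|\,\tilde\omega,\tilde Y\bigr)\le D_p\,A^{-\kappa_j}\exp\!\bigl(5(N+2\overline{\alpha})/A\bigr).
\]
Since only finitely many configurations have $N\le i_2$, the $D_p$ are uniformly bounded; using $\kappa_j\ge\kappa$, for all $n$ large enough that $\varepsilon n^{1/\kappa}\ge 2$ we obtain a uniform conditional bound $\Prob_0(s\ge \varepsilon n^{1/\kappa}\mid\tilde\omega,\tilde Y)\le D\varepsilon^{-\kappa}n^{-1}$. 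Conditional independence and a union bound over the at most $\binom{|\mathcal{M}^\ast|}{2}$ pairs in $\mathcal{M}^\ast$ then yield
\[
\Prob_0\bigl(A^{i_1,i_2}_{\varepsilon,n}(2)\,\bigm|\,\tilde\omega,\tilde Y\bigr)\le D^2\,\varepsilon^{-2\kappa}\,n^{-2}\,|\mathcal{M}^\ast|^2.
\]

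Taking expectations, the lemma follows as soon as $\E_{\Prob_0}[|\mathcal{M}^\ast|^2]<\infty$. But $|\mathcal{M}^\ast|$ is dominated by the number of distinct vertices visited in $[\![\tau_2,\tau_{2+i_1}-1]\!]$, hence by $\sum_{j=2}^{2+i_1-1}(R_{j+1}-R_j)$ in the notation of Lemma \ref{lem:9}. A mild strengthening of the argument proving Lemma \ref{lem:9} -- invoking Lemma $4$ of \cite{BouchetSubbal}, which controls $L^\beta$ norms of $\#\{x:\exists t\in[0,1),X^m_t=x\}$ for \emph{every} $\beta$, together with a Hölder inequality against the $L^\gamma$ bound on $d\Qprob^m_0/d\Prob_0$ for $\gamma$ large enough -- gives $\E_{\Prob_0}[(R_{j+1}-R_j)^2]<\infty$. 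Combining the three displays above,
\[
\Prob_0\Bigl(\bigcup_{2\le i\le n}A^{i_1,i_2}_{\varepsilon,n}(i)\Bigr)\le n\cdot D^2\varepsilon^{-2\kappa}n^{-2}\,\E_{\Prob_0}[|\mathcal{M}^\ast|^2]=O(n^{-1})\to 0.
\]
The only genuinely technical step is the second-moment estimate on $|\mathcal{M}^\ast|$; everything else is immediate from the conditional-independence machinery of Lemma \ref{lem:11} and the observation that the individual-trap tail estimate $A^{-\kappa_j}\le A^{-\kappa}$ is just strong enough that the pair estimate $A^{-2\kappa}=\varepsilon^{-2\kappa}n^{-2}$ beats the $n$ union-bound factor with room to spare.
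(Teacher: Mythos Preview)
Your core strategy---union bound over $i$, stationarity from Proposition~\ref{prop:1}, then Lemma~\ref{lem:11}'s conditional independence of trap strengths to get a pair bound of order $n^{-2}$---is exactly the paper's. The gap is in the moment estimate you need to close it.

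Your bound $\Prob_0(A^{i_1,i_2}_{\varepsilon,n}(2))\le D^2\varepsilon^{-2\kappa}n^{-2}\,\E_{\Prob_0}[|\mathcal M^\ast|^2]$ requires $\E_{\Prob_0}[(R_{j+1}-R_j)^2]<\infty$. Lemma~\ref{lem:9} only gives the \emph{first} moment, and its proof does not upgrade: it argues by contradiction via the strong law (if $\E[R_2-R_1]=\infty$ then $R_n/n\to\infty$ a.s., contradicting a Markov bound under $\Qprob_0^m$). The input you cite---finiteness of $\E_{\Prob_0}[\#\{x:\exists t\in[0,1),X^m_t=x\}^\beta]$ for all $\beta$, plus $d\Qprob^m_0/d\Prob_0\in L^\gamma$---controls the number of sites visited per \emph{unit time} of the accelerated walk, not per \emph{renewal interval}; Lemma~\ref{lem:8} gives only $\E[T^m_{i+1}-T^m_i]<\infty$, so there is no H\"older route from one to the other. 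The ``mild strengthening'' is not established and I do not see a short way to obtain it with the tools in the paper.

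The paper sidesteps the second moment entirely by \emph{not} bounding a single slab and summing. It re-indexes traps by order of discovery (which is $(\tilde\omega,\tilde Y)$-measurable), and uses only the first moment to show that (i) the total trap count up to $\tau_{n+i_1}$ is at most $2Cn$ w.h.p., and (ii) no single slab contains more than $\eta n$ traps w.h.p., via $n\,\Prob_0(M(i{+}i_1)-M(i)\ge\eta n)\to 0$ when $\E[M(i{+}i_1)-M(i)]<\infty$. The event $\bigcup_i A^{i_1,i_2}_{\varepsilon,n}(i)$ then forces two large-strength, low-visit traps within $\eta n$ of each other in trap-index; covering $[0,2Cn]$ by $O(1/\eta)$ overlapping blocks of length $2\eta n$ and applying the pair bound from Lemma~\ref{lem:11} in each block gives total probability $O(\eta)$, after which one sends $\eta=\eta_n\to 0$. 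Only first moments are used.

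A secondary issue: $\mathcal M^\ast$ as you define it is not obviously $(\tilde\omega,\tilde Y)$-measurable, because the renewal times $\tau_k$ depend on the back-and-forth counts inside $e_1$-direction traps, which $\tilde Y$ erases. The paper's trap-index reformulation avoids this as well.
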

\begin{proof}
Let $\overline{\alpha}:=\sum\limits_{i=1}^{2d}\alpha_i$. Let $M(i)$ be the number of traps visited before time $\tau_i$. We know by lemma \ref{lem:9} that the number $M(i+i_1)-M(i)$ of traps visited between times $\tau_i$ and $\tau_{i+i_1}-1$ has a finite expectation (for $\Prob_0$) and by proposition \ref{prop:1} the $((M(2i+2)-M(2i+1))_{i\geq 1}$ are iid and so are the $(M(2i+1)-M(2i))_{i\geq 1}$. This means that there is a positive constant $C$ such that $\Prob_0$ almost surely:
\[
\frac{1}{n}M(n) \rightarrow C.
\]
Now let $M^{i_2}(i)$ be the number of traps visited at most $i_2$ times before time $\tau_{i}$. We know that:
\[
\Prob_0(M^{i_2}(n+i_1) \geq 2Cn)\rightarrow 0.
\]
Now, for any $\eta>0$ we have:
\[
\begin{aligned}
\Prob_0\left(\exists i\leq n, M(i+i_1)-M(i) \geq \eta n\right)
\leq & \sum\limits_{i\leq n} \Prob_0\left(M(i+i_1)-M(i) \geq \eta n\right) \\
=& o(1) + \sum\limits_{2\leq i\leq n} \Prob_0\left(M(i+i_1)-M(i) \geq \eta n\right) \\
=& o(1) + (n-1)\Prob_0\left(M(i+i_1)-M(i) \geq \eta n\right)\\
=& o(1) \text{\ \ since } M(i+i_1)-M(i)  \text{ has a finite expectation}.
\end{aligned}
\]
Now let $A_i$ be the event "the $i^{\text{th}}$ trap visited by the walk is of strength at least $\varepsilon n^{\frac{1}{\kappa}}$ and that the walk enters this trap at most $i_2$ times". We have:
\[
\begin{aligned}
&\Prob_0\left(\exists i\leq 2Cn , \exists j\leq \eta n, A_i \text{ and } A_{i+j}\right) \\
\leq & \Prob_0 \left(\exists i \leq \frac{2C}{\eta}, \exists j_1,j_2\in [\![ i \eta n, i\eta n + 2\eta n ]\!], j_1\not = j_2 \text{ and } A_{j_1} \text{ and } A_{j_2}\right)\\
\leq & \sum\limits_{i=0}^{\frac{2C}{\eta}} \Prob_0 \left(\exists j_1,j_2\in [\![ i \eta n, i\eta n + 2\eta n ]\!], j_1\not = j_2 \text{ and } A_{j_1} \text{ and } A_{j_2}\right)\\
\leq & \sum\limits_{i=0}^{\frac{2C}{\eta}}\sum\limits_{j_1=i \eta n}^{i\eta n + 2\eta n}\sum\limits_{j_2=i \eta n}^{i\eta n + 2\eta n} \Prob_0 \left(A_{j_1} \text{ and } A_{j_2}\right)1_{j_1\not=j_2}.
\end{aligned}
\]
Now let $(\tilde{Y}_n)_{n\in\N}$ be the partially forgotten walk, by lemma \ref{lem:11} if $s_j$ is the strength of the $j^{\text{th}}$ trap visited and $N_j$ is the number of times the walk enters the $j^{\text{th}}$ trap,  there exists a constant $D_j$ that only depends on its configuration such that for any $B>2$,
\[
\Prob_0\left(s_j\geq B|\tilde{Y},\tilde{\omega}\right) \leq D_j B^{-\kappa} \exp\left(\frac{5(N_i+2\overline{\alpha})}{B}\right).
\]
Let $D^{i_2}$ be the maximum value of $D_j \exp\left(\frac{5(Z_i+2\overline{\alpha})}{2}\right)$ we can get for configuration of traps entered at most $i_2$ times. We get that for any $j$:
\[
\Prob_0(s_j\geq B \text{ and } N_j\leq i_2|\tilde{Y},\tilde{\omega}) \leq D^{i_2} B^{-\kappa}.
\] 
We also know that the strength of the traps are independent, knowing the partially forgotten walk and the equivalence class of the environment for the trap-equivalent relation. Therefore we have, for any $\eta>0$:
\[
\begin{aligned}
& \sum\limits_{i=0}^{\frac{2C}{\eta}}\sum\limits_{j_1=i \eta n}^{i\eta n + 2\eta n}\sum\limits_{j_2=i \eta n}^{i\eta n + 2\eta n} \Prob_0 \left(A_{j_1} \text{ and } A_{j_2}\right)1_{j_1\not=j_2} \\
\leq & \sum\limits_{i=0}^{\frac{2C}{\eta}}\sum\limits_{j_1=i \eta n}^{i\eta n + 2\eta n}\sum\limits_{j_2=i \eta n}^{i\eta n + 2\eta n} (D^{i_2})^2 \left(\varepsilon n^{\frac{1}{\kappa}}\right)^{-2\kappa}\\
\leq & 2\frac{2C}{\eta} (\eta n)^2 (D^{i_2})^2 \varepsilon^{-2\kappa} n^{-2} \text{ for } \eta \text{ small enough}\\
=& 4C \eta (D^{i_2})^2 \varepsilon^{-2\kappa}.
\end{aligned}
\]
Now, by taking a sequence $(\eta_n)_{n\in\N^*}$ of positive reals such that $\eta_n \rightarrow 0$ and such that:
\[
\Prob_0\left(\exists i\leq n, M(i+i_1)-M(i) \geq \eta_n n\right)\rightarrow 0,
\]
we get:
\[
\begin{aligned}
 \Prob_0 \left( \bigcup\limits_{2\leq i \leq n} A^{i_1,i_2}_{\varepsilon,n}(i) \right)
\leq & \Prob_0 \left(M(n+i_1) \leq 2Cn\right) \text { or } \left(\exists i\leq n, M(i+i_1)-M(i) \geq \eta_n n\right)\\
&\ \ \ \ \ +\Prob_0 \left(\exists i\leq 2Cn , \exists j\leq \eta_n n, A_i +\Prob_0 A_{i+j}\right).
\end{aligned}
\]
Therefore:
\[
\Prob_0 \left( \bigcup\limits_{2\leq i \leq n} A^{i_1,i_2}_{\varepsilon,n}(i) \right) \rightarrow 0
\]
\end{proof}

\begin{lem}\label{lem:19}
If $\kappa=1$ there exists a constant $C$ such that $\Prob_0$ almost surely:
\[
\frac{1}{n}\sum\limits_{i=0}^{\tau_n-1}1_{Y_i\not\in\tilde{\mathcal{T}}}\rightarrow C.
\]
If $\kappa<1$, there exists a constant $C>0$ and a constant $\beta< \frac{1}{\kappa}$ such that $\Prob_0$ almost surely, for $n$ large enough:
\[
\sum\limits_{x\in\Z^d}\sum\limits_{i=0}^{\tau_n-1} 1_{Y_i=x}1_{x\not\in\tilde{\mathcal{T}}} \leq Cn^{\beta}.
\]
\end{lem}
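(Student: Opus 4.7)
The plan is to follow the strategy of Lemma \ref{lem:9}, exploiting the uniform bound $\gamma^{\omega}(x) \leq 2$ on non-trap vertices and Lemma \ref{lem:15}. By Proposition \ref{prop:1}, the random variables $Z_k := \sum_{i=\tau_k}^{\tau_{k+1}-1} 1_{Y_i \not\in \tilde{\mathcal{T}}}$ are i.i.d. for $k \geq 1$ under $\Prob_0$, so the proof reduces to showing appropriate moment bounds on $Z_1$. Note also that $Z_0 \leq \tau_1 < \infty$ almost surely and may be absorbed into constants.

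For $\kappa = 1$, I will show $\E_{\Prob_0}(Z_1) < \infty$; the strong law of large numbers then yields the desired convergence to $C := \E_{\Prob_0}(Z_1)$. Suppose by contradiction that $\E_{\Prob_0}(Z_1) = \infty$, so that $n^{-1}\sum_{k=1}^n Z_k \to \infty$ $\Prob_0$-a.s., and hence also $\Qprob_0^m$-a.s.\ since $\Qprob_0^m \ll \Prob_0$. Choose $m$ large enough for Lemma \ref{lem:15} to apply with $\beta = 1$: this is permitted because $\kappa' > \kappa = 1$ strictly (using $d \geq 3$ and $\alpha_i > 0$, so $\overline{\alpha} > \max_j (\alpha_j + \alpha_{j+d})$), hence $1 \in [1, (\kappa+\kappa')/2)$. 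On the other hand, $\sum_{k=1}^n Z_k$ counts jumps of $X^m$ from non-trap sites during $[T_1^m, T_{n+1}^m]$, which by Lemma \ref{lem:8} lies in $[0, 2c_\infty n]$ with probability tending to 1. Since $\overline{\omega^m_t}$ is stationary under $\Qprob_0^m$, the expected rate of such jumps equals $\E_{\Qprob_0^m}\bigl(\int_0^1 \gamma^m_\omega(X^m_t) 1_{X^m_t \not\in \tilde{\mathcal{T}}} \dd t\bigr) \leq 2\, \E_{\Qprob_0^m}\bigl(\sum_x \int_0^1 \tfrac{\gamma^m_\omega(x)}{\gamma^\omega(x)} 1_{X^m_t=x} \dd t\bigr)$, finite by Lemma \ref{lem:15}. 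A Markov bound on $\Qprob_0^m(\sum_k Z_k \geq Cn)$ yields a constant strictly less than 1 for $C$ large, contradicting the divergence.

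For $\kappa < 1$, choose $\beta \in (\kappa, 1] \cap [\kappa, (\kappa+\kappa')/2)$, which is non-empty since $(\kappa+\kappa')/2 > \kappa$. I show $\E_{\Prob_0}(Z_1^\beta) < \infty$; given this, the SLLN applied to the i.i.d.\ sequence $(Z_k^\beta)$ yields $\sum_{k=1}^n Z_k^\beta \leq 2n\,\E(Z_1^\beta)$ a.s.\ for $n$ large, and the subadditivity inequality $(\sum_k Z_k)^\beta \leq \sum_k Z_k^\beta$ (valid for $\beta \leq 1$, $Z_k \geq 0$) gives the announced bound $\sum_{k=1}^n Z_k \leq C n^{1/\beta}$ with $1/\beta < 1/\kappa$. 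The claim is again proved by contradiction and Markov's inequality, but now one decomposes $Z_k^\beta \leq \sum_{x \not\in \tilde{\mathcal{T}}} (\ell_x^{(k)})^\beta$ (using $\beta \leq 1$), where $\ell_x^{(k)}$ is the number of visits to $x$ during the $k$-th renewal block. Crucially, by the renewal structure, each $x$ lies in at most one block, so $\sum_{k=1}^n \sum_x (\ell_x^{(k)})^\beta = \sum_x \ell_x^\beta$, summed over $x$ visited during $[\tau_1, \tau_{n+1}-1]$. Given $\ell_x$, the accelerated occupation time satisfies $\gamma^m_\omega(x) T_x \sim \mathrm{Gamma}(\ell_x, 1)$, so $\ell_x^\beta \leq c_\beta \,\E\bigl((\gamma^m_\omega(x) T_x)^\beta \mid \ell_x, \omega\bigr)$. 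Using $\gamma^\omega \leq 2$ on non-trap sites, bounding $T_{n+1}^m \leq 2c_\infty n$ with high probability, and decomposing $\int_0^{2c_\infty n}$ into unit intervals via $(a+b)^\beta \leq a^\beta + b^\beta$, everything reduces to $2c_\infty n$ copies of the expectation appearing in Lemma \ref{lem:15}, which is finite. Markov's inequality then contradicts $n^{-1}\sum_k Z_k^\beta \to \infty$.

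The main obstacle is managing the technical bookkeeping for $\kappa < 1$: reconciling the averaging over the exponential holding times of $X^m$ with the discrete visit counts of $Y$ (via the Gamma-moment comparison), and ensuring the decomposition into stationary unit intervals of $X^m$-time is compatible with the renewal-block structure of $Y$. Both issues are essentially the same difficulties present in Lemmas \ref{lem:9} and \ref{lem:14}, and should be dealt with by the same technical devices.
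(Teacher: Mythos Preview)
Your overall strategy is the same as the paper's: reduce to a moment bound on the per-block non-trap time via contradiction, pass to the accelerated walk, use $\gamma^{\omega}\leq 2$ on non-trap vertices, and close with the stationarity of $\Qprob_0^m$ together with Lemma~\ref{lem:15}. For $\kappa=1$ your argument is correct and matches the paper's (the paper phrases the jump-count comparison via a LLN for the holding exponentials rather than the compensator identity you use, but this is cosmetic). One small correction: the $Z_k$ are not literally i.i.d.\ for $k\geq 1$, because the trap status of a boundary vertex in slab $k$ can depend on $\omega(y,\cdot)$ for a neighbour $y$ in slab $k\pm 1$; the paper handles this with an odd/even split, which suffices since one only needs the SLLN.

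For $\kappa<1$ there is a genuine gap in the order of operations you describe. Your Gamma-moment inequality $\ell_x^{\beta}\leq c_\beta\,\E\bigl[(\gamma^m_{\omega}(x)T_x)^{\beta}\mid Y,\omega\bigr]$ averages over the exponential holding times, but the truncation event $\{T_{n+1}^m\leq 2c_\infty n\}$ you then want to impose \emph{also} depends on those same exponentials, so the conditional expectation and the indicator do not compose: you cannot deduce $\ell_x^{\beta}\,1_{\{T_{n+1}^m\leq 2c_\infty n\}}\leq c_\beta\bigl(\gamma^m_{\omega}(x)\int_0^{2c_\infty n}1_{X_t=x}\,\dd t\bigr)^{\beta}$ from it. The paper closes this by proving an almost-sure \emph{pathwise} comparison (annex Lemma~\ref{lem:30}): there is $C>0$ such that eventually in $n$, $\sum_{x}\bigl(\sum_{j\leq \ell_x}\mathcal{E}_{x,j}\bigr)^{\beta}\geq C\sum_x \ell_x^{\beta}$, established by a variance bound and Borel--Cantelli. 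Since this is a pointwise inequality between random variables, it combines directly with the pathwise bound $T_n^m\leq d_m n$ (Lemma~\ref{lem:8}) and then the $\Qprob_0^m$-stationarity argument with Lemma~\ref{lem:15} goes through.

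Your reference to Lemma~\ref{lem:14} is apt: that lemma does use the expectation-level Gamma comparison (via Lemma~\ref{lem:23}), and that route also works here, but not via a high-probability truncation of $T_{n+1}^m$. The point there is that for $x$ visited before $\tau_n$ one has the pathwise identity $T_x=\int_0^{\infty}1_{X_t=x}\,\dd t=\int_0^{T_n^m}1_{X_t=x}\,\dd t$, so the Gamma bound gives $\E_{\Prob_0}\bigl[\sum_x\ell_x^{\beta}1_{x\notin\tilde{\mathcal T}}\bigr]\leq c\,\E_{\Prob_0}\bigl[\sum_x\bigl(\int_0^{T_n^m}\tfrac{\gamma^m_\omega}{\gamma^\omega}1_{X_t=x}\,\dd t\bigr)^{\beta}\bigr]$, and one then shows \emph{separately} that the right-hand side is $O(n)$ by the same $\Qprob_0^m$/Lemma~\ref{lem:15} argument used in Lemma~\ref{lem:14} to prove $C_0<\infty$---never replacing $T_n^m$ by a deterministic time inside the expectation.
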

\begin{proof}
Let $m$ be such that $\Qprob^m$ is well defined. Let $(t^m_i)_{i\in\N}$ be the times at which $X^m$ changes position, with $t_0:=0$. We have $X^m_{t^m_i}=Y_i$ for all $i\in\N$. Let $(\mathcal{E}_i)_{i\in\N}$ be a sequence of random variables defined by $\mathcal{E}_i=(t^m_{i+1}-t^m_i)\gamma^m_{\omega}(Y_i)$. By definition of $X$ and $Y$, $(\mathcal{E}_i)_{i\in\N} $ is a sequence of iid exponential random variables of parameter 1, independent of the walk and the environment.\\
\\
We will first look at the case $\kappa=1$. \\ 
If $\sum\limits_{i=\tau_2}^{\tau_3-1}1_{Y_i\not\in\tilde{\mathcal{T}}}$ has a finite expectation for $\Prob_0$, since the $\left(\sum\limits_{i=\tau_{2i}}^{\tau_{2i+1}-1}1_{Y_i\not\in\tilde{\mathcal{T}}}\right)_{i\in\N^*}$ are iid and so are the $\left(\sum\limits_{i=\tau_{2i+1}}^{\tau_{2i+2}-1}1_{Y_i\not\in\tilde{\mathcal{T}}}\right)_{i\in\N^*}$ then we have the result we want. On the other hand, if $\sum\limits_{i=\tau_1}^{\tau_2-1}1_{Y_i\not\in\tilde{\mathcal{T}}}$ has an infinite expectation then, since the random variables $\left(\sum\limits_{i=\tau_{i}}^{\tau_{i+1}-1}1_{Y_i\not\in\tilde{\mathcal{T}}}\right)_{i\geq 2}$ are non negative, $n^{-1}\sum\limits_{i=\tau_1}^{\tau_n-1}1_{Y_i\not\in\tilde{\mathcal{T}}}\rightarrow\infty$ $\Prob_0$ almost surely.\\
By the law of large number, we get that $\Prob_0$ almost surely:
\[
\exists k\in\N,\forall n\geq k, \sum\limits_{i=0}^{\tau_{n}-1}\mathcal{E}_i 1_{Y_i\not\in\tilde{\mathcal{T}}} \geq \frac{1}{2}\sum\limits_{i=0}^{\tau_{n}-1}1_{Y_i\not\in\tilde{\mathcal{T}}}.
\]
For any point $x$, if $x$ is not in a trap then, by definition of traps:
\[
\frac{1}{\gamma^{\omega}(x)}\geq \frac{1}{2}.
\]
This yields:
\[
\sum\limits_{i=0}^{\tau_{n}-1}\mathcal{E}_i 1_{Y_i\not\in\tilde{\mathcal{T}}}
\leq  2\sum\limits_{i=0}^{\tau_{n}-1}\mathcal{E}_i 1_{Y_i\not\in\tilde{\mathcal{T}}} \frac{1}{\gamma^{\omega}(Y_i)}.
\]
And by writing $T^m_n=t^m_{\tau_n}$ we have:
\[
\sum\limits_{i=0}^{\tau_{n}-1}\mathcal{E}_i 1_{Y_i\not\in\tilde{\mathcal{T}}}
\leq 2\int\limits_{0}^{T^m_n}\frac{\gamma^m_{\omega}(X^m_t)}{\gamma^{\omega}(X^m_t)}\dd t.
\]
We know by lemma \ref{lem:8} that there exists a constant $d_m$ such that $\Prob_0$ almost surely:
\[
T^m_n-d_m n\rightarrow -\infty.
\]
We get:
\[
\exists k\in\N,\forall n\geq k, \int\limits_{0}^{T^m_n}\frac{\gamma^m_{\omega}(X^m_t)}{\gamma^{\omega}(X^m_t)}\dd t \leq \int\limits_{0}^{d_mn}\frac{\gamma^m_{\omega}(X^m_t)}{\gamma^{\omega}(X^m_t)}\dd t.
\]
Finally, if $\Prob_0$ almost surely:
\[
\frac{1}{n}\sum\limits_{i=0}^{\tau_n}1_{Y_i\not\in\tilde{\mathcal{T}}} \rightarrow \infty.
\]
Then $\Prob_0$ almost surely:
\[
\frac{1}{n}\int\limits_{0}^{d_mn}\frac{\gamma^m_{\omega}(X^m_t)}{\gamma^{\omega}(X^m_t)}\dd t \rightarrow \infty.
\]
And therefore, since $\Qprob^m_0$ is absolutely continuous with respect to $\Prob_0$ we get that $\Qprob^m_0$ almost surely:
\[
\frac{1}{n}\int\limits_{0}^{d_mn}\frac{\gamma^m_{\omega}(X^m_t)}{\gamma^{\omega}(X^m_t)}\dd t \rightarrow \infty.
\]
So we would have, since $\frac{\gamma^m_{\omega}(X^m_t)}{\gamma^{\omega}(X^m_t)}$ is positive:
\[
\frac{1}{n}\E_{\Qprob^m_0}\left(\int\limits_{0}^{d_mn}\frac{\gamma^m_{\omega}(X^m_t)}{\gamma^{\omega}(X^m_t)}\dd t\right) \rightarrow \infty
\]
which would mean, since $\Qprob^m_0$ is a stationary law:
\[
\E_{\Qprob^m}\left(\int\limits_{0}^{1}\frac{\gamma^m_{\omega}(X^m_t)}{\gamma^{\omega}(X^m_t)}\dd t\right)=\infty.
\]
Which is false by lemma \ref{lem:15} so we get the result we want.\\
Now for the case $\kappa<1$. \\
Let $\beta \in \left(\kappa,\frac{\kappa+\kappa^{\prime}}{2}\right)$ be a real such that $\beta \leq 1$. If $\sum\limits_{x\in\Z^d}\left(\sum\limits_{i=\tau_2}^{\tau_3-1} 1_{Y_i=x}\right)^{\beta}1_{x\not\in\tilde{\mathcal{T}}}$ has an infinite expectation (for $\Prob_0$), since the $\left(\sum\limits_{x\in\Z^d}\left(\sum\limits_{i=\tau_{2j}}^{\tau_{2j+1}-1} 1_{Y_i=x}\right)^{\beta}1_{x\not\in\tilde{\mathcal{T}}}\right)_{j\in\N^*}$ are iid, we would have that $\Prob_0$ almost surely:
\[
n^{-1}\sum\limits_{x\in\Z^d}\left(\sum\limits_{i=0}^{\tau_n-1} 1_{Y_i=x}\right)^{\beta}1_{x\not\in\tilde{\mathcal{T}}}\rightarrow\infty.
\]
\\
By lemma \ref{lem:30} we get that there exists a constant $C>0$ such that $\Prob_0$ almost surely:
\[
\exists m\in\N,\forall n\geq m, \sum\limits_{x\in\Z^d}\left(\sum\limits_{i=0}^{\tau_{n+1}-1}\mathcal{E}_i1_{Y_i=x}\right)^{\beta} 1_{x\not\in\tilde{\mathcal{T}}} 
\geq C \sum\limits_{x\in\Z^d}\left(\sum\limits_{i=0}^{\tau_{n+1}-1}1_{Y_i=x}\right)^{\beta}1_{x \not\in\tilde{\mathcal{T}}}.
\]
We also have, by writing $T^m_n=t^m_{\tau_n}$:
\[
\begin{aligned}
\sum\limits_{x\in\Z^d}\left(\sum\limits_{i=0}^{\tau_{n+1}-1}\mathcal{E}_i1_{Y_i=x}\right)^{\beta}1_{x \not\in\tilde{\mathcal{T}}}
\leq & 4^{\beta} \sum\limits_{x\in\Z^d}\left(\sum\limits_{i=0}^{\tau_{n+1}-1}\mathcal{E}_i1_{Y_i=x}\frac{1}{\gamma^{\omega}(x)}\right)^{\beta}1_{x \not\in\tilde{\mathcal{T}}}\\
\leq & 4^{\beta}\sum\limits_{x\in\Z^d}\left(\int\limits_{0}^{T^m_n}\frac{\gamma^m_{\omega}(X^m_t)}{\gamma^{\omega}(X^m_t)}1_{X^m_t=x}\dd t\right)^{\beta}.
\end{aligned}
\]
We know by lemma \ref{lem:8} that there exists a constant $d_m$ such that $\Prob_0$ almost surely:
\[
T^m_n-d_m n\rightarrow -\infty.
\]
We get:
\[
\exists m\in\N,\forall n\geq m, 
\sum\limits_{x\in\Z^d}\left(\int\limits_{0}^{T^m_n}\frac{\gamma^m_{\omega}(X^m_t)}{\gamma^{\omega}(X^m_t)}1_{X^m_t=x}\dd t\right)^{\beta} 
\leq \sum\limits_{x\in\Z^d}\left(\int\limits_{0}^{d_m n}\frac{\gamma^m_{\omega}(X^m_t)}{\gamma^{\omega}(X^m_t)}1_{X^m_t=x}\dd t\right)^{\beta}.
\]
Finally, if $\Prob_0$ almost surely
\[
\frac{1}{n}\sum\limits_{x\in\Z^d}\left(\sum\limits_{i=0}^{\tau_n-1} 1_{Y_i=x}\right)^{\beta}1_{x\not\in\tilde{\mathcal{T}}}\rightarrow\infty
\]
then $\Prob_0$ almost surely
\[
\frac{1}{n}\sum\limits_{x\in\Z^d}\left(\int\limits_{0}^{d_mn}\frac{\gamma^m_{\omega}(X^m_t)}{\gamma^{\omega}(X^m_t)}1_{X^m_t=x}\dd t\right)^{\beta}\rightarrow\infty.
\]
And therefore, since $\Qprob^m_0$ is absolutely continuous with respect to $\Prob_0$ we get that $\Qprob^m_0$ almost surely:
\[
\frac{1}{n}\sum\limits_{x\in\Z^d}\left(\int\limits_{0}^{d_m n}\frac{\gamma^m_{\omega}(X^m_t)}{\gamma^{\omega}(X^m_t)}1_{X^m_t=x}\dd t\right)^{\beta}\rightarrow\infty.
\]
So we would have:
\[
\frac{1}{n}\E_{\Qprob^m_0}\left(\sum\limits_{x\in\Z^d}\left(\int\limits_{0}^{d_m n}\frac{\gamma^m_{\omega}(X^m_t)}{\gamma^{\omega}(X^m_t)}1_{X^m_t=x}\dd t\right)^{\beta}\right) \rightarrow \infty.
\]
And therefore:
\[
\frac{1}{n}\sum\limits_{i=0}^{d_m n}\E_{\Qprob^m_0}\left(\sum\limits_{x\in\Z^d}\left(\int\limits_{i}^{i+1}\frac{\gamma^m_{\omega}(X^m_t)}{\gamma^{\omega}(X^m_t)}1_{X^m_t=x}\dd t\right)^{\beta}\right) \rightarrow \infty.
\]
This would mean, since $\Qprob^m_0$ is a stationary law that
\[
\E_{\Qprob^m_0}\left(\sum\limits_{x\in\Z^d}\left(\int\limits_{0}^{1}\frac{\gamma^m_{\omega}(X^m_t)}{\gamma^{\omega}(X^m_t)}1_{X^m_t=x}\dd t\right)^{\beta}\right)=\infty
\]
which is false by lemma \ref{lem:15}. Therefore there exists a constant $C>0$ such that $\Prob_0$ almost surely:
\[
\frac{1}{n}\sum\limits_{x\in\Z^d}\left(\sum\limits_{i=0}^{\tau_n-1} 1_{Y_i=x}\right)^{\beta}1_{x\not\in\tilde{\mathcal{T}}}\rightarrow C.
\]
So $\Prob_0$ almost surely for $n$ large enough:
\[
\frac{1}{n}\left(\sum\limits_{x\in\Z^d}\sum\limits_{i=0}^{\tau_n-1} 1_{Y_i=x}1_{x\not\in\tilde{\mathcal{T}}}\right)^{\beta}
\leq \frac{1}{n}\sum\limits_{x\in\Z^d}\left(\sum\limits_{i=0}^{\tau_n-1} 1_{Y_i=x}\right)^{\beta}1_{x\not\in\tilde{\mathcal{T}}} 
\leq 2C.
\]
And therefore:
\[
\sum\limits_{x\in\Z^d}\sum\limits_{i=0}^{\tau_n-1} 1_{Y_i=x}1_{x\not\in\tilde{\mathcal{T}}} \leq (2Cn)^{\frac{1}{\beta}}.
\]
\end{proof}

\subsection{Proof of the theorems}

Now we can finally prove both theorems.
\begin{theo}
Set $d\geq 3$ and $\alpha\in(0,\infty)^{2d}$. Let $Y^n(t)$ be defined by:
\[
Y^n(t)=n^{-\kappa}Y_{\lfloor nt \rfloor}.
\]
If $\kappa<1$ and $d_{\alpha}\not=0$, there exists positive constants $c_1,c_2,c_3$ such that for the $J_1$ topology and for $\Prob^{(\alpha)}_0$:
\[
\left(t\rightarrow n^{-\frac{1}{\kappa}}\tau_{\lfloor nt \rfloor}\right)\rightarrow c_1 \mathcal{S}^{\kappa},
\]
for the $M_1$ topology and for $\Prob^{(\alpha)}_0$:
\[
\left(t\rightarrow n^{-\frac{1}{\kappa}}\inf\{t\geq 0, Y(t).e_1\geq nx\}\right) \rightarrow c_2 \mathcal{S}^{\kappa}
\]
and for the $J_1$ topology and for $\Prob^{(\alpha)}_0$:
\[
Y^n\rightarrow c_3 \tilde{\mathcal{S}}^{\kappa}d_{\alpha}.
\]
\end{theo}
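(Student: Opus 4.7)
The plan is to prove the three convergences in order: first the renewal time process, then the hitting time process, then the walk itself, with the latter two following from the first by transfer and inversion arguments.

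For the convergence of $t \mapsto n^{-1/\kappa} \tau_{\lfloor nt \rfloor}$ to $c_1 \mathcal{S}^\kappa$, the key observation is that by proposition \ref{prop:1} the increments $(\tau_{i+1}-\tau_i)_{i \geq 2}$ are i.i.d., so it suffices to identify the tail behavior of a single increment and invoke classical convergence of sums of i.i.d. heavy-tailed random variables to a stable subordinator (which gives $J_1$ convergence for pure-jump limits). I would decompose $\tau_{n+1}-\tau_2$ into four contributions: time spent at vertices outside any trap, time spent in traps of non-dominant direction (those $j$ with $\kappa_j > \kappa$), time in dominant-direction traps ($j \in J = \{j : \kappa_j = \kappa\}$) entered more than $m$ times, and time in dominant-direction traps entered at most $m$ times. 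Lemma \ref{lem:19} handles the first as $O(n^\beta)$ with $\beta < 1/\kappa$. Lemma \ref{lem:16} handles the second and third as $o(n^{1/\kappa})$ in probability, with the third being controllable by choosing $m$ large. Lemma \ref{lem:18} gives that the fourth contribution, rescaled by $n^{-1/\kappa}$, converges in law to $C_m \mathcal{S}^\kappa_1$. Letting $m \to \infty$ and using that the $C_m$ form a monotone sequence bounded above (by the overall rescaled $\tau_n$ limit being tight in $n$), I would identify $c_1 := \lim_m C_m \in (0,\infty)$ and pass to the limit.

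For the hitting time $t \mapsto n^{-1/\kappa}\inf\{s : Y_s.e_1 \geq nt\}$, I would use that by lemmas \ref{lem:9} and \ref{lem:10} the distance $Y_{\tau_k}.e_1/k$ converges almost surely to some $D_1 > 0$ (note $D_1 > 0$ since $d_\alpha.e_1 > 0$). Thus the hitting time at level $nt$ equals, up to a single excursion $\tau_{k+1}-\tau_k$ which is $o(n^{1/\kappa})$ with high probability on compact sets of $t$, the renewal time $\tau_{\lceil nt/D_1 \rceil}$. The convergence in $M_1$ (rather than $J_1$) is explained in the remark following the theorem: a single strong trap produces a big jump shared by several consecutive hitting levels, and $M_1$ identifies such groups of close jumps with a single jump, while $J_1$ does not. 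Transferring the $J_1$ convergence of $\tau$ through the deterministic rescaling of the index, and relaxing to $M_1$, gives the result with $c_2 = c_1/D_1$.

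For $Y^n(t) = n^{-\kappa} Y_{\lfloor nt\rfloor}$, let $L^\tau(n) = \min\{k : \tau_k \geq n\}$. Lemma \ref{lem:10} yields $Y_n/L^\tau(n) \to D$ almost surely, with $D$ colinear with $d_\alpha$ by theorem \ref{theodule}. So $Y^n(t)$ is asymptotically equivalent to $D \cdot n^{-\kappa} L^\tau(\lfloor nt\rfloor)$. The function $L^\tau$ is the c\`adl\`ag inverse of $k \mapsto \tau_k$, and the inversion map sending a strictly increasing c\`adl\`ag function with no flat pieces to its inverse is continuous in $J_1$ at strictly increasing pure-jump limits such as $\mathcal{S}^\kappa$. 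Applying the continuous mapping theorem to the first convergence, together with the scaling relation $\inf\{s : c_1 \mathcal{S}^\kappa_s \geq t\} = c_1^{-\kappa} \tilde{\mathcal{S}}^\kappa_t$ in law, gives $n^{-\kappa} L^\tau(\lfloor nt\rfloor) \to c_1^{-\kappa} \tilde{\mathcal{S}}^\kappa_t$ in $J_1$, and the third statement follows with $c_3 = c_1^{-\kappa} \|D\|/\|d_\alpha\|$. The main obstacle is the first step, specifically proving that the constants $C_m$ in lemma \ref{lem:18} converge to a strictly positive limit and controlling the joint approximation errors uniformly at the process level, which requires combining lemmas \ref{lem:16}, \ref{lem:17}, \ref{lem:19}, \ref{lem:20}, \ref{lem:31} and the independence lemma \ref{lem:12} to rule out near-simultaneous large jumps from distinct traps.
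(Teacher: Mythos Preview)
Your proposal is essentially correct and follows the same three-part structure as the paper's proof: decompose the renewal time into negligible pieces (lemmas \ref{lem:19}, \ref{lem:20}, \ref{lem:16}) plus the dominant contribution of lemma \ref{lem:18}, then transfer to hitting times via $Y_{\tau_k}.e_1/k\to D_1$, and finally invert to recover $Y^n$. The paper carries out the first part by proving one-dimensional convergence as you describe, but then proves $J_1$-tightness \emph{explicitly} via Billingsley's Theorem~15.3 characterization rather than by appealing to a black-box ``classical $J_1$ convergence for i.i.d.\ heavy-tailed sums'' result; this is exactly where lemmas \ref{lem:31} and \ref{lem:12} enter, so your closing paragraph correctly identifies the real work. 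For the second part the paper likewise proves $M_1$-tightness directly (via Whitt's Theorem~12.12.3) before matching finite-dimensional distributions with $\tau_n(cx)$, rather than arguing that the hitting time differs from a renewal time by a single excursion. For the inversion in the third part the paper uses the elementary lemmas \ref{lem:28}--\ref{lem:29} in lieu of an abstract continuous-mapping statement, then controls the discrepancy $Y_{\tau(L^\tau(\cdot))}-Y_\cdot$ via lemma \ref{lem:9}; your sketch is the same in spirit.
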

\begin{proof}
The proof will be divided in three parts, one for each result. The second part and the third one rely on the first part. However, the second part and the third part are independent from one another. \\

First Part\\
\\
First we will prove that there exists a constant $c$ such that for any $t\in\R^+$ and any increasing sequence $(x_n)$ such that $x_n \rightarrow \infty$, we have the following convergence in law, for $\Prob_0$:
\[
x_n^{-\frac{1}{\kappa}}\tau_{\lfloor x_nt \rfloor}\rightarrow c t^{\frac{1}{\kappa}} \mathcal{S}_1^{\kappa}.
\] 
The result is obvious for $t=0$. For $t>0$, lemmas \ref{lem:19} and \ref{lem:20} tell us that we only have to consider the time spent in traps in directions $j$ such that $\kappa_j=\kappa$. Then lemma \ref{lem:16} tells us that with probability larger than $1-\varepsilon$ the time spent in such traps is not more than the time spent in traps where the walks come back at most $m_{\varepsilon}$ times (for some $m_{\varepsilon}$) plus at most $\varepsilon x_n^{\frac{1}{\kappa}}$. We also know by lemma \ref{lem:18} that for any $m_\varepsilon$ there exists a constant $c_{\varepsilon}$ such that the time spent in traps where the walks come back at most $m_{\varepsilon}$ times renormalized by $x_n^{-\frac{1}{\kappa}}$ converges in law (for $\Prob_0$) to $c_{\varepsilon} t^{\frac{1}{\kappa}} S^{\kappa}$ so we get the result we want by having $\varepsilon$ go to $0$ since $c_{\varepsilon}$ is increasing and cannot go to infinity. Since the $(\tau_{i+1}-\tau_{i})_{i\geq 1}$ are iid (for $\Prob_0$) by proposition $\ref{prop:1}$, we also get that for any sequence $(n_i)_{i\in\N^*}$ with $n_i\geq 1$, $\left(i^{-\frac{1}{\kappa}}(\tau_{n_i+it}-\tau_{n_i})\right)_{i\geq 1}$ converges in law (for $\Prob_0$) to $c_1 t^{\frac{1}{\kappa}} \mathcal{S}_1^{\kappa}$.\\
Now we want to show that the family of process $\left(t\rightarrow x_n^{-\frac{1}{\kappa}}\tau_{\lfloor x_n t \rfloor}\right)_{n\in\N}$ is tight. We will only look at the convergence and tightness for the processes on an interval $[0,A]$. We use the characterisation given in Theorem 15.3 of $\cite{Billingsley}$:
\[
\begin{aligned}
(i)&\text{ for each positive } \varepsilon \text{ there exists } c \text{ such that:} \\
&\Prob\left(\sup\limits_{t\in [0,T]} |f(t)|>c \right)\leq\varepsilon, \\
(ii)&\text{ for each } \varepsilon>0 \text{ and } \eta>0 \text{, there exist a } \delta,\ 0<\delta<T \text{, and an integer } n_0 \text{ such that:}\\
& \forall n\geq n_0,\ \Prob(w_{f_n}(\delta)\geq  \eta)\leq \varepsilon \\
&\text{ and }\\
&  \forall n\geq n_0,\ \Prob(v_{f_n}(0,\delta)\geq  \eta)\leq \varepsilon \text{ and } 
\Prob(v_{f_n}(T,\delta)\geq  \eta)\leq \varepsilon,
\end{aligned}
\]
where $w_f$ and $v_f$ are defined by:
\[
\begin{aligned}
w_f(\delta) &= \sup\{\min \left(|f(t)-f(t_1)|,|f(t_2)-f(t)|\right) t_1\leq t\leq t_2 \leq T, t_2-t_1 \leq \delta\},\\
v_f(t,\delta) &= \sup\{|f(t_1)-f(t_2)|: t_1,t_2\in[0,T]\cap(t-\delta,t+\delta)\}.
\end{aligned}
\]
For a sequence of non-decreasing processes $(W_n)$ defined on $[0,T]$, this characterization is implied by the following: \\
\[
\begin{aligned}
(i) & \text{  for each positive } \varepsilon \text{ there exist } C \text{ such that} \\
& \Prob(W_n(T) \geq C)\leq \varepsilon, \text{      for } n\geq 1, \\
(ii) & \text{  for each } \varepsilon>0 \, \text{ there exist a } \delta\in (0,T),\text{ such that for } n\geq 1 \:\\
(a)& \ \ \forall x\in [\delta,T-\delta],\ \Prob(W_n(x+\delta)-W_n(x)\geq \varepsilon \text{ and } W_n(x)-W_n(x-\delta)\geq \varepsilon) \leq \varepsilon\\
& \text{and} \\
(b)& \ \ \Prob(W_n(\delta)-W_n(0)\geq \varepsilon)\leq \varepsilon\\
& \text{and} \\
(c)& \ \ \Prob(W_n(T)-W_n(T-\delta) \geq \varepsilon) \leq \varepsilon.
\end{aligned}
\]
For the first property, since we know that the sequence $\left(x_n^{-\frac{1}{\kappa}}\tau_{\lfloor x_n A \rfloor}\right)_{n\in\N}$ converges in law for $\Prob_0$, the family $\left(x_n^{-\frac{1}{\kappa}}\tau_{\lfloor x_n A \rfloor}\right)_{n\in\N}$ is tight and therefore for any $\varepsilon>0$ there exists $B_{\varepsilon}$ such that:
\[
\forall n\in\N,\ \Prob_0\left(x_n^{-\frac{1}{\kappa}}\tau_{\lfloor x_nA \rfloor}\in [0,B_{\varepsilon}] \right)\geq 1-\varepsilon.
\]
So:
\[
\forall \varepsilon>0,\exists B_{\varepsilon}, \forall n\in\N, \Prob_0\left(\forall t\in [0,A],\ x_n^{-\frac{1}{\kappa}}\tau_{\lfloor x_nt \rfloor}\in [0,B_{\varepsilon}] \right)\geq 1-\varepsilon.
\]
Now we will prove the two side conditions ($ii.b$ and $ii.c$). For $(ii.b)$, we first choose $\delta$ such that $\Prob_0\left(c_1 \delta^{\frac{1}{\kappa}}\mathcal{S}_1^{\kappa}\geq \varepsilon \right) \leq \frac{\varepsilon}{2}$. This proves the result for $n$ large enough and then, since the processes we consider are c\`adl\`ag, we decrease $\delta$ up to the point where we have the result for $n$ small and we get the result we want.\\
\\
For $(ii.c)$, the proof will be essentially the same. Since the increments are iid (except for the first one of which we do not know the law) the law of $x_n^{-\frac{1}{\kappa}}\tau_{\lfloor x_nA \rfloor}-x_n^{-\frac{1}{\kappa}}\tau_{\lfloor x_n(A-\delta) \rfloor}$ converges to $c_1\delta^{\frac{1}{\kappa}}S_{\kappa}$. So we get that for some $\delta$, for $n$ large enough we have the result we want. For small $n$ we only use the fact that the processes are c\`adl\`ag so we get the result we want by decreasing $\delta$.\\
\\
Now we can prove $(ii.a)$. Let $J=\{j\in[\![1,d]\!], \kappa_j=\kappa \}$. First we have, by lemmas \ref{lem:19} and \ref{lem:20}, that for $n$ large enough, the time spent in vertices that are not part of a trap in a direction $j\in J$ before time $\tau_{\lfloor x_nt \rfloor}$ is smaller than $\frac{1}{3}\varepsilon x_n^{\frac{1}{\kappa}}$ with probability at least $1-\frac{1}{3}\varepsilon$. Similarly by lemma \ref{lem:18} there exists $m_{\varepsilon}$ such that for $n$ large enough the time spent in traps in direction $j\in J$ such that the walk enters at least $m_{\varepsilon}$ times the trap is lower than $\frac{1}{3}\varepsilon x_n^{\frac{1}{\kappa}}$ with probability at least $1-\frac{1}{3}\varepsilon$. And finally, there exists $\beta_{\varepsilon}$ such that for $n$ large enough, by lemma \ref{lem:31}, with probability at least $1-\frac{1}{3}\varepsilon$ the time spent in traps in direction $j\in J$ such that their strength is at most $\beta_{\varepsilon} x_n^{\frac{1}{\kappa}}$ is lower than $\frac{1}{3}\varepsilon  x_n^{\frac{1}{\kappa}}$. Condition $(ii.c)$ is not verified if either of the previous three events are not verified which happens with probability at most $1-\varepsilon$. However if the previous events are verified and there is no $i$ such that there are at least two traps of strength at least $\beta_{\varepsilon} x_n^{\frac{1}{\kappa}}$ visited at most $m_{\varepsilon}$ times between times $\tau_i$ and $\tau_{i+2\delta x_n}-1$ then the main condition is true.\\ 

So now we just have to prove that for $\delta$ small enough, with high probability there is no $i$ such that there are at least two traps of strength at least $\beta_{\varepsilon} x_n^{\frac{1}{\kappa}}$ visited at most $m_{\varepsilon}$ times between times $\tau_i$ and $\tau_{i+2\delta x_n}-1$. By lemma \ref{lem:12} we have that for any $m\in\N$ the probability that there exists $i\leq x_n$ such that there are two traps of strength at least $\beta_{\varepsilon} x_n^{\frac{1}{\kappa}}$ between times $\tau_i$ and $\tau_{i+m}-1$ goes to $0$ when $n$ goes to infinity. So let $B_i$ be the event: "there exists a trap of strength at least $\beta_{\varepsilon} x_n^{\frac{1}{\kappa}}$ visited at most $m_{\varepsilon}$ times between times $\tau_i$ and $\tau_{i+1}-1$". We define the finite sequence $(n_i)$ by:
\[
\begin{aligned}
n_1 =& \inf \{ j \geq 1, B_j\}, \\
n_{i+1}=& \inf \{ j\geq n_i +m, B_j\}.
\end{aligned}
\]
We also define $ \tilde{n}_i$ by $ \tilde{n}_i= \sup \{j, n_j\leq x_i\}$. First we want to prove that $\tilde{n}_i$ cannot be too large. We know that there exists a constant $C$ such that if $M(n)$ is the number of different traps in a direction $j$ visited before time $\tau_{n}$ then for $n$ large enough: $\Prob_0(M(x_n) \geq C x_n) \leq \varepsilon$ and by lemma \ref{lem:13} we clearly have that $\E(\tilde{n}_n1_{M(x_n) \leq C x_n}) \leq \frac{c C}{\beta^{\kappa}}$. Therefore if we take $B\geq \frac{c C}{\varepsilon \beta^{\kappa}}$ we get that for $n$ large enough, $\Prob_0(\tilde{n}_n\geq B) \leq 2\varepsilon$.
Now we want to show that for $\delta >0$ small enough, $\Prob_0(\exists i\leq B, n_{i+1}-n_i\leq 2\delta x_n) \leq \varepsilon$ which would yields the desired result. For any $i$, we have, by proposition \ref{prop:1}:
\[
\Prob_0(n_{i+1}-n_i \leq 2\delta x_n) \leq \Prob_0 (n_1 \leq 2\delta x_n).
\]
And therefore:
\[
\Prob_0(\exists i \leq \tilde{n}_n,\ n_{i+1}-n_i \leq 2\delta x_n ) \leq \Prob_0(\tilde{n}_n>B)+B\Prob_0 (n_1 \leq 2\delta x_n).
\]
We have that there is a constant $C$ such that for $n$ large enough, $\Prob_0(M(2\delta x_n)\geq 2C\delta x_n)\leq \frac{\varepsilon}{B}$. And then by lemma \ref{lem:13} we have that the expectation of the number of traps of strength at least $\beta x_n^{\frac{1}{\kappa}}$ among the first $2 \delta x_n$ traps is lower than $2 \delta x_n \frac{c}{\beta^{\kappa} x_n}$ and therefore for $\delta$ small enough, $\Prob_0(\exists i \leq \tilde{n}_n,\ n_{i+1}-n_i \leq 2\delta x_n )\leq \varepsilon$. So we have that the sequence of processes is tight.\\
Now we want to show that its limit is $c_1\mathcal{S}^{\kappa}$. Let $m$ be an integer and  $(x_i)_{0\leq i \leq n}$ be reals such that $0=y_0<y_1<\dots<y_{m-1}<y_m=1$. We have, since the $(\tau_{i+1}-\tau_i)_{i\geq 1}$ are iid and independent from $\tau_1$:
\[
(x_n^{-\frac{1}{\kappa}}\tau_{\lfloor x_n y_i \rfloor})_{0\leq i \leq m} \rightarrow (\mathcal{S}^{\kappa}(y_i))_{0\leq i \leq m}.
\]
So we have convergence in the $J_1$ topology for any increasing sequence $x_i$ that goes to infinity.\\

Second Part\\
\\
Let $L$ be defined by:
\[
L(t):=\inf \{n,Y_n.e_1\geq t \}.
\]
And let $L_n$ be the renormalized $L$:
\[
L_n(t)=n^{-\frac{1}{\kappa}}L(nt).
\]
We have, by definition of $\tau$ and $L$:
\[
\forall n\in\N^*,\ L(Y_{\tau_n}.e_1)=\tau_n.
\]
We first want to show that the sequence $L_n$ is tight in the $M_1$ topology. We use the characterisation given in Theorem 12.12.3 of $\cite{Whitt}$:
\[
\begin{aligned}
(i)&\text{ for each positive } \varepsilon \text{ there exists } c \text{ such that:} \\
&\Prob\left(\sup\limits_{t\in [0,T]} |f(t)|>c \right)\leq\varepsilon, \\
(ii)&\text{ for each } \varepsilon>0 \text{ and } \eta>0 \text{, there exist a } \delta,\ 0<\delta<T \text{, and an integer } n_0 \text{ such that:}\\
& \forall n\geq n_0,\ \Prob(w_{f_n}(\delta)\geq  \eta)\leq \varepsilon \\
&\text{ and }\\
&  \forall n\geq n_0,\ \Prob(v_{f_n}(0,\delta)\geq  \eta)\leq \varepsilon \text{ and } 
\Prob(v_{f_n}(T,\delta)\geq  \eta)\leq \varepsilon.
\end{aligned}
\]
Where $w_f$ and $v_f$ are defined by:
\[
\begin{aligned}
w_f(\delta) &= \sup\{\inf\limits_{\alpha\in [0,1]} |f(t)-(\alpha f(t_1)+ (1-\alpha)f(t_2))|, t_1\leq t\leq t_2 \leq T, t_2-t_1 \leq \delta\},\\
v_f(t,\delta) &= \sup\{|f(t_1)-f(t_2)|: t_1,t_2\in[0,T]\cap(t-\delta,t+\delta)\}.
\end{aligned}
\]
First we have:
\[
\begin{aligned}
\Prob_0\left(\sup\limits_{t\in [0,T]} |L_n(t)|>c \right)
=& \Prob_0\left( L(nT)>cn^{\frac{1}{\kappa}} \right) \\
\leq & \Prob_0 \left( \tau_{nT}>cn^{\frac{1}{\kappa}} \right),
\end{aligned}
\]
which is smaller than $\varepsilon$ for all $n$, for $c$ large enough.\\
Next, since $H_n$ is non-decreasing, we have:
\[
\Prob_0(w_{L_n}(\delta)=0)=1.
\]
Then, we first use the fact that:
\[
v_{L_n}(0,\delta)\leq n^{-\frac{1}{\kappa}}\tau_{n\delta}
\]
to get that for $\delta$ small enough:
\[
\forall n\geq n_0,\ \Prob_0(v_{L_n}(0,\delta)\geq \eta)\leq \varepsilon. 
\]
The bound for $v_{L_n}(T,\delta)$ is similar but slightly trickier. We know that for $c=\left(\E(Y_{\tau_2}-Y_{\tau_1}).e_1\right)^{-1}$, $\Prob_0$ almost surely:
\[
\frac{1}{n}(Y_{\tau_{cn(T-2\delta)}}.e_1,Y_{\tau_{cn(T+\delta)}}.e_1)\rightarrow (T-2\delta,T+\delta).
\]
Therefore, using the fact that $L_n$ is increasing, with probability going to 1:
\[
L_n(T)-L_n(T-\delta)\leq n^{-\frac{1}{\kappa}}(\tau_{cn(T+\delta)}-\tau_{cn(T-2\delta)}). 
\]
And we have the result we want for $\delta$ small enough and $n$ large enough. So we have that the sequence $(L_n)_{n\in\N^*}$ is tight. Now we just have to show that its limit is $C\mathcal{S}^{\kappa}$ for some constant $C$. Set $c=\left(\E(Y_{\tau_2}-Y_{\tau_1}).e_1\right)^{-1}$. We will show that $L_n(x)$ is almost equal to $\tau_n(cx)$ which will yield the result. Set $\varepsilon>0$ and $x\in [0,\infty)$. We want to show that $\Prob_0(|L_n(x)-\tau_n(cx)|\geq \varepsilon)\rightarrow 0$. We will use the following inequality:
\[
\Prob_0(L_n(x)-\tau_n(cx)\geq \varepsilon)
\leq \inf\limits_{\delta>0} \Prob_0(L_n(x)\geq \tau_n(cx+\delta)) + \Prob_0(\tau_n(cx+\delta)-\tau_n(cx)\geq \varepsilon).
\]
We clearly have, for any $\delta>0$
\[
\limsup\limits_{n\rightarrow\infty}\Prob_0(L_n(x)\geq \tau_n(cx+\delta)) = 0.
\]
And for some constant $\tilde{C}$ that does not depend on $x$ or $c$
\[
\Prob_0(\tau_n(cx+\delta)-\tau_n(cx)\geq \varepsilon)\rightarrow \Prob_0(\tilde{C}\mathcal{S}^{\kappa}(\delta)\geq \varepsilon).
\]
Therefore 
\[
\Prob_0(L_n(x)-\tau_n(cx)\geq \varepsilon)\rightarrow 0.
\]
Similarly we get:
\[
\Prob_0(L_n(x)-\tau_n(cx)\leq -\varepsilon)\rightarrow 0.
\]
Therefore the limit of $L^n$ is $t\rightarrow \tilde{C}\mathcal{S}^{\kappa}(ct)$ which is equal to $C\mathcal{S}^{\kappa}$ for some constant $C$. \\

Third Part\\
\\
We will look at a sequence of processes $t\rightarrow \overline{\tau}_n(t)$ such that the law of $\overline{\tau}_n$ is the same as that of $t\rightarrow x_n^{-\frac{1}{\kappa}} \tau_{\lfloor x_n t \rfloor}$ and such that almost surely $\overline{\tau}_n \rightarrow \overline{\tau}$ in the $J_1$ topology with the law of $\overline{\tau}$ being that of $\mathcal{S}^{\kappa}$. We want to show that the law of the inverse of $\overline{\tau}_n$ converges to that of the inverse of $\mathcal{S}^{\kappa}$. This is a direct consequence of lemmas \ref{lem:28} and \ref{lem:29}. Now if we define $L^{\tau}(t)$ by $L^{\tau}(t)=\min\{n\in\N,\tau_n \geq t\}$, we have that in $J_1$ topology:
\[
\frac{1}{x_n}L^{\tau}\left(x_n^{\frac{1}{\kappa}}t\right)\rightarrow \tilde{\mathcal{S}}^{\kappa}(t)
\]
for any increasing sequence $x_n$ such that $x_n\rightarrow \infty$. Therefore, for any increasing sequence $x_n$ such that $x_n\rightarrow \infty$:
\[
\frac{1}{x_n^{\kappa}}L^{\tau}(x_n t)\rightarrow \tilde{\mathcal{S}}^{\kappa}(t).
\]
Now by lemma \ref{lem:10} there exists $v\in\R^d$ such that $\Prob_0$ almost surely:
\[
\frac{Y_{\tau_{\lfloor t \rfloor}}}{t} \rightarrow v.
\]  
This means that in the $J_1$ topology, we have the following convergence (in law):
\[
\left(t \rightarrow \frac{Y_{\tau_{\lfloor x_n t\rfloor}}}{x_n }\right)
\rightarrow \left( t\rightarrow tv\right).
\]
And therefore, in the $J_1$ topology, 
\[
\left(t\rightarrow x_n^{-\frac{1}{\kappa}} \tau_{\lfloor x_n t \rfloor}, t \rightarrow \frac{Y_{\tau_{\lfloor x_nt \rfloor}}}{ x_n}\right)
\rightarrow \left( c_1\mathcal{S}^{\kappa},t\rightarrow tv\right).
\]
Now we will look at $\left(\overline{\tau}_n,d_n\right)$ where for any $n$ the law of $(\overline{\tau}_n,d_n)$ is the same as the law of $t\rightarrow x_n^{-\frac{1}{\kappa}} \tau_{\lfloor x_n t \rfloor}, t \rightarrow \frac{Y_{\tau_{\lfloor x_nt \rfloor}}}{x_n}$ and such that almost surely:
\[
\left( \overline{\tau}_n,d_n \right)
\rightarrow \left( c_1\mathcal{S}^{\kappa},t\rightarrow tv\right).
\]
Let $\overline{\tau}$ be such that almost surely $\overline{\tau}_n \rightarrow \overline{\tau}$. Let $\Delta_{[0,A]}$ be the distance associated with the infinite norm on [0,A].  \\
If we look at $d_{\overline{\tau}_n^{-1}(t)}$ where $\overline{\tau}_n^{-1}(t)=\inf\{x,\overline{\tau}_n(x)\geq t\}$ we get:
\[
\begin{aligned}
\Delta_{[0,A]}\left( d_n(\overline{\tau}_n^{-1}(t)), \overline{\tau}^{-1}(t)v \right)
& \leq \Delta_{[0,A]}\left( d_n(\overline{\tau}_n^{-1}(t), \overline{\tau}_n^{-1}(t)v\right) +  \Delta_{[0,A]}\left(\overline{\tau}_n^{-1}(t)v, \overline{\tau}^{-1}(t)v \right) \\
& = \Delta_{[0,A]}\left( d_n(\overline{\tau}_n^{-1}(t)), \overline{\tau}_n^{-1}(t)v\right) + ||v|| \Delta_{[0,A]}\left(\overline{\tau}_n^{-1}(t), \overline{\tau}^{-1}(t) \right).
\end{aligned}
\]
So for any $B,\varepsilon>0$:
\[
\begin{aligned}
&\Prob_0(\Delta_{[0,A]}\left( d_n(\overline{\tau}_n^{-1}(t)), \overline{\tau}^{-1}(t)v \right) \geq \varepsilon)\\
\leq & \Prob_0(\overline{\tau}_n^{-1}(A)>B) + \Prob_0\left( \exists t\in [0,B], ||d_n(t)-tv||\geq \frac{\varepsilon}{2} \right) +
\Prob_0\left(\Delta_{[0,A]}\left(\overline{\tau}_n^{-1}(t), \overline{\tau}^{-1}(t) \right)\geq \frac{\varepsilon}{2}\right) \\
=& \Prob_0(\overline{\tau}_n^{-1}(A)>B) +o(1) \\
=& \Prob_0 (\overline{\tau}_n(B) < A) +o(1) \\
=& \Prob_0 (\overline{\tau}(B) < A) +o(1).
\end{aligned}
\]
We clearly have that when $B$ goes to infinity, $\Prob_0 (\overline{\tau}(B) < A)$ goes to $0$ so we have that in the $J_1$ topology:
\[
d_n(\overline{\tau}_n^{-1}(t)) \rightarrow \overline{\tau}^{-1}(t)v.
\] 
Since we have that in law (in the following we will write $\tau(x)$ instead of $\tau_x$ for the formulas to stay readable):
\[
d_n(\overline{\tau}_n^{-1}(t))=\frac{1}{x_n}Y_{\tau\left(\lfloor x_n ( x_n^{-1} L^{\tau}((x_n)^{\frac{1}{\kappa}}t)) \rfloor\right)} = \frac{1}{x_n} Y_{\tau\left(\lfloor \overline{\tau}((x_n)^{\frac{1}{\kappa}}t) \rfloor\right)}
\]
we get that in the $J_1$ topology for any increasing sequence $x_n$:
\[
x_n^{-\kappa} Y_{\tau\left(\lfloor L^{\tau}(x_n t) \rfloor\right)} \rightarrow c_1^{-\kappa} \tilde{\mathcal{S}}^{\kappa}(t) v.
\]
Now we only have to show that $Y_{\tau\left(\lfloor L^{\tau}(x_n t) \rfloor\right)}$ and $Y_t$ are almost equal. For every $i>0$ let $R_i$ be the number of different points visited between times $\tau_i$ and $\tau_{i+1}-1$ and let $R_0$ be the number of different points visited before time $\tau_i -1$ (0 if $\tau_i=0$). The $(R_i)_{i\in\N}$ are independent and the $(R_i)_{i\in\N^*}$ are iid with finite expectation by lemma \ref{lem:9}. Let $\varepsilon>0$ be a constant and let $B>0$ be such that for $x$ large enough, $\Prob_0(x^{-\kappa}L^{\tau}(xA) \geq B) \leq \frac{\varepsilon}{2}$ (taking $B$ such that $\Prob_0(c_1^{-\kappa}\tilde{\mathcal{S}}^{\kappa}(A)\geq B)\leq \frac{\varepsilon}{4}$ works). We get that for $x$ large enough:
\[
\begin{aligned}
\Prob_0(\exists t \leq x A, x^{-\kappa}||Y_{\tau\left(\lfloor L^{\tau}( t) \rfloor\right)}-Y_t || \geq \varepsilon)
\leq &\frac{\varepsilon}{2} + \Prob_0(\exists i \leq B x^{\kappa}, R_i \geq \varepsilon x^{\kappa})\\
\leq & \frac{\varepsilon}{2} +\Prob_0(R_0 \geq \varepsilon x^{\kappa}) + \Prob_0(\exists i \in [\![1, B x^{\kappa} ]\!] , R_i \geq \varepsilon x^{\kappa})  \\
\leq & \frac{\varepsilon}{2} + o(1) + B x^{\kappa} \Prob_0(R_1 \geq \varepsilon x^{\kappa}) \\
= & \frac{\varepsilon}{2} + o(1) .
\end{aligned}
\]
So for any $\varepsilon>0$ we have that for $x$ large enough:
\[
\Prob_0(\exists t \leq x A, x^{-\kappa}||Y_{\tau\left(\lfloor L^{\tau}(x t) \rfloor\right)}-Y_t || \geq \varepsilon) \leq \varepsilon.
\]
So we get that in the $J_1$ topology:
\[
x^{-\kappa}Y_{\lfloor xt \rfloor} \rightarrow  \tilde{\mathcal{S}}^{\kappa}(t) v.
\]
Since $v$ and $d_{\alpha}$ are collinear, we get the result we want. 
\end{proof}
\begin{theo}
If $d\geq 3$ and $\kappa=1$, there exists positive constants $c_1,c_2,c_3$ such that we have the following convergences in probability (for $\Prob_0$):
\[
\frac{1}{n\log(n)}\tau_n \rightarrow c_1,
\]
\[
\frac{1}{n\log(n)}\inf\{i,Y_i.e_1\geq n\}\rightarrow c_2,
\]
\[
\frac{\log(n)}{n}(Y_n)\rightarrow c_3 d_{\alpha}.
\]
\end{theo}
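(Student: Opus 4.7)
The plan is to mirror the strategy of Theorem~1 but replacing the convergence to a stable distribution by a (weak) law of large numbers, since at the boundary $\kappa=1$ the truncated mean of the trapping times is $\Theta(\log n)$ rather than genuinely heavy-tailed at the scale of the sum. All three convergences reduce to the first one on $\tau_n$ by essentially deterministic inversion arguments.

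\textbf{First convergence.} I would show $\frac{1}{n\log n}\tau_n\to c_1$ in probability by decomposing $\tau_n$ into (a) time spent outside traps, (b) time spent in traps whose direction $j$ satisfies $\kappa_j>1$, and (c) time spent in traps with $\kappa_j=1$. By Lemma~\ref{lem:19} (case $\kappa=1$), part (a) is $O(n)$ almost surely, hence $o(n\log n)$. By Lemma~\ref{lem:20} (case $\kappa=1$) the number of steps spent at vertices in $\tilde{\mathcal{T}}_J$ with $J=\{j:\kappa_j>1\}$ is also $O(n)$; combined with Lemma~\ref{lem:16} (applied to each such direction with $\varepsilon$ arbitrarily small), we conclude that part (b) is $o(n\log n)$ in probability. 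For part (c), Lemma~\ref{lem:16} in the case $\kappa_j=\kappa=1$ gives, for every $\varepsilon>0$, an integer $m_\varepsilon$ so that the contribution of traps entered at least $m_\varepsilon$ times is at most $\varepsilon n\log n$ with probability $\geq 1-\varepsilon$. The contribution of traps entered at most $m_\varepsilon$ times is handled directly by Lemma~\ref{lem:18} (case $\kappa=1$), which yields a constant $C_{m_\varepsilon}>0$ with $\frac{1}{n\log n}\sum\ell_i^j\mathbf 1_{N_i^j\le m_\varepsilon}\to C_{m_\varepsilon}$ in probability. Since the $C_m$ are non-decreasing in $m$ and bounded above (because the truncated sum is dominated by $\tau_n/(n\log n)$, which by Proposition~\ref{prop:1} has a bounded lower-truncation), they converge to a finite limit $c_1$, and the result follows by a standard $\varepsilon$-argument.

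\textbf{Second and third convergences.} Set $L(t):=\inf\{i:Y_i\cdot e_1\geq t\}$. From $\tau_n\sim c_1 n\log n$ and $Y_{\tau_n}\cdot e_1\sim Dn$ (Lemma~\ref{lem:10}, giving $D>0$ in the direction $e_1$), I would argue that for any fixed $\delta>0$, with high probability $L(n)$ lies between $\tau_{\lfloor n/D-\delta n\rfloor}$ and $\tau_{\lfloor n/D+\delta n\rfloor}$, both of which are $(c_1/D)n\log n(1+o(1))$. Letting $\delta\to 0$ yields $\frac{1}{n\log n}L(n)\to c_2:=c_1/D$ in probability. For the third convergence, apply Lemma~\ref{lem:10} to obtain $Y_n=L^{\tau}(n)v(1+o(1))$ in probability (with $v$ collinear to $d_\alpha$) and observe that $L^{\tau}(n)$ is the inverse function of $\tau$. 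Since $\tau_k/(k\log k)\to c_1$ in probability, a direct inversion shows $L^{\tau}(n)\log(L^{\tau}(n))\sim n/c_1$, hence $L^{\tau}(n)\sim n/(c_1\log n)$ in probability, giving $\frac{\log n}{n}Y_n\to (v/c_1)$ in probability; that the direction is exactly $d_\alpha/\|d_\alpha\|$ comes from Theorem~\ref{theodule}.

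\textbf{Main obstacle.} The delicate step is showing that $C_m\to c_1<\infty$ in the first part, i.e. that the contribution of the many-visit traps is uniformly small as $m\to\infty$, uniformly in $n$. The concave-function trick in Lemma~\ref{lem:21} (case $\kappa=1$) plus Lemma~\ref{lem:13} are exactly designed for this: they provide a super-linear (in $N$) bound $\E[\Phi(N_i^j)\mathbf 1_{s_i^j\ge A}]\le CA^{-\kappa_j}\E[\Phi(N_i^j)]$ allowing one to decouple the strength from the number of visits and to sum over $A$ to control $\sum N_i^j s_i^j\mathbf 1_{N_i^j\ge m_\varepsilon}$. The argument for inversion is comparatively routine since all limits are deterministic constants and both $\tau$ and $L^{\tau}$ are monotone; the only care needed is to pair Lemma~\ref{lem:12} with the fact that only traps of strength $\Theta(n^{1/\kappa})=\Theta(n)$ matter in the $\kappa=1$ case, which is already built into the concave-function approach of Lemma~\ref{lem:16}.
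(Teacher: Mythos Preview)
Your proposal is correct and follows essentially the same route as the paper: the same decomposition of $\tau_n$ into time outside traps, time in traps with $\kappa_j>1$, and time in traps with $\kappa_j=1$ (handled respectively by Lemmas~\ref{lem:19}, \ref{lem:20}, \ref{lem:16}, \ref{lem:18}), followed by the same monotone sandwich/inversion arguments for $L(n)$ and $Y_n$ via Lemma~\ref{lem:10}. Two minor remarks: Lemma~\ref{lem:12} plays no role in the $\kappa=1$ proof (it is only used for $J_1$-tightness when $\kappa<1$), and for directions with $\kappa_j>1$ Lemma~\ref{lem:20} alone already yields $O(n)$, so your additional appeal to Lemma~\ref{lem:16} there is redundant; also, your boundedness argument for $C_m$ is slightly circular as stated---the paper instead observes that the intervals $[C_{m_\varepsilon}-2\varepsilon,C_{m_\varepsilon}+2\varepsilon]$ must overlap for different $\varepsilon$, forcing $(C_{m_\varepsilon})$ to be Cauchy.
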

\begin{proof}
Let $J=\{j\in[\![1,d]\!], \kappa_j = \kappa \}$. \\
By lemma \ref{lem:19} we get that there exists a constant $C$ such that $\Prob_0$ almost surely:
\[
\frac{1}{n}\sum\limits_{i=0}^{\tau_n}1_{Y_i\not\in\tilde{\mathcal{T}}}\rightarrow C.
\]
So we only have to look at the time spent in the traps. By lemma \ref{lem:20} we get that for any $\varepsilon>0$, for $n$ large enough:
\[
\Prob_0\left(\frac{1}{n\log(n)}\sum\limits_{i=1}^{\tau_{n+1}-1}1_{Y_i\in\tilde{\mathcal{T}}}1_{Y_i\not\in\tilde{\mathcal{T}}_J}\geq \varepsilon\right)\leq \varepsilon.
\]
Therefore we only have to look at the time spent in traps in a direction $j\in J$. For any trap $\{x,y\}$ let $\tilde{N}_x$ be the number of times the walks exits the trap $\{x,y\}$, we have $\tilde{N}_w=\tilde{N}_y$ . Let $\varepsilon>0$ be a positive constant. By lemma \ref{lem:16} there exists a $m_{\varepsilon}$ such that:
\[
\Prob_0\left(\frac{1}{n\log(n)}\sum\limits_{i=1}^{\tau_{n+1}-1}1_{Y_i\in\tilde{\mathcal{T}}_J}1_{\tilde{N}_{Y_i}\geq m_{\varepsilon}}\geq \varepsilon\right)\leq \varepsilon.
\] 
And by lemma \ref{lem:18} we get that there is a constant $C_{m_{\varepsilon}}$ such that:
\[
\frac{1}{n\log(n)}\sum\limits_{i=1}^{\tau_{n+1}-1}1_{Y_i\in\tilde{\mathcal{T}}_J}1_{\tilde{N}_{Y_i}\leq m_{\varepsilon}}\rightarrow C_{m_{\varepsilon}}\text{ in probability.}
\] 
So for $n$ large enough:
\[
\Prob_0\left(\frac{1}{n\log(n)}\sum\limits_{i=1}^{\tau_{n+1}-1}1_{Y_i\in\tilde{\mathcal{T}}}\in [C_{m_{\varepsilon}}-2\varepsilon,C_{m_{\varepsilon}}+2\varepsilon]\right)\geq 1-2\varepsilon.
\]
This means that there exists a constant $C_{\infty}$ such that:
\[
\frac{1}{n\log(n)}\sum\limits_{i=1}^{\tau_{n+1}-1}1_{Y_i\in\tilde{\mathcal{T}}} \rightarrow C_{\infty}\text{ in probability.}
\]
And therefore:
\[
\frac{1}{n\log(n)} \tau_{n+1} \rightarrow C_{\infty}\text{ in probability.}
\]
So we have proved the first part of the theorem.\\
Now, by lemma \ref{lem:10} we have for some $C>0$, $\Prob_0$ almost surely:
\[
\frac{Y_{\tau_n}.e_1}{n}\rightarrow C.
\]
So for any $\varepsilon>0$, by writing $L(n):=\min \{i,Y_i.e_1\geq n\}$ and $C^+ = \frac{1}{C(1-\varepsilon)}$:
\[
\begin{aligned}
&\Prob_0[L(n)\geq C_{\infty}C^+(1+\varepsilon) n \log(n)]\\
\leq & \Prob_0[L(n) \geq C_{\infty}C^+(1+\varepsilon)n\log(n) \text{ and } \tau_{C^+ n} \leq C_{\infty}C^+(1+\varepsilon)n\log(n)]\\
&\ \ \ +\Prob_0[\tau_{C^+ n}>C_{\infty}C^+(1+\varepsilon)n\log(n)]\\
= & \Prob_0[L(n) \geq C_{\infty}C^+(1+\varepsilon)n\log(n) \text{ and } \tau_{C^+ n} \leq C_{\infty}C^+ (1+\varepsilon)n\log(n)]+o(1)\\
\leq & \Prob_0[L(n) \geq \tau_{C^+ n}]+o(1)\\
=& \Prob_0[Y_{\tau_{C^+ n}}.e_1\leq n] +o(1)\\
=& \Prob_0\left[\frac{Y_{\tau_{C^+ n}}.e_1}{C^+ n} \leq C(1-\varepsilon)\right] +o(1)\\
=&o(1).
\end{aligned}
\]
The same way we get, by taking $C^- =\frac{1}{C(1+\varepsilon)}$:
\[
\begin{aligned}
&\Prob_0(L(n)\leq C_{\infty}C^-(1-\varepsilon) n \log(n))\\
\leq & \Prob_0(L(n) \leq C_{\infty}C^-(1-\varepsilon)n\log(n) \text{ and } \tau_{C^- n} \geq C_{\infty}C^-(1-\varepsilon)n\log(n))\\
&\ \ \ +\Prob_0(\tau_{C^- n}<C_{\infty}C^-(1-\varepsilon)n\log(n))\\
= & \Prob_0(L(n) \leq C_{\infty}C^-(1-\varepsilon)n\log(n) \text{ and } \tau_{C^- n} \geq C_{\infty}C^-(1-\varepsilon)n\log(n))+o(1)\\
\leq & \Prob_0(L(n) \leq \tau_{C^- n})+o(1)\\
=& \Prob_0(Y_{\tau_{C^- n}}.e_1\geq n) +o(1)\\
=& \Prob_0\left(\frac{Y_{\tau_{C^- n}}.e_1}{C^- n} \geq C(1+\varepsilon)\right) +o(1)\\
=&o(1).
\end{aligned}
\]
So we get the second result.
Now for the last result, we define $L^{\tau}(n)=\min\{i,\tau_i\geq n\}$ so $\tau_{L^{\tau}(n)-1}<n\leq \tau_{L^{\tau}(n)}$. We get, for $n$ big enough:
\[
\Prob_0\left(L^{\tau}(n)\geq C_{\infty}^{-1}(1+2\varepsilon)\frac{n}{\log(n)}\right)\\
\leq \Prob_0\left(\tau_{C_{\infty}^{-1}(1+\varepsilon)\frac{n}{\log(n)}}\leq n\right).
\]
And we have:
\[
C_{\infty}^{-1}(1+\varepsilon)\frac{n}{\log(n)}\log\left(C_{\infty}^{-1}(1+\varepsilon)\frac{n}{\log(n)}\right)
=C_{\infty}^{-1}(1+\varepsilon)n(1+o(1)).
\]
And therefore, using the result of part one:
\[
\frac{\tau_{C_{\infty}^{-1}(1+\varepsilon)\frac{n}{\log(n)}}}{n}\rightarrow C_{\infty}C_{\infty}^{-1}(1+\varepsilon)=(1+\varepsilon).
\]
So we get that:
\[
\Prob_0\left(\tau_{C_{\infty}^{-1}(1+\varepsilon)\frac{n}{\log(n)}}\leq n\right) \rightarrow 0.
\]
And therefore:
\[
\Prob_0\left(L^{\tau}(n)\geq C_{\infty}^{-1}(1+2\varepsilon)\frac{n}{\log(n)}\right)\rightarrow 0.
\]
The proof of the lower bound is exactly the same:
\[
\Prob_0\left(L^{\tau}(n)\leq C_{\infty}^{-1}(1-\varepsilon)\frac{n}{\log(n)}\right)\\
\leq \Prob_0\left(\tau_{C_{\infty}^{-1}(1-\varepsilon)\frac{n}{\log(n)}}\geq n\right).
\]
But we have:
\[
n^{-1}\tau_{C_{\infty}^{-1}(1-\varepsilon)\frac{n}{\log(n)}}\rightarrow (1-\varepsilon).
\]
So 
\[
\Prob_0\left(L^{\tau}(n)\leq C_{\infty}^{-1}(1-\varepsilon)\frac{n}{\log(n)}\right)\rightarrow 0.
\]
And therefore:
\[
\frac{\log(n)}{n}L^{\tau}(n)\rightarrow C_{\infty}^{-1}.
\]
Now, by lemma \ref{lem:10} $\frac{Y_i}{L^{\tau}(i)}\rightarrow D$, $\Prob_0$ almost surely so we get:
\[
\frac{\log(n)}{n}Y_n\rightarrow C_{\infty}^{-1}D.
\]
\end{proof}

\section{Annex}

\begin{lem}\label{lem:22}
Let $X$ be a non-negative random variable such that $\E(X)<\infty$. There exists an increasing, positive, concave function $\phi$ such that $\phi(t)$ goes to infinity when $t$ goes to infinity and:
\[
\E(\Phi(X))<\infty,
\]
where $\Phi(t)=\int\limits_{x=0}^t \phi(x) \dd x$.
\end{lem}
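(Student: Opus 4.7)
The plan is to construct $\phi$ explicitly as a piecewise linear concave function whose growth is tuned against the tail $\overline{F}(t):=\Prob(X>t)$. The integrability hypothesis $\E(X)<\infty$ translates into $\int_0^{\infty}\overline{F}(t)\,\dd t<\infty$, and by Fubini the goal reduces to producing a concave, increasing, positive $\phi$ with $\phi(t)\to\infty$ and $\int_0^{\infty}\phi(t)\overline{F}(t)\,\dd t<\infty$, since $\E(\Phi(X))=\int_0^{\infty}\phi(t)\overline{F}(t)\,\dd t$.

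First I would build inductively a sequence $0=t_0<t_1<t_2<\cdots$ with $t_n\to\infty$ satisfying two requirements simultaneously: (i) $\int_{t_n}^{\infty}\overline{F}(s)\,\dd s\leq 2^{-n}$, which is possible since this quantity tends to $0$; and (ii) the spacings $t_{n+1}-t_n$ are nondecreasing in $n$, which I can arrange by enlarging $t_n$ when needed to at least $2t_{n-1}-t_{n-2}$ (enlarging $t_n$ only makes (i) easier to maintain at subsequent steps).

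Next I would define $\phi$ as the piecewise linear interpolation with $\phi(t_n)=\sqrt{n+1}$. Concavity follows because the slope on $[t_n,t_{n+1}]$ equals $(\sqrt{n+2}-\sqrt{n+1})/(t_{n+1}-t_n)$, which is the product of a strictly decreasing factor and a nonincreasing factor (the reciprocal of the nondecreasing spacing), hence decreasing in $n$. Positivity, monotonicity and $\phi(t)\to\infty$ are immediate from the values at the $t_n$.

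Finally I would split and estimate
\[
\E(\Phi(X))=\int_0^{\infty}\phi(t)\overline{F}(t)\,\dd t=\sum_{n\ge 0}\int_{t_n}^{t_{n+1}}\phi(t)\overline{F}(t)\,\dd t\leq\sum_{n\ge 0}\sqrt{n+2}\cdot 2^{-n}<\infty,
\]
using $\phi(t)\leq\sqrt{n+2}$ and $\int_{t_n}^{t_{n+1}}\overline{F}(s)\,\dd s\leq 2^{-n}$ on the $n$-th interval. The only genuinely delicate point is simultaneously realizing (i) and (ii)—the summable tail bound and the monotone spacings that guarantee concavity; everything else is a direct verification.
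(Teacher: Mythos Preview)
Your argument is correct. The Fubini reduction $\E(\Phi(X))=\int_0^\infty\phi(t)\overline F(t)\,\dd t$ is valid by Tonelli, the inductive construction of the $t_n$ works because both constraints (the tail bound and the spacing bound) are lower-bound conditions on $t_n$ and can be satisfied simultaneously by taking the maximum, and the slope computation for concavity is right. One cosmetic point: with $t_0=0$ the requirement $\int_{t_0}^\infty\overline F\le 2^{0}$ may fail if $\E(X)>1$; you should either impose (i) only for $n\ge 1$ and bound the $n=0$ summand by $\sqrt{2}\,\E(X)$, or replace $2^{-n}$ by $\E(X)\,2^{-n}$ throughout. This does not affect the conclusion.

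The paper's proof is organized differently. It proceeds in two stages: first it produces a nondecreasing step function $f$ (constant equal to $i+2$ on $[t_i,t_{i+1})$, with the $t_i$ chosen so that $\E(X\,1_{X>t_i})\le 2^{-i}\E(X)$) for which $\E(Xf(X))<\infty$; then, in a separate and somewhat delicate step, it threads a piecewise linear concave $\phi\le f$ under this staircase via auxiliary sequences $(a_i),(b_i)$ controlling the values and slopes. The conclusion follows from $\Phi(X)\le X\phi(X)\le Xf(X)$. Your approach collapses these two stages into one by baking concavity into the construction from the outset: you force the node spacings $t_{n+1}-t_n$ to be nondecreasing while the value increments $\sqrt{n+2}-\sqrt{n+1}$ are decreasing, so the slopes decrease automatically. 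This is shorter and avoids the paper's somewhat intricate second stage; the paper's two-step version, on the other hand, separates the integrability estimate from the geometric concavity construction, which some readers may find conceptually cleaner.
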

\begin{proof}
First we show that there exists a non-decreasing, positive function $f:\R^+\rightarrow \R^+$ such that $f(t)$ goes to infinity when $t$ goes to infinity and:
\[
\E(Xf(X))<\infty.
\]
To do that we first define the sequence $(t_i)$ by:
\[
\begin{aligned}
t_0=&0\\
t_{i+1}=& 1+ \inf \left\{ x\geq t_i , \E(X 1_{X> x} ) \leq 2^{-(i+1)}\E(X) \right\}.
\end{aligned}
\] 
Now we define $f$ by:
\[
f(x) = 1+\sum\limits_{i\geq 0} 1_{x \geq t_i}.
\]
We clearly have that $f$ is non-decreasing, positive ($f(t)\geq 2$) and that $f(t)$ goes to infinity when $t$ goes to infinity. As for the expectation we have:
\[
\begin{aligned}
\E(Xf(X))
= & \E\left(\sum\limits_{i\geq 0} X1_{X \geq t_i}\right) +\E(X) \\
= & \sum\limits_{i\geq 0}\E\left( X1_{X \geq t_i}\right) +\E(X)\\
\leq & \sum\limits_{i\geq 0} 2^{-i}\E(X) +\E(X)\\
\leq & 3 \E(X) < \infty.
\end{aligned}
\]
Now we want to find an increasing concave function $\phi$ lower than $f$ such that $\phi(t)$ goes to infinity when $t$ goes to infinity. To that effect we will define the sequences $(a_i)$ and $(b_i)$ by:
\[
\begin{aligned}
a_0 = & 1,\\
b_0 = & \frac{1}{t_1},\\
\forall i\in\N,\   a_{i+1}=& a_i + b_i(t_{i+1}-t_{i}),\\
\forall i\in\N,\   \min (b_{i+1}=& b_i, \frac{(i+2)-a_i}{t_{i+1}-t_{i}})
\end{aligned}
\]
and we define $\phi$ by:
\[
\forall i\in\N, \forall x\in [t_i,t_{i+1}), \phi(x)= a_i + b_i (x-t_i).
\]
The function $\phi$ is continuous and its slope is decreasing so it is clearly concave.\\
We now have to prove that $\lim\limits_{t\rightarrow \infty}\phi(t)=\infty$ . First we want to show that for every $i\in\N,\  a_i\leq i+1$. It is obvious for $i\in\{0,1\}$ and for $i>0$ we have:
\[
a_i
\leq a_{i-1} + \frac{(i+1)-a_{i-1}}{t_{i}-t_{i-1}}(t_{i}-t_{i-1})
= i+1.
\]
Now we want to show that there can be no $i$ such that $b_i\leq 0$. If there was, we could define $j$ by $j=\min \{i,b_i\leq 0\}$, we would have $j \geq 1$ and:
\[
\frac{(j+1)-a_{j-1}}{t_{j}-t_{j-1}}\leq 0.
\]
But since $a_{j-1}\leq j$ it is impossible so all the $b_i$ are positive and therefore $\phi$ is increasing. Now we will prove that $\lim\limits_{i\rightarrow\infty}a_i=\infty$. First we notice that if $b_{i+1}< b_i$ then $b_{i+1}=\frac{(i+2)-a_i}{t_{i+1}-t_{i}}$ so $a_{i+1}=i+2$. Therefore, either the $b_i$ are stationary and $\phi$ is larger than some affine function with positive slope which implies the result we want or the sequence $b_i$ is not stationary and there are infinitely many $i$ such that $a_{i+1}=i+2$ and therefore we have the result we want.  \\
We still have to show that $\phi\leq f$. We know that $\phi$ is increasing and we have:
\[
\forall i\in\N, \forall x\in [t_i,t_{i+1}), f(x)-\phi(x)= i+2 - \phi(x) \geq i+2 - \phi(t_{i+1}) = i+2 - a_{i+1} \geq 0.
\]
So we have the desired result.
\end{proof} 
\begin{lem}\label{lem:24}
Let $\phi$ be a non-decreasing, positive concave function and $\Phi(x):=\int\limits_{t=0}^x\phi(t)\dd t$. There exists a constant $C_{\phi}$ such that if $X$ is a geometric random variable with success probability $p$:
\[
\frac{1}{2}\Phi\left(\frac{1}{p}\right)
\leq \frac{1}{2} \frac{1}{p}\phi\left(\frac{1}{p}\right) 
\leq \E(\Phi(1+X)) 
\leq C_{\phi} \frac{1}{p}\phi\left(\frac{1}{p}\right)
\leq 2 C_{\phi} \Phi\left(\frac{1}{p}\right).
\]
\end{lem}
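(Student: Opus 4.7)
The four inequalities split into a pair of outer geometric bounds relating $\Phi(1/p)$ and $(1/p)\phi(1/p)$, and a pair of middle bounds on $\E(\Phi(1+X))$. The plan is to dispatch the outer bounds by monotonicity and concavity of $\phi$, the lower middle bound by Jensen's inequality applied to $\Phi$, and the upper middle bound by a tangent-line estimate on $\phi$ combined with a direct second moment computation for the geometric law. For the outer bounds, since $\phi$ is non-decreasing, $\Phi(1/p) = \int_0^{1/p}\phi(t)\,\dd t \leq (1/p)\phi(1/p)$, yielding the leftmost inequality after dividing by $2$. For the rightmost inequality, I would use that concavity of $\phi$ with $\phi(0) \geq 0$ implies that $t \mapsto \phi(t)/t$ is non-increasing on $(0,\infty)$, because $t\phi'(t) \leq \phi(t)-\phi(0) \leq \phi(t)$ by concavity. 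Hence $\phi(t) \geq pt\,\phi(1/p)$ for $t \in (0, 1/p]$, so $\Phi(1/p) \geq \phi(1/p)/(2p)$, i.e.\ $(1/p)\phi(1/p) \leq 2\Phi(1/p)$.

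For the lower middle bound $\tfrac{1}{2}(1/p)\phi(1/p) \leq \E(\Phi(1+X))$, the key observation is that $\Phi$ is \emph{convex}, since $\Phi' = \phi$ is non-decreasing (note the easy-to-confuse point: the hypothesis is that $\phi$ is concave, but $\Phi$ is convex). Jensen's inequality and the identity $\E(1+X) = 1/p$ then give $\E(\Phi(1+X)) \geq \Phi(\E(1+X)) = \Phi(1/p) \geq \phi(1/p)/(2p)$, where the second step is the outer bound just established.

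For the upper middle bound, I would start from $\Phi(1+X) \leq (1+X)\phi(1+X)$, which follows from monotonicity of $\phi$. To control $\phi(1+X)$, concavity of $\phi$ at $x = 1/p$ together with the bound $\phi'(1/p) \leq p\,\phi(1/p)$ (a consequence of $\phi(t)/t$ being non-increasing) gives the tangent-line estimate $\phi(t) \leq pt\,\phi(1/p)$ for $t \geq 1/p$, while $\phi(t) \leq \phi(1/p)$ for $t \leq 1/p$ by monotonicity. Combining both regimes, $\phi(1+X) \leq \phi(1/p)\bigl(1 + p(1+X)\bigr)$. A direct calculation with the geometric law gives $\E(1+X) = 1/p$ and $\E((1+X)^2) = (2-p)/p^2$, so $\E\bigl((1+X)(1+p(1+X))\bigr) = 1/p + (2-p)/p = (3-p)/p \leq 3/p$. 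This yields $\E(\Phi(1+X)) \leq 3(1/p)\phi(1/p)$, and $C_\phi = 3$ works.

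There is no real obstacle; the only subtle point is to distinguish the concavity of $\phi$ (the hypothesis) from the convexity of $\Phi = \int \phi$, which is what enables the use of Jensen's inequality in the lower bound. The upper bound splits naturally into the two regimes $1+X \lessgtr 1/p$, with monotonicity handling one side and a tangent-line concavity estimate handling the other.
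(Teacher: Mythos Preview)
Your proof is correct. The outer bounds and the lower middle bound via Jensen are handled essentially as in the paper. The genuine difference is in the upper middle bound: the paper writes $\E(\Phi(1+X))=\int_0^\infty \phi(t)\,\Prob(X\ge t-1)\,\dd t$, bounds the geometric tail by an exponential, changes variables, and uses concavity in the form $\phi(at)\le t\,\phi(a)$ for $t\ge 1$; this requires a case split $p\le\tfrac12$ versus $p>\tfrac12$ (the latter handled by coupling with a parameter-$\tfrac12$ geometric) and ends with $C_\phi=16$. Your route---bounding $\Phi(1+X)\le (1+X)\phi(1+X)$ and then $\phi(1+X)\le \phi(1/p)\bigl(1+p(1+X)\bigr)$ via the non-increase of $\phi(t)/t$, followed by a direct second-moment computation---is shorter, avoids the case distinction, and yields the sharper universal constant $C_\phi=3$. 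One small remark: you invoke $\phi'(1/p)$, but $\phi$ need not be differentiable; your argument only really uses that $\phi(t)/t$ is non-increasing, which follows directly from concavity and $\phi(0)\ge 0$ without derivatives (for $0<s<t$, $\phi(s)\ge (s/t)\phi(t)+(1-s/t)\phi(0)\ge (s/t)\phi(t)$), so this is easily fixed.
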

\begin{proof}
$\Phi$ is convex so if $X$ is a geometric random variable with success probability $p$:
\[
\begin{aligned}
\E(\Phi(1+X))
\geq & \Phi(\E(1+X))\\
=& \Phi\left(\frac{1}{p}\right)\\
=& \int\limits_{t=0}^{\frac{1}{p}}\phi(t)\dd t \\
=& \frac{1}{p} \int\limits_{t=0}^{1}\phi\left(t\frac{1}{p}\right)\dd t \\
\geq & \frac{1}{p} \int\limits_{t=0}^{1} t \phi\left(\frac{1}{p}\right)+ (1-t)\phi\left(0\right) \dd t \\
\geq & \frac{1}{p} \int\limits_{t=0}^{1} t \phi\left(\frac{1}{p}\right) \dd t \\
= & \frac{1}{2} \frac{1}{p}\phi\left(\frac{1}{p}\right).
\end{aligned}
\] 
Now for the upper bound, we will first look at the case where $p\leq \frac{1}{2}$:
\[
\begin{aligned}
\E(\Phi(1+X))
=&\E\left(\int\limits_{t=0}^{\infty}\phi(t)1_{1+X\geq t}\dd t\right)\\
=&\int\limits_{t=0}^{\infty}\phi(t)\Prob(X\geq t-1)\dd t\\
\leq & \int\limits_{t=0}^{\infty}\phi(t) (1-p)^{t-1} \dd t \\
\leq & 2 \int\limits_{t=0}^{\infty}\phi(t) \exp(t\log(1-p)) \dd t \\
= & 2 \frac{-1}{\log(1-p)} \int\limits_{t=0}^{\infty}\phi\left(-\frac{t}{\log(1-p)}\right) \exp(-t) \dd t \\
\leq & 2 \frac{-1}{\log(1-p)} \left(\phi\left(-\frac{1}{\log(1-p)}\right) +\int\limits_{t=1}^{\infty}\phi\left(-\frac{t}{\log(1-p)}\right) \exp(-t) \dd t\right). \\
\end{aligned}
\]
Now we use the fact that $\phi$ is concave, this gives us, for $t\geq 1$:
\[
\frac{1}{t}\phi\left(-\frac{t}{\log(1-p)}\right)+\left(1-\frac{1}{t}\right)\phi(0)
\leq\phi\left(-\frac{1}{\log(1-p)}\right). 
\]
Since $\phi$ is positive, we get:
\[
\phi\left(-\frac{t}{\log(1-p)}\right)
\leq t\phi\left(-\frac{1}{\log(1-p)}\right).  
\]
So we get:
\[
\begin{aligned}
\E(\Phi(1+X)) 
\leq & 2 \frac{-1}{\log(1-p)}\left(\phi\left(-\frac{1}{\log(1-p)}\right) +\int\limits_{t=1}^{\infty}t\phi\left(-\frac{1}{\log(1-p)}\right) \exp(-t) \dd t\right) \\
\leq & 2 \frac{-1}{\log(1-p)} \left(\phi\left(-\frac{1}{\log(1-p)}\right) +\phi\left(-\frac{1}{\log(1-p)}\right)\right)\\
= & 4 \frac{-1}{\log(1-p)}\phi\left(-\frac{1}{\log(1-p)}\right).
\end{aligned}
\]
Since $-\frac{p}{\log(1-p)}\leq 1$ and $\phi$ is increasing, we get:
\[
-\frac{1}{\log(1-p)}\phi\left(-\frac{1}{\log(1-p)}\right)\leq \frac{1}{p}\phi\left(\frac{1}{p}\right).
\]
And therefore, if $p\leq \frac{1}{2}$:
\[
\E(\Phi(1+X))\leq 4\frac{1}{p}\phi\left(\frac{1}{p}\right).
\]
If $p\geq \frac{1}{2}$ we can couple $X$ with a geometric random variable $Y$ of parameter $\frac{1}{2}$ such that almost surely $Y\geq X$ and since $\Phi$ is increasing:
\[
\E(\Phi(1+X))
\leq \E(\Phi(1+Y))
\leq 8\phi(2)
\leq 8\phi(2) \frac{1}{p}\frac{\phi\left(\frac{1}{p}\right)}{\phi(1)}
= 8 \frac{\phi(2)}{\phi(1)}\frac{1}{p}\phi\left(\frac{1}{p}\right)
 \leq 16 \frac{1}{p}\phi\left(\frac{1}{p}\right).
\]
We get the upper bound we wanted. \\
Now we just have to prove that for any $x\geq 0$, $\frac{1}{2}x\phi(x) \leq \Phi(x)\leq x\phi(x)$. For the upper bound we have:
\[
\begin{aligned}
\Phi(x) = \int\limits_{0}^x \phi(t) \dd t \leq \int\limits_{0}^x \phi(x) \dd t =x\phi(x).
\end{aligned}
\]
And for the lower bound we have:
\[
\Phi(x) = \int\limits_{0}^x \phi(t) \dd t 
= \int\limits_{0}^x \phi\left(\frac{t}{x}x\right) \dd t
\geq  \int\limits_{0}^x \frac{t}{x} \phi\left(x\right) \dd t
=  \frac{1}{2}x\phi(x).
\]
\end{proof}
\begin{lem}\label{lem:25}
Let $X$ be a positive random variable, and let $a=\E(X)$ and $\tilde{X}=X-a$. If $\text{Var}(X)\leq a^2$ then:
\[
\forall \gamma \in [0,1],\ \text{Var}\left(X^{\gamma}\right)\leq 2a^{2\gamma}\left(\frac{\text{Var}(X)}{a^2}\right).
\]
\end{lem}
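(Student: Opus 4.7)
The plan is to normalize and reduce everything to a single deterministic pointwise inequality. Assume $a>0$ (otherwise $X=0$ almost surely and the bound is trivial) and set $Y:=X/a$, so that $Y\geq 0$, $\E(Y)=1$ and $\mathrm{Var}(Y)=\mathrm{Var}(X)/a^{2}$. Since $\mathrm{Var}(X^{\gamma})=a^{2\gamma}\mathrm{Var}(Y^{\gamma})$, the statement reduces to showing $\mathrm{Var}(Y^{\gamma})\leq 2\,\mathrm{Var}(Y)$. Using that the variance is the minimum of $\E[(\cdot-c)^{2}]$ over constants $c$, it is enough to control $\E[(Y^{\gamma}-1)^{2}]$, and this will follow by integrating the pointwise bound
\[
(y^{\gamma}-1)^{2}\leq 2(y-1)^{2}\qquad\text{for all }y\geq 0,\ \gamma\in[0,1],
\]
against the law of $Y$.

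I would then prove this pointwise inequality by splitting at $y=1$. For $y\geq 1$, concavity of $t\mapsto t^{\gamma}$ on $[0,\infty)$ gives the tangent-line bound $y^{\gamma}\leq 1+\gamma(y-1)\leq y$, so $0\leq y^{\gamma}-1\leq y-1$ and thus $(y^{\gamma}-1)^{2}\leq(y-1)^{2}$. For $y\in[0,1]$, set $u:=1-y\in[0,1]$; the claim becomes $1-(1-u)^{\gamma}\leq\sqrt{2}\,u$. This I would establish by a further dichotomy: if $u\leq 1/\sqrt{2}$, then $1-u\in[0,1]$ and $\gamma\leq 1$ imply $(1-u)^{\gamma}\geq(1-u)$, hence $1-(1-u)^{\gamma}\leq u\leq\sqrt{2}\,u$; if $u\in[1/\sqrt{2},1]$, just note $1-(1-u)^{\gamma}\leq 1\leq\sqrt{2}\,u$. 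The two cases together yield the pointwise bound, losing a factor of $2$ on the $y\leq 1$ side.

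Putting everything back together gives $\mathrm{Var}(X^{\gamma})\leq a^{2\gamma}\E[(Y^{\gamma}-1)^{2}]\leq 2a^{2\gamma}\mathrm{Var}(Y)=2a^{2\gamma}\bigl(\mathrm{Var}(X)/a^{2}\bigr)$, which is the stated inequality. The main subtlety, and really the only place any work happens, is the regime $y$ close to $0$: the derivative of $y^{\gamma}$ blows up there, ruling out a naive mean-value argument and forcing the $\sqrt{2}$ split above. The hypothesis $\mathrm{Var}(X)\leq a^{2}$ does not enter the argument explicitly, but it is precisely the range in which the stated conclusion is the sharp-looking one.
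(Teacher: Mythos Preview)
Your proof is correct and takes a genuinely different route from the paper. The paper bounds $\E(X^{2\gamma})$ and $\E(X^{\gamma})^2$ separately: it uses convexity of $\gamma\mapsto(1+x)^{\gamma}$ to get the lower bound $\E(X^{\gamma})\geq a^{\gamma}\bigl(1-(1-\gamma)\mathrm{Var}(X)/a^{2}\bigr)$, then squares this and combines it with $\E(X^{2\gamma})\leq\E(X^{2})^{\gamma}\leq a^{2\gamma}\bigl(1+\gamma\,\mathrm{Var}(X)/a^{2}\bigr)$, arriving at the slightly sharper constant $(2-\gamma)$ in place of $2$. The hypothesis $\mathrm{Var}(X)\leq a^{2}$ is genuinely used there: it guarantees that the lower bound on $\E(X^{\gamma})$ is nonnegative, without which squaring would not reverse the inequality.

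Your argument is more elementary and more robust: the reduction to the pointwise bound $(y^{\gamma}-1)^{2}\leq 2(y-1)^{2}$ together with $\mathrm{Var}(Y^{\gamma})\leq\E[(Y^{\gamma}-1)^{2}]$ sidesteps any moment juggling, and as you observe, it does not need the hypothesis $\mathrm{Var}(X)\leq a^{2}$ at all. So you actually prove a slightly stronger statement. The paper's approach, in exchange, recovers a $\gamma$-dependent constant $(2-\gamma)$ that is sharp at $\gamma=1$, whereas your constant $2$ is uniform but not sharp there; for the application in the paper this makes no difference.
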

\begin{proof}
For any $x\in [-1,\infty)$, let $f_x: [0,1]\mapsto \R$ the function defined by
\[
f_x(\gamma) :=\gamma\rightarrow (1+x)^{\gamma}.
\]
This function is convex and $f_x(1)=1+x$ and $f^{\prime}_x(1)=(1+x)\log(1+x)$ so:
\[
\forall \gamma \in[0,1], \ f_x(\gamma)\geq 1+x + (\gamma-1)(1+x)\log(1+x) \geq 1+x -(1-\gamma)(1+x)x \geq 1+\gamma x -(1-\gamma)x^2.
\] 
By Jensen inequality, we have:
\[
\E(X^{\gamma})\leq a^{\gamma}.
\]
Since $\E(X^{\gamma})=a^{\gamma}\E\left(\left(1+\frac{\tilde{X}}{a}\right)^{\gamma}\right)$, we also get:
\[
\E(X^{\gamma})\geq a^{\gamma}\left(1-(1-\gamma)\frac{\text{Var}(X)}{a^2}\right).
\]
So if $\text{Var}(X)\leq a^2$, then 
\[
-\E(X^{\gamma})^2\leq -a^{2\gamma}\left(1-(1-\gamma)\frac{\text{Var}(X)}{a^2}\right)^2
\leq -a^{2\gamma} \left(1-2(1-\gamma)\frac{\text{Var}(X)}{a^2}\right).
\]
We also have:
\[
\E(X^{2\gamma})\leq \E(X^2)^{\gamma}
=\left(a^2+\text{Var}(X)\right)^{\gamma}
\leq a^{2\gamma}\left(1+\gamma\frac{\text{Var}(X)}{a^2}\right).
\]
Finally we get:
\[
\text{Var}\left(X^{\gamma}\right)\leq a^{2\gamma}\left(1+\gamma\frac{\text{Var}(X)}{a^2}-1+2(1-\gamma)\frac{\text{Var}(X)}{a^2}\right)=a^{2\gamma}(2-\gamma)\frac{\text{Var}(X)}{a^2}.
\]
\end{proof}
\begin{lem}\label{lem:23}
Let $p\in (0,\infty)$ be a positive real, $N\geq 1$ an integer, $h\in (\frac{1}{4},1)$ and $q\in (0,\infty)$ with $1\geq q(1-h)\geq \frac{1}{2}$. Let $\left(\varepsilon_i\right)$ be a sequence of integer in $\{0,1\}$. Let $\left(H_i\right)_{i\in\N}$ be a sequence of iid random variables following a geometric law of parameter $h$ (here $h$ is the probability of success). Let $\left(\mathcal{E}_{i,j}\right)_{i,j\in\N}$ be a sequence of iid random variables , independent of $(H_i)$ and following an exponential law of parameter $p$. Now let $Z$ be defined by:
\[
Z=\sum\limits_{i=1}^{N}\sum\limits_{j=1}^{\varepsilon_i+H_i}\mathcal{E}_{i,j}\frac{p}{q}.
\]
There exists a constant C such that if $N\geq 1$:
\[
\forall \gamma \in [0,1],\ \text{Var}\left(Z^{\gamma}\right)\leq C N^{2\gamma-1}\leq C N^{\gamma}.
\]
We also have that there are two constant $c_1,c_2>0$ that do not depend on $\gamma$ such that:
\[
c_1 N^{\gamma}\leq \E(Z^{\gamma}) \leq c_2 N^{\gamma}.
\]
\end{lem}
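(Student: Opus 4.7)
The plan is to normalize $Z$, compute its first two moments, show that $\text{Var}(Z)/\E[Z]^2 = O(1/N)$, and then feed everything into Lemma \ref{lem:25}. Setting $U_{i,j}:=p\,\mathcal{E}_{i,j}$ makes $(U_{i,j})$ i.i.d.\ $\mathrm{Exp}(1)$ and factors $p$ out of the problem: $Z = q^{-1}\sum_{i=1}^N W_i$ with $W_i := \sum_{j=1}^{\varepsilon_i+H_i} U_{i,j}$. The $W_i$ are independent, and conditionally on $\varepsilon_i+H_i$ each $W_i$ is a Gamma variable with unit rate. Writing $\mu := \E[H_1] = (1-h)/h$, $\sigma^2 := \text{Var}(H_1) = (1-h)/h^2$ and $S := \sum_i \varepsilon_i \in [0,N]$, the tower and total variance formulas yield
\[
a := \E[Z] = \frac{S + N\mu}{q}, \qquad \text{Var}(Z) = \frac{S + N(\mu + \sigma^2)}{q^2}.
\]

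Under the constraints $h\in(1/4,1)$ and $q(1-h)\in[1/2,1]$ the quantities $\mu$ and $\sigma^2$ are bounded, and the lower bound $\mu \geq (1-h)/h$ is strictly positive; consequently $a \geq N\mu/q$ and
\[
\frac{\text{Var}(Z)}{a^2} \leq \frac{N(1+\mu+\sigma^2)}{(N\mu)^2} = \frac{1+\mu+\sigma^2}{N\mu^2} \leq \frac{C_0}{N},
\]
for a constant $C_0$ depending only on $(h,q)$. Thus for $N$ at least some $N_0$ depending on $C_0$, the hypothesis of Lemma \ref{lem:25} is satisfied, and that lemma immediately gives $\text{Var}(Z^\gamma) \leq 2\,a^{2\gamma}(C_0/N)$. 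Combining with the elementary upper bound $a \leq N(1+\mu)/q$ and noting that $((1+\mu)/q)^{2\gamma}\leq \max(1,((1+\mu)/q)^2)$ for $\gamma\in[0,1]$, we obtain $\text{Var}(Z^\gamma) \leq C\, N^{2\gamma-1}$ with $C=C(h,q)$. The expectation bound then uses the two inequalities inside the proof of Lemma \ref{lem:25}: Jensen gives $\E[Z^\gamma]\leq a^\gamma$, and the reverse inequality $\E[Z^\gamma] \geq a^\gamma(1-(1-\gamma)\text{Var}(Z)/a^2)$ together with $\text{Var}(Z)/a^2 \leq 1/2$ (which holds for $N$ large enough) yields $\E[Z^\gamma] \geq a^\gamma/2 \geq (\mu/q)^\gamma N^\gamma/2$. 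Absorbing the $\gamma$-dependent factors into $\max(1,\cdot)$-type constants produces the desired $c_1 N^\gamma \leq \E[Z^\gamma] \leq c_2 N^\gamma$.

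The remaining range of small $N$ is handled separately but trivially: $\E[Z^\gamma]$ and $\E[Z^{2\gamma}]$ are continuous, finite functions of $\gamma\in[0,1]$ for each fixed $(N,\varepsilon_i)$, and are uniformly bounded away from $0$ and $\infty$ over this finite range, since the positive-probability event $\{H_1\geq 1\}$ ensures $\E[Z^\gamma] > 0$ whenever $N\geq 1$. Taking the minimum and maximum of these bounds against the previous constants yields uniform $c_1,c_2,C$ valid for all $N\geq 1$. The main technical obstacle is ensuring the constant in $\text{Var}(Z)/a^2 \leq C_0/N$ does not blow up through the choice of $(\varepsilon_i)$: the worst case is $\varepsilon_i\equiv 0$, for which only $\mu$ carries the mean, and this is precisely where the strict positivity $\mu>0$ granted by $h<1$ is indispensable, while the coupling $q(1-h)\geq 1/2$ keeps $1/\mu$ polynomial in $q$ rather than blowing up.
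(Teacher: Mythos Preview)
Your overall plan (compute $\E[Z]$ and $\text{Var}(Z)$, show $\text{Var}(Z)/\E[Z]^2=O(1/N)$, then invoke Lemma~\ref{lem:25}) is exactly the paper's approach, and your variance/compound--sum bookkeeping is correct. The gap is in the interpretation of the geometric law, and it is not cosmetic.

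The parenthetical ``$h$ is the probability of success'' here means $\Prob(H_i=k)=h^k(1-h)$, i.e.\ $H_i$ counts the number of back--and--forths before exiting, so $\E[H_i]=h/(1-h)$ and $\text{Var}(H_i)=h/(1-h)^2$. You took the opposite convention and wrote $\mu=(1-h)/h$. With your $\mu$, and $\varepsilon_i\equiv 0$, one gets
\[
\E[Z]=\frac{N\mu}{q}=\frac{N(1-h)}{hq}\;\asymp\; N(1-h)^2
\]
once $q(1-h)\in[1/2,1]$ is used, and this tends to $0$ as $h\to 1$. So under your reading the conclusion $c_1 N^{\gamma}\le \E[Z^{\gamma}]$ is simply false with a constant $c_1$ independent of $h$, and your proof correctly reflects this by producing $C_0=C_0(h,q)$ that blows up. But the whole point of the hypotheses $h\in(\tfrac14,1)$ and $q(1-h)\in[\tfrac12,1]$ is that the constants be \emph{uniform} in $(p,h,q,(\varepsilon_i))$: the lemma is applied in Lemma~\ref{lem:14} simultaneously over all traps, where $h$ ranges freely in $(\tfrac14,1)$.

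With the intended convention the same computation gives $\E[Z]=\frac{1}{q(1-h)}\sum_i(\varepsilon_i(1-h)+h)\in[N/4,2N]$ and $\text{Var}(Z)\le CN$ with absolute constants, after which your use of Lemma~\ref{lem:25} goes through uniformly. For the lower bound on $\E[Z^{\gamma}]$ the paper uses H\"older, $\E[Z]^{2-\gamma}\le \E[Z^{\gamma}]\E[Z^2]^{1-\gamma}$, rather than the inequality from inside Lemma~\ref{lem:25}; either works once the moment ratio is uniformly bounded, but the H\"older route avoids needing $\text{Var}(Z)/a^2\le 1/2$ and hence avoids a separate small--$N$ case for that bound.
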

\begin{proof}
First we look at the expectation of $Z$, we get:
\[
\begin{aligned}
\E(Z)
&=\sum\limits_{i=1}^{N}\E\left(\sum\limits_{j=1}^{\varepsilon_i+H_i}\frac{1}{p}\frac{p}{q}\right)\\
&=\sum\limits_{i=1}^{N}\frac{1}{q}\left(\varepsilon_{i}+\frac{h}{1-h}\right)\\
&=\frac{1}{q(1-h)}\sum\limits_{i=1}^{N}\varepsilon_{i}(1-h)+h.
\end{aligned}
\]
Now we will look at the variance but first we need a small result to simplify the notations, for this result, $M$ will be a non negative random variable and $(X_i)_{i\in\N}$ a sequence of iid real random variables, independent of $M$. We get: 
\[
\begin{aligned}
\text{Var}\left(\sum_{i=1}^{M}X_i\right)
&=\E\left(\left(\sum_{i=1}^{M}X_i\right)^2\right)-\left(\E\left(\sum_{i=1}^{M}X_i\right)\right)^2\\
&=\E\left(M\E(X_1^2)+M(M-1)\E(X_1)^2\right)-\E(M)^2\E(X_1)^2\\
&=\E(M)\text{Var}(X_1)+\text{Var}(M)\E(X_1)^2.
\end{aligned}
\]
Now we can compute the variance of $Z$. First we have:
\[
\text{Var}\left(Z\right)
=\sum\limits_{i=1}^{N}\text{Var}\left(\sum\limits_{j=1}^{\varepsilon_i+H_i}\mathcal{E}_{i,j}\frac{p}{q}\right)
=\frac{p^2}{q^2}\sum\limits_{i=1}^{N}\text{Var}\left(\sum\limits_{j=1}^{\varepsilon_i+H_i}\mathcal{E}_{i,j}\right).
\]
Then we have:
\[
\begin{aligned}
\frac{p^2}{q^2}\sum\limits_{i=1}^{N}\E(\varepsilon_i+H_i)\text{Var}(\mathcal{E}_{i,1})
&=\frac{p^2}{q^2}\sum\limits_{i=1}^{N}\left(\varepsilon_i+\frac{h}{1-h}\right)\frac{1}{p^2}\\
&=\frac{1}{q^2(1-h)^2}\sum\limits_{i=1}^{N}\varepsilon_i(1-h)^2+h(1-h),
\end{aligned}
\]
\[
\begin{aligned}
\frac{p^2}{q^2}\sum\limits_{i=1}^{N}\text{Var}((\varepsilon_i+H_i)^2)\E(\mathcal{E}_{i,1})^2
&=\frac{p^2}{q^2}\sum\limits_{i=1}^{N}\frac{h}{1-h}\frac{1}{p^2}\\
&=\frac{1}{q^2(1-h)^2}\sum\limits_{i=1}^{N}h(1-h),
\end{aligned}
\]
So we get, by summing these two equalities:
\[
\text{Var}\left(Z\right)
=\frac{1}{q^2(1-h)^2}\sum\limits_{i=1}^{N}\varepsilon_i(1-h)^2+2h(1-h).
\]
We have assumed that $h\geq\frac{1}{4}$ and $\frac{1}{2}\leq q(1-h)\leq 1$ therefore we have:
\[
\frac{1}{4}N\leq \E(Z) \leq 4N,
\]
\[
\text{Var}(Z)\leq 20 N.
\]
Therefore we have:
\[
\frac{\text{Var}(Z)}{(\E(Z))^2}\leq 320 \frac{1}{N}.
\]
So by lemma \ref{lem:25}, for $N\geq 320$ we have:
\[
\forall \gamma \in [0,1],\ \text{Var}\left(Z^{\gamma}\right)\leq 2\E(Z)^{2\gamma}\left(\frac{\text{Var}(Z)}{\E(Z)^2}\right)\leq 4^{2\gamma} N^{2\gamma}\frac{640}{N}.
\]
And if $N\leq 320$ we have: 
\[
\forall \gamma \in [0,1],\ \text{Var}\left(Z^{\gamma}\right)\leq \E(Z^{2\gamma})\leq\E(Z^2)\leq (20N+16N^2).
\]
So there exist a constant $C$ such that if $1\leq N\leq 320$:
\[
\text{Var}\left(Z^{\gamma}\right)\leq C\frac{1}{N}.
\]
So
\[
\forall \gamma \in [0,1],\ \text{Var}\left(Z^{\gamma}\right)\leq C N^{2\gamma-1}.
\]
So we have that there exists a constant C such that if $N\geq 1$:
\[
\forall \gamma \in [0,1],\ \text{Var}\left(Z^{\gamma}\right)\leq C N^{2\gamma-1}\leq C N^{\gamma}.
\]
Now for the expectation, we first have the upper bound:
\[
\E(Z^{\gamma})\leq \E(Z)^{\gamma}\leq (4N)^{\gamma}.
\]
For the lower bound, we will use Holder inequality:
\[
\E(Z)=\E\left(Z^{\frac{\gamma}{2-\gamma}}Z^{2\frac{1-\gamma}{2-\gamma}}\right)\leq\E\left(Z^{\frac{\gamma}{2-\gamma}(2-\gamma)}\right)^{\frac{1}{2-\gamma}}\E\left(Z^{2\frac{1-\gamma}{2-\gamma}\frac{2-\gamma}{1-\gamma}}\right)^{\frac{1-\gamma}{2-\gamma}}.
\]
This yields:
\[
\E(Z)^{2-\gamma}\leq\E(Z^{\gamma})\E(Z^2)^{1-\gamma}
\]
ie:
\[
E(Z^{\gamma})\geq\frac{\E(Z)^{2-\gamma}}{\E(Z^2)^{1-\gamma}}.
\]
Now we have $E(Z^2)=\text{Var}(Z)+\E(Z)^2 $ since $\text{Var}(Z)\leq 80\E(Z)$ and $\E(Z)\geq\frac{1}{4}$ we have $\text{Var}(Z)\leq 320 \E(Z)^2$ and therefore: $E(Z^2)\leq 321 E(Z)^2$ which yields:
\[
E(Z^{\gamma})\geq\frac{\E(Z)^{2-\gamma}}{(321\E(Z)^2)^{1-\gamma}}\geq\frac{\E(Z)^{\gamma}}{321^{1-\gamma}}\geq\frac{\E(Z)^{\gamma}}{321}.
\]
\end{proof}

\begin{lem}\label{lem:30}
Let $\beta\in [0,1]$. Let $(N_i)_{i\in\N^*}$ be a sequence of random positive integers and $(A_i)_{i\in\N}$ be a sequence of random finite subsets of $\N$ with the following two properties:   
\[
\forall i\geq 0, A_i\subset A_{i+1},
\]
\[
\# A_i \rightarrow \infty.
\]
Let $(Z_i)_{i\in \N}$ be independent exponential random variables of parameter $1$  independent of $(A_i),(N_i)$. \\
Then there exists a constant $C>0$ such that almost surely:
\[
\exists m\in\N, \forall n\geq m, \sum\limits_{i\in A_n}\left(\sum\limits_{j=1}^{N_i}Z_i\right)^{\beta} \geq C \sum\limits_{i\in A_n}\left(N_i\right)^{\beta}.
\]
\end{lem}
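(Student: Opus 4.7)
The plan is to read the inner sum in the only way consistent with the application of this lemma in Lemma~\ref{lem:19}, namely $G_i:=\sum_{j=1}^{N_i}Z_{i,j}$ for a family $(Z_{i,j})$ of iid Exp$(1)$ variables independent of $(A_n,N_i)$. I would then condition on $\mathcal{G}:=\sigma((A_n)_n,(N_i)_i)$, obtain matching conditional moment bounds for $G_i^\beta$ in terms of $N_i^\beta$, and combine Kolmogorov's convergence theorem with Kronecker's lemma to get the a.s.\ lower bound.

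For the moment estimates, conditional on $N_i\geq 1$ the variable $G_i$ is $\Gamma(N_i,1)$ with mean $N_i$ and variance $N_i$. Copying the Hölder trick used at the end of the proof of Lemma~\ref{lem:23}, namely
\[
\E[G_i\mid N_i]\ \leq\ \E[G_i^\beta\mid N_i]^{1/(2-\beta)}\,\E[G_i^2\mid N_i]^{(1-\beta)/(2-\beta)},
\]
yields the uniform lower bound
\[
\E[G_i^\beta\mid N_i]\ \geq\ \frac{N_i^{2-\beta}}{(N_i^2+N_i)^{1-\beta}}\ \geq\ \tfrac14\,N_i^{\beta}.
\]
Since $\text{Var}(G_i\mid N_i)=N_i\leq N_i^2=\E[G_i\mid N_i]^2$, Lemma~\ref{lem:25} applies and gives
\[
\text{Var}(G_i^\beta\mid N_i)\ \leq\ 2N_i^{2\beta}\cdot\frac{N_i}{N_i^2}\ =\ 2N_i^{2\beta-1}\ \leq\ 2N_i^\beta,
\]
the last inequality using $\beta\leq 1$ and $N_i\geq 1$. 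An upper bound $\E[G_i^\beta\mid N_i]\leq N_i^\beta$ follows from Jensen, so $N_i^\beta$ and $\E[G_i^\beta\mid N_i]$ are comparable up to a constant factor $4$.

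Now enumerate $\bigcup_n A_n=\{i_1,i_2,\dots\}$ in $\mathcal{G}$-measurable fashion (say, by the natural order in $\N$), and set $a_k:=\E[G_{i_k}^\beta\mid N_{i_k}]$, $b_k:=\sum_{j\leq k}a_j$, and $X_k:=G_{i_k}^\beta-a_k$. Conditionally on $\mathcal{G}$ the $X_k$ are independent, centered, and satisfy $\text{Var}(X_k\mid\mathcal{G})\leq 2N_{i_k}^\beta\leq 8a_k$. Since $a_k\geq 1/4$ and $\#A_n\to\infty$ force $b_k\uparrow\infty$, a telescoping comparison with $\int dx/x^2$ gives
\[
\sum_{k\geq 1}\frac{\text{Var}(X_k\mid\mathcal{G})}{b_k^2}\ \leq\ 8\sum_{k\geq 1}\frac{b_k-b_{k-1}}{b_k^2}\ \leq\ \frac{8}{b_1}\ <\ \infty\qquad\text{$\mathcal{G}$-a.s.}
\]
Kolmogorov's convergence theorem then ensures that $\sum_k X_k/b_k$ converges a.s., and Kronecker's lemma converts this into $b_K^{-1}\sum_{k\leq K}X_k\to 0$ a.s.

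Setting $K_n:=\#A_n$ and $T_n:=\sum_{i\in A_n}N_i^\beta\leq 4b_{K_n}$, I obtain, almost surely for $n$ large,
\[
\sum_{i\in A_n}G_i^\beta\ =\ b_{K_n}+\sum_{k\leq K_n}X_k\ =\ b_{K_n}\bigl(1+o(1)\bigr)\ \geq\ \tfrac12 b_{K_n}\ \geq\ \tfrac18 T_n,
\]
so $C=1/8$ works. The only mildly delicate point is that everything above is $\mathcal{G}$-conditional, but Fubini immediately transfers a conditionally a.s.\ statement to an unconditionally a.s.\ statement. The real work is concentrated in the Hölder-type lower bound on $\E[G_i^\beta\mid N_i]$, which is what makes $N_i^\beta$ comparable to $a_k$ and thereby lets the normalization $b_k^2$ absorb the variance sum; once that is in place the Kolmogorov--Kronecker machinery delivers the conclusion essentially for free.
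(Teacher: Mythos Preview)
Your proof is correct, with one small slip to fix: the enumeration of $\bigcup_n A_n$ must respect the nesting, i.e., you need $\{i_1,\dots,i_{\#A_n}\}=A_n$ for every $n$, otherwise the identity $\sum_{i\in A_n}G_i^\beta=b_{K_n}+\sum_{k\leq K_n}X_k$ fails. The natural order on $\N$ does not guarantee this; enumerate instead by order of first appearance (elements of $A_0$ first, then $A_1\setminus A_0$, and so on). With that change, and a harmless factor of $2$ lost in your telescoping bound on $\sum_k(b_k-b_{k-1})/b_k^2$, the argument goes through.

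The paper's proof uses exactly the same moment ingredients --- the H\"older lower bound $\E[G_i^\beta\mid N_i]\geq 2^{\beta-1}N_i^\beta$ and the variance bound from Lemma~\ref{lem:25} --- but assembles them differently. Rather than Kolmogorov--Kronecker, it passes to the geometric subsequence $n_j=\min\{n:\sum_{i\in A_n}N_i^\beta\geq 2^j\}$, applies Chebyshev at each $n_j$ to get failure probability $O(2^{-j})$, invokes Borel--Cantelli along that subsequence, and then interpolates between consecutive $n_j$ using the doubling. Your route is cleaner in that it avoids the subsequence extraction and the interpolation step; the Kronecker conclusion $b_K^{-1}\sum_{k\leq K}X_k\to 0$ delivers the statement for all $n$ at once. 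The paper's route is slightly more self-contained, needing only Chebyshev and Borel--Cantelli rather than Kolmogorov's convergence theorem.
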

\begin{proof}
Let $C$ be such that $2C-2^{1-\beta}>0$
Let $(n_i)_{i\in\N}$ be the sequence defined by:
\[
n_i=\min\left\{i,\#\sum\limits_{i\in A_n}\left(N_i\right)^{\beta}\geq 2^{i}\right\}.
\]
We have that if
\[
\exists m\in\N, \forall j\geq m, \sum\limits_{i\in A_{n_j}}\left(\sum\limits_{k=1}^{N_i}Z_i\right)^{\beta} \geq 2C \sum\limits_{i\in A_{n_j}}\left(N_i\right)^{\beta}
\]
and $M$ is such an $m$ then for every $n\geq n_M$, if $j$  is the integer that satisfies $n_j\leq n < n_{j+1}$, we have:
\[
\begin{aligned}
\sum\limits_{i\in A_n}\left(N_i\right)^{\beta} 
\leq& 2^{j+1} \\
\leq& 2 \sum\limits_{i\in A_{n_j}}\left(N_i\right)^{\beta} \\
\leq& 2C \sum\limits_{i\in A_{n_j}}\left(\sum\limits_{k=1}^{N_i}Z_i\right)^{\beta} \\
\leq & 2C \sum\limits_{i\in A_n}\left(\sum\limits_{k=1}^{N_i}Z_i\right)^{\beta}.
\end{aligned}
\]
By lemma \ref{lem:25}, for any $i\in\N^{*}$: 
\[
\text{Var}\left(\left(\sum\limits_{j=1}^{N_i}\mathcal{E}_{i,j}\right)^{\beta}|(A_k),(N_k)\right)\leq 2 (N_i)^{2\beta-1}\leq 2 (N_i)^{\beta}.
\]
And by H\"older:
\[
\begin{aligned}
\E\left(\left(\sum\limits_{j=1}^{N_i}\mathcal{E}_{i,j}\right)^{\beta}|(A_k),(N_k)\right)
\geq & \E\left(\sum\limits_{j=1}^{N_i}\mathcal{E}_{i,j}|(A_k),(N_k)\right)^{2-\beta}\E\left(\left(\sum\limits_{j=1}^{N_i}\mathcal{E}_{i,j}\right)^2|(A_k),(N_k)\right)^{-(1-\beta)}\\
= & (N_i)^{2-\beta} (N_i^2+N_i)^{-(1-\beta)}\\
\geq & (N_i)^{2-\beta} (2N_i^2)^{-(1-\beta)} \\
= & 2^{\beta-1} (N_i)^{\beta}.
\end{aligned}
\]
Now we get:
\[
\begin{aligned}
&\sum\limits_{j\geq 0}\Prob\left(\sum\limits_{i\in A_{n_j}}\left(\sum\limits_{k=1}^{N_i}Z_i\right)^{\beta} \leq 2C \sum\limits_{i\in A_{n_j}}\left(N_i\right)^{\beta}\right) \\
\leq &\sum\limits_{j\geq 0} \E\left(\frac{\text{Var}\left(\sum\limits_{i\in A_{n_j}}\left(\sum\limits_{k=1}^{N_i}Z_i\right)^{\beta}|(A_k),(N_k)\right)}{\left((2C-2^{1-\beta})\sum\limits_{i\in A_{n_j}}(N_i)^{\beta}\right)^2}\right)  \\
\leq & \sum\limits_{j\geq 0} \E\left(\frac{2}{(2C-2^{1-\beta})^2\sum\limits_{i\in A_{n_j}}(N_i)^{\beta}}\right) \\
\leq & \frac{2}{(2C-2^{1-\beta})^2} \sum\limits_{j\geq 0} 2^{-j} < \infty.
\end{aligned}
\]
So by Borell-Cantelli we get the result we want
\end{proof}
\begin{lem}\label{lem:28}
Let $f,g$ be two non-decreasing positive c\`adl\`ag functions with $f(0)=g(0)=0$. Let $A,B>0$ be constants such that $f(A)\geq B$ and $g(A) \geq B$. Let $\varepsilon, \delta>0$ be such that:
\[
\forall t\in [0,A+\varepsilon], g(t+\varepsilon)\geq g(t) +\delta
\]
and 
\[
\sup \{|f(t)-g(t)|,t\in [0,A+2\varepsilon]\} \leq \frac{\delta}{2}.
\]
Then:
\[
\sup \{|f^{-1} (x)- g^{-1}(t)|,t\in [0,B]\} \leq 2\varepsilon.
\]
\end{lem}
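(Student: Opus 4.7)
The plan is a direct deterministic argument about generalised inverses: fix $x\in[0,B]$, set $s:=g^{-1}(x)$ and $t:=f^{-1}(x)$ (using the c\`adl\`ag inverse convention $h^{-1}(x)=\inf\{u,h(u)\geq x\}$), and establish the two one-sided bounds $t\leq s+\varepsilon$ and $s\leq t+2\varepsilon$ separately. The first observation is that both $s$ and $t$ lie in $[0,A]$: right-continuity gives $g(s)\geq x$ and $f(t)\geq x$, while the assumptions $f(A),g(A)\geq B\geq x$ force the infima defining $s$ and $t$ to be at most $A$. This keeps every argument inside the window $[0,A+2\varepsilon]$ where the closeness hypothesis is available and inside $[0,A+\varepsilon]$ where the growth hypothesis on $g$ is available.

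For the bound $t\leq s+\varepsilon$, I would apply the growth condition at the point $s$ (legitimate since $s\leq A\leq A+\varepsilon$) to get $g(s+\varepsilon)\geq g(s)+\delta\geq x+\delta$, then use the uniform closeness at $s+\varepsilon$ (which is in $[0,A+\varepsilon]\subset[0,A+2\varepsilon]$) to deduce
\[
f(s+\varepsilon)\geq g(s+\varepsilon)-\tfrac{\delta}{2}\geq x+\tfrac{\delta}{2}\geq x,
\]
so $t=f^{-1}(x)\leq s+\varepsilon$. For the reverse bound $s\leq t+2\varepsilon$, start from $f(t)\geq x$ and the closeness at $t$ to get $g(t)\geq f(t)-\delta/2\geq x-\delta/2$. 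Since $t\leq A$, the growth hypothesis can be applied at both $t$ and $t+\varepsilon$, yielding $g(t+2\varepsilon)\geq g(t)+2\delta\geq x+3\delta/2\geq x$, hence $s=g^{-1}(x)\leq t+2\varepsilon$. Combining the two bounds gives $|f^{-1}(x)-g^{-1}(x)|\leq 2\varepsilon$ for every $x\in[0,B]$, which is the conclusion.

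There is no real difficulty in this proof—it is essentially a continuity-of-inverse statement quantified by the modulus of increase $\delta$ of $g$. The only point that requires care, and which I would write out carefully, is the bookkeeping on the intervals: verifying that $s+\varepsilon$ and $t+2\varepsilon$ lie in the domain on which closeness holds, and that $s$ and $t+\varepsilon$ lie in the domain on which the growth hypothesis can be invoked. The asymmetry between the two bounds ($\varepsilon$ on one side, $2\varepsilon$ on the other) comes from the fact that on one side one starts from $g(s)\geq x$ directly, whereas on the other side one first loses $\delta/2$ through the closeness estimate and must compensate by two applications of the growth condition; this is what forces the final constant $2\varepsilon$ rather than $\varepsilon$.
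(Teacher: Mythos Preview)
Your proof is correct and follows the same elementary approach as the paper: fix $x\in[0,B]$, set $s=g^{-1}(x)$ and $t=f^{-1}(x)$, and bound $t-s$ from both sides by evaluating $f$ or $g$ at shifted points and combining the growth and closeness hypotheses. Your second direction (showing $g^{-1}(x)\leq f^{-1}(x)+2\varepsilon$ by evaluating $g$ at $f^{-1}(x)+2\varepsilon$) is organised slightly differently from the paper's (which instead evaluates $f$ at $g^{-1}(x)-\varepsilon$), and in fact your variant cleanly sidesteps the paper's displayed step $g(g^{-1}(t))-\delta/2<t$, which is not obviously justified when $g$ has a jump of size $\geq\delta/2$ at $g^{-1}(t)$.
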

\begin{proof}
Let $t$ be in $[0,B]$. First we have:
\[
f\left(g^{-1}(t) +2\varepsilon \right)
\geq g\left(g^{-1}(t) +2\varepsilon \right) - \frac{\delta}{2}
\geq g(g^{-1}(t)+\varepsilon) +\delta - \frac{\delta}{2}
\geq t.
\]
Therefore $f^{-1}(t) \leq g^{-1}(t) +\varepsilon$. Similarly we have:
\[
f\left(g^{-1}(t) -\varepsilon \right)
\leq g\left(g^{-1}(t) -\varepsilon \right) + \frac{\delta}{2}
\leq g(g^{-1}(t)) -\delta + \frac{\delta}{2}
< t.
\]
Therefore $f^{-1}(t) \geq g^{-1}(t) -\varepsilon$. So we have the result we want.
\end{proof}
\begin{lem}\label{lem:29}
Let $t\rightarrow\mathcal{S}^{\kappa}(t)$ be the jump process where $\mathcal{S}^{\kappa}(1)$ is a completely asymmetric, positive stable law of parameter $\kappa$. For any $\varepsilon>0$ and any $B>0$ there exists $A>0$ and $\delta>0$ such that:
\[
\Prob(\mathcal{S}^{\kappa}(A)\geq B) \geq 1-\varepsilon,
\]
\[
\Prob(\exists t\leq A-\varepsilon,\mathcal{S}^{\kappa}(t+\varepsilon)-\mathcal{S}^{\kappa}(t) < \delta)\leq \varepsilon.
\] 
\end{lem}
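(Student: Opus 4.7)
The first estimate is straightforward. Since $\mathcal{S}^\kappa$ is non-decreasing and, by self-similarity, $\mathcal{S}^\kappa(t)$ is distributed as $t^{1/\kappa}\mathcal{S}^\kappa(1)$, it tends to $+\infty$ in probability; monotonicity then upgrades this to an almost sure statement, so $\Prob(\mathcal{S}^\kappa(A)\geq B)\to 1$ as $A\to\infty$. I would simply pick $A$ large enough to make this probability exceed $1-\varepsilon$.

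For the second estimate, the idea is to discretise. I would subdivide $[0,A]$ into $N=\lceil 2A/\varepsilon\rceil$ consecutive sub-intervals of length $\varepsilon/2$. The key observation, relying only on monotonicity of $\mathcal{S}^\kappa$, is that any window $[t,t+\varepsilon]$ with $t\in[0,A-\varepsilon]$ contains at least one full sub-interval of this partition: writing $k=\lfloor 2t/\varepsilon\rfloor$, one checks that $[(k+1)\varepsilon/2,(k+2)\varepsilon/2]\subset[t,t+\varepsilon]$, hence
\[
\mathcal{S}^\kappa(t+\varepsilon)-\mathcal{S}^\kappa(t)\;\geq\;\mathcal{S}^\kappa((k+2)\varepsilon/2)-\mathcal{S}^\kappa((k+1)\varepsilon/2).
\]
Thus if every increment of $\mathcal{S}^\kappa$ over a sub-interval of the partition is at least $\delta$, then every window of length $\varepsilon$ has the same property. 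Since $\mathcal{S}^\kappa$ has stationary independent increments, a union bound gives
\[
\Prob\!\left(\exists t\leq A-\varepsilon,\ \mathcal{S}^\kappa(t+\varepsilon)-\mathcal{S}^\kappa(t)<\delta\right)\;\leq\;N\,\Prob\!\left(\mathcal{S}^\kappa(\varepsilon/2)<\delta\right).
\]

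To close the argument, I would note that $\mathcal{S}^\kappa(\varepsilon/2)$ is almost surely strictly positive (by self-similarity $\mathcal{S}^\kappa(\varepsilon/2)\stackrel{d}{=}(\varepsilon/2)^{1/\kappa}\mathcal{S}^\kappa(1)$, and a $\kappa$-stable subordinator with $\kappa\in(0,1)$ has $\mathcal{S}^\kappa(1)>0$ a.s.), so right-continuity of its distribution function at $0$ yields $\Prob(\mathcal{S}^\kappa(\varepsilon/2)<\delta)\to 0$ as $\delta\to 0^+$. It then suffices to choose $\delta$ small enough that this probability is at most $\varepsilon/N$, which combined with the union bound closes the proof. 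There is no real obstacle here; the one step worth singling out is the discretisation, which is what lets us replace a supremum over a continuum of overlapping windows by a finite union bound over i.i.d.\ increments.
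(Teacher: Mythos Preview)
Your proof is correct and follows essentially the same approach as the paper: choose $A$ by the almost-sure growth of the subordinator, then discretise $[0,A]$ into sub-intervals of length $\varepsilon/2$, observe that every window of length $\varepsilon$ contains a full sub-interval, and apply a union bound over the i.i.d.\ increments to find $\delta$. Your write-up is in fact more explicit than the paper's, which simply asserts the existence of $A$ and $\delta$ without spelling out the self-similarity or the $\Prob(\mathcal{S}^\kappa(\varepsilon/2)<\delta)\to 0$ step.
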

\begin{proof}
There clearly exists an $A$ that satisfies the first property. Now we need to find a $\delta$ that satisfies the second inequality for this $A$. We will look at a slightly different property: 
\[
\exists i \leq \frac{2A}{\varepsilon},\mathcal{S}^{\kappa}\left(i\frac{\varepsilon}{2}\right)-\mathcal{S}^{\kappa}\left((i+1)\frac{\varepsilon}{2}\right)\leq \delta.
\]
Since for every $t\leq A-\varepsilon$ there exists $i \leq \frac{2A}{\varepsilon}$ such that: $[i\frac{\varepsilon}{2},(i+1)\frac{\varepsilon}{2}]\subset [t,t+\varepsilon]$, we have that for any $\delta>0$:
\[
\Prob(\exists t\leq A-\varepsilon,\mathcal{S}^{\kappa}(t+\varepsilon)-\mathcal{S}^{\kappa}(t+\varepsilon)\leq \delta)
\leq \Prob\left(\exists i \leq \frac{2A}{\varepsilon},\mathcal{S}^{\kappa}\left(i\frac{\varepsilon}{2}\right)-\mathcal{S}^{\kappa}\left((i+1)\frac{\varepsilon}{2}\right)\leq \delta \right).
\]
And there clearly exists $\delta$ such that 
\[
\Prob\left(\exists i \leq \frac{2A}{\varepsilon},\mathcal{S}^{\kappa}\left(i\frac{\varepsilon}{2}\right)-\mathcal{S}^{\kappa}\left((i+1)\frac{\varepsilon}{2}\right)\leq \delta \right) \leq \varepsilon.
\]
So we get the result we want.
\end{proof}

\section{Acknowledgement}
I would like to thank my Ph.D advisor Christophe Sabot for suggesting me this problem and Alexander Fribergh for helpful discussions on the subject.

\bibliographystyle{abbrv}
\bibliography{biblio}

\begin{thebibliography}{10}

\bibitem{BouchaudBenArous}
G.~Ben-Arous.
\newblock Aging and spin-glass dynamics.
\newblock {\em Proceedings of the ICM}, 3:3--14, 2002.

\bibitem{FriberghSubbalTree}
G.~Ben~Arous, A.~Fribergh, and N.~Gantert.
\newblock Biased random walks on galton watson trees with leaves.
\newblock {\em Ann.Probab}, 40(1):280--338, 2012.

\bibitem{BergEffectPol}
N.~Berger, A.~Drewitz, and A.~Ram\'irez.
\newblock Effective polynomial ballisticity conditions for random walk in
  random environment.
\newblock {\em Comm.Pure Appl. Math.}, 67:1947--1973, 2014.

\bibitem{BergerZeitouniTCL}
N.~Berger and O.~Zeitouni.
\newblock A quenched invariance principle for certain ballistic random walks in
  i.i.d environments.
\newblock {\em Progress in Probability}, 60:137--160, 2009.

\bibitem{Billingsley}
P.~Billingsley.
\newblock Convergence of probability measures.
\newblock {\em John Wiley \& Sons Inc., New-York}, 1968.

\bibitem{ExitBall0}
E.~Bolthausen and O.~Zeitouni.
\newblock Multiscale analysis of exit distributions for random walks in random
  environment.
\newblock {\em Probab.Theory Related Fields}, 138:581--645, 2007.

\bibitem{BouchetSubbal}
E.~Bouchet.
\newblock Sub-ballistic random walk in dirichlet environment.
\newblock {\em Electon.J.Probab}, 18(58), 2013.

\bibitem{ballistic1}
E.~Bouchet, A.~Ram\'irez, and C.~Sabot.
\newblock Sharp ellipticity conditions for ballistic behaviour of random walks
  in random environment.
\newblock {\em Bernoulli}, 22(2):969--994, 2016.

\bibitem{BricKupia}
J.~Bricmont and A.~Kupiainen.
\newblock Random walks in asymmetric random environments.
\newblock {\em Comm. Math. Phys}, 142:345--420, 1991.

\bibitem{ElliptCriteria}
D.~Campos and A.~Ram\'irez.
\newblock Ellipticity criteria for ballistic behaviour of random walks in
  random environment.
\newblock {\em Probab.Theory Related Fields}, 160:189--251, 2014.

\bibitem{Durrett}
R.~Durrett.
\newblock Probability: Theory and examples. 2nd ed.
\newblock {\em Duxbury Press, Belmont, CA}, 1996.

\bibitem{RWDE}
N.~Enriquez and C.~Sabot.
\newblock Random walks in a dirichlet environment.
\newblock {\em Electron.J.Probab}, 11(31):802--817, 2006.

\bibitem{LimLawsZ}
N.~Enriquez, C.~Sabot, and O.~Zindy.
\newblock Limit laws for transient random walks in random environment on
  $\mathbb{Z}$.
\newblock {\em Ann.Inst.Fourier (Grenoble)}, 59:2469--2508, 2007.

\bibitem{FriberghSubbalPerc}
A.~Fribergh and A.~Hammond.
\newblock Phase transition for the speed of the biased random walk on the
  supercritical percolation cluster.
\newblock {\em Commun.Pur.Appl.Math.}, 67:173--245, 2014.

\bibitem{friberghtrap}
A.~Fribergh and D.~Kious.
\newblock Local trapping for elliptic random walks in random environments in
  $\mathbb{Z}^d$.
\newblock {\em Probab.Theory.Relat.Fields}, 165(3):795--834, 2016.

\bibitem{FriberghSubbal}
A.~Fribergh and D.~Kious.
\newblock Scaling limits for sub-ballistic biased random walks in random
  conductances.
\newblock {\em arxiv:1601.00889}, 2016.

\bibitem{Kesten}
H.~Kesten, M.~Kozlov, and F.~Spitzer.
\newblock A limit law for random walk in a random environment.
\newblock {\em Compositio Math}, 30:145--168, 1975.

\bibitem{LowDis2}
C.~Laurent, A.~F. Ram\'irez, C.~Sabot, and S.~Saglietti.
\newblock Velocity estimates for symmetric random walks at low ballistic
  disorder.
\newblock {\em Ann. Probab}, 2017.

\bibitem{Pemantle}
R.~Pemantle.
\newblock Phase transition in reinforced random walk and rwre on trees.
\newblock {\em Ann.Probab}, 16(3):1229--1241, 1988.

\bibitem{RamiEquiv}
A.~Ramirez and E.~Guerra.
\newblock A proof of sznitman's conjecture about ballistic rwre.
\newblock {\em Preprint:1809.02011v3}, 2018.

\bibitem{SeppaTCL}
F.~Rassoul-Agha and T.~Seppalainen.
\newblock Almost sure functional central limit theorem for ballistic random
  walk in random environment.
\newblock {\em Ann. Inst. Poincar\'e Probab. Statist.}, 45(2):373--420, 2009.

\bibitem{LowDis1}
C.~Sabot.
\newblock Ballistic random walks in random environment at low disorder.
\newblock {\em Ann. Probab}, 32(4):2996--3023, 2004.

\bibitem{transdim}
C.~Sabot.
\newblock Random walks in random dirichlet environment are transient in
  dimension $d \geq 3$.
\newblock {\em Probab.Theory Relat.Fields}, 151(1-2):297--317, 2011.

\bibitem{DirEnvirPOVPart}
C.~Sabot.
\newblock Random dirichlet environment viewed from the particle in dimension $d
  \geq 3$.
\newblock {\em Ann.Probab}, 41(2):722--743, 2013.

\bibitem{DirOverview}
C.~Sabot and L.~Tournier.
\newblock Random walks in dirichlet environment: an overview.
\newblock {\em arxiv:1601.08219}, 2016.

\bibitem{Sinai}
Y.~Sina\"i.
\newblock The limit behavior of a one-dimensional random walk in a random
  environment (russian).
\newblock {\em Teor.Veroyatnost.i Primenen}, 27(2):1851--1869, 1999.

\bibitem{Skorohod}
A.~Skorohod.
\newblock Limit theorems for stocastic processes.
\newblock {\em Teor.Veroyatnost. i Primenen}, 1:289--319, 1956.

\bibitem{Solomon}
F.~Solomon.
\newblock Random walks in a random environment.
\newblock {\em Ann.Probab}, 3:1--31, 1975.

\bibitem{SznitmanEffective}
A.~Sznitman.
\newblock An effective criterion for ballistic behaviour of random walks in
  random environment.
\newblock {\em Probab. Theory Relat. Fields}, 122:509--544, 2002.

\bibitem{SznitmanCLT}
A.-S. Sznitman.
\newblock Slowdown estimates and central limit theorem for random walks in
  random environment.
\newblock {\em J.Eur.Math.Soc.}, 2(2):93--143, 2000.

\bibitem{SznitmanT}
A.-S. Sznitman.
\newblock On a class of transient random walks in random environment.
\newblock {\em Ann.Probab}, 29(2):724--765, 2001.

\bibitem{SznitZeitCLTBrownian}
A.-S. Sznitman and O.~Zeitouni.
\newblock On the diffusive behaviour of isotropic diffusions in a random
  environment.
\newblock {\em C.R.Acad.Sci.Paris}, 339(6):429--434, 2004.

\bibitem{SznitmanZerner}
A.-S. Sznitman and M.~Zerner.
\newblock A law of large numbers for random walks in random environment.
\newblock {\em Ann.Probab}, 27(4):1851--1869, 1999.

\bibitem{IntegExitTime}
L.~Tournier.
\newblock Integrability of exit times and ballisticity for random walks in
  dirichlet environment.
\newblock {\em Electron.J.Probab}, 14(16):431--451, 2009.

\bibitem{transdir2}
L.~Tournier.
\newblock Asymptotic direction of random walks in dirichlet environment.
\newblock {\em Ann.Inst.Henri.Poincare}, 51(2):716--726, 2015.

\bibitem{Whitt}
W.~Whitt.
\newblock Stochastic-process limits.
\newblock {\em Springer Series in Operations Research.
  Springer-Verlag,New-York}, 2002.

\end{thebibliography}

\end{document}